\renewcommand{\footnote}{\endnote}
\newtheorem{theorem}{Theorem}
\newtheorem{lemma}[theorem]{Lemma}
\newtheorem{proposition}[theorem]{Proposition}
\newtheorem{definition}[theorem]{Definition}
\newtheorem{remark}[theorem]{Remark}
\newtheorem{corollary}[theorem]{Corollary}
\newtheorem{example}[theorem]{Example}
\newtheorem*{thm}{Theorem}
\title{Outer billiard outside regular polygons}
\date{}
\author{Nicolas Bedaride\footnote{ Laboratoire d'Analyse Topologie et Probabilités  UMR 6632 , Université Paul Cézanne, avenue escadrille Normandie Niemen 13397 Marseille cedex 20, France. nicolas.bedaride@univ-cezanne.fr}\and  Julien Cassaigne\footnote{Institut de Mathématiques de Luminy, UMR 6206, Université de la méditérranée, 13288 Marseille. cassaigne@iml.univ-mrs.fr}}
\begin{document}
\maketitle
\begin{abstract}We consider the outer billiard outside a regular convex polygon. We deal with the case of regular polygons with $3,4,5,6$ or $10$ sides. We describe the symbolic dynamics of the map and compute the complexity of the language.
\end{abstract}

\tableofcontents

\bigskip

\section{Introduction}
An outer billiard table is a compact convex domain $P$. Pick a point $M$ outside $P$. There are two half-lines starting from $M$ tangent to $P$; choose one of them, say the one to the right from the view-point of $M$, and reflect $M$ with respect to the tangency point. One obtains a new point, $y$, and the transformation 
$T : TM=y$ is the outer (a.k.a. dual) billiard map, see Figure \ref{fig-def-1}. 
The map $T$ is not defined if
the support line has a segment in common with the outer billiard table. In
the case where $P$ is a convex polygon, the set of points for which $T$ or any of its
iterations is not defined is contained in a countable union of lines and has zero measure.
The dual billiard map was introduced by Neumann in \cite{Neu.59} as a toy model for planet orbits. 
Then the first result has been proved by Moser in \cite{Mos.73}. It concerns the outer billiard outside a closed 
convex smooth curve on the plane, and he proves that the orbits are always bounded using KAM theory. 
In the case of a polygon the same problem has no general solution. A particular class of polygons has been 
introduced by Kolodziej et al. in several articles, see \cite{Viv.Shai.87,Kol,Gu.Si.92}. This class is named the quasi-rational polygons and 
contains all the regular polygons. They prove that every orbit outside a polygon in this class is bounded. Recently Schwartz described a family of polygons for which there exist unbounded orbits, see \cite{Sch.07}. 
In the case of the regular pentagon Tabachnikov completely described the dynamics of the outer billiard map in terms of symbolic dynamics, see \cite{Ta.95}. 
The study of the symbolic dynamics of these map outside a polygon is just at the beginning. By a result of Buzzi, see \cite{Buz.01}, we know that 
the topological entropy is zero, thus the complexity growths as a sub-exponential. Recently Gutkin andTabachnikov proved that the complexity function in the case of a quasi-rational polygon is bounded by a polynomial, see \cite{Gu.Ta.06}.

In this paper we consider the outer billiard map outside regular polygons, and analyze the symbolic dynamics attached to
this map. We are interested in the cases where the polygon has $3,4,5,6$ or $10$ sides and give a complete description of the 
dynamics. Moreover we compute the global complexity of these maps, and generalize the result of Gutkin and Tabachnikov, see \cite{Gu.Ta.06}.


\section{Overview of the paper}
In Section \ref{secdual} we define the outer billiard map outside a polygon. In Section \ref{secsymb} we recall the basic definitions of word combinatorics and explain the partition associated to the dual billiard map. In Section \ref{secsimplif} we simplify our problem, then in Section \ref{secback} we recall the different results on the subject, and in Section \ref{secresult} we can state our results. 

After this point the proof begins with a general method which will be useful during all the rest. The following two sections will be focused on the case of the square, the triangle and the regular hexagon.
In Section \ref{secinduc} we use an induction method for these polygons to describe their languages. In Section \ref{seccalcomp} we finish the computation of the complexity.

Now it remains to study the regular pentagon. First in Section \ref{sectab} we recall some facts on piecewise isometries. Then in Section \ref{secindupent} we use an induction method to describe the dynamics of the dual billiard map. Finally in Sections \ref{seclangpent}, \ref{seccalccomppent}  and \ref{secfin} we describe the language of the dual billiard map outside the regular pentagon and finish the proof. Section \ref{secdeca} finishes the paper with a similar result for the regular decagon. 


\section{Outer billiard}\label{secdual}
We refer to \cite{Ta} or \cite{Gu.Si.92}.
We consider a convex polygon $P$ in $\mathbb{R}^2$ with $k$ vertices. Let $\overline{P}=\mathbb{R}^2\setminus P$ be the complement of $P$.\\
We fix an orientation of $\mathbb{R}^2$. 

\begin{definition}\label{defdual}
Denote by $\sigma_1$ the union of straight lines containing the sides of $P$. We consider the central symmetries $s_i, i=0\dots k-1$ about the vertices of $P$. Define $\sigma_n$, where $n\geq 2$ is an integer, by $\sigma_n=\displaystyle\bigcup_{i=0}^{k-1} s_i\sigma_{n-1}$.
Now the singular set is defined by:
$$Y=\bigcup_{n=1}^\infty\sigma_n.$$
\end{definition}

For any point $M\in\overline{P}\setminus Y$, there are two half-lines $R,R'$ emanating from $M$ and tangent to $P$, see Figure 1. Assume that the oriented angle $R,R'$ has positive measure. Denote by $A^+,A^-$ the tangent points on $R$ respectively $R'$.

\begin{definition}\label{point}
The outer billiard map is the map $T$ defined as follows:
$$\begin{array}{ccccc}
T&:&\overline{P}\setminus Y&\to& \overline{P}\setminus Y\\
&&M&\mapsto&s_{A^+}(M)\end{array}$$ 
where $s_{A^+}$ is the reflection about $A^+$.
\end{definition}

\begin{remark}
This map is not defined on the entire space. The map $T^n$ can be defined on $\overline{P}\setminus \sigma_n$, but on this set $T^{n+1}$ is not everywhere defined. The definition set $\overline{P}\setminus Y$ is of full measure in $\overline{P}$.
\end{remark}

\begin{figure}[h]
\begin{center}
\includegraphics[width= 5cm]{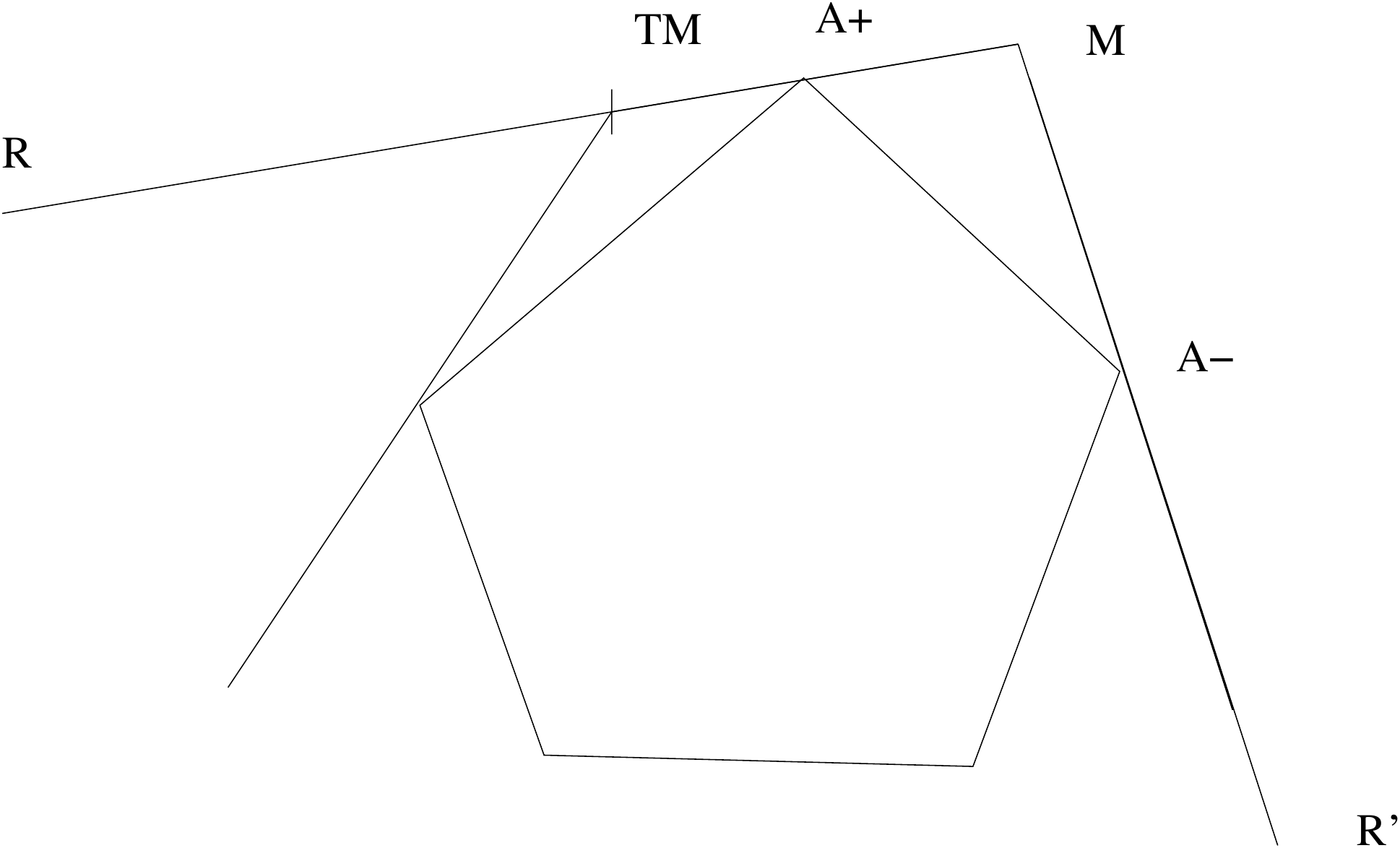}
\psfrag*{A}{$A^{+}$}
\psfrag*{B}{$A^{-}$}
\psfrag*{C}{$\mathbb{R}$}
\psfrag*{D}{$R'$}
\caption{Outer billiard map}\label{fig-def-1}
\end{center}
\end{figure}

Two important families of polygons have been defined in the study of this map: the rational polygons and the quasi-rational polygons.
\begin{definition}
A polygon $P$ is said to be rational if the vertices of $P$ are on a lattice of $\mathbb{R}^2$.
\end{definition}
The definition of the quasi-rational polygons is more technical and we will not need it here. We just mention the fact that every regular polygon is a quasi-rational polygon.


\section{Symbolic dynamics}\label{secsymb}
\subsection{Definitions}
\subsubsection{Words}

For this paragraph we refer to \cite{Pyt}.

\begin{definition}
Let $\mathcal{A}$ be a finite set called the alphabet, a word is a finite string of elements in $\mathcal{A}$. If $v$ is the word $v=v_0\dots v_{n-1}$ then $n$ is called the length of the word $v$. The set of all finite words over $\mathcal{A}$ is denoted $\mathcal{A}^*$. 
\end{definition}

\begin{definition}
A (one sided) sequence of elements of $\mathcal{A}$, $u=(u_n)_{n\in\mathbb{N}}$ is called an infinite word.

A word $v=v_0\dots v_{r-1}$ is said to occur at position $m$ in an infinite word $u$ if there exists an integer $m$ such that for all $i\in[0;r-1]$ we have $u_{m+i}=v_i$. We say that the word $v$ is a factor of $u$. 

For an infinite word $u$, the language of $u$ (respectively the language of length $n$)  is the set of all words (respectively all words of length $n$) in $\mathcal{A}^*$ which appear in $u$. We denote it $L(u)$ (respectively $L_{n}(u)$). 
\end{definition}

\begin{definition}
The shift map is the map defined on $\mathcal{A}^\mathbb{N}$ by $S(u)=v$ with $v_n=u_{n+1}$ for all integer $n$.

The dynamical system associated to an infinite word $u$ is the system $(X_u,S)$ where $S$ is the shift map and $X_u=\overline{\{S^n(u),n\in\mathbb{N}\}}$.

An infinite word $u$ is said to be recurrent if every factor occurs infinitely often. 
\end{definition}

\begin{remark}\label{remcombinmot}
Remark that $u$ is recurrent is equivalent to the fact that $S$ is onto on $X_u$. Moreover we have equivalence between $\omega\in X_u$ and $L_\omega\subset L_u$. Thus the language of $X_u$ is equal to the language of $u$.
\end{remark}

In the following, we will deal with several infinite words, thus we need a general definition of language.

\begin{definition}
A language $L$ is a sequence $(L_n)_{n\in\mathbb{N}}$ where $L_n$ is a finite set of words of length $n$ such that for any word $v\in L_n$ there exists two letters $a,b$ such that $av$ and $vb$ are elements of $L_{n+1}$, and all factors of length $n$ of elements of $L_{n+1}$ are in $L_n$.
\end{definition}

\begin{definition}\label{defcomp}
The complexity function of a language $L$ is the function $p:\mathbb{N}\rightarrow\mathbb{N}$ defined by $p(n)=card(L_n)$.
\end{definition}

\subsubsection{Substitutions}
{\bf Substitution:}
A substitution $\sigma$ is an application from an alphabet $\mathcal{A}$ to the set $\mathcal{A}^*\setminus\{\varepsilon\}$ of nonempty finite words on $\mathcal{A}$. It extends to a morphism of $\mathcal{A}^*$ by concatenation, that is $\sigma(uv)=\sigma(u)\sigma(v)$. 

{\bf Fixed point}: A fixed point of a substitution $\sigma$ is an infinite word $u$ with $\sigma(u)=u$. 

{\bf Periodic point}: A periodic point of a substitution $\sigma$ is an infinite word $u$ with $\sigma^k(u)=u$ for some $k>0$.

Let $\sigma$ be a substitution over the alphabet $\mathcal{A}$, and $a$ be a letter such that $\sigma(a)$ begins with $a$ and $|\sigma(a)|\geq 2$. Then there exists a unique fixed point $u$ of $\sigma$ beginning with $a$. This infinite word is the limit of the sequence of finite words $\sigma^n(a)$.
\subsubsection{Complexity}

 First we recall a result of the second author concerning combinatorics of words \cite{Ca}.
\begin{definition}
Let $L=(L_n)_{n\in\mathbb{N}}$ be a language. For any
$n\geq 0$ let $s(n)\!:=p(n+1)-p(n)$. For $v \in L_n$ let

$$m_{l}(v)= card\{a\in \mathcal{A}, av\in L_{n+1}\},$$
$$m_{r}(v)= card\{b\in \mathcal{A},vb\in L_{n+1}\},$$
$$m_{b}(v)= card\{(a,b)\in \mathcal{A}^2, avb\in L_{n+2}\}.$$
\begin{itemize}
\item A word $v$ is called right special if $m_{r}(v)\geq 2$. 
\item A word $v$ is called left special if $m_{l}(v)\geq 2$. 
\item A word $v$ is called bispecial if it is right and left special.
\item $BL_n$ denotes the set of bispecial words of length $n$. 
\item $b(n)$ denotes the sum $b(n)=\displaystyle\sum_{v\in BL_n}{i(v)}$,
where $i(v)=m_{b}(v)-m_{r}(v)-m_{l}(v)+1$.
\end{itemize}
\end{definition}

\begin{lemma}\label{julien}
Let $L$ be a language. Then the complexity of $L$ satisfies for every integer $n\geq 0$:
$$s(n+1)-s(n)=b(n).$$
\end{lemma}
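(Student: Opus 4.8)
The plan is to reduce everything to three elementary counting identities relating the complexity $p$ to the extension numbers $m_l, m_r, m_b$, and then to telescope. The starting observation is that $s(n+1)-s(n)$ is simply a second difference of $p$: indeed $s(n+1)-s(n) = \bigl(p(n+2)-p(n+1)\bigr) - \bigl(p(n+1)-p(n)\bigr) = p(n+2)-2p(n+1)+p(n)$. So it suffices to rewrite each of $p(n)$, $p(n+1)$, $p(n+2)$ as a sum indexed by the words $v\in L_n$.

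First I would establish the counting identities. Every word of $L_{n+1}$ has a unique prefix of length $n$, which lies in $L_n$, so the map sending a word of $L_{n+1}$ to its length-$n$ prefix sorts $L_{n+1}$ according to $v\in L_n$, and the fibre over $v$ consists exactly of the words $vb\in L_{n+1}$, hence has $m_r(v)$ elements; therefore $p(n+1)=\sum_{v\in L_n} m_r(v)$. Reading words from the right instead gives $p(n+1)=\sum_{v\in L_n} m_l(v)$. Likewise every word of $L_{n+2}$ can be written uniquely as $avb$ with $a,b$ letters and $v$ its central factor of length $n$ (which lies in $L_n$ by closure under factors), so sorting $L_{n+2}$ by $v$ gives fibres of size $m_b(v)$, whence $p(n+2)=\sum_{v\in L_n} m_b(v)$. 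Trivially $p(n)=\sum_{v\in L_n} 1$.

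Substituting these into the second difference, all four sums run over the same index set $L_n$ and combine termwise:
$$s(n+1)-s(n)=\sum_{v\in L_n}\bigl(m_b(v)-m_r(v)-m_l(v)+1\bigr)=\sum_{v\in L_n} i(v).$$
It then remains to see that this sum collapses to the sum over bispecial words only, i.e. that $i(v)=0$ whenever $v$ is not bispecial. Suppose $v$ is not right special, so $m_r(v)=1$ and there is a unique letter $b$ with $vb\in L_{n+1}$. For any $a$ with $av\in L_{n+1}$, the word $av$ admits some right extension $avb'\in L_{n+2}$, and its suffix $vb'$ of length $n+1$ forces $b'=b$; hence $avb\in L_{n+2}$, so the pairs counted by $m_b(v)$ are exactly the $(a,b)$ with $av\in L_{n+1}$. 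This gives $m_b(v)=m_l(v)$ and $i(v)=m_l(v)-m_l(v)-1+1=0$. The case where $v$ is not left special is symmetric. Therefore $\sum_{v\in L_n} i(v)=\sum_{v\in BL_n} i(v)=b(n)$, which is the claim.

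The only genuinely delicate point is the vanishing $i(v)=0$ for non-special words: it relies on the language axiom that every word extends both to the left and to the right (so that the extension $avb'$ exists) together with the closure of $L_{n+2}$ under factors. I would flag these two uses explicitly; everything else is bookkeeping with the bijections above.
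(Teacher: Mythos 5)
Your argument is correct. The paper itself gives no proof of this lemma --- it defers to the cited references of Cassaigne and Cassaigne--Hubert--Troubetzkoy --- and what you have written is precisely the standard argument found there: the three fibration identities $p(n+1)=\sum_{v\in L_n}m_r(v)=\sum_{v\in L_n}m_l(v)$ and $p(n+2)=\sum_{v\in L_n}m_b(v)$, the termwise combination into $\sum_{v\in L_n}i(v)$, and the vanishing of $i(v)$ for non-bispecial $v$. You correctly identify the only delicate point, namely that $i(v)=0$ for a non-right-special word requires both the right-extendability of $av$ guaranteed by the paper's definition of a language and the closure of $L_{n+1}$ under length-$n+1$ factors of $L_{n+2}$; both hypotheses are indeed part of the paper's definition, so the proof is complete.
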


For the proof of the lemma we refer to \cite{Ca} or
\cite{Ca.Hu.Tr}.

\begin{definition}
A word $v$ such that $i(v)< 0$ is called a weak bispecial. 
A word $v$ such that $i(v)>0$ is called a strong bispecial.
A bispecial word $v$ such that $i(v)=0$ is called a neutral bispecial.
\end{definition}

\begin{definition}
Let $v$ be a finite word. The infinite periodic word with period $v$ will be denoted $v^\omega$.
\end{definition}

\subsection{Coding}\label{coddual}
We introduce a coding for the dual billiard map. Recall that the polygon $P$ has $k$ vertices. 
\begin{definition}
The sides of $P$ can be extended in half-lines in the following way: Denote these half lines by $(d_i)_{0\leq i\leq k-1}$ we assume that the angle $(d_i,d_{i+1})$ is positive. They form a cellular decomposition of $\overline{P}$ into $k$ closed cones $V_0,\dots, V_{k-1}$. By convention we assume that the half line $d_i$ is between $V_{i-1}$ and $V_i$ see Figure \ref{defcones}.
\end{definition}
\begin{figure}[h]
\begin{center}
\includegraphics[width= 5cm]{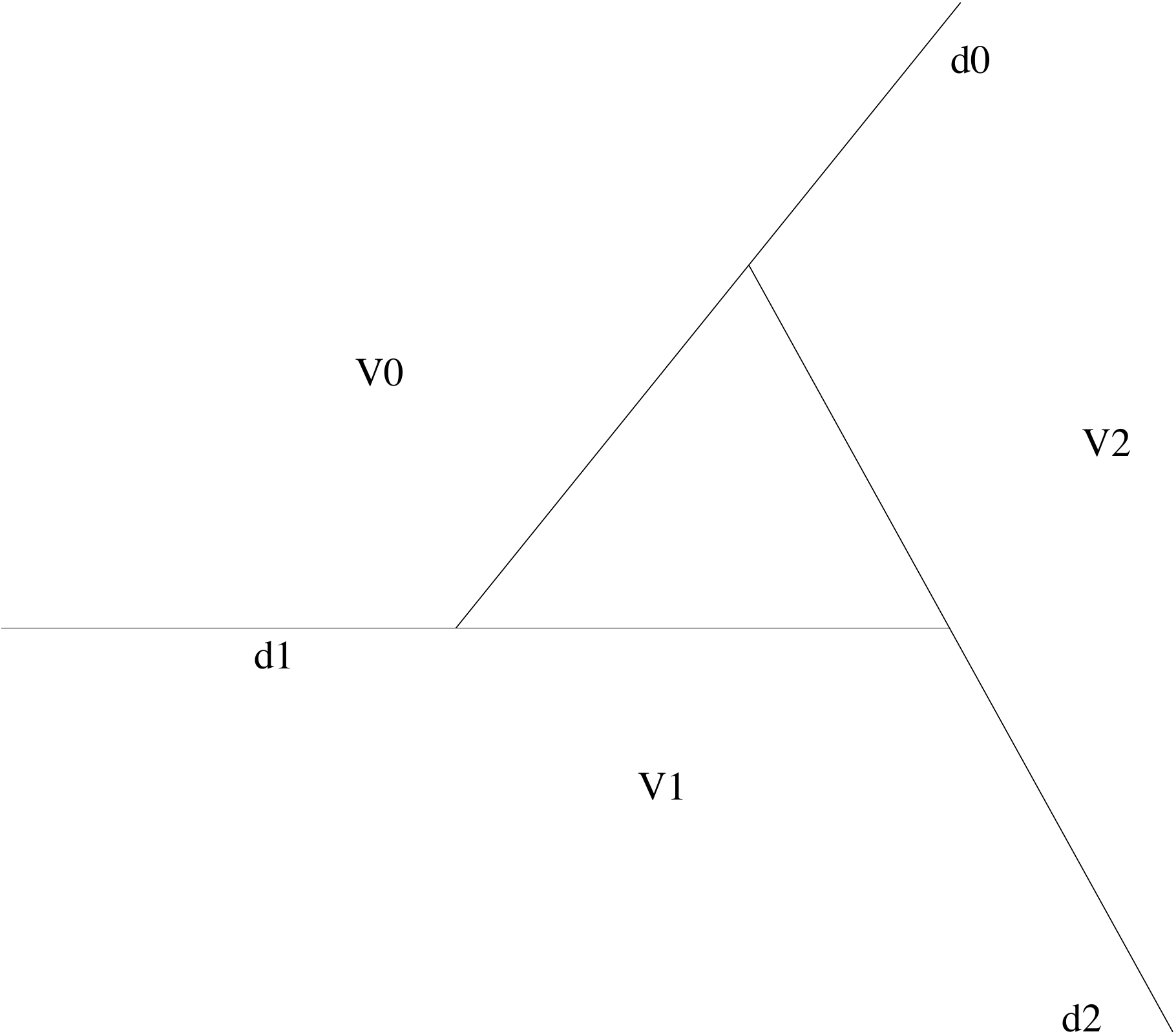}
\caption{Partition}\label{defcones}
\end{center}
\end{figure}

Now we define the coding map, we refer to Definition \ref{defdual}:
\begin{definition}\label{defcoddual}
Let $\Phi$ be the map
$$\begin{array}{ccccc}
\rho&:& \overline{P}\setminus Y &\to& \{0,\dots ,k-1\}^\mathbb{N}\\ 
&&M&\mapsto& (u_n)_{n\in\mathbb{N}}\end{array}$$
where $u_n=i$ if and only if $T^nM\in V_i.$
\end{definition}

Now consider the factors of length $n$ of $u$, and denote this set by $L_n(M)$. Remark that $T^nM\in V_i\cap V_{i+1}$ is impossible if $M\notin Y$.

\begin{definition}\label{lang-dyn}
We introduce the following set
$$L_n=\bigcup_{M\in\overline{P}\setminus Y} L_n(M).$$
This set corresponds to all the words of length $n$ which code outer billiard orbits. Then the set $L=\bigcup_nL_n$ is a language. It is the language of the outer billiard map.
We denote the complexity of $L$ by $p(n)$, see Definition \ref{defcomp}.
$$p(n)=card (L_n).$$
\end{definition}

\begin{definition}
The set $\{0,\dots ,k-1\}^\mathbb{N}$ has a natural product topology. Then $X$ denotes the closed set $X=\overline{\rho(\overline{P}\setminus Y)}$. 
\end{definition}
The link between $X$ and $L$ is the same as in Remark \ref{remcombinmot} between $X_u$ and $L_u$.

\section{Simplification of the problem }\label{secsimplif}
\subsection{First remarks}
\begin{lemma}\label{lem-isom-affin}
Let $P$ be a convex polygon, and $h$ an affine map of $\mathbb{R}^2$ preserving the orientation, then the languages of the outer billiard maps outside $P$ and $h(P)$ are the same.
\end{lemma}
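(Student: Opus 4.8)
The plan is to show that an orientation-preserving affine map $h$ conjugates the entire geometric construction---vertices, cones, singular set, and the map $T$ itself---so that it induces an exact identification of the two codings. Write $h(x)=Lx+b$ with $L$ linear and $\det L>0$. The first observation is that $h$ intertwines central symmetries: for the symmetry $s_A(M)=2A-M$ about a point $A$ one computes
\[
h(s_A(M))=L(2A-M)+b=2(LA+b)-(LM+b)=s_{h(A)}(h(M)),
\]
so that $h\circ s_A=s_{h(A)}\circ h$. In particular the symmetries $s_i$ about the vertices of $P$ are carried to the symmetries about the vertices of $h(P)$.

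Next I would check that the combinatorial data defining the coding is preserved. Since $h$ sends lines to lines and half-lines to half-lines, it maps the half-lines $d_i$ extending the sides of $P$ to the half-lines extending the sides of $h(P)$; because $\det L>0$, the map $h$ preserves the cyclic (angular) order of the vertices and of these half-lines, so choosing the corresponding labelling for $h(P)$ gives $h(d_i)=d_i'$ and hence $h(V_i)=V_i'$ for every $i$, with indices matching. Orientation preservation is used again for the tangency data: from a point $M$ the two tangent half-lines $R,R'$ and the positively oriented choice of $A^+$ are carried by $h$ to the corresponding tangent half-lines and positively oriented tangency point from $h(M)$ to $h(P)$, so $h(A^+(M))=A^+(h(M))$. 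Combining this with the intertwining of symmetries yields the conjugacy $h\circ T=T'\circ h$, where $T'$ is the outer billiard map of $h(P)$. The same line-to-line property applied to Definition \ref{defdual} gives $h(\sigma_n)=\sigma_n'$ by induction, hence $h(Y)=Y'$, so $h$ restricts to a bijection $\overline{P}\setminus Y\to\overline{h(P)}\setminus Y'$.

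Finally I would assemble these facts. Because $h$ conjugates $T$ to $T'$, sends the partition $(V_i)$ to $(V_i')$ preserving labels, and maps $Y$ to $Y'$, the coding of Definition \ref{defcoddual} satisfies $\rho_{h(P)}(h(M))=\rho_{P}(M)$ for every $M\in\overline{P}\setminus Y$. Thus $L_n(h(M))=L_n(M)$, and since $h$ is a bijection between the two complements, taking the union over all points in Definition \ref{lang-dyn} gives equality of the languages $L_n$ for $P$ and for $h(P)$, for every $n$.

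The main obstacle is not any single computation but making the equivariance statements precise: one must verify that $\det L>0$ is exactly what guarantees both that the cyclic labelling of the cones can be matched and that the \emph{right} tangency point $A^+$ is sent to the right tangency point rather than the left one. An orientation-reversing $h$ would swap the roles of $A^+$ and $A^-$ and reverse the angular order of the cones, producing the reversed language instead; isolating precisely where positivity of $\det L$ enters is the delicate point of the argument.
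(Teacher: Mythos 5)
Your proof is correct. The paper itself gives no argument here (the proof is ``left to the reader''), and your conjugacy argument --- $h\circ s_A=s_{h(A)}\circ h$, preservation of the cones, the singular set, and the positively oriented tangency point, hence $h\circ T=T'\circ h$ and equality of codings --- is exactly the standard argument the authors intend, with the role of $\det L>0$ correctly isolated.
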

\begin{proof}
The proof is left to the reader.
\end{proof}

Remark that an affine map preserves the set of lattices. Thus if $P$ is rational, then $h(P)$ is rational for each affine map $h$. Also the outer billiard map outside any triangle has the same language, so it is sufficiant to study the equilateral triangle, see Lemma \ref{lem-isom-affin}.

\subsection{A new coding for the regular polygon}\label{newcoding}
Let $P$ be a regular polygon with $k$ vertices, and $R$ be the rotation of angle $-2\pi/k$, centered at the center of the polygon. Consider one sector $V_0$ and define the map:
$$\begin{array}{cccc}
& \displaystyle\bigcup_iV_i   & \to& \mathbb{N}   \\
 & y & \mapsto & n_y 
\end{array}$$
where the integer $n_y$ is the smallest integer which maps the sector $V_i$ containing $y$ to $V_0$.
Then we define a new map
\begin{definition}\label{Tcone}
The map $\hat{T}$ is defined in $V_0$ by the formula
$$\hat{T}(x)=R^{n_{Tx}}Tx.$$ 
\end{definition}

\begin{lemma}\label{isom-newcoding}
The integer $n_{Tx}$ takes the values $1$ to $j=\lfloor \frac{k+1}{2}\rfloor$. The map 
$\hat{T}$ is a piecewise isometry on $j$ pieces.
\end{lemma}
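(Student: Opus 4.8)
The plan is to exploit the rotational symmetry of the regular polygon to reduce everything to the single cone $V_0$, and to observe that on $V_0$ the map $T$ is, piecewise, a central symmetry about a vertex. First I would place the center of $P$ at the origin, so that the rotation $R$ of angle $-2\pi/k$ is a symmetry of $P$. Consequently $R$ permutes the half-lines $d_i$ and cyclically permutes the cones, $R(V_i)=V_{i-1}$ with indices taken mod $k$. This yields the explicit description $R^n(V_i)=V_0 \iff n\equiv i \pmod k$, so that $n_y$ is just the index of the cone containing $y$; in particular $n_y\in\{0,1,\dots,k-1\}$, and $n_{Tx}\geq 1$ as soon as $Tx\notin V_0$.

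Next I would analyze $T$ on $V_0$. By Definition \ref{point} we have $Tx=s_{A^+}(x)$, where $A^+$ is the right tangency vertex; since a central symmetry is an isometry, $T$ restricted to any region of $V_0$ on which $A^+$ is constant is an isometry. These regions are cut out inside $V_0$ precisely by the lines of $\sigma_1$ from Definition \ref{defdual}: crossing the extension of a side is exactly where the right tangency point jumps from one vertex to the next. Thus the first real task is the combinatorial-geometric one of counting, as $x$ ranges over $V_0$, the number of distinct vertices that can occur as $A^+$, equivalently the number of side-lines of $\sigma_1$ meeting the interior of $V_0$, plus one.

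I claim this number is $j=\lfloor (k+1)/2\rfloor$. The reason is that from a point of a single cone the right tangency vertex ranges over a contiguous arc of vertices on the far side of $P$, and for a regular $k$-gon this arc contains exactly $\lfloor (k+1)/2\rfloor$ vertices; I would verify this by a direct angle computation comparing the directions of the two edges $d_0,d_1$ bounding $V_0$ with the directions in which each successive vertex becomes the extreme right tangency point. Labelling the resulting subregions $V_0^{(1)},\dots,V_0^{(j)}$ and the associated vertices, a parallel computation shows that the central symmetry about the $m$-th such vertex carries $V_0^{(m)}$ into the cone $V_m$; equivalently $n_{Tx}=m$ is constant on $V_0^{(m)}$ and takes precisely the values $1,\dots,j$. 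Here one also sees $n_{Tx}\neq 0$, since a central symmetry about a vertex of $P$ sends a point of $V_0$ to the opposite side of $P$, hence out of $V_0$.

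Finally, on each piece $V_0^{(m)}$ the map $\hat T=R^{\,n_{Tx}}\circ T=R^m\circ s_{A^+}$ is a composition of two isometries, hence an isometry, and the $V_0^{(m)}$ partition $V_0$ into $j$ pieces; this is exactly the assertion of the lemma. I expect the main obstacle to be the geometric count in the third paragraph: pinning down rigorously that the right tangency vertex sweeps through exactly $\lfloor (k+1)/2\rfloor$ values and that the $m$-th piece lands in $V_m$, so that the correspondence between pieces and landing cones is a bijection onto $\{V_1,\dots,V_j\}$. The parity hidden in the floor function (whether $k$ is even, so that opposite edges are parallel, or odd, so that an edge faces a vertex) is the delicate point, and I would treat the even and odd cases separately using the explicit coordinates of the vertices of the regular $k$-gon.
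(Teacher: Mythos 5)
There is a genuine gap, and it sits at the heart of your argument: you assume that the tangency vertex $A^{+}$ varies as $x$ ranges over $V_0$, and that the pieces of $\hat{T}$ are the regions of $V_0$ on which $A^{+}$ is constant, ``cut out by the lines of $\sigma_1$''. But the cones $V_i$ are, by construction, exactly the continuity domains of $T$: for every $x\in V_0$ the vertex $A^{+}$ is the apex of $V_0$, so $T|_{V_0}$ is one single central symmetry $s_A$ and your proposed partition $V_0^{(1)},\dots,V_0^{(j)}$ by values of $A^{+}$ collapses to a single piece. (The rays of $\sigma_1$ that do cross the interior of $V_0$ are discontinuity lines of $T^{-1}$, i.e.\ loci where the \emph{other} tangency point $A^{-}$ jumps; they are irrelevant for $T$.) This is precisely what the paper's proof exploits: it forms the image $V=TV_0=s_A(V_0)$, a single cone with the same apex, and counts how many of the cones $V_1,\dots,V_{k-1}$ this reflected cone meets, using the central symmetry of the regular polygon (for $k$ even, and similarly for $k$ odd) to get exactly $j=\lfloor (k+1)/2\rfloor$ of them; the pieces of $\hat{T}$ are then $V_0\cap T^{-1}(V_m)=s_A(V\cap V_m)$ for $m=1,\dots,j$.

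Your count of ``visible far vertices'' does happen to equal $j$, by a duality (a vertex $v$ occurs as the second tangency point for some $x\in V_0$ exactly when the backward cone at $v$ meets $V_0$), but it is not the quantity that determines the pieces of $\hat{T}$, and the step you yourself flag as the main obstacle --- that $n_{Tx}=m$ is constant on your $m$-th region and that the regions biject with $V_1,\dots,V_j$ --- cannot be carried out as stated, since those regions do not exist under the paper's conventions. To repair the proof, replace the analysis of $A^{+}$ by an analysis of how the single reflected cone $s_A(V_0)$ sits inside the fan $V_0,\dots,V_{k-1}$. The remainder of your outline (reduction to $V_0$ via the rotation $R$, the identification of $n_y$ with the index of the cone containing $y$, the observation that $n_{Tx}\neq 0$, and the final remark that $R^{m}\circ s_A$ is an isometry on each piece) is sound and agrees with the paper.
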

\begin{proof}
We will treat the case where $k$ is even, the other case is similar. Assume $k=2k'$, then we can assume that the regular polygon has as vertices the complex numbers $e^{i\pi n/k'}, n=0\dots k-1$ and that $V_0$ has $1$ as vertex. Consider the cone $V=TV_0$ of vertex $1$ obtained by central symmetry from the cone $V_0$. We must count the number of intersection points of this cone with the cones $V_i, i=0\dots k-1.$ The polygon is invariant by a central symmetry, this symmetry maps the cone $V_i$, to the cone $V_{k'+i}$. We deduce that if the cone $V_i$ intersects $V$, then the cone $V_{k'-i}$ cannot intersect it. Moreover it is clear that each cone $V_i, 1\leq i\leq k'$ intersects $V$, thus $n_{T_x}$ takes the values $1$ to $k'$. Now for each value of $n_{Tx}$ we obtain an isometry, thus we obtain a piecewise map defined on $j=k'$ sets.
\end{proof}

\begin{figure}
\begin{center}
\includegraphics[width=5cm]{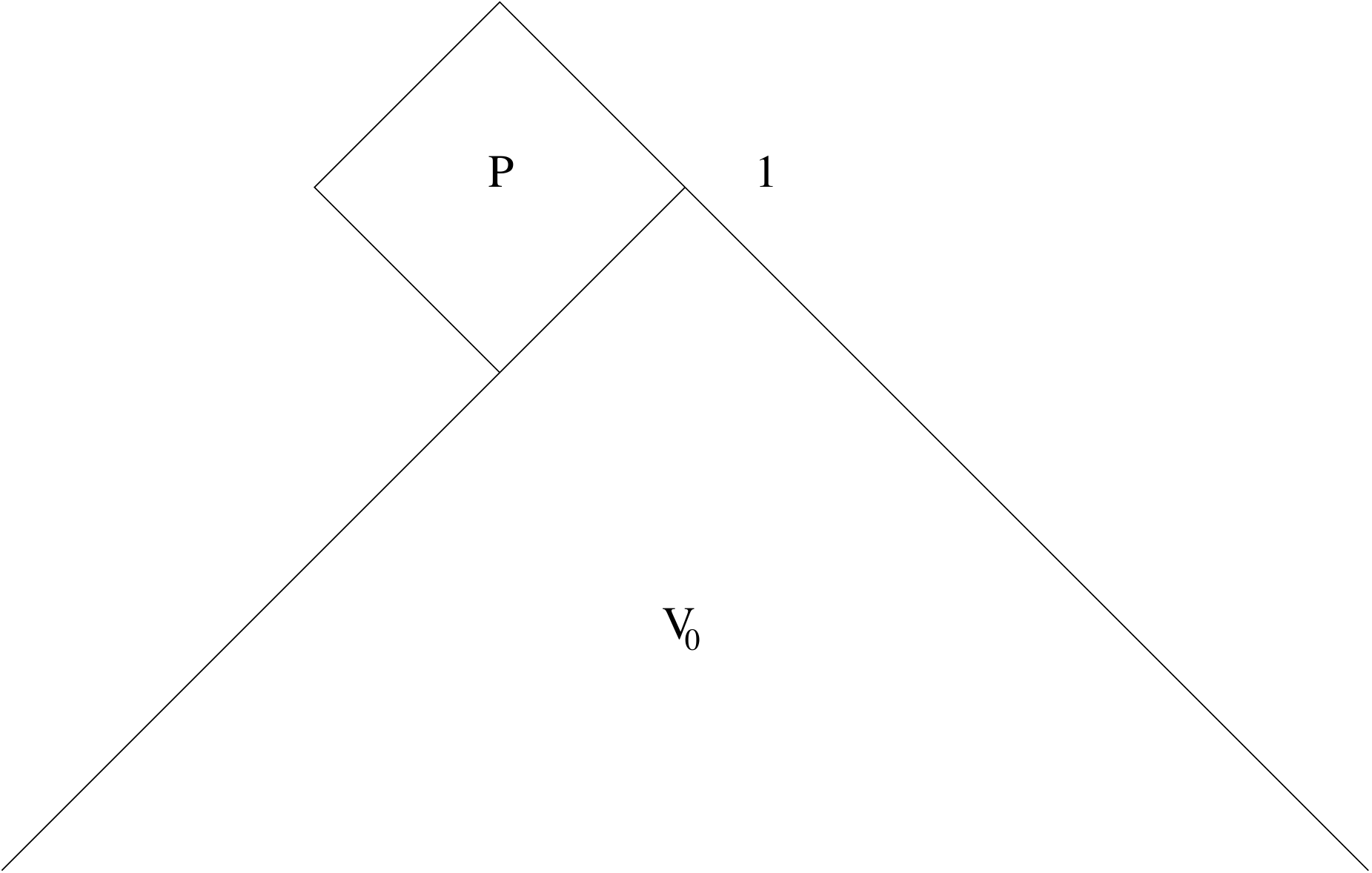}
\caption{New coding}
\end{center}
\end{figure}

\begin{definition}\label{def-lang-hat}
Let $\eta$ be the map defined as
$$\begin{array}{ccccc}
\eta & : &V_0\setminus Y  & \to & \{1,\dots, j\}^\mathbb{N} \\
& & x & \mapsto & (n_{T(\hat{T}^{i}x)})_{i\in\mathbb{N}} \\
\end{array}$$

As in Definition \ref{lang-dyn}, let $L'$ be the language of $\hat{T}$ related to the coding $\eta$.
\end{definition}

\begin{lemma}
We have 
$$\hat{T}^k(x)=R^{n_{T^kx}}T^kx.$$
\end{lemma}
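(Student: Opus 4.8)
The plan is to prove the identity $\hat{T}^k(x)=R^{n_{T^kx}}T^kx$ by induction on $k$, exploiting the way the rotation $R$ commutes with the outer billiard map $T$. The essential structural fact is that the whole configuration (the regular polygon $P$ together with its cone decomposition $V_0,\dots,V_{k-1}$ and the singular set $Y$) is invariant under the rotation $R$ of angle $-2\pi/k$. Consequently $T$ is equivariant with respect to $R$ in the sense that $T\circ R = R\circ T$ on the appropriate domain: reflecting a rotated point about the corresponding rotated vertex yields the rotation of the reflected point. I would first state and use this commutation $RT=TR$ explicitly, since every step of the induction rests on pushing powers of $R$ past $T$.

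First I would establish the base case $k=1$, which is exactly the defining formula $\hat{T}(x)=R^{n_{Tx}}Tx$ from Definition \ref{Tcone}. For the inductive step, assume $\hat{T}^k(x)=R^{n_{T^kx}}T^kx$ and compute $\hat{T}^{k+1}(x)=\hat{T}\big(\hat{T}^k(x)\big)$. Writing $y=\hat{T}^k(x)=R^{n_{T^kx}}T^kx$ and applying the definition of $\hat{T}$ gives $\hat{T}^{k+1}(x)=R^{n_{Ty}}Ty$. Now I substitute $y=R^{n_{T^kx}}T^kx$ and use the equivariance $T R^{m}=R^{m}T$ to obtain $Ty=R^{n_{T^kx}}T^{k+1}x$, so that $\hat{T}^{k+1}(x)=R^{n_{Ty}}R^{n_{T^kx}}T^{k+1}x$. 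The remaining point is to identify the total rotation exponent $n_{Ty}+n_{T^kx}$ (mod $k$) with the single exponent $n_{T^{k+1}x}$ prescribed by the new coding.

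The main obstacle, and the only nonroutine step, is precisely this bookkeeping of rotation indices. By definition $n_z$ is the smallest nonnegative integer (taken modulo $k$) sending the sector containing $z$ back to $V_0$, so $R^{n_z}z\in V_0$. I would argue that applying $R^{n_{T^kx}}$ first lands $T^kx$ in $V_0$, whence $Ty=TR^{n_{T^kx}}T^kx=R^{n_{T^kx}}T^{k+1}x$ lies in $R^{n_{T^kx}}(TV_0)$; the quantity $n_{Ty}$ then records exactly the additional rotation needed to bring $T^{k+1}x$ back to $V_0$ after the partial rotation $R^{n_{T^kx}}$ has already been applied. Since the map $z\mapsto n_z$ depends only on the sector of $z$ and satisfies $n_{R^m z}\equiv n_z-m \pmod k$, the two exponents add modulo $k$ to give $n_{T^{k+1}x}$, and because $R$ has order $k$ we may replace $R^{n_{Ty}+n_{T^kx}}$ by $R^{n_{T^{k+1}x}}$. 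This completes the induction and yields the claimed formula.
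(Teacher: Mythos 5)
Your proof is correct and follows essentially the same route as the paper: the paper's entire argument is to establish the commutation $RT=TR$ (via $Rs_B=s_AR$ for consecutive vertices) and declare the lemma a consequence, and your induction with the index bookkeeping $n_{R^mz}\equiv n_z-m \pmod k$ is exactly the detail the paper leaves implicit. Nothing is missing.
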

\begin{proof}
This lemma is a consequence of the following fact:
If $A,B$ are two consecutive vertices of the polygon for the orientation, denote by $s_A,s_B$ the central symmetries about them, then we have $Rs_B=s_AR.$ This relation implies that $R$ and $T$ commute: 
$$RT=TR.$$
\end{proof}

\begin{lemma}\label{lem-mod}
If $x\in V_0\setminus Y$, then the codings $(u_n)=\rho(x)$ and $(v_n)=\eta(x)$ are linked by $$v_n=u_{n+1}-u_n \quad mod \quad k.$$  
\end{lemma}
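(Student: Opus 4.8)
The plan is to compute the two codings explicitly and compare them term by term, exploiting the formula for the iterates of $\hat{T}$ established in the previous lemma together with the commutation $RT=TR$.

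First I would record the elementary dictionary between the rotation index $n_y$ and the original coding $\rho$. By Definition \ref{defcoddual} we have $u_n=i$ exactly when $T^n x\in V_i$, while $n_{T^n x}$ is by construction the smallest integer $m$ for which $R^m$ carries the cone $V_i$ containing $T^n x$ back to $V_0$. Since $R$ is the rotation by $-2\pi/k$ and permutes the cones $V_0,\dots,V_{k-1}$ cyclically, this forces $n_{T^n x}\equiv u_n\pmod{k}$. This identification of the rotation index with the cone index is the key observation.

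Next I would invoke the preceding lemma, which gives $\hat{T}^n x=R^{n_{T^n x}}T^n x$ for every $n$. Applying $T$ and using $RT=TR$ (hence $TR^{m}=R^{m}T$ for all $m$), I obtain
$$T(\hat{T}^n x)=T\bigl(R^{n_{T^n x}}T^n x\bigr)=R^{n_{T^n x}}T^{n+1}x.$$
Now $v_n$ is, by Definition \ref{def-lang-hat}, precisely the rotation index $n_{T(\hat{T}^n x)}$ of this point. Since $T^{n+1}x\in V_{u_{n+1}}$ and $n_{T^n x}=u_n$, applying $R^{u_n}$ rotates its cone to $V_{u_{n+1}-u_n}$ (indices taken mod $k$). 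Reading off the rotation index of a point in $V_{u_{n+1}-u_n}$ through the dictionary of the first step then yields $v_n\equiv u_{n+1}-u_n\pmod{k}$, which is the claim.

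The computation is short; the only delicate point is the bookkeeping of indices, namely checking that $R^{n_{T^n x}}$ shifts the cone index of $T^{n+1}x$ by exactly $-u_n$ rather than $+u_n$ modulo $k$. I would pin this down once and for all using the concrete normalization of Lemma \ref{isom-newcoding} (with $k=2k'$ and vertices $e^{i\pi n/k'}$), where the orientation of $R$ and the labelling of the cones are unambiguous, and then remark that the odd case is identical. Everything else is a direct substitution, so I expect no genuine obstacle beyond getting this sign convention right.
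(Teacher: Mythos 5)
Your proposal is correct and follows essentially the same route as the paper: both identify $n_{T^nx}\equiv u_n \pmod k$, use the identity $\hat{T}^n x=R^{n_{T^nx}}T^nx$ together with $RT=TR$ to see that $T(\hat{T}^nx)=R^{u_n}T^{n+1}x$ lies in $V_{u_{n+1}-u_n}$, and read off $v_n$. Your explicit attention to the sign convention for $R$ is a reasonable extra precaution but not a departure from the paper's argument.
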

\begin{proof}
Consider two consecutive elements of the sequence $u$ with values $a,b$. It means $T^nx\in V_a, T^{n+1}x\in V_b$. Now the rotation $R^{b-a}$ maps $V_b$ to $V_a$, thus we deduce that $R^{b-a}T[T^{n}x]$ belongs to $V_a$, this implies with the help of preceding Lemma that $v_n=b-a\quad mod \quad k.$
\end{proof}

The preceding Lemma implies that the study of the map $\hat{T}$ will give information for $T$. 

\begin{lemma}\label{changealphabet}
With the notations of Definitions \ref{lang-dyn}, \ref{def-lang-hat} we have
$$p_L(n)=kp_{L'}(n-1).$$
Moreover the map $\begin{array}{ccc}L &\to& L'\\ u &\mapsto & v\end{array}$ defined by $v_i=u_{i+1}-u_i$, for $0\leq i\leq n-2$ if $u=u_0u_1\dots u_{n-1}$ is a $k$-to-one map.
\end{lemma}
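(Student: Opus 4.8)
The plan is to realize the claimed map $u\mapsto v$ as the combinatorial shadow of the rotational symmetry of $P$, and to read off both assertions from a single fibre count. First I would record the two structural facts already available. By Lemma \ref{lem-mod}, for a point $x\in V_0\setminus Y$ the $\rho$-coding $(u_n)$ and the $\eta$-coding $(v_n)$ are related exactly by $v_n=u_{n+1}-u_n\bmod k$, so the difference map is literally the passage from the coding of $T$ to the coding of $\hat{T}$. Moreover, since $RT=TR$ and the rotation $R$ of angle $-2\pi/k$ permutes the sectors cyclically, with $RV_i=V_{i-1}$, applying $R^j$ to a point shifts every letter of its $\rho$-coding by the constant $-j\bmod k$: if $T^nM\in V_i$ then $T^n(R^jM)=R^j(T^nM)\in R^jV_i=V_{i-j}$.

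Next I would check that $\pi:u\mapsto v$, with $v_i=u_{i+1}-u_i\bmod k$, is well defined as a map $L_n\to L'_{n-1}$. A word $u\in L_n$ is a factor of some $\rho(M)$, hence a prefix of $\rho(N)$ for $N=T^mM$ (the $\rho$-coding intertwines $T$ with the shift); then $N\in V_{u_0}$, and rotating by the power of $R$ carrying $V_{u_0}$ to $V_0$ produces a point $N'\in V_0$ whose $\rho$-coding is $u$ with every letter shifted by the constant $-u_0$. Since the successive differences are unchanged by adding a constant, $\pi(u)$ equals the difference word of $\rho(N')$, which by Lemma \ref{lem-mod} is the $\eta$-coding of $N'\in V_0$; hence $\pi(u)\in L'_{n-1}$. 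The same computation gives surjectivity: any $v\in L'_{n-1}$ is a prefix of $\eta(x)$ for some $x\in V_0\setminus Y$, and by Lemma \ref{lem-mod} it is the image under $\pi$ of the length-$n$ prefix of $\rho(x)\in L_n$.

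I would then count fibres. The relation $v_i=u_{i+1}-u_i\bmod k$ solves forward as $u_{i+1}=u_i+v_i\bmod k$, so a preimage of $v$ is completely determined by its first letter $u_0\in\{0,\dots,k-1\}$; hence each fibre has at most $k$ elements. Conversely, if one preimage $u$ lies in $L_n$, then the rotation argument shows that the $k$ words obtained by adding each of the constants $0,1,\dots,k-1$ to all letters of $u$ (mod $k$) are the $\rho$-codings of the rotated points $R^{c}M$, hence again lie in $L_n$; they are pairwise distinct, having distinct first letters, and they all share the same difference word $v$. Thus every fibre has exactly $k$ elements, the map is $k$-to-one, and
$$p_L(n)=\operatorname{card}L_n=k\,\operatorname{card}L'_{n-1}=k\,p_{L'}(n-1).$$

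The point requiring the most care is the claim that all $k$ constant-shifts of a valid word remain in the language: this is exactly where the full rotational symmetry of the regular polygon enters, through $RT=TR$ and the cyclic action of $R$ on the sectors, and it is what forces every fibre to have size precisely $k$ rather than merely at most $k$. The remaining subtlety — that a factor of an orbit coding may be treated as a prefix by passing from $M$ to $T^mM$ — is routine once one observes that $\rho$ conjugates $T$ to the shift.
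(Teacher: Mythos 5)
Your proof is correct and follows essentially the same route as the paper's: both rest on the rotational symmetry of the regular polygon ($RT=TR$, the cyclic action of $R$ on the sectors) to show that the $k$ constant shifts of a word in $L$ are exactly the fibre over its difference word, giving the $k$-to-one property and hence $p_L(n)=kp_{L'}(n-1)$. You simply spell out the details (factor-to-prefix reduction, well-definedness, surjectivity) that the paper leaves implicit.
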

\begin{proof}
The regular polygon is invariant by the rotation $R$, thus the points $x$ and $R^ix, 0\leq i\leq k-1$ have the same coding in $L'$.  Thus the map is not injective and the pre-image of a word consists of $k$ word. By definition it is surjective. Now the formula for the complexity function is an obvious consequence of the formula $v_n=u_{n+1}-u_n$, see Lemma \ref{lem-mod}.
\end{proof}


\section{Background}\label{secback}
Few results are known about the complexity of the outer billiard map.
We recall them in this section.

First of all Buzzi proved in \cite{Buz.01} that the topological entropy is zero.
\begin{theorem}
For any piecewise isometry $T$ of $\mathbb{R}^n$ we have $h_{top}(T)=0$. As a consequence we have $\displaystyle\lim_{+\infty}\frac{\log{p(n)}}{n}=0$.
\end{theorem}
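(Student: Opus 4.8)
This statement is quoted from Buzzi, so the plan is to reconstruct his argument for piecewise isometries and then read off the consequence for $p(n)$. Fix a piecewise isometry $T$ associated with a finite partition $\mathcal{P}=\{P_1,\dots,P_m\}$ of $\mathbb{R}^n$ on whose pieces $T$ acts by isometries. Since topological entropy on a non-compact space is computed on bounded forward-invariant pieces, I would fix a bounded region $K$ and work with Bowen's $(n,\varepsilon)$-separated sets inside $K$. The whole point is to show that $h_{top}(T)$ is governed by the growth of the refined partitions $\mathcal{P}_n=\bigvee_{i=0}^{n-1}T^{-i}\mathcal{P}$, and that this growth is sub-exponential.

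First I would establish the key isometry estimate. If two points $x,y$ lie in the same atom of $\mathcal{P}_n$, then for every $0\le i\le n-1$ the points $T^ix$ and $T^iy$ lie in the same piece of $\mathcal{P}$; composing the branch isometries one obtains $d(T^ix,T^iy)=d(x,y)$ for all $0\le i\le n-1$. Consequently the dynamical (Bowen) metric $d_n(x,y)=\max_{0\le i<n}d(T^ix,T^iy)$ coincides with $d(x,y)$ on each atom. Hence an $(n,\varepsilon)$-separated set meets each atom in a set that is $\varepsilon$-separated for the original metric, whose cardinality is bounded by the $\varepsilon$-covering number $C(K,\varepsilon)$ of $K$, a constant independent of $n$. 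This yields $N(n,\varepsilon)\le C(K,\varepsilon)\,\#\{\text{atoms of }\mathcal{P}_n\text{ meeting }K\}$ and therefore $h_{top}(T)\le\limsup_n\frac1n\log\#\mathcal{P}_n$.

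It remains to prove that $\#\mathcal{P}_n$ grows sub-exponentially, and this is the real obstacle. The atoms of $\mathcal{P}_n$ are the connected components of the complement of the cumulative singular set $\bigcup_{i=0}^{n-1}T^{-i}(\partial\mathcal{P})$. Here I would use the fact that $\partial\mathcal{P}$ is a finite union of pieces of hyperplanes and that each branch of $T$ is an affine isometry, so every $T^{-i}(\partial\mathcal{P})$ belongs to a definable (semi-algebraic, indeed piecewise linear) family of uniformly bounded complexity. The crux is that, because the branches are isometries and hence do not expand, the number of cells cut out by these hypersurfaces increases only polynomially rather than exponentially in $n$; making this precise is exactly Buzzi's geometric lemma and rests on tameness (an o-minimal/semi-algebraic bound on the number of connected components of such arrangements). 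I regard this cell-counting step as the main difficulty; everything preceding it is soft.

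Finally I would deduce the statement about $p(n)$. By Lemma \ref{isom-newcoding} the induced map $\hat T$ is a piecewise isometry, and by Lemma \ref{changealphabet} the complexity $p(n)$ of the outer billiard language differs from the complexity of $\hat T$ only by the multiplicative factor $k$ and a shift in $n$, hence has the same exponential growth rate. Since the number of coding words of length $n$ is precisely the number of non-empty atoms of the corresponding refined partition, the sub-exponential bound above gives $\limsup_n\frac1n\log p(n)\le h_{top}(\hat T)=0$, that is $\lim_{n\to\infty}\frac{\log p(n)}{n}=0$, as claimed.
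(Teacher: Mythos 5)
This theorem is imported from Buzzi's paper \cite{Buz.01}; the paper under review gives no proof of it at all, so there is nothing to compare your argument to except Buzzi's original one. Your first reduction is sound and is indeed the standard soft part: on each atom of $\mathcal{P}_n=\bigvee_{i=0}^{n-1}T^{-i}\mathcal{P}$ the Bowen metric $d_n$ equals $d$, so $(n,\varepsilon)$-separated sets are controlled by $C(K,\varepsilon)\cdot\#\mathcal{P}_n$ and $h_{top}(T)\leq\limsup_n\frac1n\log\#\mathcal{P}_n$. The deduction of $\lim\frac{\log p(n)}{n}=0$ at the end is also fine, since $p(n)$ counts nonempty atoms of the refined partition for $\hat T$ (itself a piecewise isometry by Lemma \ref{isom-newcoding}) and Lemma \ref{changealphabet} only changes $p$ by a factor of $k$ and a shift.

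The genuine gap is exactly where you locate it, but the route you sketch for closing it would not work. The cumulative singular set $\bigcup_{i=0}^{n-1}T^{-i}(\partial\mathcal{P})$ is a union of hyperplane pieces whose \emph{number} can a priori grow like $m^n$ (each piece of $T^{-i}(\partial\mathcal{P})$ is cut by $\mathcal{P}$ into up to $m$ pieces before being pulled back again), and a semi-algebraic or o-minimal cell bound for an arrangement of $N$ bounded-complexity hypersurfaces in $\mathbb{R}^d$ is of order $N^d$ --- polynomial in $N$, hence still exponential in $n$. So tameness alone cannot give sub-exponential growth of $\#\mathcal{P}_n$; nor is it true in general that non-expansion of the branches makes the cell count polynomial without further argument. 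Buzzi's actual mechanism is an induction on dimension: the first-return dynamics induced on the discontinuity hypersurfaces is again governed by piecewise isometries of one dimension lower, and the isometry property is used quantitatively (not just qualitatively as ``no expansion'') to propagate the sub-exponential bound up through the dimensions. As it stands your proposal proves the reduction $h_{top}(T)\leq\limsup_n\frac1n\log\#\mathcal{P}_n$ and then defers the entire content of the theorem to an unproved claim whose suggested justification is insufficient.
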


Then Gutkin and Tabachnikov proved in \cite{Gu.Ta.06} the following result.
\begin{theorem} Let $P$ be a convex polygon 
\begin{itemize}
\item If $P$ is a regular polygon with $k$ vertices then there exist $a,b>0$ such that
$$an\leq p(n)\leq  bn^{r+2}.$$ The integer $r$ is the rank of the abelian group generated by translations in the sides of $P$. We have $r=\phi(k)$, where $\phi$ is the Euler function.

\item If $P$ is a rational polygon, then there exist $a,b>0$ such that
$$an^2\leq p(n)\leq bn^2.$$
\end{itemize}
\end{theorem}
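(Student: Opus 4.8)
The plan is first to pass to the induced map $\hat T$ on the single cone $V_0$. By Lemma \ref{isom-newcoding} this is a piecewise isometry on $j=\lfloor (k+1)/2\rfloor$ pieces, and by Lemma \ref{changealphabet} we have $p_L(n)=k\,p_{L'}(n-1)$; hence it suffices to prove the asserted polynomial bounds for the reduced complexity $p_{L'}$, the passage back to $p_L$ only affecting the multiplicative constants and a shift of the argument. For the lower bound $an\le p(n)$ in the regular case I would argue that the dynamics is not eventually periodic, so the coding $u=\rho(M)$ of a generic point is a non-ultimately-periodic infinite word, whence the Morse--Hedlund theorem gives $p(n)\ge n+1$. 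For the rational case the sharper bound $an^2\le p(n)$ requires exhibiting quadratically many distinct length-$n$ words; I would produce these from the two independent lattice directions, locating a $\sim cn\times cn$ array of atoms of the refined partition whose codes are pairwise distinct.

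\textbf{From complexity to a line arrangement.} The key identity is that $p_{L'}(n)$ equals the number of atoms (connected components) of the refined partition $\bigvee_{i=0}^{n-1}\hat T^{-i}\mathcal P_0$ of $V_0$, where $\mathcal P_0$ is the partition into the $j$ isometry pieces. The boundaries of these atoms lie in the singular set of Definition \ref{defdual} restricted to $V_0$, i.e.\ in $\sigma_n\cap V_0$. Two structural facts drive everything. \emph{Rigidity of directions:} each generator $s_i$ of $\sigma_n$ is a central symmetry, which sends every line to a parallel line; since $\sigma_1$ carries the sides of $P$, every line of $\sigma_n$ is parallel to a side of $P$, so $\sigma_n$ uses only a fixed finite set $D$ of directions with $|D|\le k$, independent of $n$. \emph{Algebraic structure of offsets:} fixing a direction $v\in D$, the signed offsets of the lines of $\sigma_n$ parallel to $v$ lie in the finitely generated abelian group $G$ generated by the projections of the side-translation vectors; for the regular $k$-gon this group has rank $r=\phi(k)$ (the cyclotomic rank of $\mathbb Z[e^{2i\pi/k}]$), while for a rational polygon $G$ is contained in a lattice and is therefore \emph{discrete} of rank at most $2$.

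\textbf{The counting.} With these two facts the upper bound becomes a problem about planar arrangements: for $m$ lines distributed among $d$ fixed directions, Euler's relation $V-E+F=2$ bounds the number of regions by $O(d^2m^2)$, so it remains to count how many lines of $\sigma_n$ actually meet the spatial window that a word of length $n$ can probe. Because one step of $\hat T$ displaces a point by a bounded multiple of its distance to $P$ and the orbits are bounded, a length-$n$ code only sees a region of radius $O(n)$; inside it a line of $\sigma_n$ has an offset produced by at most $n$ applications of the central symmetries, i.e.\ a sum of at most $n$ generators of $G$. Counting the relevant offsets is thus a lattice-point count: the number of $(a_1,\dots,a_r)\in\mathbb Z^r$ with $\sum_i|a_i|\le n$ whose image $\sum_i a_ig_i$ lands in the prescribed interval. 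In the rational case $G$ is discrete, there are only $O(n)$ lines per direction, the arrangement has $O(n)$ lines, and one gets $p(n)=O(n^2)$, matching the lower bound. In the regular case the same count must be pushed to yield the exponent $r+2$.

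\textbf{The main obstacle.} The hard part is precisely this last count. The naive estimate, treating the $O(n)$-radius window and the rank-$r$ group independently, overshoots $n^{r+2}$, because it ignores that atoms lying far out on the successive ``necklaces'' surrounding $P$ carry nearly periodic, hence largely repeated, codes. The genuine work, following Gutkin and Tabachnikov \cite{Gu.Ta.06}, is to reconcile the temporal budget (at most $n$ central symmetries, bounding the combinatorial size of the generator sums) with the spatial window (radius $O(n)$, bounding which sums are geometrically relevant), and to show that the truly distinct length-$n$ codes number $O(n^{r+2})$, ruling out the inflation caused by coincidences of offsets and by the periodic far field. Establishing this sharp reconciliation, rather than the arrangement bound or the direction rigidity, which are routine, is where the difficulty of the theorem resides.
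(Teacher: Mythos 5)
This theorem is not proved in the paper at all: it is quoted verbatim from Gutkin and Tabachnikov \cite{Gu.Ta.06} as background, so there is no internal proof to compare against. Judged on its own terms, your proposal is an outline rather than a proof, and it has two genuine gaps. First, your linear lower bound is justified by applying Morse--Hedlund to ``the coding of a generic point,'' which is ``non-ultimately-periodic.'' For the regular polygons with $k=3,4,6$ (which are rational), \emph{every} orbit is periodic, so no such point exists and the argument collapses. The bound is still true, but the correct route is through the language $L$ itself: since there are infinitely many distinct periodic necklaces, $p(n)$ is unbounded, hence (for a factorial extendable language) strictly increasing, hence $p(n)\geq n+1$. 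The quadratic lower bound for rational polygons is likewise only gestured at; exhibiting the $\sim cn\times cn$ array of atoms with pairwise distinct codes is a real argument, not a remark.

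Second, and more seriously, you explicitly do not prove the upper bound $p(n)\leq bn^{r+2}$ in the regular case, which is the substance of the theorem. As you yourself note, the naive count --- $O(n^{r})$ admissible offsets per direction feeding into an arrangement bound --- yields $O(n^{2r})$ regions, far worse than $n^{r+2}$ once $r\geq 3$ (e.g.\ $r=4$ for the pentagon). Naming the obstacle (``reconcile the temporal budget with the spatial window'') is not the same as overcoming it; the entire difficulty of \cite{Gu.Ta.06} lives in that step, and your text defers it back to that reference. So what you have written is a correct identification of the framework (reduction to $\hat T$, rigidity of the directions of $\sigma_n$, the abelian group of offsets of rank $\phi(k)$, discreteness in the rational case giving $O(n)$ lines and hence $O(n^2)$ atoms) together with an honest admission that the main estimate is missing. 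As a proof of the stated theorem it is incomplete.
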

In fact their theorem concerns the more general family of quasi-rational polygons that includes  the regular ones, but we will not prove a result about this family of polygons. Remark that for the regular pentagon, we have $r=4$.


\section{Results}\label{secresult}
We obtain two types of results: The description of the language of the dynamics, and the computation of the complexity. 
The results are obtained for two types of polygons: the triangle, the square and the regular hexagon which are rational polygons; and the regular pentagon which is a quasi-rational polygon. 
\subsection{Languages}
We characterize the languages of the outer billiard map outside regular polygons:
We will use Lemma \ref{changealphabet} and work with the language $L'$. Moreover we will give only infinite words. The language is the set of finite words which appear in  the infinite words.
\begin{definition}
Consider the three following endomorphisms of the free group $F_3$ defined on the alphabet $\{1;2;3\}$
$$\sigma: \begin{cases}1\rightarrow 1121211 \\ 2\rightarrow 111\\ 3\rightarrow 3\end{cases} \psi:\begin{cases}1\rightarrow 2232232\\ 2\rightarrow 232\\ 3\rightarrow 2^{-1} \end{cases} \xi:\begin{cases}1\rightarrow 23222\\ 2\rightarrow 2\\ 3\rightarrow 3\end{cases}$$ 

\end{definition}

\begin{theorem}\label{langages}
Let $P$ be a triangle, a square, a regular hexagon or a regular pentagon. Then the language $L'$ of the dynamics of $\hat{T}$ is the set of factors of the periodic words of the form $z^\omega$ for z$\in Z$, where
\begin{itemize}
\item If $P$ is a triangle 

$$Z=\bigcup_{n\in\mathbb{N}} \{1(21)^n, 1(21)^n1(21)^{n+1}\}.$$

\item If $P$ is the square 
$$Z=\bigcup_{n\in\mathbb{N}}\{12^n\}.$$ 

\item If $P$ is the regular hexagon
$$Z=\bigcup_{n\in\mathbb{N}}\{23^n, 23^n23^{n+1}\}\cup \{1\}.$$

\item If $P$ is the regular pentagon then $Z$ is the union of
$$\bigcup_{n\in\mathbb{N}}\{\sigma^n(1), \sigma^n(12)\},$$
$$\bigcup_{n,m\in\mathbb{N}} \{\psi^m(2), \psi^m(2223), \psi^m\circ\sigma^n(1), \psi^m\circ\sigma^n(12)\},$$
$$\bigcup_{n,m\in\mathbb{N}} \{\psi^m\circ\xi\circ\sigma^n(1), \psi^m\circ\xi\circ\sigma^n(12)\}.$$
\end{itemize}
\end{theorem}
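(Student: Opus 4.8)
The plan is to prove Theorem \ref{langages} by reducing everything to the study of the induced (first-return-type) map $\hat{T}$ on the single cone $V_0$, which by Lemma \ref{isom-newcoding} is a piecewise isometry on $j=\lfloor(k+1)/2\rfloor$ pieces, and then to identify the language $L'$ of the coding $\eta$ explicitly. For the three rational cases ($k=3,4,6$) the piecewise isometry $\hat{T}$ has only one or two pieces, so the dynamics is very rigid; I would proceed by directly analyzing the geometry of $\hat{T}$ on $V_0$, decomposing $V_0$ into the atoms on which $\hat{T}$ acts as a fixed isometry and computing the itineraries by hand. The claim is that each orbit is either periodic or lands, up to finitely many steps, in a nested family of invariant regions whose codings are exactly the periodic words $z^\omega$ listed in $Z$. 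The key is to exhibit a renormalization (self-similarity) of $\hat{T}$: a subregion $W\subset V_0$ on which the first-return map of $\hat{T}$ is affinely conjugate to $\hat{T}$ itself, so that codings pull back under a substitution. For the square this gives the single family $12^n$ (a purely rotational structure with no renormalization needed), while for the triangle and hexagon the doubling pattern $23^n23^{n+1}$ reflects one step of such an induction.

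For the pentagon ($k=5$, $j=3$) the argument is genuinely recursive and is where the substitutions $\sigma,\psi,\xi$ enter. First I would establish, in the sections on piecewise isometries and the induced map (Sections \ref{sectab}, \ref{secindupent}), that there is a self-similar (fractal) structure: a proper subregion $W_1\subset V_0$ whose return map to $V_0$ is conjugate to $\hat{T}$ with the coding transformed by $\sigma$, and a second nested region governed by $\psi$, with the transitional map $\xi$ mediating between the two regimes. Concretely the strategy is to show that any orbit of $\hat{T}$ either is eventually periodic with one of the base codings, or enters one of these self-similar subregions; in the latter case its coding is the $\sigma$- (resp. $\psi$-, $\psi\circ\xi$-) image of the coding of the induced orbit, and induction on the depth of penetration into the nested regions produces exactly the three listed families of words. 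The combinatorial bookkeeping is organized so that $\bigcup_n\{\sigma^n(1),\sigma^n(12)\}$ captures one renormalization tower, the $\psi^m$ families capture the second, and the mixed $\psi^m\circ\xi\circ\sigma^n$ family captures orbits that pass through both.

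To turn this into a proof of the precise equality of languages I would argue by double inclusion. For the inclusion $L'\supseteq\{\text{factors of }z^\omega:z\in Z\}$ I must check that each periodic word $z^\omega$ is actually realized by some point $x\in V_0\setminus Y$; this follows by exhibiting, for each $z$, a point in the appropriate atom (typically a fixed point or periodic point of the relevant induced isometry) and verifying its itinerary, using that the substitutions preserve admissibility of codings. For the reverse inclusion $L'\subseteq\{\text{factors of }z^\omega\}$ I must show no other words occur, i.e.\ that the list of invariant regions is exhaustive; this is the step where I would lean on the fact that $\hat{T}$ is a piecewise isometry with zero entropy (so by the results of Section \ref{secback} the complexity is subexponential), together with a direct verification that the complement of all the nested self-similar regions has empty interior, forcing every non-exceptional orbit into the towers.

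The main obstacle I anticipate is the pentagon: proving that the renormalization is \emph{exact} — that the first-return map on each self-similar subregion is affinely conjugate to $\hat{T}$ on the nose, with the coding transformed by precisely $\sigma$, $\psi$ or $\xi$ and no hidden additional pieces — requires careful control of the geometry of the atoms and their images under iteration. In particular one must verify that the boundaries of the atoms (pieces of the singular set $Y$) line up under renormalization so that the substitutions are well defined on the whole language and not merely on finitely many words, and that the three towers $\sigma^n$, $\psi^m$, $\psi^m\circ\xi\circ\sigma^n$ together exhaust all orbits without overlap or omission. Establishing this exactness, rather than the subsequent combinatorial translation into the families $Z$, is the crux of the theorem.
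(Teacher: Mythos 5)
For the square, hexagon and triangle your plan coincides with the paper's: one exhibits a cell on which the first return map of $\hat{T}$ is conjugate to $\hat{T}$ itself by a translation, deduces that the language is invariant under a substitution ($\alpha_{car}$, $\alpha_{hex}$), and seeds the towers with the obvious periodic words coming from the rotational pieces. (Two small corrections: the square \emph{does} use this renormalization --- the family $12^n$ is exactly $\alpha_{car}^n(1)$, it is not ``purely rotational with no renormalization needed''; and the paper handles the triangle not directly but by showing that the induced map on the cell coded by $2$ is conjugate to $\hat{T}_{hexa}$, giving a bijection of languages via $1\mapsto 2111$, $2\mapsto 211$, $3\mapsto 21$. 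A direct treatment is plausible but you would be redoing the hexagon analysis.)

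For the pentagon there are two genuine gaps. First, you describe the $\psi$- and $\xi$-structures as first-return maps on \emph{nested} self-similar subregions of $V_0$, with an induction on ``depth of penetration''. That is not how these substitutions arise, and I do not see how to realize them that way: only the $\sigma$-tower is an internal renormalization (it is Tabachnikov's contraction $D$ with $DG=G^7D$, $DG=G^3D$ on the invariant set $Z=AFC\cup HFE$, which the paper imports rather than reproves). The substitutions $\psi$ and $\xi$ come from \emph{conjugacies of the composed isometries}: a translation $t$ with $F(\psi(v))\circ t=t\circ F(v)$ and a rotation $u$ with $F(\xi(v))\circ u=u\circ F(v)$ (Lemma \ref{translation-calcul} and Corollary \ref{crucial-translations}). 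The translation $t$ is parallel to a side of the cone and carries the cell pattern of one ``ring'' of periodic islands to the next ring farther from the polygon; the $\psi^m$-families index successive translates $Z+mt$, not successively deeper subregions. Without identifying these conjugacies (which requires the explicit computation of the rotation centers $Z_1,Z_2,Z_3,Z_{223},Z_{2223223}$) your scheme has no mechanism producing $\psi$ and $\xi$. Second, your completeness argument --- zero topological entropy plus ``the complement of the nested regions has empty interior'' --- does not establish that the listed periodic words exhaust $L'$. Subexponential complexity is compatible with many other languages, and the set of aperiodic points here has nonempty closure with nonempty interior complement issues that this does not resolve. The paper's completeness argument is different and essential: the three invariant rings glue into a fundamental domain for the action of $t$ inside a strip, and outside that strip $\hat{T}$ acts as a single rotation, so every orbit enters the strip after finitely many steps; exhaustiveness then follows from the decomposition of the strip, not from an entropy bound. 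As written, the crux you correctly identify (exactness and exhaustiveness of the renormalization) is precisely the part your proposal does not supply.
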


\subsection{Complexity}
In the statement of Theorem \ref{calccomp} we give the formula for $p_{L'}$ . Lemma \ref{changealphabet} can be used to obtain the formula for the complexity of the language $L$.

\begin{theorem}\label{calccomp}

\begin{itemize}
\item For a triangle, we have
$$p_{L'}(n)=\frac{5n^2+14n+f(r)}{24},$$
where $r=n\quad mod\quad12$ and $f(r)$ is given by

\begin{tabular}{|c|c|c|c|c|c|c|c|c|c|c|c|c|c|c|}
\hline
r&0&1&2&3&4&5&6&7&8&9&10&11\\
\hline
f(r)&24&29&24&9&8&21&24&17&0&-3&8&9\\
\hline
\end{tabular}

\item For a square we obtain:
 $$p_{L'}(n)=\frac{1}{2}\lfloor\frac{(n+2)^2}{2}\rfloor.$$
 
\item For a regular hexagon:
$$p_{L'}(n)=\lfloor \frac{5n^2+16n+15}{12}\rfloor.$$

\item For a regular pentagon, let $\beta$ be the real number:
\begin{align*}
\beta=&\frac{14}{15}+
\displaystyle\sum_{n\geq 0}(\frac{7}{48.6^n.2+14+2(-1)^n}+\frac{7}{18.6^n.2+14-(-1)^n})\\
&-\displaystyle\sum_{n\geq 0}(\frac{7}{78.6^n.2+14+5(-1)^n}+\frac{7}{48.6^n.2+14-5(-1)^n}).
\end{align*}
then we have
$$p_{L'}(n)\sim \frac{\beta n^2}{2},$$
$$\beta\sim 1.06$$
\end{itemize}
\end{theorem}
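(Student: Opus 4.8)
The plan is to prove Theorem~\ref{calccomp} by leveraging Lemma~\ref{julien}, which reduces the computation of the complexity to a careful bookkeeping of the bispecial words in each language. Since $p(n)$ is determined by $s(n)=p(n+1)-p(n)$ and $s(n+1)-s(n)=b(n)$, I would first compute the initial values $p(0),p(1)$ (hence $s(0)$) directly, and then obtain $p(n)$ by the double summation
\begin{equation*}
p(n)=p(0)+\sum_{k=0}^{n-1}s(k),\qquad s(k)=s(0)+\sum_{m=0}^{k-1}b(m).
\end{equation*}
Thus everything hinges on determining $b(m)=\sum_{v\in BL_m}i(v)$ for every $m$, which in turn requires a complete inventory of the bispecial words of each length together with their multiplicities $m_l,m_r,m_b$. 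This is exactly where Theorem~\ref{langages} enters: since $L'$ is the set of factors of the explicit periodic words $z^\omega$, I would read off the left/right/bilateral extensions of a candidate bispecial word directly from the finite list of generators of $Z$, or from the substitutive structure in the pentagon case.

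For the three rational polygons (triangle, square, hexagon) the set $Z$ is described by explicit one-parameter families such as $1(21)^n$ or $23^n$, so the plan is: first classify the bispecial words, showing that they occur only among the ``boundary'' words of these families (e.g.\ the words $1(21)^n$ and their concatenations for the triangle), and compute $i(v)$ for each. I expect these $i(v)$ to be $\pm1$ (weak or strong bispecials) occurring at predictable lengths, so that $b(m)$ becomes an eventually periodic or piecewise-linear function of $m$. Summing twice then yields a quadratic in $n$ with a periodic correction term; for the triangle this produces the quotient $(5n^2+14n+f(r))/24$ with $r=n \bmod 12$, and the table for $f$ is obtained by matching the residue classes. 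For the square and hexagon the same method gives the floor expressions, the floor arising precisely from the periodic (period $2$ or $12$) fluctuation of the second difference.

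For the regular pentagon the situation is genuinely harder, and this is the main obstacle. Here $Z$ is generated by iterating the substitutions $\sigma,\psi,\xi$, so the bispecial words are organized in an infinite hierarchy: each application of $\psi$ (or $\sigma$, or $\xi$) produces bispecial words at geometrically growing lengths, roughly multiplied by the expansion factor of the substitution (the $6^n$ appearing in $\beta$ reflects the dominant eigenvalue/length-scaling of $\psi$). The plan is to set up a recursive description of the bispecial words: show that the image under a substitution of a bispecial word is again bispecial (after suitable desubstitution/recoding), track how $i(v)$ transforms, and identify the finitely many ``seed'' bispecials from which all others descend. Then $b(m)$ is supported on a sparse set of lengths $\ell_n\sim c\cdot 6^n$, and each summation over $m$ turns into a convergent series. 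The delicate points will be (i) verifying that no bispecial words are missed outside the substitutive hierarchy, and (ii) computing the asymptotic constant $\beta$ by summing the contributions of the two families of lengths produced by $\sigma$ and by $\xi\circ\sigma$; this accounts for the four-term series defining $\beta$, each term being a geometric-type series in $6^n$ with the alternating $(-1)^n$ corrections coming from the parity-dependent lengths of $\psi^m$. Finally, the asymptotic $p_{L'}(n)\sim \beta n^2/2$ follows because $\sum_{m<n}b(m)$ contributes a bounded oscillation on top of a term linear in the number of bispecials below $n$, and the quadratic growth rate is fixed by the density of bispecial lengths; I would close by estimating $\beta$ numerically to obtain $\beta\sim 1.06$.
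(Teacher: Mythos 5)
Your plan coincides with the paper's own proof: both rest on Lemma~\ref{julien}, a complete classification of the bispecial words read off from Theorem~\ref{langages} (with the substitutive hierarchy and the $6^n$ length-scaling handled via abelianization matrices in the pentagon case), the resulting periodicity of $b(n)$ for the rational polygons, and a double summation yielding the quadratic formulas and the convergent series for $\beta$. Note only that what you give is an outline; the substance of the paper's argument lies in actually carrying out the bispecial classification (e.g.\ Proposition~\ref{classfinbisp}) and the length computations, which your proposal defers.
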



\section{Induction}\label{secinduc}
In this section we will consider the map $\hat{T}$ of Definition \ref{Tcone}, and consider its first return map on different sets. Since we consider the same map for three different polygons, we will denote it by $\hat{T}_{square},\hat{T}_{tria},\hat{T}_{hexa}$ in the different cases.

\subsection{Induction and substitution}
In this subsection we recall some usual facts about the coding of a map and the coding of the map obtained by induction of the initial map.\\
Let $X$ be an open subset of $\mathbb{R}^d$, and $T$ a map from $X$ to $X$. Assume there is a partition of $X$ by the sets $U_i, i=0\dots k-1$, then consider the language $L_U$ obtained by the associated coding map as in Definition \ref{defcoddual}. Now assume that the first return map of $T$ on $U_1$ is defined and denote it by $T_{U_1}$. $$T_{U_1}(x)=T^{n_x}(x),$$ where $n_x$ is the smallest integer such that $T^nx$ belongs to $U_1$.
Consider the partition of $U_1$ in which the return time $n_{x}$ is constant on each part of the partition. This partition is associated to a coding map for the map $T_{U_1}$. We denote the associated language by $L_{U_1}$.
We will give conditions to describe the language of this first return map 
in terms of the initial language.\\

If $x$ is in one cell of the partition of $U_1$ then denote by $v_i$ the finite word  of length $n_x$which codes the orbit $T^jx, j=0\dots n_x-1$ where $n_x$ is the return time of $x$ in $U_1$.
Define the substitution $\alpha$ by 
$$\begin{cases}\alpha_{U_1}(1)=v_1\\ \vdots\\ \alpha_{U_1}(k)=v_k
\end{cases}$$

\begin{lemma}\label{leminducsub}
\begin{itemize}
\item In general we have $$\alpha_{U_1}(L_{U_1})\subset L_U.$$
\item If the maps are defined for every set $U_i$, we have $\displaystyle\bigcup_{i}\alpha_{U_i}(L_{U_i})=L_U.$ 
\item Assume there exists an homeomorphism $h$ between $X$ and $U_1$ such that for all $x\in X$ we have $h^{-1}\circ T_{U_1}\circ h(x)=T(x)$. Then we have 
$$L_{U_1}=L_U.$$
Moreover $L_U$ is stable by $\alpha_{U_1}$.
\end{itemize}
\end{lemma}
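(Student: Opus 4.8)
The plan is to reduce all three items to a single commutation identity between the two codings and then read off each statement in turn. Fix $U_1$ and recall that each cell of the return partition carries an itinerary word $v_i$ of length $n_x$ coding $x, Tx, \dots, T^{n_x-1}x$; by definition of the first return, $v_i$ begins with the letter $1$ and contains no other occurrence of $1$. The central observation I would establish first is that for every non-singular $x \in U_1$, if $c=(c_n)_n$ denotes the $T_{U_1}$-coding of $x$ and $d=(d_n)_n$ its $T$-coding, then $d=\alpha_{U_1}(c)$, where $\alpha_{U_1}$ is extended to infinite words (this is well defined since every $v_i$ is non-empty). This follows by induction on the number of returns: the block $v_{c_0}$ codes the orbit of $x$ up to its first return $T_{U_1}x=T^{n_x}x$, the block $v_{c_1}$ codes the orbit from $T^{n_x}x$ to the second return, and so on, so concatenating the blocks reproduces the $T$-coding exactly.

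Granting this identity, the first item is immediate: any $w\in L_{U_1}$ occurs in the $T_{U_1}$-coding $c$ of some $x\in U_1$, and since $\alpha_{U_1}$ is a morphism, $\alpha_{U_1}(w)$ occurs in $\alpha_{U_1}(c)=d$ and hence lies in $L_U$. For the second item the inclusion $\bigcup_i \alpha_{U_i}(L_{U_i})\subseteq L_U$ is just the same argument applied to each $U_i$. For the reverse inclusion, take $w\in L_U$ occurring at position $m$ in the $T$-coding of some $x$, put $y=T^m x$ and let $i=w_0$, so $y\in U_i$; then $w$ is a prefix of the $T$-coding $\alpha_{U_i}(c^{(y)})$ of $y$. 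The main obstacle lies exactly here: $w$ need not end at a return time, so it is a \emph{prefix} of $\alpha_{U_i}(c^{(y)})$ rather than literally an image $\alpha_{U_i}(w')$. I would resolve this by extending $w$ on the right to the next return to $U_i$, i.e. by choosing the shortest prefix $w'$ of $c^{(y)}$ with $|\alpha_{U_i}(w')|\geq |w|$; then $w$ is a factor of $\alpha_{U_i}(w')\in\alpha_{U_i}(L_{U_i})$. Since all the languages involved are factorial (closed under factors by the definition of a language), this shows that $L_U$ and $\bigcup_i \alpha_{U_i}(L_{U_i})$ have the same factors, which is the intended equality.

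For the third item, the hypothesis $h^{-1}\circ T_{U_1}\circ h=T$ makes $T_{U_1}$ on $U_1$ topologically conjugate to $T$ on $X$ via the homeomorphism $h$. Writing any $p\in U_1$ as $p=h(x)$ gives $T_{U_1}^n(p)=h(T^n x)$, so the two orbits are carried onto one another by $h$. Provided $h$ sends the partition $\{U_i\}$ of $X$ onto the return partition of $U_1$ respecting the labels — which is how $h$ is set up in the applications — the $T_{U_1}$-coding of $h(x)$ coincides letter by letter with the $T$-coding of $x$; as $h$ is a bijection of the non-singular sets, the two families of codings agree, and therefore $L_{U_1}=L_U$. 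Combining this with the first item yields $\alpha_{U_1}(L_U)=\alpha_{U_1}(L_{U_1})\subseteq L_U$, that is, $L_U$ is stable under $\alpha_{U_1}$, which is the last assertion.
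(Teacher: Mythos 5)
Your proposal is correct. The paper itself gives no argument for this lemma (it simply defers to the literature), and what you write is precisely the classical proof: the commutation identity $d=\alpha_{U_1}(c)$ between the return-map coding and the ambient coding, obtained by concatenating the return words block by block, from which all three items follow. You also correctly identify, and sensibly repair, the two places where the statement is informal: the equality in the second item can only hold after taking factorial closures, since a word of $L_U$ need not end at a return time (your device of extending to the next return and using factoriality is the right fix); and the third item implicitly requires the conjugacy $h$ to carry the partition $\{U_i\}$ of $X$ onto the return partition of $U_1$ with matching labels, which is indeed how $h$ arises in every application in the paper (it is a translation or similarity matching cells to cells). With those two points made explicit, the deduction $\alpha_{U_1}(L_U)=\alpha_{U_1}(L_{U_1})\subset L_U$ for the stability claim is exactly as intended.
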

The proof is classical and can be found in the litterature.

\subsection{Square}
\subsubsection{Computation}

\begin{lemma}
The map $\hat{T}_{square}$ is a piecewise isometry on two sets, see Figure \ref{figcarre}.
On the first set $U_1$ it is a rotation of angle $\pi/2$. On the second set $U_2$ it is a translation. The center of the rotation is on the bissector of the sector.
\end{lemma}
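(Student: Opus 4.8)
The plan is to compute $\hat T_{square}$ explicitly from its definition $\hat T(x)=R^{n_{Tx}}Tx$ (Definition \ref{Tcone}) by composing the two isometries $R$ and $T|_{V_0}$ in complex coordinates. The assertion that $\hat T_{square}$ is a piecewise isometry on exactly two sets is nothing more than Lemma \ref{isom-newcoding} specialised to $k=4$: there $j=\lfloor 5/2\rfloor=2$ and $n_{Tx}$ takes only the values $1$ and $2$, so I simply write $U_1$ for the piece where $n_{Tx}=1$ and $U_2$ for the piece where $n_{Tx}=2$. All the remaining content is to identify the two isometries and to locate the centre of the rotation.

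First I would adopt the normalisation used in the proof of Lemma \ref{isom-newcoding}: the square has vertices $1,i,-1,-i$, its centre is the origin $O$, and $V_0$ is the cone with vertex $1$. On $V_0$ the tangent vertex $A^+$ of Definition \ref{point} is constant, and a direct inspection of the outer billiard construction shows it equals the vertex $1$ itself; hence $T|_{V_0}$ is the central symmetry $s_1\colon z\mapsto 2-z$, an orientation preserving isometry, i.e. a rotation of angle $\pi$. With $R(z)=e^{-i\pi/2}z=-iz$ the definition of $\hat T$ then gives the affine maps $\hat T(z)=R\,s_1(z)=iz-2i$ on $U_1$ and $\hat T(z)=R^2 s_1(z)=z-2$ on $U_2$. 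The linear part is $e^{i\pi/2}$ in the first case and $1$ in the second, which is exactly the statement that $\hat T$ is a rotation of angle $\pi/2$ on $U_1$ and a translation on $U_2$.

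For the last assertion I would solve $\hat T(c)=c$ on $U_1$, i.e. $c=ic-2i$, which gives $c=-2i/(1-i)=1-i$. It then remains to check that this point lies on the bisector of the sector $V_0$. The cone $V_0$ is the wedge at the vertex $1$ cut out by the two edge lines through $1$ (namely $x+y=1$ and $x-y=1$) that opens downwards, so its bisector is the vertical ray issuing from $1$ in the direction $-i$; since $c-1=-i$, the centre $c$ lies on this bisector, and this finishes the proof.

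The computations are routine once the geometry is fixed, so the only point deserving care, and the real obstacle, is the geometric bookkeeping in the chosen normalisation: verifying that $V_0$ is indeed the cone with vertex $1$, that the constant tangent vertex on it equals $1$, and that its bisector points in the direction I claim. It is worth stressing that this bisector is not an axis of symmetry of the square and that $R$ is centred at $O\neq 1$, so there is no cheap symmetry argument forcing $c$ onto the bisector; the coincidence has to be read off from the explicit value $c=1-i$.
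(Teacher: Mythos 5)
Your proof is correct; the paper leaves this lemma to the reader, and your explicit computation in the normalisation of Lemma \ref{isom-newcoding} (vertices $1,i,-1,-i$, $T|_{V_0}=s_1$, $R(z)=-iz$) is exactly the intended argument. The one point you flag as delicate --- that $V_0$ is the \emph{downward}-opening wedge at the vertex $1$, so that its bisector is the ray $1-it$, $t\geq 0$ --- is indeed forced by the orientation convention for $A^{+}$ and can be cross-checked against Lemma \ref{isom-newcoding}: with the upward-opening choice the integer $n_{Tx}$ would take the values $2$ and $3$ rather than $1$ and $2$, so your identification is the consistent one and the centre $c=1-i$ does lie on the bisector as claimed.
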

\begin{proof}
The proof is left to the reader.
\end{proof}

\begin{lemma}
Consider the induction of $\hat{T}_{square}$ on the set $U_2$. The induction will be denoted $\hat{T}_{square,2}$.
This map is related to $\hat{T}_{square}$ by
$$\hat{T}_{square}\circ t_2=t_2\circ \hat{T}_{square,2}$$
where $t_2$ is the vertical translation which maps $V_0$ to $U_2$ see Figure \ref{figcarre}.
\end{lemma}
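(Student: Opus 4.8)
The plan is to verify the announced conjugacy by computing the first return map $\hat{T}_{square,2}$ explicitly and comparing it, piece by piece, with $\hat{T}_{square}$. First I would fix the coordinates used in the previous lemma, so that on $V_0$ the map $\hat{T}_{square}$ is the rotation of angle $\pi/2$ centred at the point $c$ of the bisector on the cell $U_1$, and a vertical translation, say by the vector $\tau$, on the cell $U_2$; I would also record the line $\ell$ separating $U_1$ from $U_2$. The candidate conjugating map is the vertical translation $t_2$ sending the apex of the cone $V_0$ to the apex of the subcone $U_2$. One checks immediately that $t_2$ is a bijection from $V_0$ onto $U_2$: this is exactly where the hypothesis that $V_0$ has a vertical axis is used, for only then does a vertical translation carry $V_0$ to a genuine subcone $U_2\subset V_0$. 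One also notes that $\tau$ points towards the apex, opposite to the direction of $t_2$, which is what makes a first return of $U_2$ to itself possible at all.

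Next I would compute $\hat{T}_{square,2}$ by following orbits. A point of $U_2$ is moved by the translation $\tau$ towards the apex, and two regimes occur. Either the image stays in $U_2$, in which case the return time is $1$ and the return map is the restriction of $\tau$; or the image leaves $U_2$ and enters $U_1$, where a single application of the rotation of angle $\pi/2$ sends it back into $U_2$, so that the return map is a composition of finitely many translations with exactly one rotation, hence again an isometry, in fact a rotation of angle $\pi/2$. The essential point is that these two regimes partition $U_2$ into exactly two cells and that $t_2$ carries the partition $\{U_1,U_2\}$ of $V_0$ onto this return partition: the immediate-return cell is $t_2(U_2)$ and the excursion cell is $t_2(U_1)$. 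On the first, the return map is conjugate through $t_2$ to the translation $\tau$; on the second, the excursion collapses, after conjugation, to the rotation of angle $\pi/2$ about $c$. Matching the two pieces yields $t_2^{-1}\circ\hat{T}_{square,2}\circ t_2=\hat{T}_{square}$, that is, the relation $\hat{T}_{square}\circ t_2=t_2\circ\hat{T}_{square,2}$.

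The main obstacle is the second step: one must check that the return partition of $U_2$ is precisely $t_2(U_1)\sqcup t_2(U_2)$ and that, after conjugation by $t_2$, the finite excursion through $U_1$ reconstitutes a single rotation with the correct centre $c$. This requires keeping careful track of the cutting line $\ell$ and of the number of translation steps preceding each excursion, and verifying that the rotation centre of the return map is the image under $t_2$ of $c$. The commutation $RT=TR$ established earlier, together with the fact that $\tau$ and $t_2$ are antiparallel vertical translations, keeps this bookkeeping manageable. Once the identity is established it is exactly the hypothesis of the third item of Lemma \ref{leminducsub} with $h=t_2$, so it gives $L_{U_2}=L_U$ and the self-similarity of the language that the subsequent sections exploit.
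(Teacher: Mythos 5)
Your proposal is correct in structure and follows essentially the same route as the paper's (two--sentence) proof: split $U_2$ according to the return time, identify the return partition with $\{t_2(U_1),t_2(U_2)\}$ (return times $2$ and $1$ respectively), and check branch by branch that conjugating the return map by $t_2$ reproduces the two branches of $\hat{T}_{square}$, the only nontrivial points being that the excursion through $U_1$ lasts exactly one step and that the centre of the composed rotation is $t_2(c)$. Those are indeed the right things to verify, and they do check out in coordinates.

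One geometric assertion you make is false, although it turns out not to be load--bearing. Writing $A$ for the apex of $V_0$ (a vertex of the square) and $O$ for the centre of the square, the translation $\tau=\hat{T}_{square}|_{U_2}=R^2\circ s_A$ is the translation by $2(O-A)$, i.e.\ along the \emph{bisector} of the cone (towards the apex), whereas $t_2$ must send apex to apex and therefore translates along an \emph{edge} of the cone; correspondingly $U_1=V_0\setminus U_2$ is a half--strip along that edge, not a cap of the apex cut off orthogonally to $t_2$. So $\tau$ and $t_2$ make an angle of $3\pi/4$: they are neither antiparallel nor both vertical. The facts you deduce at that point (for instance that the immediate--return cell $U_2\cap\tau^{-1}(U_2)$ is exactly $t_2(U_2)$, and that the composed rotation on $t_2(U_1)$ has centre $t_2(c)$) remain true, but they must come from the explicit equations of the cutting lines rather than from antiparallelism. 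Once that justification is replaced by the direct check, the argument is complete and feeds into the third item of Lemma~\ref{leminducsub} with $h=t_2$ exactly as you say.
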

\begin{proof}
To prove this result, we consider the image of $U_2$ by $\hat{T}_{square}$, it  splits in two parts: one which is in $U_2$ and one which comes back after one iteration. The splitting corresponds to the new partition of $U_2$.
\end{proof}

\begin{corollary}
We deduce that the language is invariant under the substitution 
$\alpha_{car}:\begin{cases}1\rightarrow 12\\ 2\rightarrow 2\end{cases}$
\end{corollary}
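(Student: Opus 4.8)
The plan is to read this corollary as a direct application of the third item of Lemma \ref{leminducsub} to the induction of $\hat{T}_{square}$ on the cell $U_2$, so that $\alpha_{car}$ is exactly the substitution $\alpha_{U_2}$ attached to that induction. First I would make the substitution explicit. By the previous lemma the image $\hat{T}_{square}(U_2)$ splits into a part that stays in $U_2$ and a part that lands in $U_1$ and returns to $U_2$ one step later; this gives precisely two return-time cells on $U_2$, of return times $1$ and $2$. The associated return words, read off the coding by the partition $\{U_1,U_2\}$, are the length-one word $2$ and the length-two word $12$. Labelling the two cells through the conjugating homeomorphism $t_2$ (the cell $t_2(U_2)$ carrying return time $1$ and $t_2(U_1)$ carrying return time $2$), I obtain $\alpha_{U_2}(2)=2$ and $\alpha_{U_2}(1)=12$, that is $\alpha_{U_2}=\alpha_{car}$.

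Next I would verify the hypothesis of the third item of Lemma \ref{leminducsub}. The preceding lemma provides the vertical translation $t_2$, a homeomorphism from $V_0$ onto $U_2$, together with the intertwining relation $\hat{T}_{square}\circ t_2=t_2\circ \hat{T}_{square,2}$; this says exactly that $t_2$ conjugates the first return map $\hat{T}_{square,2}$ on $U_2$ to the full map $\hat{T}_{square}$ on $V_0$, which is the required self-similarity with $X=V_0$ and the induction cell playing the role of $U_1$. The lemma then yields at once that $L_{U_2}=L_U=L'$ and that $L'$ is stable under $\alpha_{U_2}$. Combining this with the identification $\alpha_{U_2}=\alpha_{car}$ gives $\alpha_{car}(L')\subset L'$, which is the assertion.

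The only genuine points to get right are combinatorial bookkeeping: matching the two return-time cells of $U_2$ with the letters $1$ and $2$ through $t_2$ in the orientation fixed by the figure, and confirming that the two return words are indeed $2$ and $12$ rather than some other reading. The geometric heart of the argument, namely that $t_2$ really renormalizes the first return map on $U_2$ into $\hat{T}_{square}$ itself, is not reproved here, since it is exactly the content of the two preceding lemmas; this is why the corollary is obtained by a short deduction rather than a new computation.
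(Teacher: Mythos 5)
Your proposal is correct and follows exactly the paper's route: the paper also deduces the corollary from the third item of Lemma \ref{leminducsub}, using the conjugation $\hat{T}_{square}\circ t_2=t_2\circ \hat{T}_{square,2}$ from the preceding lemma and the two return times $1$ and $2$ to read off $\alpha_{car}$. Your version merely spells out the bookkeeping (identification of the return-time cells and return words) that the paper leaves implicit.
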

\begin{proof}
This result is an obvious consequence of Lemma \ref{leminducsub}. Indeed there are two different return times which are equal to $2$ and $1$ by preceding Lemma.  
\end{proof}
\subsubsection{Proof of Theorem \ref{langages} for the square}
Since $T_{U_1}$ is a rotation of angle $\pi/2$, there a symmetry of order four, thus $(1^4)^\omega=1^\omega$ is a periodic word for $\hat{T}_{square}$. Now by preceding Lemma the  words $\alpha_{car}^n(1)^\omega$ are words of the language for every integer $n$. Now we have $\alpha_{car}^n(1)=12^{n-1}$ and the stability of the language by $\alpha_{car}$ finishes the proof.

\begin{figure}[h]
\begin{center}
\includegraphics[width= 6cm]{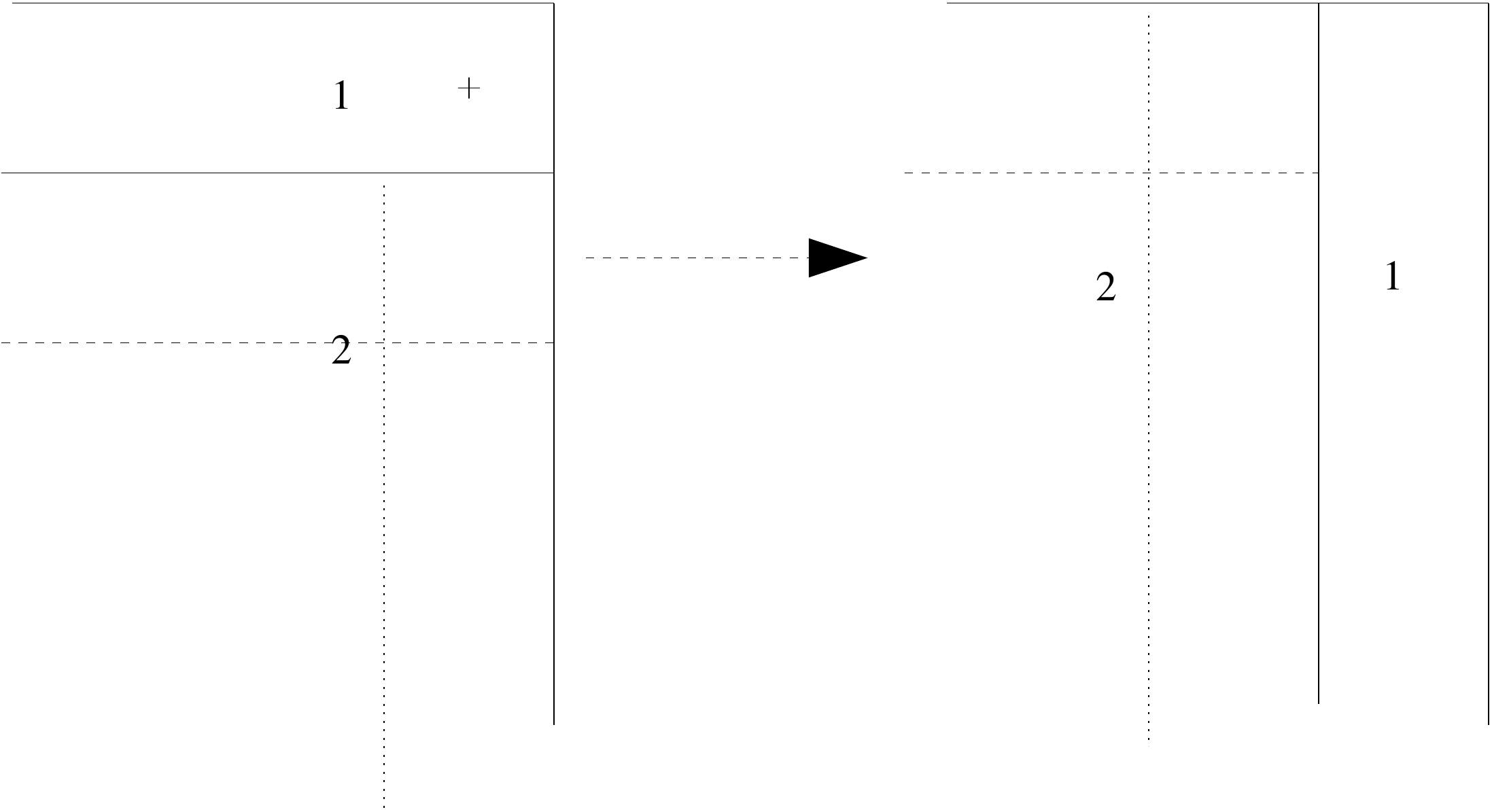}
\caption{Dynamics of $\hat{T}_{square}$}
\label{figcarre}
\end{center}
\end{figure}


\subsection{Regular hexagon}
\subsubsection{Computation}
In this part we state some results similar to the ones of preceding section.
\begin{lemma}\label{hexa-un}
The map $\hat{T}_{hexa}$ is a piecewise isometry on three sets.
On the first set it is a rotation of angle $2\pi/3$. On the second set it is a rotation of angle $\pi/3$, see Figure \ref{hatThexa} .On the third set it is a translation. The first set is invariant by $\hat{T}_{hexa}$.
\end{lemma}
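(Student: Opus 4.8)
The plan is to make everything explicit in complex coordinates, exactly as in the proof of Lemma \ref{isom-newcoding}. I would normalize the regular hexagon so that its vertices are the sixth roots of unity $e^{i\pi n/3}$, $n=0,\dots,5$, with $V_0$ the cone of apex $1$. With this normalization $R$ is multiplication by $e^{-i\pi/3}$, and on $V_0$ the outer billiard map $T$ is the central symmetry about the vertex $1$, that is $Tx=2-x$; this is precisely the cone $V=TV_0=s_1(V_0)$ that appears in the proof of Lemma \ref{isom-newcoding}. That lemma already guarantees that $\hat{T}_{hexa}$ is a piecewise isometry on $j=3$ pieces, and that on the piece where $Tx\in V_i$ one has $n_{Tx}=i$ with $i\in\{1,2,3\}$ (since $R^i$ maps $V_i$ to $V_0$). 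Hence the only genuinely new content is to identify the nature of each of the three isometries and to establish the invariance of the first one.

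The core computation is a one-liner. On the branch $n_{Tx}=n$ one has $\hat{T}(x)=R^n(2-x)=2e^{-in\pi/3}-e^{-in\pi/3}x$, so the linear part of $\hat{T}$ on this branch is multiplication by $-e^{-in\pi/3}=e^{i(\pi-n\pi/3)}$. This is a rotation by the angle $\pi-n\pi/3$, unless that angle is a multiple of $2\pi$, in which case $\hat{T}$ is a translation. Substituting $n=1,2,3$ yields the angles $2\pi/3$, $\pi/3$ and $0$ respectively; thus the first set ($Tx\in V_1$) carries a rotation of angle $2\pi/3$, the second set ($Tx\in V_2$) a rotation of angle $\pi/3$, and the third set ($Tx\in V_3$) a translation, as claimed. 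Each set is nonempty because, as established in the proof of Lemma \ref{isom-newcoding}, the cone $V=TV_0$ meets each of $V_1$, $V_2$ and $V_3$.

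It remains to prove that the first set is invariant. I would describe it explicitly as $U_1=V_0\cap s_1(V_1)$, the set of $x\in V_0$ with $2-x\in V_1$, a polygonal region read off from Figure \ref{hatThexa}. On $U_1$ the map $\hat{T}$ is the rotation $x\mapsto e^{i2\pi/3}x+2e^{-i\pi/3}$, whose center is the fixed point $c_1=2e^{-i\pi/3}/(1-e^{i2\pi/3})=1-i/\sqrt{3}$. Since this rotation has order three, I would finish by checking that it permutes the bounding half-lines, equivalently the vertices, of $U_1$, so that $\hat{T}(U_1)=U_1$. This set-level identity is the step I expect to be the main obstacle: the angular information follows instantly from the linear part, but confirming $\hat{T}(U_1)=U_1$ forces one to pin down the exact shape of the region $U_1$ and verify that the order-three rotation about $c_1$ carries it onto itself. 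This is the hexagonal analogue of the order-four symmetry invoked for the square, and it is exactly the mechanism that produces the periodic word $1^\omega$ appearing in the set $Z$ of Theorem \ref{langages}.
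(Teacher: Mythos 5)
The paper gives no proof of this lemma --- it only says the proofs ``are left to the reader, you can use the expressions of the rotations with complex numbers'' --- and your proposal carries out exactly that computation, correctly: the linear part $-e^{-in\pi/3}$ of the branch $x\mapsto R^n(2-x)$ yields the angles $2\pi/3$, $\pi/3$, $0$ for $n=1,2,3$, and the fixed point $c_1=1-i/\sqrt{3}$ is the centroid of the equilateral triangle $U_1$ (vertices $1$, $e^{-i\pi/3}$, $2-e^{i\pi/3}$), so the order-three rotation permutes its vertices and $U_1$ is invariant. Your argument is correct and is precisely the route the paper indicates.
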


\begin{figure}[h]
\begin{center}
\includegraphics[width= 6cm]{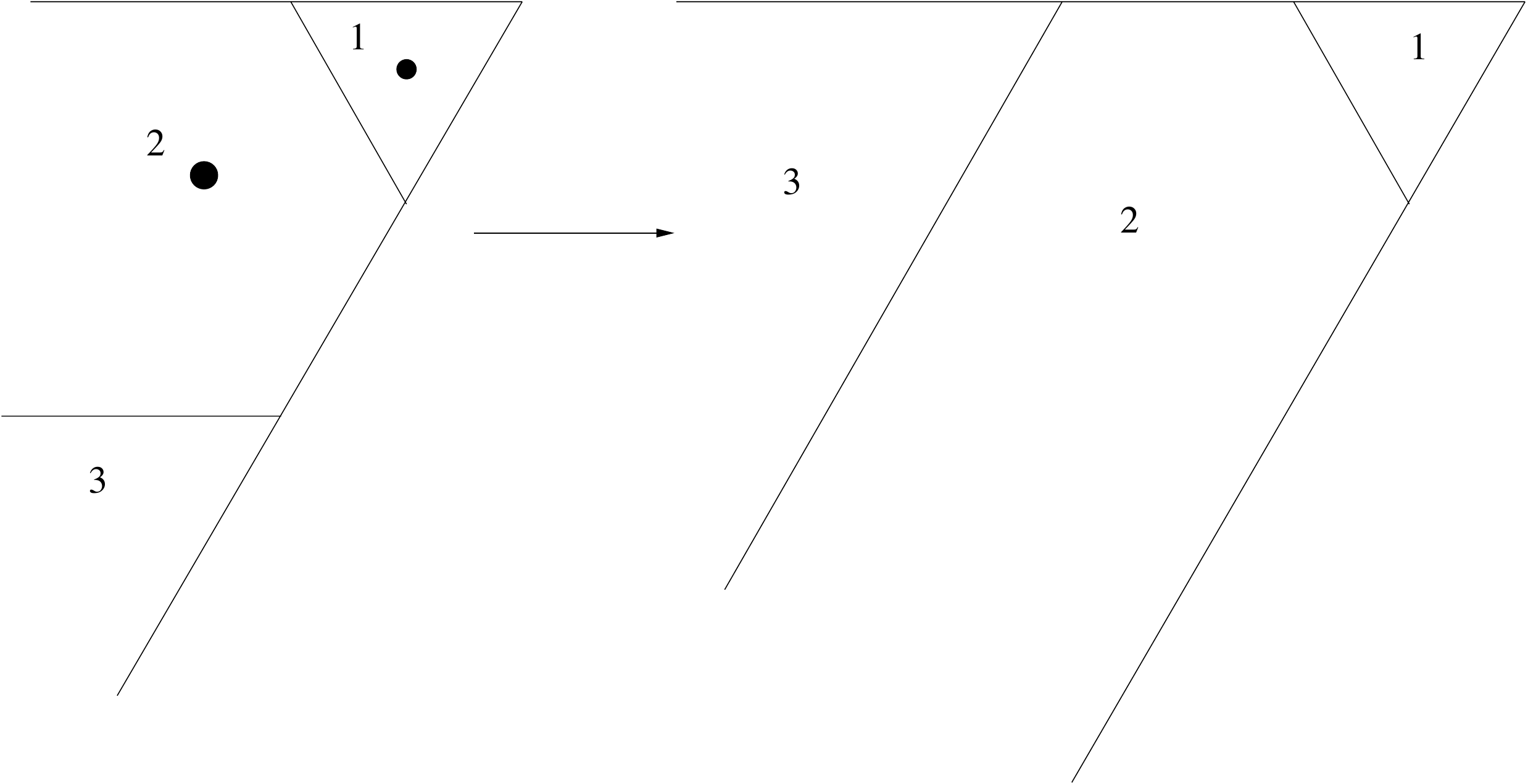}
\caption{Return map $\hat{T}_{hexa}$}\label{hatThexa}
\end{center}
\end{figure}

\begin{lemma}
Consider the first return map of $\hat{T}_{hexa}$ on the set $U_2$. The induction will be denoted $\hat{T}_{hexa,2}$.
Then there exists a translations $t'$ such that
$$\hat{T}_{hexa,2}\circ t'=t'\circ \hat{T}_{hexa}.$$
\end{lemma}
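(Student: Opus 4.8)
The plan is to establish a conjugacy between the first-return map $\hat{T}_{hexa,2}$ and the original map $\hat{T}_{hexa}$ via a translation $t'$, mirroring the structure of the proof already given for the square. First I would identify explicitly the set $U_2$ on which $\hat{T}_{hexa}$ acts as the rotation of angle $\pi/3$, as described in Lemma \ref{hexa-un}, and consider its image under $\hat{T}_{hexa}$. As in the square case, I would track how this image decomposes into a part that remains outside $U_2$ (and therefore returns after several iterations) and a part that lands directly back, which dictates the partition of $U_2$ into cells of constant return time for the induced map.

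The key geometric step is to produce the correct translation $t'$. Since $\hat{T}_{hexa}$ is built from a rotation of angle $\pi/3$ on $U_2$, the natural candidate for $t'$ is the translation carrying the base sector $V_0$ onto $U_2$, analogous to the vertical translation $t_2$ used for the square. I would verify that conjugating $\hat{T}_{hexa,2}$ by $t'$ reproduces $\hat{T}_{hexa}$ on the base domain, i.e. that the identity
$$\hat{T}_{hexa,2}\circ t'=t'\circ \hat{T}_{hexa}$$
holds pointwise. Because both maps are piecewise isometries whose pieces are determined by the cellular decomposition into cones, it suffices to check the relation separately on each cell: the translation $t'$ must intertwine the isometry defining $\hat{T}_{hexa}$ on a given piece with the isometry defining the return map on the corresponding translated piece.

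I expect the main obstacle to be the bookkeeping of return times and the matching of the partition of $U_2$ under $t'$ with the original partition of the base domain. Unlike the square, where $\hat{T}_{hexa}$ has only two relevant pieces, here the map has three pieces (two rotations of different angles and a translation), so the image of $U_2$ can intersect several cones, and one must confirm that the induced return-time cells correspond \emph{exactly}, under $t'$, to the three cells of the original partition. Verifying that the rotation of angle $\pi/3$ composed with the appropriate return dynamics yields a map isometrically conjugate (by a pure translation, with no rotational correction) to $\hat{T}_{hexa}$ is the delicate point; the self-similar structure of the regular hexagon, together with the commutation $RT=TR$ from the earlier lemma, is what makes this possible. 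Once the conjugacy is checked on each piece, the identity follows globally and the proof concludes, exactly as the analogous statement did for the square.
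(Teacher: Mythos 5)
Your proposal follows essentially the same route as the paper: the paper in fact leaves this proof to the reader (suggesting only that one "use the expressions of the rotations with complex numbers"), and its worked proof of the analogous square lemma proceeds exactly as you describe --- splitting the image of $U_2$ by return time and exhibiting the translation that conjugates the resulting partition to the original one. Your plan is sound; the one point to watch when carrying it out is that the invariant piece $U_1$ (coded by $1$) plays no role in the renormalization, so the conjugacy is really verified on the $\{2,3\}$-part of the dynamics, consistent with the fact that the substitution $\alpha_{hex}$ is defined only on the alphabet $\{2,3\}$.
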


The proofs of these results are left to the reader, you can use the expressions of the rotations with complex numbers.

\begin{corollary}\label{hexa-deux}
We deduce that the language is invariant under the substitution 
$\alpha_{hex}=\begin{cases}2\rightarrow 23\\ 3\rightarrow 3\end{cases}$
\end{corollary}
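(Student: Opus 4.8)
The plan is to read this off from Lemma~\ref{leminducsub}, exactly as in the square case. The preceding lemma provides a translation $t'$ with $\hat{T}_{hexa,2}\circ t'=t'\circ\hat{T}_{hexa}$, which I rewrite as $t'^{-1}\circ\hat{T}_{hexa,2}\circ t'=\hat{T}_{hexa}$; thus the first-return map on $U_2$ is conjugate, through the homeomorphism $t'$, to the original map $\hat{T}_{hexa}$ on the whole cone $V_0$. This is precisely the hypothesis of the third item of Lemma~\ref{leminducsub} (taking $h=t'$), whose conclusion gives both $L_{U_2}=L_U$ and the stability of $L_U$ under the associated induction substitution $\alpha_{U_2}$. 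In this way the whole statement reduces to identifying $\alpha_{U_2}$ with $\alpha_{hex}$.

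To compute $\alpha_{U_2}$ I would determine the return-time partition of $U_2$ under $\hat{T}_{hexa}$. The crucial input is the last assertion of Lemma~\ref{hexa-un}, that $U_1$ is invariant: an orbit that leaves $U_2$ and later comes back can therefore never visit $U_1$, so every excursion word is written in the letters $2$ and $3$ only. This is exactly why $\alpha_{hex}$ ignores the letter $1$, in contrast with the square, where $U_1$ is not invariant and the letter $1$ does occur. As in the square computation, I would then examine the image $\hat{T}_{hexa}(U_2)$ and show that it splits into two pieces: one already lying in $U_2$, of return time $1$, and one lying in $U_3$, which by the translation on $U_3$ comes back to $U_2$ at the next step, of return time $2$. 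Recording the orbit codes along these two cells produces the two images, the direct return and the passage $2\to 3\to 2$, which are the words over $\{2,3\}$ giving the substitution $\alpha_{hex}:2\mapsto 23,\ 3\mapsto 3$.

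The only genuine work, and the step where care is needed, is this geometric splitting of $\hat{T}_{hexa}(U_2)$: one must verify that the two pieces are exactly the two return-time cells, and that the piece sent into $U_3$ really returns after a single further step, so that the return times are precisely $1$ and $2$ and no longer excursions occur. As suggested after the preceding lemma, I would carry this out using the complex-number expressions for the rotation of angle $\pi/3$ on $U_2$ and the translation on $U_3$; once the two cells and their codings are pinned down, everything else is a direct transcription of the square argument and the corollary follows.
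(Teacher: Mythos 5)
Your overall strategy --- feed the conjugation $\hat{T}_{hexa,2}\circ t'=t'\circ\hat{T}_{hexa}$ into the third item of Lemma \ref{leminducsub} and then identify the induction substitution with $\alpha_{hex}$ --- is exactly the paper's (its proof is literally the one-line citation of Lemma \ref{leminducsub}). The problem lies in the step you yourself single out as ``the only genuine work'': the claim that the piece of $\hat{T}_{hexa}(U_2)$ falling into $U_3$ returns to $U_2$ after a single further step, so that the return times to the cell $U_2$ are exactly $1$ and $2$. This is false, and it is refuted by the very language you are trying to establish: Theorem \ref{langages} lists the periodic words $(23^n)^\omega$ for every $n$, so for every $n$ there is an orbit that leaves $U_2$, spends $n$ consecutive steps in the translation cell $U_3$, and only then comes back. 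Hence the first-return time to the cell $U_2$ is unbounded, the return words form the infinite family $23^n$, and the naive induction on the cell $U_2$ does not yield a finite substitution at all. A related symptom that should have raised a flag: every return word of an induction on the cell $U_2$ must begin with the letter $2$, so the image $\alpha_{hex}(3)=3$ can never occur as such a return word; your sentence matching ``the direct return and the passage $2\to 3\to 2$'' with the two images of $\alpha_{hex}$ silently conflates two incompatible labellings.

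What makes the argument actually work is that the induction set is not the cell $U_2$ but the translated cone $t'(V_0)$, which meets both $U_2$ and $U_3$. (For the square the translated cone happens to coincide with the cell $U_2$ --- the paper says $t_2$ maps $V_0$ onto $U_2$ --- which is why your extrapolation from the square breaks down here.) On $t'(V_0)\cap U_3$ one application of the translation lands back in $t'(V_0)$, giving the return word $3$; on $t'(V_0)\cap U_2$ the orbit makes exactly one excursion into $U_3\setminus t'(V_0)$ before re-entering, giving the return word $23$. These are the two images of $\alpha_{hex}$, and Lemma \ref{leminducsub} applied to this induction set, conjugated by $t'$, gives the stated invariance. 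So the corollary is correct and your top-level plan is the right one, but the geometric verification you propose (return times $1$ and $2$ for the first return to the cell $U_2$) cannot be carried out; you must induce on the translated cone instead.
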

\begin{proof}
The proof is a consequence of Lemma \ref{leminducsub}.
\end{proof}

\subsubsection{Proof of Theorem \ref{langages} for the hexagon}
The word $1^\omega$ is clearly a periodic word of the language, moreover the word $1^\omega$ is in the subshift by Lemma \ref{hexa-un} and no other word in $L'$ contains $1$. Now we can work on the sublanguage defined on the alphabet $\{2;3\}$. By Corollary \ref{hexa-deux} we deduce that the substitution $\alpha_{hex}$ is an invariant for the words of the language associated to $\hat{T}_{hexa}$. Since the words $2^\omega$ and $(223)^\omega$ are clearly periodic words, we deduce the result: The periodic words of the language are
$$\alpha_{hex}^n(2)^\omega, \alpha_{hex}^n(223)^\omega.$$

\subsection{Triangle}
\subsubsection{Computation}
\begin{lemma}
The map $\hat{T}_{tria}$ is a piecewise isometry on two sets, see Figure \ref{hatTtriang}.
On the first set it is a rotation of angle $\pi/3$. On the second set it is a rotation of angle $-\pi/3$. Both rotations have their centers on the bissector of the sector.
\end{lemma}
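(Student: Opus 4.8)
The plan is to combine Lemma~\ref{isom-newcoding} with the fact that, on the single cone $V_0$, the outer billiard map $T$ is one fixed central symmetry. First I would recall from Lemma~\ref{isom-newcoding} that for $k=3$ the integer $n_{Tx}$ takes exactly the two values $1$ and $2$ (here $j=\lfloor 4/2\rfloor=2$), so that $\hat T_{tria}$ is automatically a piecewise isometry on the two pieces $\{n_{Tx}=1\}$ and $\{n_{Tx}=2\}$ of $V_0$. On $V_0$ the pivot vertex $A^+$ is constant; call it $A$. Thus $T$ restricted to $V_0$ equals the central symmetry $s_A$, which is a rotation of angle $\pi$, and by Definition~\ref{Tcone} we have $\hat T_{tria}=R^{\,n_{Tx}}\circ s_A$, where $R$ is the rotation of angle $-2\pi/3$ about the centre $O$ of the triangle.

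Next I would compute the angles. A composition of two rotations is again a rotation whose angle is the sum of the two angles, provided this sum is not a multiple of $2\pi$. Here $s_A$ contributes $\pi$ and $R^{n}$ contributes $-2\pi n/3$, so on the piece $\{n_{Tx}=1\}$ the map $\hat T_{tria}$ is a rotation of angle $\pi-\tfrac{2\pi}{3}=\tfrac{\pi}{3}$, and on the piece $\{n_{Tx}=2\}$ it is a rotation of angle $\pi-\tfrac{4\pi}{3}=-\tfrac{\pi}{3}$. This already yields the piecewise isometry statement with the announced angles, the first set carrying $+\pi/3$ and the second $-\pi/3$.

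It remains to locate the two centres, which I would do with complex numbers, as suggested for the hexagon. Put $O$ at the origin, write $\rho=e^{-2\pi i/3}$ for $R$ and $s_A(z)=2A-z$, so that $\hat T_{tria}(z)=\rho^{n}(2A-z)$ on the piece with return value $n$. The centre is then the unique fixed point $c_n=\dfrac{2\rho^{n}A}{1+\rho^{n}}$; substituting the value of $A$ gives, in these coordinates, $c_1=2e^{i\pi/3}$ and $c_2=-2$. One then checks directly that $c_1-A$ and $c_2-A$ are real multiples of the direction of the angle bisector of the cone $V_0$, i.e. that both centres lie on the bisecting line.

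The main obstacle is precisely this last point, and it is more delicate than it looks. One might hope to read it off from the reflection $\tau$ across the bisector of $V_0$, but $\tau$ is \emph{not} a symmetry of the equilateral triangle (the bisector of $V_0$ does not pass through $O$), so $\tau R\tau$ is a rotation about $\tau(O)\neq O$ and the naive conjugation $\tau\hat T_{tria}\tau=\hat T_{tria}^{-1}$ fails. Hence the alignment of both centres on the bisector has to be obtained from the explicit fixed point computation above rather than from a symmetry. A secondary point to pin down is which of the two pieces carries which rotation; this is settled by following the image $T(V_0)=s_A(V_0)$ and recording, exactly as in the proof of Lemma~\ref{isom-newcoding}, which of the cones $V_1,V_2$ it meets.
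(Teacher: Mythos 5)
Your proof is correct: writing $\hat T_{tria}=R^{\,n_{Tx}}\circ s_A$ on $V_0$ gives the two pieces with rotation angles $\pi-\tfrac{2\pi n}{3}=\pm\tfrac{\pi}{3}$, and the fixed points $c_n=\frac{2\rho^nA}{1+\rho^n}$ satisfy $c_n-A=\pm i\sqrt{3}\,A$, so both centres lie on the line through $A$ perpendicular to $OA$, which is precisely the bisector of the cone $V_0$ (your warning that this bisector does not pass through the centre of the triangle, so no symmetry shortcut is available, is well taken). The paper leaves this proof to the reader, and your explicit complex-number computation is exactly the style of argument the authors carry out for the pentagon in Lemma~\ref{lem-rot-calcul}, so there is nothing to add.
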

\begin{proof}
The proof is left to the reader.
\end{proof}

\begin{figure}[ht]
\begin{center}
\includegraphics[width= 6cm]{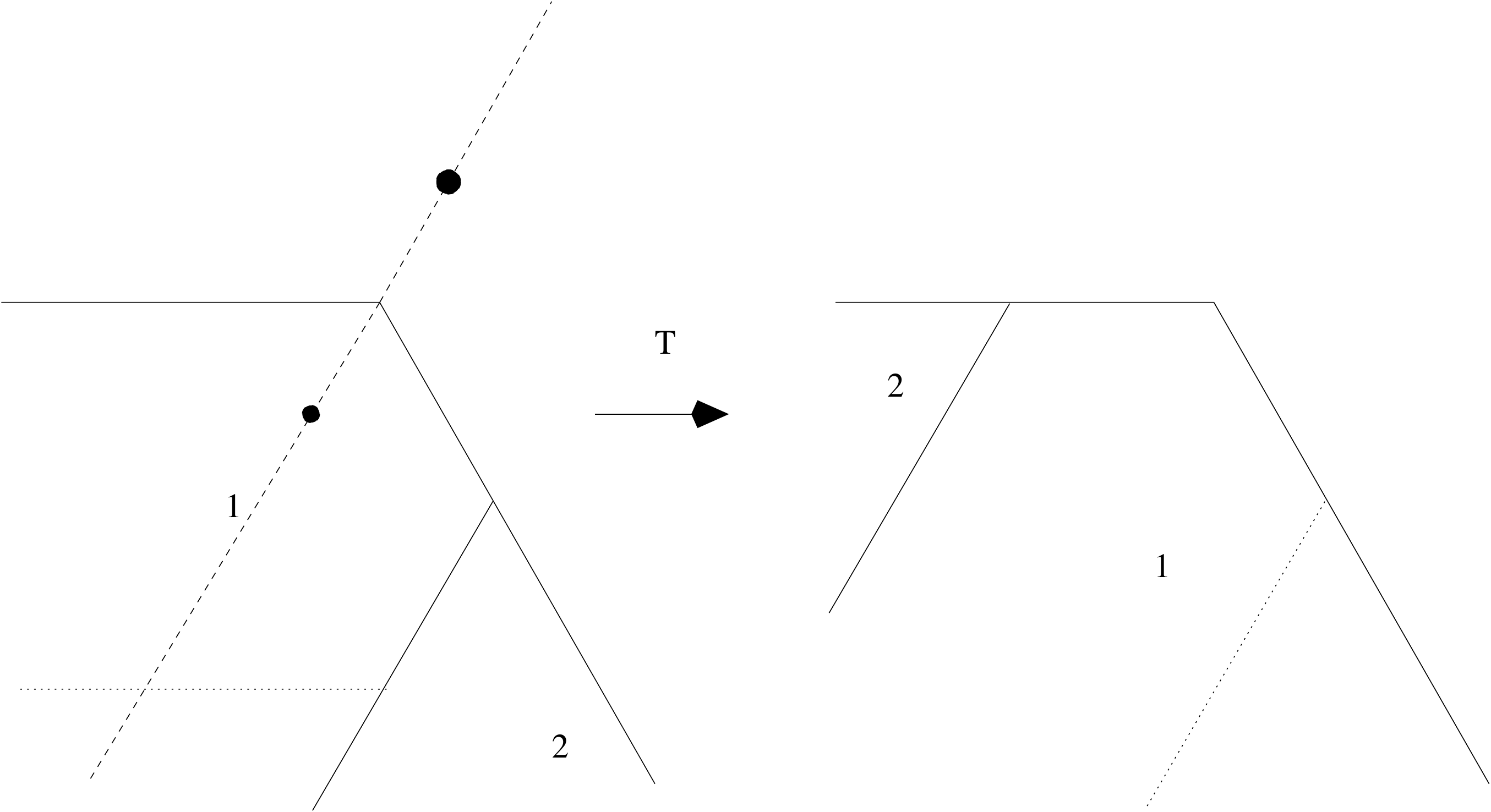}
\caption{Dynamics of $\hat{T}_{tria}$}\label{hatTtriang}
\end{center}
\end{figure}
\begin{lemma}
The induction map of $\hat{T}_{tria}$ on the set coded by $2$ is a piecewise isometry denoted by $\hat{T}_{tria,2}$ such that there exists a translation $u$
$$\hat{T}_{hexa}\circ u=u\circ \hat{T}_{tria;2}.$$ 
\end{lemma}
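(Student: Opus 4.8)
The plan is to follow the same explicit complex-number computation that was used (and left to the reader) for the square and the hexagon, and to show that the first-return partition of the region coded by $2$ reproduces exactly the three-piece structure of $\hat{T}_{hexa}$ described in Lemma \ref{hexa-un}. First I would fix complex coordinates so that the sector $V_0$ and the two pieces of $\hat{T}_{tria}$ are given explicitly: by the preceding lemma, on the piece coded $1$ the map is the rotation $r_1$ of angle $\pi/3$ about a center $c_1$ lying on the bisector, and on the piece coded $2$ it is the rotation $r_2$ of angle $-\pi/3$ about a center $c_2$ on the bisector. I denote by $W$ the region coded by $2$, which is the domain of the induced map $\hat{T}_{tria,2}$.

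Next I would compute the first-return map to $W$ by tracking orbits. A point of $W$ is first sent by $r_2$; either its image already lies in $W$, or it lies in the piece coded $1$, where successive applications of the $\pi/3$-rotation $r_1$ advance it until it re-enters $W$. Since there are only two pieces, every return word has the form $r_1^{k}\circ r_2$, and this decomposes $W$ into finitely many cells on which the return time $k+1$ is constant. The arithmetic of angles is the crux: the net rotation realized on such a cell is an isometry of angle $(k-1)\tfrac{\pi}{3}$, so the possible behaviours are rotations of angle $0,\pm\tfrac{\pi}{3},\tfrac{2\pi}{3}$, the vanishing-angle case being a pure translation. One checks geometrically that the cells which are actually nonempty realize precisely the angles $\tfrac{2\pi}{3}$, $\tfrac{\pi}{3}$ and $0$, that is, two rotations and one translation, with the $\tfrac{2\pi}{3}$-cell invariant. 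This is exactly the triple of behaviours of $\hat{T}_{hexa}$ in Lemma \ref{hexa-un}.

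Finally I would exhibit the conjugating translation $u$ explicitly as the translation carrying the domain of $\hat{T}_{hexa}$ onto $W$, and verify that it sends each rotation center and each cell of $\hat{T}_{hexa}$ to the corresponding center and cell of $\hat{T}_{tria,2}$. Because a translation commutes with the operation of taking the rotation angle, once the centers, the angles and the cells are matched under $u$ the identity $\hat{T}_{hexa}\circ u=u\circ\hat{T}_{tria,2}$ follows piece by piece, exactly as in the analogous self-similarity statements proved for the square and the hexagon. The combinatorial payoff, via Lemma \ref{leminducsub}, is then that this conjugacy transports the hexagon language into the triangle language through the associated return-word substitution.

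The step I expect to be the main obstacle is the bookkeeping of the return partition: determining precisely which sub-regions of $W$ correspond to each return time, equivalently which of the a priori possible angles $(k-1)\tfrac{\pi}{3}$ are geometrically realized, and then checking that the resulting rotation centers and translation vector coincide, after applying $u$, with the hexagon data. Once the boundaries of these cells are pinned down from the explicit positions of $c_1$ and $c_2$, the verification of the conjugacy reduces to a routine identity between the complex expressions of the rotations and is no harder than the square and hexagon computations.
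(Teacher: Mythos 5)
Your proposal is correct and is essentially the computation the paper leaves to the reader: the return words to the set coded by $2$ are $21,211,2111$ (consistent with Corollary \ref{hexa-triang}), giving net rotation angles $0,\pi/3,2\pi/3$ that match the three pieces of $\hat{T}_{hexa}$ in Lemma \ref{hexa-un}, and the conjugating translation then follows piece by piece as you describe. The only nit is the direction of $u$: as the identity $\hat{T}_{hexa}\circ u=u\circ\hat{T}_{tria,2}$ is written, $u$ must carry the domain of $\hat{T}_{tria,2}$ onto that of $\hat{T}_{hexa}$, i.e.\ the inverse of the translation you exhibit.
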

\begin{proof}
The proof is left to the reader.
\end{proof}

\begin{corollary}\label{hexa-triang}
 There is a bijective map between the language of the outer billiard map out of the regular hexagon and out of the triangle which is 
  given by
$$\begin{array}{ccccc}
& &\displaystyle L'_{hexa}\  & \to L'_{tria} &  \\
& & \begin{cases}1\\2\\3\\ \end{cases} & \mapsto & \begin{cases}2111\\ 211\\ 21\end{cases} \\
\end{array}$$
\end{corollary}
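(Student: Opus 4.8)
The plan is to establish the bijection of Corollary \ref{hexa-triang} by using the induction structure already set up in the two preceding lemmas, which relate $\hat{T}_{tria}$ and $\hat{T}_{hexa}$ through a first return map and a conjugating translation $u$. The key observation is that the final lemma states $\hat{T}_{hexa}\circ u=u\circ\hat{T}_{tria,2}$, so the translation $u$ conjugates the induced triangle map $\hat{T}_{tria,2}$ to the hexagon map $\hat{T}_{hexa}$. By the third item of Lemma \ref{leminducsub}, a conjugacy of this type forces the language of the induced map to coincide with the language obtained by induction; more precisely, the induction of $\hat{T}_{tria}$ on the set coded by $2$ has a language isomorphic (via the conjugating translation) to the full language $L'_{hexa}$.

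First I would identify, for the induction of $\hat{T}_{tria}$ on the region coded by the letter $2$, the finite words $v_i$ that code the excursions of the original triangle orbit between successive returns to that region. Concretely, I would read off from the dynamics of $\hat{T}_{tria}$ (the piecewise isometry on two sets described in the lemma, with rotations of angle $\pm\pi/3$) the return-time words, and I expect these to be exactly $21$, $211$, and $2111$, matching the three letters $3,2,1$ of the hexagon alphabet. This is precisely the content of building the induction substitution $\alpha$ as in the setup preceding Lemma \ref{leminducsub}: each letter of $L'_{hexa}$ is sent to the word coding one return excursion of $\hat{T}_{tria}$. Combining the substitution $\alpha_{U_2}$ with the conjugacy from the lemma, Lemma \ref{leminducsub} then yields that the map
$$1\mapsto 2111,\quad 2\mapsto 211,\quad 3\mapsto 21$$
sends $L'_{hexa}$ onto $L'_{tria}$.

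It then remains to check that this map is a bijection between the two languages rather than merely a surjection or an embedding. Surjectivity and the equality of languages follow from the conjugacy clause of Lemma \ref{leminducsub} applied to the induced map, since every triangle orbit eventually enters the region coded by $2$ (so every word of $L'_{tria}$ arises as a factor of some image word), while injectivity follows from the fact that the three coding words $2111$, $211$, $21$ are mutually distinguishable by their lengths and hence the substitution is a comma-free-like encoding: the position of each letter $2$ in an image word marks the start of a block, so a word in $L'_{tria}$ can be uniquely parsed back into a word of $L'_{hexa}$. I would make this parsing explicit by noting that in any orbit the symbol $2$ reappears exactly at each return time and the number of intervening $1$'s determines the preimage letter uniquely.

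The main obstacle I anticipate is the explicit verification that the return-time excursions produce exactly the words $21$, $211$, $2111$ and no others, i.e.\ that the induced partition of the region coded by $2$ has precisely three cells with these return times. This requires tracking the piecewise rotations of $\hat{T}_{tria}$ through one or several steps and identifying which points return after two, three, or four iterations; the geometry of the two rotation cells (with centers on the bisector) must be used to rule out longer excursions and to confirm that the three cases exhaust all return behaviors. Once this combinatorial-geometric computation is pinned down, the bijection and the identity of languages are immediate from Lemma \ref{leminducsub} and the conjugacy supplied by the preceding lemma.
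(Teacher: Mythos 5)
Your proposal follows essentially the same route as the paper: identify the return words of the induction of $\hat{T}_{tria}$ on the cell coded by $2$ as $21$, $211$, $2111$, use the conjugating translation from the preceding lemma, and invoke Lemma \ref{leminducsub} to transport the language. The paper's own proof is just a terser version of this (it states the return words without the geometric verification you flag as the main obstacle), so your argument is correct and, if anything, more explicit about injectivity and about what still needs to be checked.
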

\begin{proof}
The returns words for $2$ in the induction are the followings
$$21;211;2111$$ then if the outer billiard outside the regular hexagon is coded by $a,b,c$ we deduce that $a=2111,b=211,c=21$. This means that there is a bijection between the two languages by  Lemma \ref{leminducsub}.
\end{proof}
 
\begin{corollary}
We use the result proved in the preceding section, and we deduce
that the language is invariant by the substitution
$\alpha_{tria}:\begin{cases}1\rightarrow 121\\ 2\rightarrow 1^{-1}\end{cases}$.
\end{corollary}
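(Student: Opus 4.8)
The plan is to transport the $\alpha_{hex}$-invariance of the hexagon language across the bijection of Corollary \ref{hexa-triang}. Write $\kappa$ for the substitution of that corollary, given by $\kappa(1)=2111$, $\kappa(2)=211$, $\kappa(3)=21$, which is a bijection between $L'_{hexa}$ and $L'_{tria}$, so that (after the boundary/factor bookkeeping of Lemma \ref{leminducsub}) $L'_{tria}=\kappa(L'_{hexa})$. By Corollary \ref{hexa-deux} the hexagon language is invariant under $\alpha_{hex}$. I would therefore define $\alpha_{tria}$ to be the conjugate of $\alpha_{hex}$ by $\kappa$, that is, the endomorphism determined by
\[
\alpha_{tria}\circ\kappa=\kappa\circ\alpha_{hex}.
\]
Once such an $\alpha_{tria}$ is available, the invariance is immediate:
\[
\alpha_{tria}(L'_{tria})=\alpha_{tria}(\kappa(L'_{hexa}))=\kappa(\alpha_{hex}(L'_{hexa}))\subseteq\kappa(L'_{hexa})=L'_{tria}.
\]

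It then remains to solve the conjugacy relation letter by letter and check that it produces exactly the stated endomorphism. Evaluating the identity on the hexagon letter $3$ gives $\alpha_{tria}(21)=\kappa(3)=21$, and on the letter $2$ it gives $\alpha_{tria}(211)=\kappa(23)=211\,21=21121$. Hence $\alpha_{tria}(2)\alpha_{tria}(1)=21$ and $\alpha_{tria}(2)\alpha_{tria}(1)\alpha_{tria}(1)=21121$; comparing the two forces $\alpha_{tria}(1)=121$, and then $\alpha_{tria}(2)=21\,(121)^{-1}=1^{-1}$. This is precisely the map in the statement, and it also explains why it must be read in the free group $F_2$ on $\{1,2\}$ rather than in the free monoid: the value $\alpha_{tria}(2)=1^{-1}$ is forced by the relations, not a choice.

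The point that needs real care — and which I expect to be the only genuine obstacle — is to justify that applying the free-group endomorphism $\alpha_{tria}$ to words of $L'_{tria}$ yields honest positive words of the language despite the negative letter $1^{-1}$. This is guaranteed exactly by the conjugacy identity: every word of $L'_{tria}$ is a factor of some $\kappa(w)$ with $w\in L'_{hexa}$, hence a concatenation of the return blocks $21,211,2111$, and on such concatenations the reductions $1^{-1}1$ always cancel, so that $\alpha_{tria}(\kappa(w))=\kappa(\alpha_{hex}(w))$ is manifestly a positive word. I would verify this on each block (the computations above for $211$ and $21$, together with $\alpha_{tria}(2111)=21121121$), and separately check compatibility on the isolated hexagon word $1^\omega$, whose image $\kappa(1^\omega)=(2111)^\omega=(1121)^\omega$ is sent by $\alpha_{tria}$ to $\bigl(1(21)1(21)^2\bigr)^\omega$, again a word of $L'_{tria}$. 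With the factor bookkeeping for $\kappa$ handled as in Lemma \ref{leminducsub}, the inclusion $\alpha_{tria}(L'_{tria})\subseteq L'_{tria}$ follows, which is the asserted invariance.
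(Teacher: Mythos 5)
Your proposal is correct and follows essentially the same route as the paper: the paper's proof likewise transports the invariance from Corollary \ref{hexa-deux} through the bijection of Corollary \ref{hexa-triang} and then rewrites the substitution in the triangle alphabet, which is exactly your conjugacy relation $\alpha_{tria}\circ\kappa=\kappa\circ\alpha_{hex}$. You simply carry out in detail the letter-by-letter computation (and the check that the formal inverse $1^{-1}$ always cancels on actual words of $L'_{tria}$) that the paper leaves implicit.
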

\begin{proof}
There is a bijection between the two languages, then we use Lemma \ref{hexa-deux} which gives the invariance of the language by a substitution. It remains to write this susbtitution in the new alphabet.
\end{proof}

\subsubsection{Proof of Theorem \ref{langages} for the triangle}
 The proof is a consequence of the preceding corollary.
 
With the preceding result we deduce that the language is given by 
$$\bigcup_{n\in\mathbb{N}}\alpha_{tria}^n(1)^\omega\cup \alpha_{tria}^n(1121)^\omega.$$

\begin{figure}[h]
\begin{center}
\includegraphics[width= 6cm]{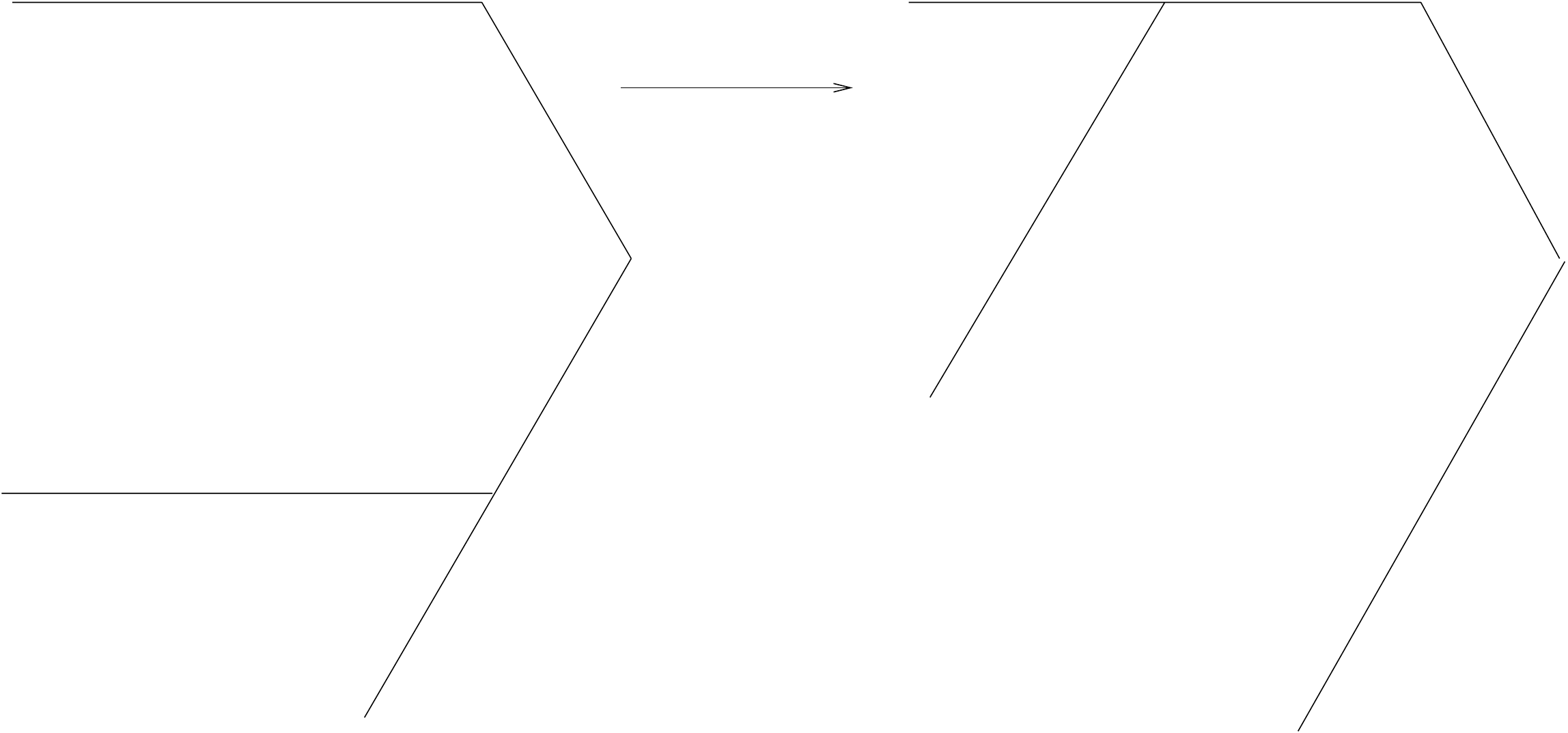}
\caption{Induction map for the triangle}
\end{center}
\end{figure}


\section{Computation of the complexity}\label{seccalcomp}
In this section we use Theorem \ref{langages} to compute the bispecial words of the different languages. It will allow us to obtain the complexity function in the different cases.

\subsection{Square}
\subsubsection{Bispecial words}
\begin{lemma}
If $w$ is a bispecial word, then we have $w=s\alpha_{car}(v)p$ with $s=2, p=\varepsilon$. Moreover $v$ is a bispecial word with the same extensions as $w$, 
\end{lemma}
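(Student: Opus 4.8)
The plan is to analyze how bispecial words of the square's language $L'$ arise from the substitution $\alpha_{car}:1\mapsto 12,\ 2\mapsto 2$, which by the preceding corollary leaves $L'$ invariant. Recall from Theorem \ref{langages} that $L'$ for the square consists of the factors of the words $(12^n)^\omega$, so the alphabet is $\{1,2\}$ and every occurrence of the letter $1$ is followed by a block of $2$'s. I would first record the structure of these words under $\alpha_{car}$: since $\alpha_{car}(1)=12$ and $\alpha_{car}(2)=2$, the letter $1$ always appears immediately preceded by a $2$ (except possibly at the very start), and $1$ is never a suffix of an image. This is the combinatorial fact that forces the desingularization $s=2$, $p=\varepsilon$ in the statement.

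The key steps, in order, are as follows. First I would show that a bispecial word $w$ of length $\geq 2$ must contain the letter $1$: a word consisting only of $2$'s is not left special because in $L'$ it can be preceded only by $2$ (or by $1$, but then its extensions collapse), so bispeciality requires at least one $1$, and then $w$ begins and ends in a way dictated by the block structure. Second, using that every $1$ in any word of $L'$ is the image-initial letter of $\alpha_{car}$ and is preceded by a $2$, I would prove that $w$ admits a factorization $w=2\,\alpha_{car}(v)$ for a unique word $v$: one reads $w$ from the left, and because the only way to produce the pattern "$2$ then $1$" is as $\alpha_{car}(\cdots 1)$ respectively a trailing letter, the word $w$ minus its leading $2$ is a concatenation of blocks $12=\alpha_{car}(1)$ and $2=\alpha_{car}(2)$, which is exactly $\alpha_{car}(v)$. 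The fact that $p=\varepsilon$ follows because $\alpha_{car}(v)$ cannot end with the letter $1$ (no image of a letter ends in $1$), so no nonempty suffix need be split off. Third, I would transfer the extension data: left extensions of $w$ correspond bijectively to left extensions of $v$ (prepending a letter $a$ to $v$ corresponds to prepending $\alpha_{car}(a)$, whose last letter meets the leading $2$ of $w$), and similarly right extensions, so $v$ is bispecial with $m_l(v)=m_l(w)$, $m_r(v)=m_r(w)$, $m_b(v)=m_b(w)$, i.e. $v$ has the same extensions as $w$.

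I expect the main obstacle to be the bijectivity of the extension correspondence at the two endpoints, which is where the desingularization constants $s=2$ and $p=\varepsilon$ really do their work. Concretely, one must check that prepending a letter to $w$ is equivalent to prepending a letter to $v$ \emph{without ambiguity}: the subtlety is that a left extension of $w$ by the letter $1$ and a left extension by $2$ must be matched correctly against the images $\alpha_{car}(1)=12$ and $\alpha_{car}(2)=2$, using the forced leading $2$ of $w$ so that the desubstitution of the extended word is unambiguous. I would handle this by a short case analysis on the first and last letters of $v$ together with the injectivity of $\alpha_{car}$ on admissible blocks (the map is a prefix code: $12$ and $2$ are not prefixes of one another up to the block decomposition). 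Once this bijection is established, the equality of all three multiplicities $m_l,m_r,m_b$ is immediate, completing the claim.
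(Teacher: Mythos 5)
Your factorization step and the transfer of extensions are essentially the paper's argument (strip the leading $2$, desubstitute into blocks $12=\alpha_{car}(1)$ and $2=\alpha_{car}(2)$, and match extensions of $w$ with extensions of $v$), and that part is fine. But your first step contains a genuine error: you claim that a bispecial word of length $\geq 2$ must contain the letter $1$ because "a word consisting only of $2$'s is not left special." This is false. For every $n$, both $12^n$ and $22^n=2^{n+1}$ occur in $L'$ (as factors of $(12^n)^\omega$ and of $(12^{n+1})^\omega$ respectively), and symmetrically $2^n1$ and $2^{n+1}$ occur, so $2^n$ is both left and right special. Indeed the paper's Corollary on the square lists the words $2^n$ among the bispecial words and computes $i(2^n)=+1$ from the four extensions $12^n1$, $12^n2$, $22^n2$, $22^n1$; these are precisely the strong bispecial words that make $b(n)$ positive and the complexity quadratic. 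If they were not bispecial, as you assert, the subsequent computation would give a sublinear second difference and contradict $p_{L'}(n)=\frac12\lfloor (n+2)^2/2\rfloor$.

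The lemma does hold for $w=2^n$ — trivially, $2^n=2\,\alpha_{car}(2^{n-1})$ and $v=2^{n-1}$ has the same four extensions — but your proof as written does not cover this case; worse, it denies that the case arises, which would corrupt the classification of bispecial words that this lemma feeds into. The correct preliminary observation (the one the paper makes) is weaker: outside the exceptional word $1^\omega$ the letter $1$ is isolated, hence a bispecial word of length $\geq 2$ must \emph{begin and end with} $2$; it need not contain a $1$ at all. With that corrected, your desubstitution and extension-matching go through for both families. (A smaller point: your statement that every $1$ in $L'$ is followed by a block of $2$'s also needs the exception of $1^\omega$, which is a word of the language for $n=0$.)
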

\begin{proof}
First we remark the following fact: if we omit the word $1^\omega$, then the letter $1$ is isolated. Thus a bispecial word, of length bigger than $2$, begins and ends by $2$. Now let $w$ be a bispecial word of this language, then there exists an integer $n$ such that $w=2^nv'$. Now $v'$ begins by $1$, thus we can write $w=2^n\alpha(w)=2\alpha(v)$. Moreover $w$ is a bispecial word of length one.
\end{proof}

We deduce the following result.
\begin{corollary}\label{bisp-car}
The bispecials words of the language $L'$ of the square are the words $2^n, n\in\mathbb{N}$, and $2^n12^n, n\in\mathbb{N}$.
\end{corollary}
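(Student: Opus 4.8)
<br>

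The plan is to bootstrap from the previous Lemma, which tells us two things: every bispecial word is obtained as $w = s\,\alpha_{car}(v)\,p$ with $s = 2$, $p = \varepsilon$, and moreover $v$ is itself a bispecial word with the same left/right extensions as $w$. So the strategy is an induction on the length of the bispecial word, using this reduction step to peel off one application of $\alpha_{car}$ at a time and descend to a shorter bispecial word, until we reach a base case that can be checked by hand.

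First I would establish the base cases directly. The shortest bispecial words must be found explicitly from the language $L' = \{12^n\}^\omega$-factors described in Theorem \ref{langages}. The empty word $\varepsilon$ is bispecial (it corresponds to $n=0$ in the family $2^n$), and I would check that the single letter $2$ is bispecial: both $12$ and $22$ occur (so $2$ is left special) and both $21$ and $22$ occur (so $2$ is right special). This pins down the seeds of the two families, namely $2^0 = \varepsilon$ and $2^0 1 2^0 = 1$, from which the substitution will generate the rest.

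Next I would run the induction. Suppose $w$ is a bispecial word of length at least $2$; by the previous Lemma it equals $2\,\alpha_{car}(v)$ where $v$ is a shorter bispecial word with the same extensions. By the inductive hypothesis $v$ is either $2^m$ or $2^m 1 2^m$ for some $m$. I would then simply compute the image under $w \mapsto 2\,\alpha_{car}(v)$ in each case, using $\alpha_{car}(1) = 12$ and $\alpha_{car}(2) = 2$. For $v = 2^m$ one gets $\alpha_{car}(2^m) = 2^m$, so $w = 2^{m+1}$; for $v = 2^m 1 2^m$ one gets $\alpha_{car}(2^m 1 2^m) = 2^m 1 2 \cdot 2^m = 2^m 1 2^{m+1}$, so $w = 2^{m+1} 1 2^{m+1}$. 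Thus the two families are exactly reproduced with the index incremented, closing the induction and showing every bispecial word has one of the two stated forms.

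The main obstacle, and the step deserving the most care, is the converse containment: verifying that every word of the form $2^n$ and $2^n 1 2^n$ genuinely is bispecial, i.e. that the reduction process does not lose any candidates and that each claimed word really has two left and two right extensions in $L'$. This amounts to checking that in the language of factors of the $\{12^n\}^\omega$ words, a block $2^n$ can be preceded by either $1$ or $2$ and followed by either $1$ or $2$, and likewise for $2^n 1 2^n$; the point is that the isolated occurrences of the letter $1$ between runs of $2$'s, together with the arbitrarily long runs $2^n$ coming from large $n$ in the family $12^n$, provide precisely the needed ambiguity on both sides. Once this is confirmed, the two descriptions combine to give exactly the stated list, completing the proof.
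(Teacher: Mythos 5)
Your argument is correct and follows essentially the same route as the paper: descend via the reduction $w=2\alpha_{car}(v)$ of the preceding Lemma to the base cases, and observe that iterating $v\mapsto 2\alpha_{car}(v)$ on the seeds reproduces exactly the two families $2^n$ and $2^n12^n$. The converse verification you rightly flag (that these words really admit two extensions on each side) is exactly what the paper carries out immediately afterwards in Corollary \ref{cal-bisp-car} when computing $i(2^n)$ and $i(2^n12^n)$.
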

\begin{proof}
We use preceding Lemma and the fact that $1,2$ are bispecial words. Thus all the bispecial words are of the form 
$$\hat{\alpha_{car}}^n(2), \hat{\alpha_{car}}^n(1), n\in\mathbb{N}$$
where $\hat{\alpha_{car}}(v)=2\alpha_{car}(v)$.
\end{proof}

\subsubsection{Proof of Theorem \ref{calccomp} for the square}
First we begin by a corollary of Corollary \ref{bisp-car}.
\begin{corollary}\label{cal-bisp-car}
We deduce that $b(n+1)+b(n)=1 \quad \forall n\in\mathbb{N}$.
\end{corollary}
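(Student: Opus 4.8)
The plan is to derive the identity $b(n+1)+b(n)=1$ directly from the explicit list of bispecial words furnished by Corollary \ref{bisp-car}, namely the family $2^n$ and the family $2^n12^n$ for $n\in\mathbb{N}$. The strategy is to compute, for each bispecial word, its contribution $i(v)=m_b(v)-m_r(v)-m_l(v)+1$, and then observe how these contributions distribute across consecutive lengths. Recall that in this language the letter $1$ is isolated (apart from the orbit $1^\omega$, which we may treat separately): outside the word $1^\omega$, every occurrence of $1$ is flanked by $2$'s, so the possible one-sided extensions of a bispecial word are tightly constrained.

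First I would pin down the extensions of each bispecial word. For the word $2^n$, both a preceding and a following letter may be either $1$ or $2$, so tentatively $m_l=m_r=2$; the subtle point is the bilateral count $m_b$, i.e. which of the four pairs $(a,b)\in\{1,2\}^2$ actually yield an admissible word $a\,2^n\,b$ in the language. Here the structure of the periodic words $z^\omega$ with $z=12^m$ (from Theorem \ref{langages} for the square) dictates which combinations occur: a block $2^n$ can be preceded and followed by $1$ only when it sits as a maximal run, and the run-length constraints coming from the words $12^m$ forbid certain pairs. I would carry out this bookkeeping to get $i(2^n)$, and expect it to equal $+1$ precisely when $2^n$ sits at a ``branching'' length and $0$ otherwise, reflecting that $2^n$ is the image $\widehat{\alpha_{car}}^{\,n}(2)$ of the letter $2$. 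Symmetrically, for $2^n12^n$ the central $1$ forces the word to be extendable on each side only by $2$ into a longer palindromic block, and the competition between extending to $2^{n+1}12^n$ versus $2^n12^{n+1}$ governs $m_b$; I expect $i(2^n12^n)$ to contribute the complementary sign so that the two families interleave.

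The key step is then to sum $i(v)$ over all bispecial words of a fixed length $n$ to obtain $b(n)=\sum_{v\in BL_n}i(v)$, and to check that exactly one bispecial word of each relevant parity class contributes a net $+1$ while its partner contributes $0$, or that a single word contributes the whole $b(n)$. Since $2^n$ has length $n$ and $2^n12^n$ has length $2n+1$, the two families occupy disjoint length classes, so $b(n)$ receives a contribution from $2^n$ for every $n$ and from $2^{m}12^{m}$ only when $n=2m+1$ is odd. Tallying these, I would show that $b(n)$ alternates in such a way that the sum of two consecutive values telescopes to the constant $1$; concretely, whenever $b(n)=1$ one has $b(n+1)=0$ and vice versa, which is exactly the assertion $b(n+1)+b(n)=1$.

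The main obstacle will be the accurate determination of the bilateral multiplicities $m_b$, and hence the signs $i(v)$, for the two bispecial families. This is where a naive count of one-sided extensions is misleading: one must verify precisely which two-sided extensions $a\,v\,b$ actually appear as factors of the periodic words $\{12^n\}^\omega$, since an apparent left extension and an apparent right extension need not be simultaneously realizable. I would resolve this by reading off the admissible surroundings directly from the list of periodic words in Theorem \ref{langages}, using the isolation of the letter $1$ to limit the cases. Once the signs are fixed, the final summation and the verification of $b(n+1)+b(n)=1$ is a routine induction, streamlined by the substitutive description $v\mapsto\widehat{\alpha_{car}}(v)=2\alpha_{car}(v)$, which carries the bispecial words of length $n$ bijectively to those of length $n+1$ or $2n+1$ and lets us transport the computation of $i$ from one length to the next.
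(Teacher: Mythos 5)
Your outline follows the same route as the paper: list the two bispecial families $2^n$ and $2^n12^n$ from Corollary \ref{bisp-car}, compute $i(v)$ for each, sum over each length to get $b(n)$, and read off the parity alternation. The problem is that the decisive step --- pinning down $m_b$ and hence $i$ --- is essentially the entire content of the corollary, and you leave it undone while recording provisional values that are wrong. In fact $i(2^n)=+1$ for \emph{every} $n$: all four two-sided extensions $12^n1$, $12^n2$, $22^n2$, $22^n1$ occur, each as a factor of $(12^m)^\omega$ for a suitable $m\in\{n,n+1,n+2\}$, so $m_b=4$, $m_l=m_r=2$. Your guess that $i(2^n)$ is ``$+1$ at branching lengths and $0$ otherwise'' would, if true, kill the identity: any even $n$ with $i(2^n)=0$ gives $b(n)=0$ and hence $b(n)+b(n+1)\neq 1$. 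Similarly you never settle $i(2^n12^n)$; the correct count is $m_b=2$ (only $12^n12^n1$ and $22^n12^n2$ occur, since an asymmetric extension such as $12^n12^n2$ would require a maximal run of $2$'s of length exactly $n$ adjacent to one of length at least $n+1$, impossible because in each periodic word $(12^m)^\omega$ all runs of $2$ have the same length $m$), giving $i(2^n12^n)=-1$.

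A secondary error: the two families do \emph{not} occupy disjoint length classes. The word $2^{2m+1}$ and the word $2^m12^m$ both have length $2m+1$, and it is precisely their coexistence at odd lengths that makes the $+1$ and $-1$ cancel there, while at even lengths only $2^n$ contributes. With the correct values $i(2^n)=1$ and $i(2^m12^m)=-1$, your tally does yield $b(n)=1$ for $n$ even and $b(n)=0$ for $n$ odd, whence $b(n)+b(n+1)=1$ by Lemma \ref{julien}'s bookkeeping; and the shortcut you mention at the end --- computing $i$ only for the seeds $2$ and $1$ and transporting it along $v\mapsto 2\alpha_{car}(v)$ via the ``same extensions'' clause of the preceding lemma --- is a legitimate way to avoid redoing the count for all $n$, but it still requires you to actually carry out the extension count once, which the proposal never does.
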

\begin{proof}
The words $2^n$ can be extended into the words $$12^n1,12^n2,22^n2,22^n1.$$ We deduce that 
$i(2^n)=1.$ The words $2^n12^n$ can be extended in the words $12^n12^n1,22^n12^n2$, thus we have $i(2^n12^n)=-1$. We deduce that $b(n)$ equals $0$ if $n$ is odd and $1$ otherwise. The result follows from Lemma \ref{julien}.
\end{proof}

We have $p(0)=1, p(1)=2, p(2)=4$, we deduce $s(0)=1, s(1)=2$. 
Then Corollary \ref{cal-bisp-car} shows that 
$$s(n+2)-s(n)=1 \quad n\geq 1 .$$
$$s(2n)=n+1\quad n\geq 0 .$$
$$s(2n+1)=n+2\quad n\geq 0.$$
$$p(n)=\displaystyle\sum_{i=1}^{n-1}s(i)+p(1).$$
Thus we deduce for $n\geq 1$:
$$p(2n)=\displaystyle\sum_{2\leq 2i\leq 2n-1}(i+1)+\displaystyle\sum_{1\leq 2i+1\leq 2n-1} (i+2)+p(1).$$
$$p(2n)=\displaystyle\sum_{1\leq i\leq n-1}(i+1)+ \displaystyle\sum_{0\leq i\leq n-1}(i+2)+p(1).$$
$$p(2n)=n(n-1)/2+n-1+n(n-1)/2+2n+2.$$
$$p(2n)=n^2+2n+1.$$
$$p(2n+1)=\displaystyle\sum_{2\leq 2i\leq 2n}(i+1)+\displaystyle\sum_{1\leq 2i+1\leq 2n-1} (i+2)+p(1).$$
$$p(2n+1)=\displaystyle\sum_{1\leq i\leq n}(i+1)+\displaystyle\sum_{0\leq i\leq n-1} (i+2)+p(1).$$
$$p(2n+1)=n(n+1)/2+n+n(n-1)/2+2n+2.$$
$$p(2n+1)=n^2+3n+2.$$
We observe that $p(n)=\frac{1}{2}\lfloor\frac{(n+2)^2}{2} \rfloor.$

\subsection{Regular hexagon}
\subsubsection{Bispecial words}
\begin{lemma}\label{bisp-hexa}
Define a new map by $\hat{\alpha_{hexa}}(v)=3\alpha_{hexa}(v)$.
The bispecial words of this language are 
\begin{itemize}
\item $\hat{\alpha_{hexa}}^n(\varepsilon)=3^n, n\in\mathbb{N}$.
\item $\hat{\alpha_{hexa}}^n(2)=3^n23^{n}, n\in\mathbb{N}$.
\item $\hat{\alpha_{hexa}}^n(23)=3^n23^{n+1}, n\in\mathbb{N}.$
\item $\hat{\alpha_{hexa}}^n(32)=3^{n+1}23^n, n\in\mathbb{N}.$
\item $\hat{\alpha_{hexa}}^n(232)=3^{n}23^{n+1}23^n, n\in\mathbb{N}.$
\item $\hat{\alpha_{hexa}}^n(22)=3^n23^n23^n, n\geq 1$ and $22$ if $n=0$.
\end{itemize}
\end{lemma}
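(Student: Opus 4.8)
The plan is to mirror the structure of the square computation: establish a normal form for bispecial words showing that every sufficiently long bispecial word is the image under $\hat{\alpha_{hexa}}$ of a shorter bispecial word, and then seed the induction with the finitely many short bispecial words. Concretely, I would first prove the analogue of the square's key lemma, namely that if $w$ is a bispecial word of the sublanguage on $\{2,3\}$ with $|w|$ large, then $w = 3\,\alpha_{hexa}(v)$ for some bispecial word $v$, where $\hat{\alpha_{hexa}}(v) := 3\,\alpha_{hexa}(v)$. The crucial combinatorial input is that, by Theorem \ref{langages} restricted to the hexagon (after discarding the isolated letter $1$ and the word $1^\omega$), every word of $L'$ is a factor of some $(23^n)^\omega$ or $(23^n23^{n+1})^\omega$, so the letter $2$ always occurs flanked by blocks of $3$'s whose lengths differ by at most one. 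This block structure is exactly what makes the substitution $\alpha_{hexa}:\,2\mapsto 23,\ 3\mapsto 3$ desubstitutable: a long bispecial word, being left and right extendable by two letters, must begin and end inside a maximal block of $3$'s in a controlled way, forcing the leading $3$ and an $\alpha_{hexa}$-image form.

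Having the desubstitution lemma, I would run the induction. The map $v \mapsto \hat{\alpha_{hexa}}(v)=3\alpha_{hexa}(v)$ sends bispecial words to bispecial words (by the stability of the language under $\alpha_{hexa}$ from Corollary \ref{hexa-deux}, together with the fact that prepending the appropriate $3$ restores bispeciality), and the desubstitution lemma says every long enough bispecial word arises this way. So the full list of bispecial words is the orbit closure under $\hat{\alpha_{hexa}}$ of the finitely many short bispecial words that cannot be further desubstituted. I would therefore verify directly, by inspecting the factors of the generating periodic words $2^\omega=(23^0)^\omega$ and $(223)^\omega$ up to small length, that the initial bispecial words are precisely $\varepsilon$, $2$, $23$, $32$, $232$, and $22$ (the last only contributing starting at $n=0$ as the word $22$ itself, since $\hat{\alpha_{hexa}}$ applied to it does not reproduce $22$ but rather the longer forms). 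Applying $\hat{\alpha_{hexa}}^n$ to each seed and computing the images symbolically then yields the six families $3^n$, $3^n23^n$, $3^n23^{n+1}$, $3^{n+1}23^n$, $3^n23^{n+1}23^n$, $3^n23^n23^n$ listed in the statement; these computations are routine expansions of the substitution, e.g. $\alpha_{hexa}(23)=233$ so $\hat{\alpha_{hexa}}(23)=3\cdot233=3^123^{2}$, matching the $n=1$ case.

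I expect the main obstacle to be the desubstitution lemma, that is, proving that the normal form $w=3\alpha_{hexa}(v)$ is forced and that $v$ inherits bispeciality with a clean relationship between the extension multiplicities of $w$ and $v$. Unlike the square, where the letter $1$ is strictly isolated and the combinatorics is almost trivial, here the letter $3$ occurs in blocks of varying length and the two generating word-families create several distinct local patterns around a $2$; I must check that each left/right extension of $w$ corresponds bijectively to an extension of $v$ so that no bispecial word is missed and none is spuriously produced. The careful point is the boundary behaviour at maximal $3$-blocks at the two ends of $w$: I would argue that bispeciality forces the two flanking blocks to have the extremal lengths permitted by the $(23^n23^{n+1})$ pattern, which is precisely what pins down the six seed words and explains why $22$ behaves exceptionally (it is the unique seed whose two adjacent $3$-blocks are both empty). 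Once this correspondence is established, the remainder is bookkeeping, and it sets up the subsequent complexity computation via Lemma \ref{julien} exactly as in the square case.
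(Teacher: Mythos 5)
Your proposal follows essentially the same route as the paper: the paper's proof explicitly says it reuses the desubstitution method from the square case (every long enough bispecial word is $3\alpha_{hexa}(v)$ with $v$ bispecial of the same extensions) and then observes that $\varepsilon,2,23,32,22,232$ are the seeds, exactly the six you identify, so the bispecial words are their iterates under $\hat{\alpha_{hexa}}$. Your version is in fact more detailed than the paper's (which is only a few lines), and your sample computations and the exceptional treatment of $22$ at $n=0$ are consistent with the statement.
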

\begin{proof}
We use the same method as in the case of the square. Indeed the two susbtitutions $\alpha_{car}, \alpha_{hexa}$ are similar, the proof ends with the remark that $\varepsilon,2,23,32,22,232$ are bispecial words, thus all the bispecial words are iterations of one of these six words under $\hat{\alpha_{hexa}}$.
We deduce the six families of bispecial words
$$\hat{\alpha_{hexa}}^n(\varepsilon), \hat{\alpha_{hexa}}^n(2), \hat{\alpha_{hexa}}^n(23), \hat{\alpha_{hexa}}^n(32), \hat{\alpha_{hexa}}^n(22), \hat{\alpha_{hexa}}^n(232), n\in\mathbb{N}.$$

\end{proof}

\subsubsection{Proof of Theorem \ref{calccomp} for the regular hexagon}
\begin{proposition}\label{calculhexa}
The complexity of the language $L'$ associated to the regular hexagon satisfies for all integer $n$:
$$\displaystyle\sum_{i=n}^{11+n}b(i)=10.$$
\end{proposition}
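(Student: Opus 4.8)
The plan is to reduce the statement to a periodicity property of the function $b$ and then to read $b$ off the list of bispecial words in Lemma \ref{bisp-hexa}. By Lemma \ref{julien} the sum telescopes,
$$\sum_{i=n}^{11+n}b(i)=\sum_{i=n}^{11+n}\bigl(s(i+1)-s(i)\bigr)=s(n+12)-s(n),$$
so it suffices to show that $b$ is periodic of period $6$ with the six values of one period summing to $5$; two periods then yield $10$. Equivalently, I will compute $b(n)=\sum_{v\in BL_n}i(v)$ for every $n$ directly.

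To do this I would treat separately the six families produced by $\hat{\alpha_{hexa}}$ in Lemma \ref{bisp-hexa}, recording for each the length of its members and the value of $i$. By Theorem \ref{langages} the sublanguage over $\{2,3\}$ consists of the factors of the periodic words $(23^n)^\omega$ and $(23^n23^{n+1})^\omega$, the letter $1$ being isolated; in each such word the maximal blocks of $3$'s have either one fixed length or two consecutive lengths that strictly alternate. Reading the admissible one- and two-sided extensions off this description gives $m_l(v)=m_r(v)=2$ in every case together with: for $v=3^k$ and $v=3^n23^n$ one finds $m_b=4$, hence $i(v)=1$ (strong); for $v=3^n23^{n+1}$ and $v=3^{n+1}23^n$ one finds $m_b=3$, hence $i(v)=0$ (neutral); and for $v=3^n23^{n+1}23^n$ and $v=3^n23^n23^n$ one finds $m_b=2$, hence $i(v)=-1$ (weak).

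The heart of the argument is this extension count, and the genuinely delicate point is the vanishing of the two \emph{diagonal} bilateral extensions for the last two families. Because consecutive $3$-blocks of the generating periodic words take only two values in strict alternation, a factor such as $3^n23^{n+1}23^{n+1}$ or $3^n23^n23^{n+1}$ can never occur, which forces $m_b=2$ rather than $4$ and makes these words weak. The small cases must be checked by hand, using the words $2^\omega=(23^0)^\omega$ and $(223)^\omega=(23^023^{1})^\omega$: they yield $i(\varepsilon)=0$ (the isolated letter $1$ contributes an extra unilateral extension at each end) and $i(22)=-1$.

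Finally I would assemble $b(n)$ from the lengths of the members: family $3^k$ contributes at every length, $3^n23^n$ at every odd length, $3^n23^{n+1}23^n$ (of length $3n+3$) at every length $\equiv 0 \pmod 3$, and $3^n23^n23^n$ (of length $3n+2$) at every length $\equiv 2\pmod 3$, while the two neutral families contribute $0$. Writing $[\,P\,]=1$ if $P$ holds and $0$ otherwise, this gives for $n\geq 1$
$$b(n)=1+[\,n\ \text{odd}\,]-[\,n\equiv 0 \!\!\pmod 3\,]-[\,n\equiv 2 \!\!\pmod 3\,],$$
which, together with $b(0)=0$, is periodic of period $6$ with one period equal to $(0,2,0,1,1,1)$ of sum $5$. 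Hence any twelve consecutive values of $b$ sum to $10$, which is the assertion.
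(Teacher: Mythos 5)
Your proof is correct and follows essentially the same route as the paper: you compute $i(v)$ for each of the six families of Lemma \ref{bisp-hexa}, read off their lengths, and assemble $b(n)$ into the period-$6$ pattern $(0,2,0,1,1,1)$ of sum $5$, whence twelve consecutive values sum to $10$. Your version is in fact slightly more careful than the paper's (which contains a slip, first calling the families $\hat{\alpha_{hexa}}^n(232)$ and $\hat{\alpha_{hexa}}^n(22)$ neutral before correctly assigning them $i=-1$), since you justify explicitly why the diagonal bilateral extensions of the two weak families cannot occur and you check the small words $\varepsilon$ and $22$ by hand.
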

\begin{proof}
By Lemma \ref{bisp-hexa} the bispecial words form six famillies of words. 
The words $\hat{\alpha_{hexa}}^n(232), \hat{\alpha_{hexa}}^n(22)$ are neutral bispecial words, thus they do not count in the computation. 
For the other words we have 

$$i(\hat{\alpha_{hexa}}^n(\varepsilon))=1$$

$$i(\hat{\alpha_{hexa}}^n(2))=1$$ 
By symmetry $i(\hat{\alpha_{hexa}}^n(23))=i(\hat{\alpha_{hexa}}^n(32))=0,$ and finally
$$i(\hat{\alpha_{hexa}}^n(232))=i(\hat{\alpha_{hexa}}^n(22)=-1.$$

These words have for lengths $n,2n+1,2n+2, 2n+2,3n+3,3n+2$
We deduce that $b(n)$ can be computed by the euclidean division of $n$ by $6$. We obtain:
$$b(n)=\begin{cases}0, n=6k\\ 2, n=6k+1\\ 0, n=6k+2\\ 1, n=6k+3\\ 
1, n=6k+4\\ 1, n=6k+5\end{cases}$$
Thus the sum $\displaystyle\sum_{i=n}^{11+n}b(i)$ is a constant function of $n$.
\end{proof}

\begin{lemma}
We have:
$$\sum_{i=0}^{11}s(i)=76$$
\end{lemma}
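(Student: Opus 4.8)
The plan is to recover the individual values $s(0),\dots,s(11)$ from the recursion furnished by Lemma \ref{julien}, namely $s(n+1)-s(n)=b(n)$, and then simply add them up. This recursion fixes the sequence $(s(n))$ only once a single value is prescribed, so the first step is to determine the base point $s(0)$ directly from the language rather than from any bispecial bookkeeping. Since $s(0)=p(1)-p(0)$, I would read off $p(0)=1$ (the empty word) and $p(1)=3$: by Theorem \ref{langages} all three letters $1,2,3$ occur in $L'$, for instance in $1^\omega$ and in $(23)^\omega$. This gives $s(0)=2$.

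With $s(0)=2$ in hand, the second step is to telescope the recursion, $s(i)=s(0)+\sum_{j=0}^{i-1}b(j)$, and to insert the explicit, $6$-periodic values of $b(n)$ provided by Proposition \ref{calculhexa}. Running the recursion through $n=0,\dots,10$ produces the list $s(0),\dots,s(11)=2,2,4,4,5,6,7,7,9,9,10,11$; summing these twelve integers yields $76$. Equivalently, one may bypass the partial sums by exchanging the order of summation,
$$\sum_{i=0}^{11}s(i)=12\,s(0)+\sum_{j=0}^{10}(11-j)\,b(j),$$
and evaluating the right-hand side: the first term contributes $24$ and the weighted sum of the $b(j)$ contributes $52$, again totalling $76$.

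The computation is entirely routine once Proposition \ref{calculhexa} is in place, so the only point demanding care is the base case, where the general formulas for $i(v)$ break down. Concretely, I would verify that the empty word contributes $i(\varepsilon)=p(2)-2p(1)+p(0)=0$ (using $p(2)=5$, the factors of length two being $11,22,23,32,33$) rather than the value $1$ attached to the longer members $3^n$, $n\geq 1$, of its family, so that the tabulated value $b(0)=0$ is consistent with the directly computed $s(0)=s(1)=2$. Once this low-order consistency is checked, the recursion together with the summation above carries the proof to its conclusion.
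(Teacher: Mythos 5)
Your proof is correct and follows essentially the same route as the paper: the paper's proof is a direct tabulation of $p(n)$, $s(n)$, $b(n)$, and your values $s(0),\dots,s(11)=2,2,4,4,5,6,7,7,9,9,10,11$ agree exactly with its table. Your reconstruction of that table from the anchor $s(0)=p(1)-p(0)=2$ together with the recursion $s(n+1)-s(n)=b(n)$ is just a more systematic way of producing the same numbers, and your explicit check that $i(\varepsilon)=0$ (so that $b(0)=0$ despite the general claim $i(3^n)=1$ for $n\geq 1$) correctly handles the one edge case the paper glosses over.
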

\begin{proof}
It is a direct computation, we give here the different values of $p,s,b$.
\begin{tabular}{|c|c|c|c|c|c|c|c|c|c|c|c|c|c|c|c|c|}
\hline
n&0&1&2&3&4&5&6&7&8&9&10&11&12&13&14&15\\
\hline
p(n)&1&3&5&9&13&18&24&31&38&47&56&66&77&89&101&115\\
\hline
\end{tabular}

\medskip
\begin{tabular}{|c|c|c|c|c|c|c|c|c|c|c|c|c|c|c|c|}
\hline
n&0&1&2&3&4&5&6&7&8&9&10&11&12&13&14\\
\hline
s(n)&2&2&4&4&5&6&7&7&9&9&10&11&12&12&14\\
\hline
\end{tabular}
\medskip

\begin{tabular}{|c|c|c|c|c|c|c|c|c|c|c|c|c|c|c|c|c|}
\hline
n&0&1&2&3&4&5&6&7&8&9&10&11&12&13&14\\
\hline
b(n)&0&2&0&1&1&1&0&2&0&1&1&1&0&2&\\
\hline
\end{tabular}
\end{proof}

Now we compute the formula for the general case:
Consider the euclidean division of $n$ by $12$, $n=12q+r$. 
By Proposition \ref{calculhexa} we have.
$$s(n+12)-s(n)=10.$$
We deduce by an easy induction on $q$:
$$s(12q+r)-s(r)=10q.$$
$$s(12q+r)=10q+s(r).$$
Then this gives 
$$p(12q+r+1)=p(12q+r)+10q+s(r).$$
We write this equality for 12 consecutive numbers and sum the equalities. We obtain 
$$p(12(q+1)+r)=120q+\sum_{i=0}^{11}s(i)+p(12q+r).$$
Now preceding Lemma shows that we have
$$p(12(q+1)+r)=120q+76+p(12q+r).$$
Then if we denote $u_q=p(12q+r)$ for a fixed $r$, we obtain
$$u_{q+1}=120q+76+u_q.$$
Thus we have
$$u_q=\sum_{i=0}^{q-1}(120i+76)+p(r).$$
Finally we have
$$u_q=60q(q-1)+76q+p(r)=60q^2+16q+p(r).$$
$$p(n)=5(n-r)^2/12+ 16(n-r)/12+p(r).$$

This is equivalent to:
$$p(n)=\lfloor \frac{5n^2+16n+15}{12}\rfloor.$$

\subsection{Triangle}
\subsubsection{Bispecial words}

\begin{proposition}
Denote by $f$ the bijection map between the languages of the hexagon and the triangle. Then define $\hat{f}$ as the map 
$$\hat{f}(w)=1f(w)21\quad\forall w\in\{2,3\}^*$$
Then the bispecial words of the the language $L'$ of the triangle are the words 
$$\hat{f}(v),\quad v\in\mathcal{BL}_{hexa},$$
and the words 
$$\varepsilon, 1,11, 121,1121, 1211, 11211, 21121, 21211.$$  
\end{proposition}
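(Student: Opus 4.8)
The plan is to transport the bispecial structure of the hexagon language, already determined in Lemma \ref{bisp-hexa}, across the bijection $f$ of Corollary \ref{hexa-triang}, and then to account for the finitely many "boundary" bispecials that the bijection cannot see. Recall that $f$ sends the hexagon letters $1,2,3$ to the triangle blocks $2111,211,21$, so it is a substitution-type coding in which the letter $2$ of the triangle alphabet plays the role of a delimiter: every occurrence of $f$-image begins with $2$, and the letters $1$ accumulate between consecutive $2$'s. The first step is to make precise the claim that $\hat f(w)=1f(w)21$ is the natural "centering" of an $f$-image as a factor of a biinfinite triangle word; the leading $1$ and the trailing $21$ are exactly the overlap one must add so that $\hat f(w)$ is a genuine factor whose left and right extensions correspond bijectively to those of $w$ in the hexagon language.

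First I would establish the key correspondence: if $v$ is a bispecial word of the hexagon with left-extension set indexed by letters $a$ and right-extension set indexed by letters $b$, then $\hat f(v)=1f(v)21$ is bispecial in the triangle language, with $m_l,m_r,m_b$ (hence $i$) preserved. This is the analogue of the last clause of Lemma \ref{leminducsub} and of the stability arguments used for the square and hexagon: because $f$ is a bijection between the two languages induced by the return-word coding, extensions of $v$ on each side are carried to extensions of $\hat f(v)$ on each side, so $i(\hat f(v))=i(v)$. Running $v$ through the six families of Lemma \ref{bisp-hexa} then produces six families of triangle bispecials via $\hat f$, matching the first assertion of the proposition.

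The second step is to locate the bispecials that lie "below the scale" of the induction, i.e. the short words that are not of the form $\hat f(v)$ for any hexagon factor $v$. Since the induction on the letter $2$ has return words $21,211,2111$, the behaviour of the triangle language at lengths smaller than one full return block is not governed by $f$, and must be checked by hand. I would enumerate directly all left- and right-special short words in the triangle language — using the description of $L'_{tria}$ from Theorem \ref{langages}, namely the factors of $1(21)^n$ and $1(21)^n1(21)^{n+1}$ — and read off which of them are bispecial. This is the origin of the exceptional list $\varepsilon,1,11,121,1121,1211,11211,21121,21211$: these are precisely the bispecial factors whose $\hat f$-preimage would be empty or sub-letter, together with the few whose extensions are not captured by the coding because they straddle a single return block.

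The main obstacle I anticipate is the completeness of the exceptional list: showing that no further short bispecial word has been overlooked, and conversely that none of the listed words is already an $\hat f(v)$ in disguise (which would double-count it in the complexity sum $b(n)$). Concretely, I expect to argue that every sufficiently long bispecial word must contain a full return block and hence begins with $1$ and ends with $21$, forcing it into the image of $\hat f$; the finitely many words too short for this must then be verified individually against the explicit factor description of Theorem \ref{langages}. The careful bookkeeping of one-sided-special versus bispecial among the words $(21)^k$, $1(21)^k$ and their reversals is where the real work lies, and I would organise it by the parity and position of the delimiter letter $2$ rather than by brute-force enumeration.
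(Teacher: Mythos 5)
Your proposal follows essentially the same route as the paper: the paper's proof is a single sentence invoking the bijection of Corollary \ref{hexa-triang} to transport the hexagon bispecials of Lemma \ref{bisp-hexa}, which is exactly your first step. Your additional work — checking that the centering $1f(v)21$ synchronizes the extensions bijectively (using that all return blocks share the prefix $21$ and end in $1$), and verifying the completeness of the finite exceptional list of short bispecials that do not contain a full return block — is precisely the detail the paper leaves implicit, so your argument is a correct and more complete version of the same proof.
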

\begin{proof}
By Corollary \ref{hexa-triang} there is a bijection between the languages associated to the triangle and the hexagon, thus we can prove the result for the regular hexagon, thus we refer to Lemma \ref{bisp-hexa}.
\end{proof}

\begin{corollary}
The bispecial words of the language $L'$ of the triangle are the words 
$$1(21)^n, 1(21)^n1(21)^n, 1(21)^n1(21)^{n+1},1(21)^{n+1}1(21)^n,$$
$$1(21)^n1(21)^n1(21)^n,1(21)^n1(21)^{n+1}1(21)^n \quad n\in\mathbb{N}.$$
\end{corollary}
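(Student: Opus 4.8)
The plan is to deduce the six families directly from the preceding Proposition, which already identifies the bispecial words of $L'_{tria}$ as the images $\hat{f}(v)$, $v\in\mathcal{BL}_{hexa}$, together with the finite exceptional list $\varepsilon,1,11,121,1121,1211,11211,21121,21211$. Since $\mathcal{BL}_{hexa}$ is enumerated by Lemma \ref{bisp-hexa} and its words use only the letters $2,3$, and since $\hat{f}(w)=1\,f(w)\,21$ with $f(3)=21$, $f(2)=211$, the entire computation reduces to evaluating $\hat{f}$ on block words of the form $3^{a_0}2\,3^{a_1}2\cdots 2\,3^{a_k}$.

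First I would isolate the combinatorial identity that makes this evaluation clean. Writing $f(2)=211=(21)1$ and $f(3^{a})=(21)^{a}$, the image of a block word telescopes: each internal separating letter $2$ contributes one factor $21$, which merges into the block on its left, and one isolated $1$. A short induction on the number $k$ of separators (the base case being $f(3^{a})=(21)^{a}$) gives
$$f(3^{a_0}2\,3^{a_1}2\cdots 2\,3^{a_k})=(21)^{a_0+1}1\,(21)^{a_1+1}1\cdots 1\,(21)^{a_k}.$$
Prepending the leading $1$ and absorbing the trailing $21$ of $\hat{f}$ into the final block then yields the rule
$$\hat{f}(3^{a_0}2\,3^{a_1}2\cdots 2\,3^{a_k})=1(21)^{a_0+1}1(21)^{a_1+1}1\cdots 1(21)^{a_k+1},$$
that is, each maximal block $3^{a_i}$ becomes $(21)^{a_i+1}$, each separating $2$ becomes a single $1$, and one extra $1$ is placed in front.

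Substituting the six shapes of Lemma \ref{bisp-hexa} into this rule is then immediate. The single block $3^n$ gives $1(21)^{n+1}$; the two-block words $3^n23^n$, $3^n23^{n+1}$, $3^{n+1}23^n$ give $1(21)^{n+1}1(21)^{n+1}$, $1(21)^{n+1}1(21)^{n+2}$, $1(21)^{n+2}1(21)^{n+1}$; and the three-block words $3^n23^{n+1}23^n$, $3^n23^n23^n$ give $1(21)^{n+1}1(21)^{n+2}1(21)^{n+1}$, $1(21)^{n+1}1(21)^{n+1}1(21)^{n+1}$. Reindexing by $m=n+1$ turns these into exactly the six families of the statement, realized for $m\geq 1$.

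Finally I would match the exceptional words to the lowest index: $1$, $11$, $1121$, $1211$, $11211$ are precisely $1(21)^0$, $1(21)^0 1(21)^0$, $1(21)^0 1(21)^1$, $1(21)^1 1(21)^0$, $1(21)^0 1(21)^1 1(21)^0$, supplying the $n=0$ member of the five families that admit one. The hard part, and the only place demanding genuine care, is this boundary bookkeeping: one must reconcile the index ranges coming from Lemma \ref{bisp-hexa} (where some families carry shifts such as $3^{n+1}$) with those of the six triangle families, decide which families actually contain an $n=0$ representative, and verify that the remaining entries of the exceptional list ($\varepsilon$, the redundant $121=1(21)^1$, and the short words beginning with $2$) are accounted for without producing spurious or missing bispecials at the low end. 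Once the $\hat{f}$-images and the exceptional words are checked to cover each family exactly once, their union is precisely the six announced families, which proves the corollary.
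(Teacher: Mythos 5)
Your route is the paper's route: the paper offers no separate proof of this corollary (the preceding proposition's proof is just the one-line reduction to the hexagon via the bijection $f$), and the corollary is meant to be the explicit evaluation of $\hat{f}$ on the six families of Lemma \ref{bisp-hexa}. Your telescoping formula $\hat{f}(3^{a_0}2\cdots 2\,3^{a_k})=1(21)^{a_0+1}1(21)^{a_1+1}\cdots 1(21)^{a_k+1}$ is correct, and substituting the six hexagon shapes does yield the six triangle families for the indices $m=n+1\geq 1$. Up to that point the argument is sound and matches the intended one.

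The gap is in the step you explicitly defer. The ``boundary bookkeeping'' is not merely tedious: carried out against the proposition as printed, it does not close, and your final sentence (``their union is precisely the six announced families'') is false as stated. Two concrete problems. First, the exceptional words $21121$ and $21211$ are not of the form $1(21)^{a_0}\cdots$; in fact they are not bispecial at all, since $22$ never occurs in $L'_{tria}$, so any word beginning with $2$ has the unique left extension $1$. Second, the $n=0$ member of the family $1(21)^n1(21)^n1(21)^n$ is $111$, which is genuinely bispecial ($11111$ occurs in $1^\omega$ and $21112$ occurs in $(1121)^\omega$, giving $i(111)=-1$), yet $111$ is neither a $\hat{f}$-image (every $\hat{f}$-image ends in $21$) nor on the proposition's exceptional list. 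So you cannot finish by matching the two lists; you must either correct the exceptional list of the proposition or verify the handful of short words directly against the language $L'_{tria}$ (the same direct check also disposes of $\varepsilon$ and of the redundant $121$). As it stands, the proof outline would fail at exactly the point you flagged as needing care.
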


\subsubsection{Proof of Theorem \ref{calccomp} for the triangle}
\begin{lemma}\label{calc-bisp-trian}
The complexity function of the language $L'$ associated to a triangle fulfills:
$$\displaystyle\sum_{i=n}^{n+11}b(i)=5.$$
\end{lemma}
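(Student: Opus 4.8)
The plan is to mirror the computation already carried out for the regular hexagon, using the bijection $f$ of Corollary \ref{hexa-triang} to transport the bispecial structure. The statement $\sum_{i=n}^{n+11}b(i)=5$ asserts that $b$ is periodic with period dividing $12$ and that one full period sums to $5$; since $b(i)=\sum_{v\in BL_i}i(v)$, the whole task reduces to computing $i(v)$ for each bispecial word $v$ and tracking its length modulo $12$.

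First I would take the six families of bispecial words listed in the preceding corollary, namely
$$1(21)^n,\ 1(21)^n1(21)^n,\ 1(21)^n1(21)^{n+1},\ 1(21)^{n+1}1(21)^n,$$
$$1(21)^n1(21)^n1(21)^n,\ 1(21)^n1(21)^{n+1}1(21)^n,$$
and record the length of each as an affine function of $n$. A word of the form $1(21)^n$ has length $2n+1$; the doubled words have length $4n+2$ or $4n+3$; the tripled words have length $6n+3$ or $6n+4$. I would then compute $i(v)=m_b(v)-m_r(v)-m_l(v)+1$ for each family. The key input is that $f$ (and its extension $\hat f$) is a bijection of languages, so the bilateral-extension data of each triangle bispecial word is identical to that of the corresponding hexagon bispecial word from Lemma \ref{bisp-hexa}. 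Hence the two singleton-type families inherit $i=1$, the two ``shifted'' doubled families inherit $i=0$ (the symmetric pair, as in the hexagon), and the remaining words inherit $i=-1$, with the finitely many short exceptional words ($\varepsilon,1,11,121,\dots$) checked by hand.

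Next I would assemble $b(n)$ by sorting these contributions according to $n \bmod 12$, exactly as in Proposition \ref{calculhexa}. Because the nonzero-$i$ families have lengths that are affine in $n$ with slopes $2$, $4$, and $6$, each family contributes a nonzero value to $b(\ell)$ only for $\ell$ in a fixed residue class modulo the corresponding slope; taking the common period $12=\mathrm{lcm}(2,4,6)\cdot(\ldots)$ makes $b$ genuinely $12$-periodic (after the initial segment governed by the exceptional short words). Summing the $+1$, $+1$, $-1$, $-1$ contributions of the four relevant families over one period of length $12$ yields the claimed total $5$; the key arithmetic check is that over twelve consecutive lengths the surplus of strong bispecials over weak bispecials is exactly $5$.

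The main obstacle I anticipate is bookkeeping rather than conceptual: one must verify that the bijection $\hat f$ really preserves $m_b,m_r,m_l$ and not merely the underlying factor (in particular that it does not create or destroy left/right extensions at the boundary, since $\hat f$ pads with the prefix $1$ and suffix $21$), and one must treat the small exceptional bispecial words $\varepsilon,1,11,121,1121,1211,11211,21121,21211$ separately to confirm they do not disturb the periodicity for large $n$. Once the per-family values of $i(v)$ and the length-residues are pinned down, the identity $\sum_{i=n}^{n+11}b(i)=5$ follows by direct summation, precisely paralleling the hexagon case where the analogous sum equals $10$.
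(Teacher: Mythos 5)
Your proposal follows essentially the same route as the paper: list the six families of bispecial words, assign each its index $i(v)\in\{1,0,-1\}$ and its length as an affine function of $n$ with slope $2$, $4$ or $6$, and sum the contributions over a window of $12$ consecutive lengths to get $6+3+0-2-2=5$. Two minor remarks: your lengths for the mixed doubled and tripled words are off by one (they are $4n+4$ and $6n+5$, not $4n+3$ and $6n+4$), which happens not to affect the sum because only the common differences matter over a full period of $12$; and the paper computes the extensions of the triangle words directly (e.g.\ exhibiting the two extensions $11(21)^n1(21)^n1(21)^n1$ and $21(21)^n1(21)^n1(21)^n2$) rather than transporting $m_l,m_r,m_b$ through $\hat f$, which sidesteps the synchronization issue you rightly flag as the delicate point of your variant.
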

\begin{proof}
 There are five types of bispecial words
\begin{itemize}
\item For the word $1(21)^n$ we see that there are four extended words. Thus this word fulfills 
$i(v)=1$.

\item For the words (and their mirror images) $1(21)^n1(21)^{n+1}$ there are three extensions: $ 21(21)^n1(21)^{n+1}2, 21(21)^n1(21)^{n+1}1$ and $11(21)^n1(21)^{n+1}1$, thus $i(v)=0$ for this word. 

\item For the words $1(21)^n1(21)^n$ there are four extensions, thus we have $i=1$.

\item For the words $1(21)^n1(21)^n1(21)^n$ there are two extensions: 
$$11(21)^n1(21)^n1(21)^n1, 21(21)^n1(21)^n1(21)^n2,$$ thus we have $i=-1$.

\item  For the words $1(21)^n1(21)^{n+1}1(21)^n$ there are two extensions: 
$$11(21)^n1(21)^{n+1}1(21)^n1, 21(21)^n1(21)^{n+1}1(21)^n2,$$ thus we have $i=-1$. 
\end{itemize}

Finally we obtain five sort of words of length $2n+1, 4n+2, 4n+4, 6n+3, 6n+5$ which are bispecial words. Between $n$ and $n+1$ there are $6,3,3,3*2,2,2$ words of each sort, so we have 
$s(n+12)-s(n)=6+3+0*6-2-2$.
\end{proof}

\begin{lemma}
The complexity of the language $L'$ satisfies
$$\displaystyle\sum_0^{11}s(r)=37.$$
\end{lemma}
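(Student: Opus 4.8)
The plan is to reduce the statement to a finite computation of the first twelve values of the first difference $s$, in complete parallel with the treatment already carried out for the regular hexagon. Since $s(r)=p(r+1)-p(r)$ by definition, the sum telescopes:
$$\sum_{r=0}^{11}s(r)=p(12)-p(0).$$
As $p(0)=1$ (only the empty word has length $0$), the assertion $\sum_{r=0}^{11}s(r)=37$ is \emph{equivalent} to the single equality $p(12)=38$. Thus it suffices to pin down the initial segment $p(0),\dots,p(12)$ of the complexity function, and the whole problem becomes bookkeeping over small lengths.

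Rather than enumerate by hand the factors of length $12$, I would obtain this segment by bootstrapping. Lemma \ref{julien} gives $s(n+1)-s(n)=b(n)$, so starting from the base values $p(0)=1$ and $p(1)=2$ (the alphabet of $L'$ for the triangle is $\{1,2\}$), whence $s(0)=1$, the entire list $s(1),\dots,s(11)$ is determined once $b(0),\dots,b(10)$ are known. Each $b(n)$ in turn is read off from the bispecial words already classified: for every length $n\leq 10$ one lists which of the six families of the preceding corollary (together with the empty word) produces a bispecial word of that exact length, and sums the corresponding indices $i(v)\in\{-1,0,1\}$ computed in the proof of Lemma \ref{calc-bisp-trian}. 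Accumulating the twelve values of $s$ so obtained yields the claim; equivalently, one verifies that the recursion lands on $p(12)=38$.

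The delicate point, and the only place where genuine care is needed, is the behaviour of the six families at small lengths. For large $n$ the families occupy disjoint residue classes of lengths ($2n+1$, $4n+2$, $4n+4$, $6n+3$, $6n+5$), so their indices simply add; but for the first few indices the families degenerate and can coincide or collapse to very short words (the empty word, a single letter, $11$, $121$, and so on), for which the extension counts $m_l,m_r,m_b$ and hence the index $i(v)$ must be recomputed directly instead of being inferred from the generic pattern. Settling this short-word table correctly is exactly what fixes the residual additive constant, and is where an off-by-one slip would corrupt the final sum; once it is in place, the remainder is the mechanical telescoping described above.
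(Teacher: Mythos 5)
Your proposal takes essentially the same route as the paper: the paper's proof is nothing more than ``an easy computation'' exhibiting the table of values $s(0),\dots,s(11)$, and your telescoping identity $\sum_{r=0}^{11}s(r)=p(12)-p(0)=p(12)-1$ together with the bootstrap $s(n+1)=s(n)+b(n)$ from Lemma \ref{julien} is just an organized way of generating that same table from the bispecial classification. The one thing you have not done is the computation itself, and since the lemma is a single numerical identity the proof is not finished until the twelve values (equivalently, $p(12)=38$) are actually produced; your warning that the short, degenerate bispecial words are where the constant gets fixed is exactly right, and is not an idle caution --- the values printed in the paper's own table sum to $36$ rather than $37$ (the closed formula of Theorem \ref{calccomp} forces $s(11)=6$, not the tabulated $5$), so carrying out and double-checking this small-length bookkeeping is genuinely the whole content of the lemma.
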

\begin{proof}
By an easy computation we deduce:

\begin{tabular}{|c|c|c|c|c|c|c|c|c|c|c|c|c|c|c|}
\hline
n&0&1&2&3&4&5&6&7&8&9&10&11&12&13\\
\hline
s(n)&1&1&1&2&3&3&3&3&4&5&5&5&6&6\\
\hline
\end{tabular}
\end{proof}

Now we can prove the formula of the theorem.
We use Lemma \ref{calc-bisp-trian}, the method is the same as in the case of the square:
The relation $s(n+12)=5+s(n)$ gives 
$s(n)=s(r)+5q$, where $n=12q+r$, and $0\leq r<12$. 
We deduce 
$$\displaystyle\sum_{N=0}^{11}s(n+N)=37+5\displaystyle\sum_{N=0}^{11}[(n+N)/12].$$
$$p(n+12)-p(n)=37+5\displaystyle\sum_{N=0}^{11}[(n+N)/12].$$
$$p(n+12)-p(n)=37+60q+5\displaystyle\sum_{N=0}^{11}[(N+r)/12].$$
$$p(n+12)-p(n)=37+60q+5r.$$
The change of $n$ into $n-12$ does not change $r$
$$p(n)=p(r)+37q+\displaystyle\sum_{i=0}^{q-1}(60i+5r).$$
$$p(n)=30q(q-1)+37q+5\displaystyle\sum_{i=0}^{q-1}r+p(r).$$
$$p(n)=30q(q-1)+(37+5r)q+p(r).$$
$$p(n)=30q^2+(7+5r)q+p(r).$$
$$p(n)=30(\frac{n-r}{12})^2+(7+5r)\frac{n-r}{12}+p(r).$$
$$p(n)=\frac{5n^2}{24}+\frac{7n}{12}-\frac{5r^2}{24}-\frac{7r}{12}+p(r).$$


\section{Piecewise isometry of Tabachnikov}\label{sectab}
In this section we recall some results proved by Tabachnikov in \cite{Ta.95}.

\subsection{Definition and results}
Consider Figure \ref{figtab}.
We define a piecewise isometry $(Z,G)$ on the union of two triangles 
$$Z= AFC\cup HFE.$$ The two triangles are isoceles, the angle in $A$ equals $2\pi/5, AF=1$ and $AC=\varphi$ where $\varphi=\frac{1+\sqrt{5}}{2}$. 
The map $G:Z\mapsto Z$ is defined as follows:
\begin{itemize}
\item a rotation of center $O_1$ and angle $-3\pi/5$ which sends $C$ to $E$, on $AFC$. 
\item a rotation of center $O_2$ and angle $-\pi/5$ which sends $H$ to $C$ on $HFE$.
\end{itemize}
\begin{figure}[h]
\begin{center}
\includegraphics[width= 5cm]{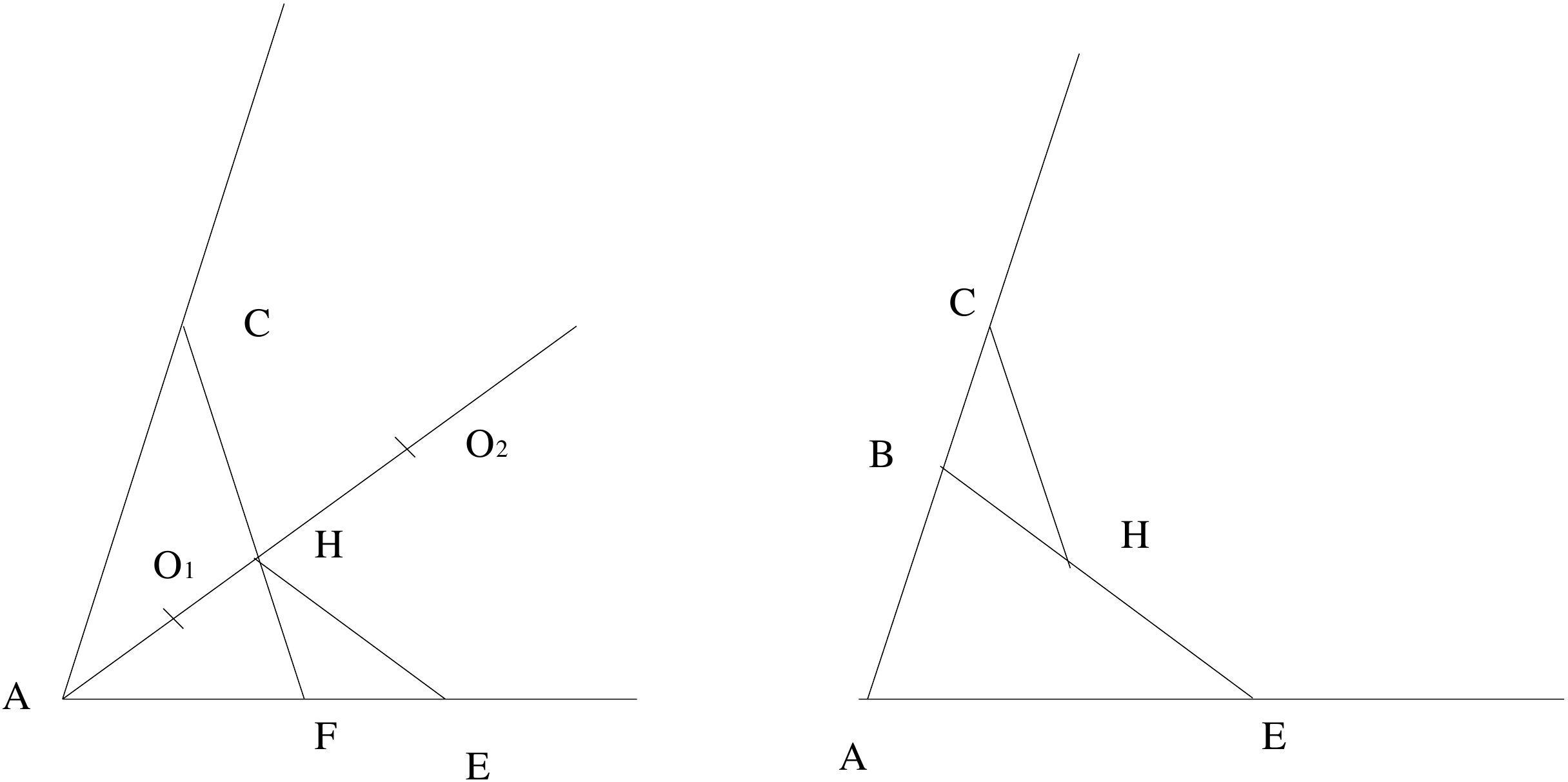}
\caption{Piecewise isometry $G$}\label{figtab}
\end{center}

\end{figure}

Denote by $a,b$ the two rotations, and $D$ the composition of the contraction, centered at $A$,  that takes $O_2$ to $O_1$  and the reflection over the line $(O_1O_2)$. Then we have on each triangle:
 $$\begin{cases}Da(x)=aababaaD(x)\quad \text{if}\quad x\in AFC,\\ Db(y)=aaaD(y)\quad  \text{if}\quad y\in HFE
 \end{cases}$$
 These equalities can be written as
 
 \begin{equation}
 DG(x)=G^7D(x), DG(y)=G^3D(y). 
 \end{equation}
 
 \begin{definition}\label{ptfixesigma}
Let $\sigma$ be the substitution:
 $$\sigma: \begin{cases}1\rightarrow 1121211 \\ 2\rightarrow 111\end{cases}$$
and let $u$ be its fixed point.
\end{definition}

\begin{definition}
 We denote by $V_{per}$ the set of periodic points for $G$ , and by $V_\infty$ the set $Z\setminus V_{per}$.
\end{definition}

 With Equation $1$ we deduce
 
 \begin{theorem}\cite{Ta.95}
 We have:
 \begin{enumerate}
\item If $x$ is a point with non periodic orbit under $G$, then the dynamical system $(O(x),G)$ is conjugated to $(O(u),S)$ where $S$ is the shift map, and $O(x)$ denotes the closure of the orbit of $x$.
\item A connected component of $V_{per}$ is a regular pentagon or a regular decagon.
\item Each point in a regular decagon has for coding an infinite word included in the shift orbit of $(\sigma^n(1))^\omega, n\in\mathbb{N}$. The points inside regular pentagons correspond to the words 
$(\sigma^n(12))^\omega, n\in\mathbb{N}$. 
\end{enumerate}
\end{theorem}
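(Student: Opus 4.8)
The plan is to turn the self-similarity recorded in Equation (1) into a statement about itineraries. The map $D$ is a contracting similarity, and Equation (1) says that conjugating $G$ by $D$ replaces one step of $G$ on $AFC$ by seven steps, and one step on $HFE$ by three steps. The first thing I would prove is the \emph{renormalization lemma}: writing $C(x)=(c_n)_{n\in\mathbb{N}}\in\{1,2\}^{\mathbb{N}}$ for the itinerary of $x$ (with $c_n=1$ if $G^n x\in AFC$ and $c_n=2$ if $G^n x\in HFE$), one has
$$C(D(x))=\sigma\bigl(C(x)\bigr).$$
Indeed, reading Equation (1) as $DG=G^{7}D$ on $AFC$ and $DG=G^{3}D$ on $HFE$, the itinerary of $D(x)$ over its first $|\sigma(c_0)|$ steps is spelled by the composition $aababaa$ (resp. $aaa$); replacing $a\mapsto 1,\ b\mapsto 2$ gives $1121211=\sigma(1)$ (resp. $111=\sigma(2)$), after which $G^{|\sigma(c_0)|}D(x)=D(Gx)$ and the argument repeats blockwise by induction. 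In particular the set of itineraries is stable under $\sigma$, and passing to the limit of $\sigma^n$ realizes the fixed point $u$ of Definition \ref{ptfixesigma} as a genuine itinerary.

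Next I would isolate the two \emph{seed cells}. The rotation $a$ has angle $-3\pi/5$, hence order $10$, while $b\circ a$ is a rotation of angle $-4\pi/5$, hence order $5$. A small neighbourhood of the centre $O_1$ of $a$ remains in $AFC$ under all iterates, so $G$ acts on it as the order-$10$ rotation $a$; its maximal periodicity cell is therefore a regular decagon with constant itinerary $1^\omega$. Symmetrically, a neighbourhood of the fixed point of $b\circ a$ follows the period-two itinerary $(12)^\omega$ and is permuted by an order-$5$ rotation, hence is a regular pentagon. This proves the case $n=0$ of part~(3), and since the first-return map on any periodicity cell is a rotation whose angle is a multiple of $\pi/5$, the only possible orders are $5$ and $10$, forcing every periodicity cell to be a regular pentagon or a regular decagon, which is part~(2).

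I would then propagate by renormalization. By the lemma above, $D$ carries the decagon of itinerary $1^\omega$ to a decagon of itinerary $\sigma(1)^\omega$ and the pentagon of itinerary $(12)^\omega$ to a pentagon of itinerary $\sigma(12)^\omega$; iterating $D^n$ produces exactly the cells with codings $(\sigma^n(1))^\omega$ and $(\sigma^n(12))^\omega$, giving all the cells of part~(3). For part~(1), the incidence matrix $\left(\begin{smallmatrix}5&3\\2&0\end{smallmatrix}\right)$ of $\sigma$ has strictly positive square, so $\sigma$ is primitive and the substitutive subshift $X_\sigma=O(u)$ is minimal and uniquely ergodic. Any aperiodic itinerary lies in $X_\sigma$, and because $G$ is a piecewise isometry the coding map is injective on $V_\infty$; minimality then identifies $(O(x),G)$ with $(O(u),S)$ for every non-periodic $x$.

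The main obstacle is the \emph{exhaustion} step: one must show that the $D^n$-images of the two seed cells, together with their finite $G$-orbits, cover $V_{per}$ up to a set of measure zero, equivalently that no periodicity cell escapes the renormalization and that the residual set $V_\infty$ carrying the aperiodic dynamics is precisely the attractor of $D$. This needs careful bookkeeping of the finitely many cells appearing at each scale and a check that the contraction ratio of $D$ matches the pentagon/decagon packing, so that the cell areas sum to $\mathrm{area}(Z)$. A secondary technical point is the injectivity of the coding on $V_\infty$: two distinct non-periodic points must be separated by the partition $\{AFC,HFE\}$ under some iterate of $G$, which follows from the discontinuity set of $G$ having empty interior together with the expansivity furnished by the renormalization.
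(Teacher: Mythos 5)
First, a remark on the comparison: the paper does not prove this theorem at all --- it is imported from Tabachnikov's paper, and the only in-text justification is the sentence ``With Equation 1 we deduce''. Your renormalization lemma $C(D(x))=\sigma(C(x))$ is exactly the intended reading of that equation (the word $aababaa$ transliterates to $1121211=\sigma(1)$ and $aaa$ to $111=\sigma(2)$), and your identification of the two seed cells --- the order-$10$ rotation $a$ producing a regular decagon with itinerary $1^\omega$, and the order-$5$ rotation $b\circ a$ producing a regular pentagon with itinerary $(12)^\omega$ --- is correct. So your outline follows the same renormalization route as the source.

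There are nevertheless two genuine gaps. The first you name yourself: the exhaustion step. Without it, the substantive assertions of the theorem --- that \emph{every} aperiodic itinerary lies in $X_\sigma$ (part 1) and that the list of periodic codings in part (3) is complete --- remain unproved; the renormalization by itself only produces a sub-collection of cells and itineraries, and the whole content of the theorem is that nothing else occurs. This is not bookkeeping to be deferred, it is the heart of the proof: one must check that the two seed cells, their images along the finitely many return words, and the $D$-images of the resulting region account for all of $Z$, e.g.\ by an area computation in $\mathbb{Z}[\varphi]$ or by exhibiting the first-return map of $G$ on $D(Z)$ and verifying it equals $DGD^{-1}$. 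The second gap is in your derivation of part (2): ``a rotation whose angle is a multiple of $\pi/5$ has order $5$ or $10$'' is false, since the multiples $\pi$ and $2\pi$ give orders $2$ and $1$ --- for instance a hypothetical return word with one letter $1$ and two letters $2$ yields total angle $-(3+2)\pi/5=-\pi$ --- and moreover an invariant convex cell of an order-$5$ rotation is a priori only a $5$-fold-symmetric polygon, not automatically a regular pentagon. Excluding these possibilities again requires the explicit cell structure furnished by the exhaustion, so part (2) cannot be obtained from the order argument alone.
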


\begin{corollary}
The aperiodic points have codings included in the orbit $O(u)$.
\end{corollary}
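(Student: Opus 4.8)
The plan is to show that the corollary follows almost immediately from the first part of the preceding theorem, interpreted via the combinatorial setup recalled in Remark \ref{remcombinmot}. The statement asserts that every aperiodic point $x \in V_\infty$ has its coding contained in $O(u)$, the closure of the orbit of the fixed point $u$ of $\sigma$. First I would recall that by definition $V_\infty = Z \setminus V_{per}$ is exactly the set of points whose $G$-orbit is not periodic, so an aperiodic point is precisely a point to which item (1) of the theorem applies.

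Next I would unwind what item (1) gives us. For such an $x$, the dynamical system $(O(x),G)$ is conjugated to $(O(u),S)$, where $S$ is the shift and $O(u) = \overline{\{S^n(u) : n \in \mathbb{N}\}}$ is precisely the subshift $X_u$ attached to the infinite word $u$ in the sense of the definition in Section \ref{secsymb}. The conjugacy is the coding map: it sends the $G$-orbit of $x$ to its itinerary with respect to the partition of $Z$ into the two triangles $AFC$ and $HFE$, which is exactly the symbolic coding under which $(O(x),G)$ and $(X_u,S)$ are identified. Consequently the coding of $x$, being a point of $O(x)$ mapped into $O(u)$ under this conjugacy, lies in $O(u)$.

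To make the argument fully rigorous I would invoke Remark \ref{remcombinmot}, which states the equivalence between membership $\omega \in X_u$ and the language inclusion $L_\omega \subset L_u$, and records that the language of $X_u$ equals the language of $u$. Thus saying that the coding of $x$ lies in $O(u) = X_u$ is the same as saying that every factor appearing in the coding of $x$ is a factor of $u$, i.e. the coding is governed entirely by the language $L(u)$. Combining this with the conjugacy of item (1) yields the claim: the symbolic orbit of any aperiodic point is contained in $O(u)$.

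I do not expect any genuine obstacle here, since the corollary is essentially a restatement of part (1) of the theorem once one identifies $O(u)$ with the subshift $X_u$ and recalls that the conjugating map is the coding map. The only point requiring a little care is to confirm that the partition realizing the conjugacy in item (1) is the same two-piece partition $\{AFC, HFE\}$ used to define the coding of $G$, so that the abstract conjugacy of systems literally transports the symbolic itinerary of $x$ into $O(u)$ rather than merely asserting an abstract topological conjugacy; this is immediate from the construction of $G$ as a piecewise isometry on these two triangles.
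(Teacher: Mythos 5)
Your argument is correct and coincides with the paper's (implicit) reasoning: the paper states this corollary without proof, treating it as an immediate consequence of item (1) of Tabachnikov's theorem, which is exactly how you derive it. Your additional care in checking that the conjugacy of item (1) is realized by the coding map with respect to the partition $\{AFC, HFE\}$, together with the identification $O(u)=X_u$ from Remark \ref{remcombinmot}, is a reasonable and faithful filling-in of the details the paper leaves unstated.
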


\subsection{Link between $(Z,G)$ and the outer billiard outside the regular pentagon}
We will make more precize the statement of Lemma \ref{isom-newcoding}.
We use the same definitions, but the sector will be denoted by $V$.
\begin{definition}
The points refer to Figure \ref{pentaT1}. We define three sets
\begin{itemize}
\item $U_1$ is the triangle $AEB$.
\item $U_2$ is an infinite polygon with vertices $IBE$.
\item $U_3$ is a cone of vertex $I$.
\end{itemize}
\end{definition}

\begin{figure}
\begin{center}  
\includegraphics[width= 6cm]{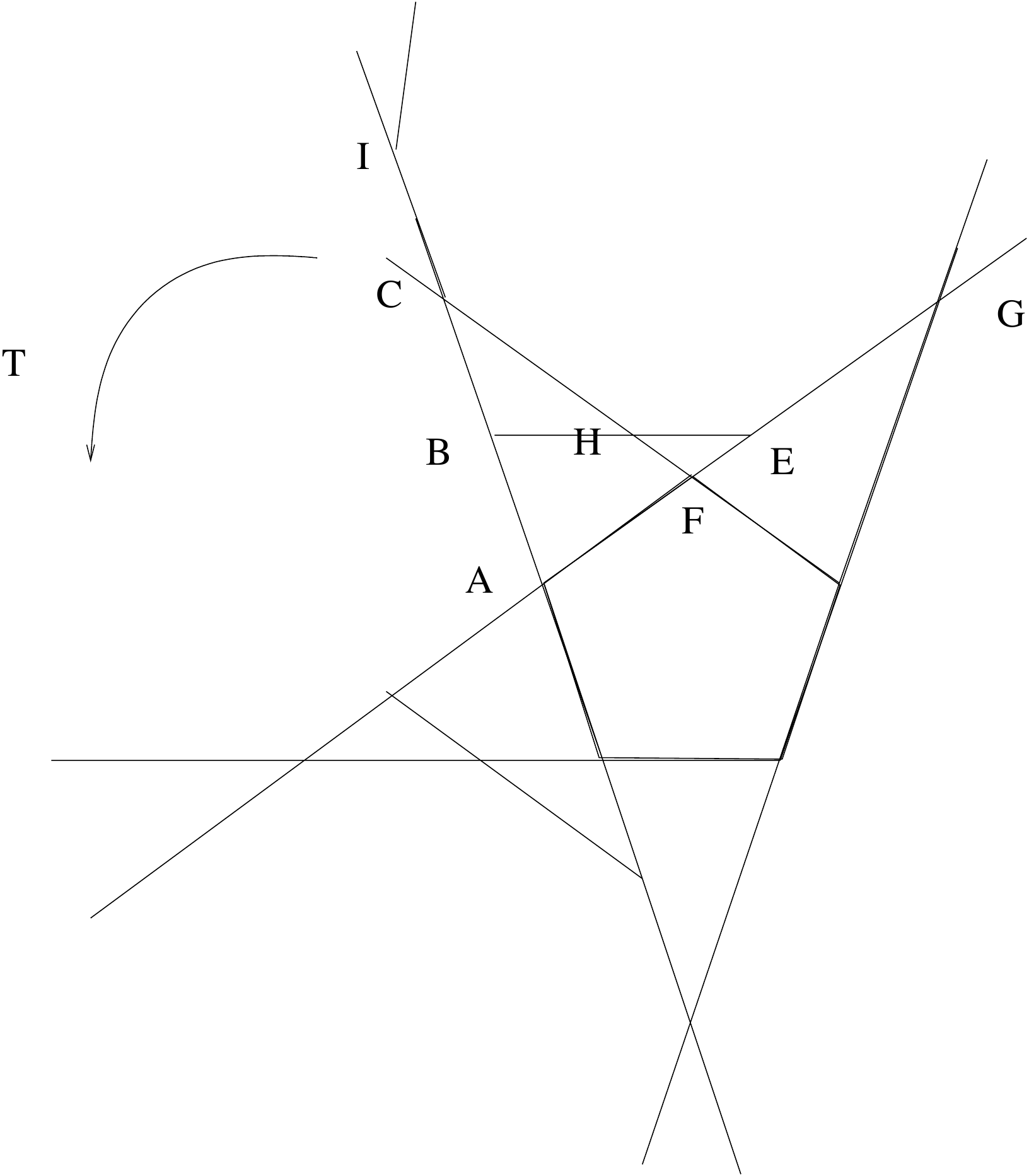}
\caption{Definition of $\hat{T}$} \label{pentaT1}
\end{center}
\end{figure}

\begin{lemma}\label{calcul-pentagone}
The map $\hat{T}$, see definition \ref{Tcone}, is defined on three subsets $U_1,U_2,U_3$.
$$V=U_1\cup U_2\cup U_3.$$
The images of $U_1,U_2,U_3$ by $\hat{T}$ verify the following properties:
 
\begin{itemize}
\item The first cell $U_1$ has for image the triangle $ACF$ and $\hat{T}_{|U_1}=RT$.
\item On the second cell $U_2$, we have $\hat{T}_{|U_2}=R^2T$, and the image of $U_2$ is an infinite polygon with vertices $CFG$. 
\item On the third set, we have $\hat{T}_{|U_3}=R^3T$, and the image of $U_3$ is a cone of vertex $G$.
\item The union of the triangles $ACF$ and $HFE$ is invariant by $\hat{T}$: $$U_1\cup (\hat{T}U_1\cap U_2).$$ 
\item The restriction of the map $\hat{T}$ to this invariant set is the map $(Z,G^{-1})$.
\item The set $[U_2\setminus (\hat{T}U_1\cap U_2)]\cup U_3$ is also invariant.
\end{itemize}
\end{lemma}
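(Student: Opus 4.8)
The plan is to compute $\hat{T}$ explicitly on the sector $V$ using complex coordinates, and then to recognise the resulting piecewise isometry as the inverse of Tabachnikov's map $G$. First I would fix coordinates so that the pentagon and the points $A,B,C,E,F,G,H,I$ of Figures~\ref{figtab} and~\ref{pentaT1} are explicit complex numbers, and record the basic fact that on the sector $V=V_0$ the outer billiard map $T$ is a single central symmetry, $T|_V=s_A$, about the vertex $A$ that is the tangency point for every $M\in V$. Since $s_A$ is an involution, the image cone $TV=s_A(V)$ is immediate, and the decomposition of $V$ into $U_1,U_2,U_3$ is obtained by pulling back through $s_A$ the three intersections $TV\cap V_1$, $TV\cap V_2$, $TV\cap V_3$. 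By Lemma~\ref{isom-newcoding} these are the only sectors met, so $V=U_1\cup U_2\cup U_3$, and on $U_i$ one has $n_{Tx}=i$, that is $\hat{T}|_{U_i}=R^iT$; comparing the pulled-back regions with Figure~\ref{pentaT1} identifies $U_1$ with the triangle $AEB$, $U_2$ with the infinite polygon $IBE$, and $U_3$ with the cone of vertex $I$.

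Next I would compute the three images as explicit isometries. Writing $\hat{T}|_{U_i}=R^i s_A$, the image $\hat{T}U_1=Rs_A(AEB)$ is obtained by applying one rigid motion to the triangle $AEB$, and a direct calculation shows it to be the triangle $ACF$; similarly $\hat{T}U_2=R^2 s_A(IBE)$ is the infinite polygon with vertices $CFG$, and $\hat{T}U_3=R^3 s_A$ sends the cone of vertex $I$ to the cone of vertex $G$. These computations also show that $ACF$, the polygon $CFG$ and the cone at $G$ tile $V$, so that $\hat{T}$ is an almost-everywhere bijection of $V$ onto itself.

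The core of the argument is to identify the invariant set and the restricted dynamics. From $\hat{T}U_1=ACF$ one checks the set identity $ACF\cup HFE=U_1\cup(\hat{T}U_1\cap U_2)$, which presents $Z$ as the union of the piece $U_1$, on which $\hat{T}=RT$, and the piece $\hat{T}U_1\cap U_2\subset U_2$, on which $\hat{T}=R^2T$. To show simultaneously that $\hat{T}(Z)=Z$ and that $\hat{T}|_Z=G^{-1}$, I would compute rotation angles: $Rs_A$ is a rotation by $\pi-2\pi/5=3\pi/5$ and $R^2s_A$ is a rotation by $\pi-4\pi/5=\pi/5$, the exact opposites of the angles $-3\pi/5$ and $-\pi/5$ of the two branches of $G$. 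Since a planar rotation is determined by its angle together with the image of one point, it then suffices to check that the two domain pieces $U_1$ and $\hat{T}U_1\cap U_2$ coincide with $G(AFC)$ and $G(HFE)$ and that the point correspondences $E\mapsto C$ and $C\mapsto H$ hold, so that the centres of $Rs_A$ and $R^2s_A$ are Tabachnikov's $O_1$ and $O_2$; this matches $\hat{T}|_Z$ branch by branch with $G^{-1}$ and yields invariance of $Z$. Finally, because $\hat{T}$ is a bijection of $V$ and $V\setminus Z=[U_2\setminus(\hat{T}U_1\cap U_2)]\cup U_3$, invariance of $Z$ forces invariance of its complement, which is the last assertion.

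I expect the main obstacle to be this centre-matching step. The angles fall out of the composition-of-rotations formula for free, but verifying that the centres of $Rs_A$ and $R^2s_A$ are exactly $O_1$ and $O_2$, and that the domain pieces match $G(AFC)$ and $G(HFE)$, requires carrying the explicit coordinates of $A,C,E,H$ through the two isometries and comparing with Tabachnikov's definitions; this is where all the concrete pentagon geometry must be done by hand.
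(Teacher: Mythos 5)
Your proposal is correct and follows essentially the same route as the paper: the paper's proof likewise reduces everything to the explicit piecewise form $\hat{T}|_{U_i}=R^iT$ with $T|_V=s_A$ and then checks invariance of $Z$ via the two inclusions $\hat{T}U_1=ACF\subset U_1\cup(\hat{T}U_1\cap U_2)$ and $\hat{T}(\hat{T}U_1\cap U_2)=HFE\subset U_1$. Your write-up is in fact more complete than the paper's, which leaves the identification of the images, the conjugacy with $G^{-1}$ (your angle-and-centre matching, confirmed later in the paper by Lemma \ref{lem-rot-calcul}), and the invariance of the complement to figure inspection.
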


\begin{proof}
By definition we have 
$$TU_1\subset V_1, TU_2\subset V_2, TU_3\subset V_3.$$
We must verify $$\hat{T}(U_1\cup(\hat{T}U_1\cap U_2))\subset U_1\cup (\hat{T}U_1\cap U_2).$$ 
We have $\hat{T}U_1\subset U_1\cup U_2$ since $ACF\subset U_1\cup U_2$, thus $\hat{T}U_1\subset U_1\cup (\hat{T}U_1\cap U_2)$, and $\hat{T}(\hat{T}U_1\cap U_2)=\hat{T}(BHC)=HFE\subset U_1$.
\end{proof} 


\section{Dynamics on $U_2\cup U_3$}\label{secindupent}
The last point of the preceding lemma implies that we can restrict our study to a piece of $U_2\cup U_3$.
\subsection{First part}
\begin{lemma}\label{lemindupent}
The dynamics on the invariant set $[U_2\setminus (\hat{T}U_1\cap U_2)]\cup U_3$ is given by:
\begin{itemize}
\item The map restricted to $U_3$ is a rotation by angle $-\pi/5$.
\item The map restricted to $U_2\setminus Z$ is a rotation by angle $\pi/5$.
\end{itemize}
\end{lemma}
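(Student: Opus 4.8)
The plan is to establish the two rotation statements separately by computing the action of $\hat{T}$ explicitly on each of the two pieces $U_3$ and $U_2\setminus Z$, using the formulas for $\hat{T}_{|U_2}$ and $\hat{T}_{|U_3}$ already provided in Lemma \ref{calcul-pentagone}. Recall from that lemma that $\hat{T}_{|U_2}=R^2T$ and $\hat{T}_{|U_3}=R^3T$, where $R$ is the rotation of angle $-2\pi/5$ centered at the center of the pentagon and $T$ is the outer billiard map. Since the outer billiard map $T$ acts on each cone as a central symmetry about the appropriate vertex, the composition $R^jT$ restricted to a single cell is an orientation-preserving isometry, hence a rotation (it cannot be a translation since $T$ reverses orientation and $R^j$ is a rotation, so the composite is a rotation). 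The whole content of the lemma is therefore to identify the \emph{angles} of these two rotations.

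First I would treat $U_3$. On this cone we have $\hat{T}_{|U_3}=R^3T$. The map $T$ restricted to the cone $V_3$ is the central symmetry $s_A$ about the relevant vertex, which is a rotation by angle $\pi$ (equivalently $-\pi$). Composing with $R^3$, whose angle is $-6\pi/5$, gives a rotation by angle $\pi - 6\pi/5 = -\pi/5$ modulo $2\pi$. This yields the claimed angle $-\pi/5$ on $U_3$. The center of this rotation is recovered as the fixed point of $R^3 s_A$, but for the statement only the angle is needed.

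Next I would treat $U_2\setminus Z$. Here $\hat{T}_{|U_2}=R^2T$, and again $T$ acts as a central symmetry (rotation by $\pi$) on the cone $V_2$. Composing with $R^2$, of angle $-4\pi/5$, produces a rotation by $\pi - 4\pi/5 = \pi/5$ modulo $2\pi$, which is exactly the asserted angle $\pi/5$. The subtlety is that we must work on $U_2\setminus Z$ rather than all of $U_2$: the part $\hat{T}U_1\cap U_2 = BHC$ has been removed to form the invariant set $[U_2\setminus(\hat{T}U_1\cap U_2)]\cup U_3$, and one must check that on the complementary region the return map coincides with a single application of $\hat{T}$ (rather than a higher iterate), so that the angle computation above indeed gives the dynamics on the invariant set. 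This amounts to verifying that points of $U_2\setminus Z$ land, under $R^2T$, back inside the invariant set without passing through the removed triangle.

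The main obstacle I anticipate is precisely this bookkeeping about the invariant set: confirming that the decomposition of the invariant set into $U_3$ and $U_2\setminus Z$ is respected by the dynamics, and that the first-return map to the invariant set reduces on each piece to a single step of $\hat{T}$ with the stated angle. The angle arithmetic itself is routine modular computation; the geometric verification that no trajectory leaves the set $[U_2\setminus(\hat{T}U_1\cap U_2)]\cup U_3$ — equivalently that the pieces $U_3$ and $U_2\setminus Z$ are permuted correctly by the two rotations — is where care is required, and I would settle it by tracking the images of the cone boundaries and the vertices $B,H,C,E,G$ in Figure \ref{pentaT1}.
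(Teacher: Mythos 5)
Your proposal is correct in substance and arrives at the right angles, but it takes a different route from the paper. You compute the rotation angles algebraically, writing $\hat{T}_{|U_2}=R^2T$ and $\hat{T}_{|U_3}=R^3T$ and adding the angle $\pi$ of the central symmetry to $-4\pi/5$ resp.\ $-6\pi/5$; the paper instead argues geometrically from Figure \ref{pentaT1}, tracking the images of the labelled points ($C\mapsto H$, $H\mapsto E$, $I\mapsto C$, $E\mapsto G$, $B\mapsto F$) to identify each restriction as a rotation and, importantly, to locate the centers: the rotation on $U_2\setminus Z$ has the \emph{same center} $O_2$ as Tabachnikov's map $G$ (a fact used later when the invariant rings are glued), and the center $O_3$ for $U_3$ lies on the bisector but outside the sector. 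Your approach is cleaner for the angles and is in fact what the paper itself does later in Lemma \ref{lem-rot-calcul}, where $F(2)$ and $F(3)$ are written out as $z\mapsto e^{i\pi/5}(z-Z_2)+Z_2$ and $z\mapsto e^{-i\pi/5}z+\cdots$; it just postpones the determination of the centers. Your concern about the invariance of $[U_2\setminus(\hat{T}U_1\cap U_2)]\cup U_3$ is already discharged by the last item of Lemma \ref{calcul-pentagone}, so no first-return analysis is needed: the lemma concerns $\hat{T}$ itself restricted to that set. One point you should correct: a central symmetry of the plane does \emph{not} reverse orientation --- it is a rotation by $\pi$ --- so your parenthetical justification that $R^jT$ ``cannot be a translation since $T$ reverses orientation'' is wrong as stated (if $T$ reversed orientation the composite would be a reflection, not a rotation). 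The correct reason the composite is a rotation rather than a translation is exactly your angle computation: the total angle $\pi-2j\pi/5$ is $\pm\pi/5\not\equiv 0\pmod{2\pi}$.
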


\begin{proof}
On the figure we see that for $\hat{T}$ restricted to $U_2$:

$$C\mapsto H, H\mapsto E, I\mapsto C, E\mapsto G, B\mapsto F$$
\begin{itemize}
\item The map on this infinite pentagon is a rotation centered in a point $O_2$ on the bissector of the sector. It is the {\bf same center} as the map $G$.
 \item The map on the infinite triangle is a rotation centered on a point $O_3$ of the bissector, but not inside the sector.
\end{itemize}
\end{proof}

\subsection{Notations}
We use the coding related to $\hat{T}$, see Subsection \ref{newcoding}.
Using Lemma \ref{calcul-pentagone} we see that on the invariant set $Z$ we have the same coding as in the piecewise isometry of Tabachnikov. 

\begin{definition}
We define the map 
$$\begin{array}{ccccc}
F&:&\{1;2;3\}^*&\to& Isom(\mathbb{R}^2)\\
&&v&\mapsto& F(v)
\end{array}$$
where $v=v_0\dots v_{n-1}$ is a finite word over the alphabet $\{1,2,3\}$, $F(v)$ is the composition of isometries:
$F(v)=F(v_{n-1})\circ\dots F(v_0)$. The map $F(i)$ coincides with  $\hat{T}$ on $U_i$ for $i=1\dots 3$.
\end{definition}
Remark that $F(2)$ and $F(3)$ are rotations of opposite angles. $F(2)$ has angle $\pi/5$ and $F(1)$ has angle equal to $3\pi/5$.

\subsection{Dynamics}
\subsubsection{Conjugaison}

The plane is identified with $\mathbb{C}$ with the origin in the vertex $A$ of $V_0$ and $F$ has for affix $1$, see Figure \ref{pentaT1}. If $F(v)$ is a rotation, then the affix of its center is denoted by $Z_v$.
Here we will find some relations between the map $F(i), i=1\dots 3$. 
After this, we will deduce some results on the coding words.
We recall the following classical fact
\begin{lemma}\label{lem-rot-centre}
Let $u_1,u_2$ two rotations of the plane of the same angle with centers points of affixes $z_1,z_2$. Then the translation vector $t$ wich conjuges the maps fulfills 
$$u_1\circ t=t\circ u_2, t=z_1-z_2.$$
\end{lemma}

\begin{lemma}\label{lem-rot-calcul}
With preceding notations we find
\begin{itemize}
\item $F(2)(z)=e^{i\pi/5}(z-(1+\varphi)e^{i\pi/5})+(1+\varphi)e^{i\pi/5}.$
\item $F(1)(z)=e^{3i\pi/5}z+1.$
\item $F(3)(z)=e^{-i\pi/5}z+\varphi+1-(\varphi+1) e^{i\pi/5}.$
\item $Z_1=\frac{1}{1+e^{-2i\pi/5}}, Z_2=(1+\varphi)e^{i\pi/5}, Z_3=-(1+\varphi)e^{i\pi/5}.$
\item $\cos{\pi/5}=\varphi/2, \cos{2\pi/5}=\frac{\varphi-1}{2}$.
\end{itemize}
\end{lemma}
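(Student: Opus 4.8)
The plan is to compute each $F(\ell)$ directly in the complex coordinate fixed by the normalization of Figure \ref{pentaT1} (origin at $A$, affix of $F$ equal to $1$, angle $2\pi/5$ at $A$), starting from the factorization $\hat{T}_{|U_\ell}=R^\ell T$ supplied by Lemma \ref{calcul-pentagone}. Since $T$ restricted to $V_0$ is the central symmetry about a single tangent vertex, it is a half-turn, i.e.\ a rotation of angle $\pi$, while $R$ is the rotation of angle $-2\pi/5$. Hence the linear part of $R^\ell T$ is $e^{i\pi}\,e^{-2i\pi\ell/5}=e^{i\pi(5-2\ell)/5}$, which for $\ell=1,2,3$ equals $e^{3i\pi/5},e^{i\pi/5},e^{-i\pi/5}$. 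This recovers the angles $3\pi/5,\pi/5,-\pi/5$ announced before the lemma and shows that every $F(\ell)$ is a genuine rotation (the linear part is never $1$), settling the homogeneous part of all three formulas simultaneously.

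Next I would record the two building blocks in complex form, namely the half-turn $T(z)=2p-z$ about the tangent vertex of affix $p$ and the rotation $R(z)=e^{-2i\pi/5}(z-\omega)+\omega$ about the pentagon centre $\omega$, so that each map takes the affine form $F(\ell)(z)=e^{i\theta_\ell}z+c_\ell$ with $c_\ell$ an explicit expression in $p$ and $\omega$. The constants $c_\ell$ are then pinned down by a single point--image pair read from the geometry already in place: the normalization gives $A=0$, $F=1$, and the angle condition places $C$ at affix $\varphi e^{2i\pi/5}$, while Lemma \ref{lemindupent} and Lemma \ref{calcul-pentagone} provide the correspondences (for instance $B\mapsto F$ under $F(2)$, and $\hat{T}U_1=ACF$). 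Substituting one such pair into $z\mapsto e^{i\theta_\ell}z+c_\ell$ determines $c_\ell$ and yields the three displayed formulas for $F(1),F(2),F(3)$.

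With the affine forms in hand, the centres are the fixed points: solving $Z_\ell=e^{i\theta_\ell}Z_\ell+c_\ell$ gives $Z_\ell=c_\ell/(1-e^{i\theta_\ell})$. A short simplification then reduces these to the stated values; one uses $1-e^{3i\pi/5}=1+e^{-2i\pi/5}$ to get $Z_1=\frac{1}{1+e^{-2i\pi/5}}$, and $\frac{1-e^{i\pi/5}}{1-e^{-i\pi/5}}=-e^{i\pi/5}$ to turn $Z_3=(\varphi+1)(1-e^{i\pi/5})/(1-e^{-i\pi/5})$ into $-(1+\varphi)e^{i\pi/5}$, with $Z_2=(1+\varphi)e^{i\pi/5}$ obtained directly; Lemma \ref{lem-rot-centre} can be quoted to confirm that $Z_2$ and $Z_3$ are conjugate centres on the bisector, as required by Lemma \ref{lemindupent}. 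Finally, the two trigonometric identities are the classical golden-ratio relations: they follow from $\varphi^2=\varphi+1$ together with $\cos(\pi/5)-\cos(2\pi/5)=\tfrac12$ and the double-angle formula, or equivalently from the minimal polynomial of $2\cos(\pi/5)$.

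I expect the main obstacle to be the geometric bookkeeping rather than the algebra. One must correctly identify the tangent vertex affix $p$ and the pentagon centre $\omega$ in the chosen coordinate, and correctly read the affixes of the auxiliary points $B,C,E,F,G,H,I$ from Figure \ref{pentaT1}, because any error there propagates directly into the constants $c_\ell$ and hence into the centres $Z_\ell$. The angle determination and the fixed-point computation are routine once these affixes and the correspondences of Lemma \ref{lemindupent} are fixed.
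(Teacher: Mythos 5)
Your proposal is correct and follows essentially the same route as the paper: fix the rotation angles (which you derive cleanly from the linear part of $R^{\ell}T$, something the paper only asserts), pin down each affine constant from a point--image pair read off Figure \ref{pentaT1} (the paper uses $A\mapsto F$ for $F(1)$ and $I\mapsto G$ for $F(3)$, and for $F(2)$ reads the centre directly from the invariant decagon rather than from a pair such as $B\mapsto F$), then obtain the centres as fixed points and check the golden-ratio identities. These are only cosmetic differences, so no further comparison is needed.
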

\begin{proof}
We will refer to Figure \ref{pentaT1}. The map $F(2)$ is a rotation of angle $\pi/5$. 
Thus it fixes a decagon.
The decagon has for ray $R=\varphi$ and for side length $1$, then its center is 
$Z_2=(1+\varphi)e^{i\pi/5}$, thus we obtain the expression of $F(2)$. 

Now $F(3)$ has opposite angle, to obtain the constant, we remarked that $F(3)$ maps the point $I$ of coordinates $(\varphi+1) e^{i2\pi/5}$ to the point $G$ of coordinates $\varphi+1$.
$$F(3)(z)=e^{-i\pi/5}z+\varphi+1-(\varphi+1) e^{i\pi/5}.$$
Then its center is
$$Z_3=(1+\varphi)\frac{1-e^{i\pi/5}}{1-e^{-i\pi/5}}=-(1+\varphi)e^{i\pi/5}.$$

For $F(1)$ we see that it maps the point $A$ of coordinates $0$ to the point $F$ of coordinates $1$, thus:
$$F(1)(z)=e^{3i\pi/5}z+1.$$
Its center is 
$$Z_1=\frac{1}{1+e^{-2i\pi/5}}.$$
\end{proof}

\begin{lemma}\label{translation-calcul}
There exists a translation $t$ such that
$$F(223)\circ t=t\circ F(2).$$
$$F(2223223)\circ t=t\circ F(1).$$
$$F(3^{-1}2^{-1}3)\circ t=t\circ F(3).$$
\end{lemma}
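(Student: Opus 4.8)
The plan is to reduce all three identities to a single principle. In each line the left-hand side $F(w)$ is, by Lemma \ref{lem-rot-calcul} and the fact that $F$ reverses the order of composition, a product of rotations of angles $\pm\pi/5$ and $3\pi/5$; I will first check that this product is a rotation of exactly the same angle as the target rotation $F(2)$, $F(1)$ or $F(3)$ on the right. Once the angles agree, Lemma \ref{lem-rot-centre} tells us that the only isometry conjugating the two equal-angle rotations is the translation by the vector joining their centers. Thus each line forces $t$ to be the translation by $Z_w-Z_{\text{target}}$, where $Z_w$ is the center of $F(w)$, and the whole content of the lemma is that these three vectors coincide; it is exactly this coincidence that I will have to establish by computation.

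The angle check is immediate because angles add. The word $223$ has two $2$'s and one $3$, giving angle $2\frac{\pi}{5}-\frac{\pi}{5}=\frac{\pi}{5}$, the angle of $F(2)$; the word $2223223$ has five $2$'s and two $3$'s, giving $\frac{5\pi}{5}-\frac{2\pi}{5}=\frac{3\pi}{5}$, the angle of $F(1)$; and $F(3^{-1}2^{-1}3)=F(3)\circ F(2)^{-1}\circ F(3)^{-1}$ has angle $-\frac{\pi}{5}-\frac{\pi}{5}+\frac{\pi}{5}=-\frac{\pi}{5}$, the angle of $F(3)$. In none of the cases is the total a multiple of $2\pi$, so each left-hand side is genuinely a rotation and Lemma \ref{lem-rot-centre} applies.

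The last line is the cheapest to analyse: $F(3)\circ F(2)^{-1}\circ F(3)^{-1}$ is the conjugate of the rotation $F(2)^{-1}$ (center $Z_2$) by the orientation-preserving isometry $F(3)$, so its center is simply $F(3)(Z_2)$, and the required vector is $F(3)(Z_2)-Z_3$. For the other two I will compose the explicit affine maps $z\mapsto a_iz+b_i$ of Lemma \ref{lem-rot-calcul} into a single map $z\mapsto Az+B$ and read off the center as its fixed point $B/(1-A)$. Here the simplifications $1+\varphi=\varphi^2$ and $2\cos\frac{\pi}{5}=\varphi$ (so that $Z_1=\varphi^{-1}e^{i\pi/5}$, $Z_2=\varphi^2e^{i\pi/5}$, $Z_3=-\varphi^2e^{i\pi/5}$), together with the factorization $2-e^{i\pi/5}-e^{2i\pi/5}=(1-e^{i\pi/5})(2+e^{i\pi/5})$, keep the expressions manageable; for instance the center of $F(223)$ collapses to $(1+\varphi)(2+e^{i\pi/5})$.

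The main obstacle, and the only substantial step, is to verify that the three vectors are equal, i.e. that
\[
Z_{223}-Z_2 \;=\; Z_{2223223}-Z_1 \;=\; F(3)(Z_2)-Z_3 .
\]
I expect each side to reduce to the same real number $\tau=2(1+\varphi)$, so that $t$ is the horizontal translation by $2(1+\varphi)$; defining $t$ this way then yields the three identities at once. Concretely, the right-hand expression gives $F(3)(Z_2)-Z_3=2(1+\varphi)$ after the cancellation of the $e^{i\pi/5}$ terms, and the same cancellation occurs for $F(223)$; the middle term $Z_{2223223}-Z_1$ requires composing the length-seven word but, after the same reductions, also collapses to $2(1+\varphi)$. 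This common value is the renormalization (self-similarity) constant for the pentagon, the analogue of the translations $t_2$ and $t'$ used for the square and the hexagon, and it is what will make the substitutions $\psi$ and $\xi$ act on the language $L'$.
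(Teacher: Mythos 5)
Your proposal is correct and follows essentially the same route as the paper: check that each left-hand side is a rotation of the same angle as the target, compute the centers from the explicit formulas of Lemma \ref{lem-rot-calcul}, and apply Lemma \ref{lem-rot-centre} to see that all three conjugating vectors equal $2(1+\varphi)$. Your treatment of the third identity (reading the center of $F(3)\circ F(2)^{-1}\circ F(3)^{-1}$ as $F(3)(Z_2)$, since conjugation transports centers) is a small but genuine shortcut over the paper, which merely asserts that line follows ``by the same method,'' and your explicit verification that none of the angle sums is a multiple of $2\pi$ fills a point the paper takes for granted.
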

\begin{proof}
The maps $F(223)$  and $F(2)$ are rotations of same angle thus they are conjugated by a translation. Denote this map by $t$. The same argument shows that $F(22223)$ and $F(1)$ are conjugated by a translation, it remains to prove it is the same vector. To finish the proof  we must do the same computation for the two rotations $F(2223223),F(1)$. We use Lemma \ref{lem-rot-calcul} in the following.
\begin{itemize}
\item $$F(2)^2(z)=e^{i2\pi/5}(z-(1+\varphi)e^{i\pi/5})+(1+\varphi)e^{i\pi/5}.$$
$$F(3)\circ F(22)(z)=e^{i\pi/5}(z-(1+\varphi)e^{i\pi/5})+(1+\varphi)+\varphi+1-(\varphi+1) e^{i\pi/5}.$$
$$F(3)\circ F(22)(z)=e^{i\pi/5}(z-(1+\varphi)e^{i\pi/5})+(1+\varphi)(2-e^{i\pi/5}).$$
$$F(3)\circ F(22)(z)=F(2)(z)+2(\varphi+1)(1-e^{i\pi/5}).$$
Thus its center is the point
$$Z_{223}=(1+\varphi)\frac{2-e^{i\pi/5}-e^{2i\pi/5}}{1-e^{i\pi/5}}$$
Lemma \ref{lem-rot-centre} gives 
$$t=Z_{223}-Z_2=2(1+\varphi).$$

\item We show here that $t=Z_{2223223}-Z_1.$
First remark that $F(2223223)=F(223)^2\circ F(2)$. Now we have
$$F(223)^2(z)=e^{2i\pi/5}(z-Z_{223})+Z_{223}.$$
Thus we obtain:
$$F(2223223)(z)=e^{2i\pi/5}(F(2)(z)-Z_{223})+Z_{223}.$$
Since we have 
$F(2)(z)=e^{i\pi/5}z-(1+\varphi)e^{i2\pi/5}+(1+\varphi)e^{i\pi/5}$ we deduce
$$F(2223223)(z)=e^{2i\pi/5}[e^{i\pi/5}z-(1+\varphi)e^{i2\pi/5}+(1+\varphi)e^{i\pi/5}-Z_{223}]+Z_{223}.$$
Now preceding item gives 
$$Z_{223}=(1+\varphi)(2+e^{i\pi/5}).$$
$$F(2223223)(z)=e^{2i\pi/5}[e^{i\pi/5}z-(1+\varphi)(e^{i2\pi/5}-e^{i\pi/5}+2+e^{i\pi/5})]+(1+\varphi)(2+e^{i\pi/5}).$$
$$F(2223223)(z)=e^{2i\pi/5}[e^{i\pi/5}z-(1+\varphi)(e^{i2\pi/5}+2)]+(1+\varphi)(2+e^{i\pi/5}).$$

$$F(2223223)(z)=e^{3i\pi/5}z+(1+\varphi)(2+e^{i\pi/5}-e^{i4\pi/5}-2e^{2i\pi/5}).$$




$$Z_{2223223}=(1+\varphi)\frac{2+e^{i\pi/5}-e^{i4\pi/5}-2e^{2i\pi/5} }{1-e^{i3\pi/5}}.$$
$$Z_{2223223}=(1+\varphi)\frac{2+e^{i\pi/5}+e^{-i\pi/5}-2e^{2i\pi/5} }{1+e^{-i2\pi/5}}.$$
$$Z_{2223223}=(1+\varphi)\frac{2+2\cos{\pi/5}-2e^{2i\pi/5}}{1+e^{-i2\pi/5}}.$$
$$Z_{2223223}=(1+\varphi)\frac{2+\varphi-2e^{2i\pi/5}}{1+e^{-i2\pi/5}}.$$
$$Z_{2223223}=\frac{2+\varphi+2\Phi+\varphi+1-2(1+\varphi)e^{2i\pi/5}}{1+e^{-i2\pi/5}}.$$
$$Z_{2223223}=\frac{3+4\varphi-2(1+\varphi)e^{2i\pi/5}}{1+e^{-i2\pi/5}}.$$
Now we compute the translation vector
$$Z_{2223223}-Z_1=\frac{3+4\varphi-2(1+\varphi)e^{2i\pi/5}-1}{1+e^{-i2\pi/5}}.$$
$$Z_{2223223}-Z_1=2\frac{1+2\varphi-(1+\varphi)e^{2i\pi/5}}{1+e^{-i2\pi/5}}.$$
$$Z_{2223223}-Z_1=2\frac{1+2\varphi-(1+\varphi)\cos{2\pi/5}-(1+\varphi)i\sin{2\pi/5}}{1+e^{-i2\pi/5}}.$$
$$Z_{2223223}-Z_1=2\frac{1+2\varphi-(1+\varphi)(\varphi-1)/2-(1+\varphi)i\sin{2\pi/5}}{1+e^{-i2\pi/5}}.$$
$$Z_{2223223}-Z_1=\frac{2+3\varphi-2i(1+\varphi)\sin{2\pi/5}}{1+e^{-i2\pi/5}}.$$
$$Z_{2223223}-Z_1=\frac{2+3\varphi-2i(1+\varphi)\sin{2\pi/5}}{1+\cos{2\pi/5}-i\sin{2\pi/5}}.$$
$$Z_{2223223}-Z_1=2\frac{2+3\varphi-2i(1+\varphi)\sin{2\pi/5}}{1+\varphi-2i\sin{2\pi/5}}.$$
$$Z_{2223223}-Z_1=2\frac{(1+\varphi)^2-2i(1+\varphi)\sin{2\pi/5}}{1+\varphi-2i\sin{2\pi/5}}.$$
$$Z_{2223223}-Z_1=2(1+\varphi).$$

The last equality is proved by the same method.
\end{itemize}
\end{proof}

\begin{lemma}
There exists a rotation $u$ such that
$$F(32222)\circ u=u\circ F(1).$$
$$F(322222)\circ u=u\circ F(12).$$
\end{lemma}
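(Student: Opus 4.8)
The plan is to mirror the argument of Lemma~\ref{translation-calcul}, replacing the conjugating translation by a rotation and using the elementary fact that conjugation of a rotation by an orientation-preserving isometry produces a rotation of the \emph{same} angle whose center is the image of the original center.

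First I would check that the two pairs of maps appearing in the statement are rotations of equal angle. Since $F$ is an anti-homomorphism, $F(32222)=F(2)^4\circ F(3)$ and $F(322222)=F(2)^5\circ F(3)$, while $F(12)=F(2)\circ F(1)$. Using the angles recorded after Lemma~\ref{lem-rot-calcul}, namely $3\pi/5$ for $F(1)$, $\pi/5$ for $F(2)$ and $-\pi/5$ for $F(3)$, one finds that $F(32222)$ has angle $4\pi/5-\pi/5=3\pi/5$, equal to that of $F(1)$, and that $F(322222)$ has angle $5\pi/5-\pi/5=4\pi/5$, equal to that of $F(12)$. In particular all four maps are genuine rotations, so each conjugacy has a chance of being realized by a rotation $u$.

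Next I would reduce each conjugacy relation to a statement about centers. If $u$ is a rotation and $\rho$ is the rotation of angle $\theta$ centered at $Z$, then $u\circ\rho\circ u^{-1}$ is the rotation of angle $\theta$ centered at $u(Z)$. Hence $F(32222)\circ u=u\circ F(1)$ holds as soon as $u(Z_1)=Z_{32222}$, and $F(322222)\circ u=u\circ F(12)$ holds as soon as $u(Z_{12})=Z_{322222}$, where $Z_v$ denotes the center of the rotation $F(v)$. So it suffices to produce a single rotation $u$ that sends $Z_1$ to $Z_{32222}$ and $Z_{12}$ to $Z_{322222}$ simultaneously.

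I would then compute the four affixes $Z_1,Z_{12},Z_{32222},Z_{322222}$ explicitly from Lemma~\ref{lem-rot-calcul}, exactly as $Z_{223}$ and $Z_{2223223}$ were obtained in the proof of Lemma~\ref{translation-calcul}: each $F(v)$ has the form $z\mapsto e^{i\theta}z+c$, so its center is $Z_v=c/(1-e^{i\theta})$, which is simplified using $\cos\pi/5=\varphi/2$ and $\cos2\pi/5=(\varphi-1)/2$. An orientation-preserving isometry carrying the ordered pair $(Z_1,Z_{12})$ to $(Z_{32222},Z_{322222})$ exists, is unique, and is a rotation precisely when $Z_{32222}-Z_{322222}=e^{i\gamma}(Z_1-Z_{12})$ for some angle $\gamma\neq 0\pmod{2\pi}$; the map $u$ is then the rotation of angle $\gamma$ and center determined by $u(Z_1)=Z_{32222}$. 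The main obstacle is exactly this last verification, i.e. confirming that the four centers sit in the rigid position forced by a single rotation, which amounts to a golden-ratio identity among the $Z_v$ analogous to the chain of simplifications ending in $2(1+\varphi)$ in Lemma~\ref{translation-calcul}. Once that identity is established, the existence of $u$ and both conjugacy relations follow at once.
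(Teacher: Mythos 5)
Your proposal is correct and follows the same basic strategy as the paper: observe that the angles match, use the fact that conjugating a rotation by an orientation-preserving isometry preserves the angle and moves the center, and thereby reduce both identities to the requirement that a single rotation $u$ send $Z_1\mapsto Z_{32222}$ and $Z_{12}\mapsto Z_{322222}$. The one point where the paper proceeds differently is in pinning down $u$: instead of computing all four centers and checking directly that the segment $Z_1Z_{12}$ is congruent to $Z_{32222}Z_{322222}$ with a nonzero turning angle $\gamma$, the paper divides the second relation by the first (using $F(322222)=F(2)\circ F(32222)$ and $F(12)=F(2)\circ F(1)$) to obtain $F(2)\circ u=u\circ F(2)$, which forces $u$ to share its center with $F(2)$, i.e.\ to be centered at $Z_2$; the angle is then read off as $-3\pi/5$. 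That trick buys a cheaper identification of $u$ (only $Z_2$ and one angle need to be computed), whereas your route is more self-contained but leaves a four-center golden-ratio verification at the end, which you correctly flag as the remaining obstacle; to be complete you would still need to carry it out, just as the paper's proof implicitly relies on the final (unshown) computation that the rotation about $Z_2$ by $-3\pi/5$ actually realizes both conjugacies.
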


\begin{proof}
The rotations $F(32222)$ and $F(1)$ have the same angle, thus they are conjugated by a rotation. This rotation must exchange the two centers.
The same thing is true for $F(32^5),F(12)$, thus there exists only one rotation which conjugate both maps. Moreover we obtain 
$F(2)\circ u=u\circ F(2)$. This implies that $u$ has the same center as $F(2)$. Now the angle of $u$ is computed and we obtain $-3\pi/5$.


\end{proof}

\subsubsection{Words}
\begin{corollary}\label{crucial-translations}
\begin{itemize}
\item For any word $v$ of the language of $\hat{T}$ we have: $$F(\psi(v))\circ t=t\circ F(v), F(\xi(v))\circ u=u\circ F(v).$$
\item For the language of the map $\hat{T}$, we have equivalence between
\begin{itemize}
\item$v$ is the code of a periodic point.
\item $\psi(v)$ is the code of a periodic point.
\end{itemize}
\item For the language of the map $\hat{T}$, we have equivalence between
\begin{itemize}
\item$v$ is a periodic word.
\item $\xi(v)$ is a periodic word.
\end{itemize}
\end{itemize} 
\end{corollary}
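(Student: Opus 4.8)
The three assertions are layered, and the plan is to treat the first bullet as the engine and then read off the two equivalences from it. So I would first establish the two conjugation identities $F(\psi(v))\circ t=t\circ F(v)$ and $F(\xi(v))\circ u=u\circ F(v)$, and only afterwards transport periodicity through the conjugating isometries $t$ and $u$.

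For the first bullet the key structural observation is that $v\mapsto F(v)$ is an anti-morphism: by the very definition $F(v)=F(v_{n-1})\circ\cdots\circ F(v_0)$ one has $F(vw)=F(w)\circ F(v)$ for all words $v,w$. Since $\psi$ and $\xi$ are morphisms (they extend by concatenation), I would argue by induction on $|v|$. Writing $v=av'$ with $a$ a single letter gives $F(\psi(v))=F(\psi(v'))\circ F(\psi(a))$, and the induction hypothesis $F(\psi(v'))=t\circ F(v')\circ t^{-1}$ reduces the inductive step to the generator identity $F(\psi(a))\circ t=t\circ F(a)$ for $a\in\{1,2,3\}$. The generator identities are the single-letter content provided by Lemma \ref{translation-calcul} for the translation $t$, and by the following lemma (the one producing the rotation $u$) for the parallel identity attached to $\xi$; both ultimately rest on Lemma \ref{lem-rot-centre}, which conjugates two rotations of equal angle by the translation joining their centers.

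The delicate point, and the step I expect to cost the most care, is the reconciliation between the letter images of $\psi$ and the words actually appearing in Lemma \ref{translation-calcul}. The lemma conjugates $F(2),F(1),F(3)$ by $t$ to the isometries of the words $223$, $2223223$ and $3^{-1}2^{-1}3$, whereas $\psi$ sends $2,1,3$ to $232$, $2232232$, $2^{-1}$; these are conjugate to one another in the free group $F_3$, namely cyclic shifts in the first two cases and a cyclic reduction in the third. Because $F$ is an anti-morphism, conjugate words yield conjugate isometries, so I would record each generator relation in the normalized form $F(w(a))=t\,F(a)\,t^{-1}$ with $w(a)$ the lemma word, write $\psi(a)=c_a^{-1}w(a)c_a$, and then verify that in any product $F(\psi(a_n))\circ\cdots\circ F(\psi(a_1))$ arising from a word $v$ of the language the conjugating factors $F(c_{a_i})$ cancel across the block boundaries, so that only the global map $t$ (respectively $u$) survives. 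This cancellation uses the adjacency constraints of the language and is precisely why the identity is ultimately invoked at the periodic/shift level, where cyclic shifts of a word are immaterial.

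Granting the two conjugation identities, the remaining bullets are formal. For the second, if $v$ codes a periodic point then the fixed point $x$ of the elliptic isometry $F(v)$ has itinerary $v^\omega$; then $t(x)$ is fixed by $F(\psi(v))=t\,F(v)\,t^{-1}$, and a direct check on the coding shows its itinerary is $\psi(v)^\omega$, so $\psi(v)$ codes a periodic point. Applying the identity conjugated by $t^{-1}$ gives the converse, the two directions being exact because conjugate isometries are simultaneously elliptic. The third bullet is the same argument with $\xi$ and the rotation $u$ in place of $\psi$ and $t$: a periodic word $v^\omega$ corresponds to $F(v)$ being a finite-order rotation realized in the language, a property preserved under conjugation by the isometry $u$, whence $v^\omega$ is a periodic word if and only if $\xi(v)^\omega$ is. This completes the plan.
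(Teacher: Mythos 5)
Your proposal follows the paper's proof essentially verbatim: the paper likewise establishes the two identities on the generators $1,2,3$ via Lemma \ref{translation-calcul} and the subsequent rotation lemma, extends them to arbitrary words by composition, and then obtains the periodicity equivalences by transporting fixed points, $F(v)m=m$ giving $F(\psi(v))(tm)=t F(v)m=tm$, with the converse by conjugating back. The one point where you are more careful than the paper --- reconciling the lemma's words $223$, $2223223$, $3^{-1}2^{-1}3$ with the letter images of $\psi$, which agree only up to conjugation in $F_3$ --- is an issue the paper silently defers to its later observation that conjugating $\psi$ does not change the language, so your extra discussion is a useful clarification rather than a different route.
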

\begin{proof}
\begin{itemize}
\item The equalities are fulfilled for $1,2,3$ by preceding Lemma. Thus the equalities are true since $1,2,3$ are generators of every word.
\item Assume $v$ is a periodic word. Let $m$ be a point in the cell of $v$. We have 
$F(v)m=m$. Now we have 
$F(\psi(v))\circ tm=t\circ F(v)m=tm$. We deduce that $tm$ is a periodic point for the word $\psi(v)$. since this equality is true for every point $m$, the word $\psi(v)$ is periodic. 
\end{itemize}
The other part of the proof is similar.
\end{proof}

\begin{corollary}
\begin{itemize}
\item The cells associated to the periodic words of period $\psi^{2k}(2), \psi^{2k+1}(1)$ are regular decagons. 
\item The cells associated to the periodic words of period $\psi^k(21)$ are regular pentagons.
\item The cells associated to the periodic words of period $\xi^{k}(1)$ are regular decagons.
\item The cells associated to the periodic words of period $\xi^{k}(12)$ are regular pentagons.
\end{itemize} 
\end{corollary}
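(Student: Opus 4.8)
The plan is to reduce the whole statement to two facts already available: the theorem of Tabachnikov recalled in Section~\ref{sectab}, by which every connected component of $V_{per}$ is a regular pentagon or a regular decagon, and the conjugacy relations of Corollary~\ref{crucial-translations}, which realize $\psi$ as conjugation by the translation $t$ and $\xi$ as conjugation by the rotation $u$. The key observation is that $t$ and $u$ are orientation-preserving isometries, so the identities $F(\psi(v))=tF(v)t^{-1}$ and $F(\xi(v))=uF(v)u^{-1}$ leave the \emph{rotation angle} of $F(v)$ unchanged. Hence the order of $F(v)$ as a rotation, and with it the type of the periodic cell fixed by $F(v)$, is invariant under $\psi$ and under $\xi$.

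First I would set up the dictionary between the order of $F(w)$ and the type of the cell. For a period word $w$ the map $F(w)$ is a rotation whose angle is $j\pi/5$, where $j$ is three times the number of letters $1$ in $w$, plus the number of letters $2$, minus the number of letters $3$ (this uses the angles $3\pi/5,\ \pi/5,\ -\pi/5$ of $F(1),F(2),F(3)$ from Lemma~\ref{lem-rot-calcul} and the remark following the definition of $F$). Such a rotation has order $10$ when $j$ is coprime to $10$, and order $5$ when $j$ is even but not divisible by $10$; a regular decagon is fixed by a rotation of order $10$ and a regular pentagon by one of order $5$. By Tabachnikov's theorem the cell of $w^\omega$ is a connected component of $V_{per}$, hence one of these two regular polygons, and the order of $F(w)$ decides which one.

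Next I would verify the seeds and propagate. One checks directly that $2$ gives $j=1$ and $1$ gives $j=3$, both coprime to $10$, so $2^\omega$ and $1^\omega$ code regular decagons (for $1^\omega$ this is also the case $\sigma^0(1)$ of Tabachnikov's classification); and $21$ gives $j=4$, so $21^\omega$, equal to $12^\omega=\sigma^0(12)$ up to a shift, codes a regular pentagon. By Corollary~\ref{crucial-translations} the cell of $\psi(v)$ is the isometric image $t(\text{cell of }v)$ and the cell of $\xi(v)$ is $u(\text{cell of }v)$, and an isometry carries a regular decagon to a regular decagon and a regular pentagon to a regular pentagon. A straightforward induction on the number of iterations then shows that every $\psi^m(2)$, $\psi^m(1)$, $\psi^m(21)$ and every $\xi^k(1)$, $\xi^k(12)$ codes a cell of the asserted type; in particular $\psi^{2k}(2)$ and $\psi^{2k+1}(1)$ are decagons, $\psi^k(21)$ are pentagons, $\xi^k(1)$ are decagons and $\xi^k(12)$ are pentagons. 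The even/odd split in the first item is subsumed here, since both the even powers of the seed $2$ and the odd powers of the seed $1$ start from order-$10$ rotations; equivalently, one can skip the induction altogether and note that the angle of $F(\psi^m(w))$ (resp. $F(\xi^k(w))$) equals that of $F(w)$, so the order, and hence the cell type, is read off once and for all from the seed.

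The hard part will be the first step: justifying that the object ``the cell associated to the period word $w^\omega$'' is genuinely a single connected periodic component, i.e.\ a full regular polygon, and that the conjugating isometry sends this cell \emph{onto} the cell of the image word, not merely periodic points to periodic points. This is precisely the role of the equivalences in Corollary~\ref{crucial-translations} between $v$, $\psi(v)$ and $\xi(v)$ being codes of periodic points and periodic words, combined with the fact, from Lemmas~\ref{calcul-pentagone} and~\ref{lemindupent}, that on the relevant invariant set $F(w)$ really is the return dynamics coded by $w$. Once the cell is known to be a periodic component, Tabachnikov's rigidity forces it to be a regular pentagon or decagon, and the angle computation decides which; everything else is the routine angle bookkeeping sketched above, requiring no delicate estimate.
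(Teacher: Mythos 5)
Your proposal is correct and follows essentially the same route as the paper: identify the rotation angle of $F(w)$ for the seed words $2$, $1$, $21$ (orders $10$, $10$, $5$), invoke Tabachnikov's rigidity to conclude decagon versus pentagon, and propagate along $\psi$ and $\xi$ via the conjugation relations of Corollary \ref{crucial-translations}, which carry cells isometrically onto cells. Your explicit letter-counting formula for the angle and your remark that conjugation by an isometry preserves the rotation angle are just a slightly more systematic write-up of the same argument.
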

\begin{proof}
\begin{itemize}
\item The rotation $F(2)$ is a rotation of angle $-\pi/5$. Thus it fixes a regular decagon with the same center as $F(2)$. This decagon corresponds to the cell associated to the word $2^\omega$. We have the same thing for the map $F(1)$.

\item In the same spirit the map $F(21)$ is a rotation of angle $-4\pi/5$. It fixes a pentagon. Then we apply preceding result of the same Lemma and we obtain the existence of the periodic words.

\item Since $F(1)$ is a rotation of angle $-3\pi/5$ we deduce, that there is an invariant decagon inside the set $U_1$. This decagon corresponds to the periodic word of period $1$. Now by preceding Proposition we deduce that the word $\xi(1)$ is also a period for a word of the language, it corresponds to the second ring of small decagons in the picture.
\end{itemize}
\end{proof}


\section{Proof of Theorem \ref{langages} for the regular pentagon}\label{seclangpent}
\subsection{Proof}
We recall the theorem before the proof
\begin{thm}
For the outer billiard outside the regular pentagon, the language $L'$ of the dynamics of $\hat{T}$ is the set of factors of the periodic words of the form $z^\omega$ for z$\in Z$, where
$$\bigcup_{n\in\mathbb{N}}\{\sigma^n(1), \sigma^n(12)\},$$
$$\bigcup_{n,m\in\mathbb{N}} \{\psi^m(2), \psi^m(2223), \psi^m\circ\sigma^n(1), \psi^m\circ\sigma^n(12)\},$$
$$\bigcup_{n,m\in\mathbb{N}} \{\psi^m\circ\xi\circ\sigma^n(1), \psi^m\circ\xi\circ\sigma^n(12)\}.$$

\end{thm}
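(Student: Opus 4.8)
The plan is to characterize the language $L'$ of the map $\hat{T}$ on the invariant set $U_2\cup U_3$ by exploiting the decomposition of the dynamics established in Lemma \ref{calcul-pentagone}, namely that $\hat{T}$ splits into the Tabachnikov piecewise isometry $(Z,G^{-1})$ on the invariant set $U_1\cup(\hat{T}U_1\cap U_2)$ and the complementary invariant set $[U_2\setminus(\hat{T}U_1\cap U_2)]\cup U_3$. Because every orbit eventually decomposes relative to these two invariant pieces, I would first observe that the periodic words of the language fall into three groups according to which substitution-generated family they belong to, and that the three displayed unions in the statement correspond precisely to these groups. The starting point is that on the invariant set $Z$ the coding coincides with Tabachnikov's coding, so by the Theorem of \cite{Ta.95} the periodic points there are coded exactly by the shift orbits of $(\sigma^n(1))^\omega$ and $(\sigma^n(12))^\omega$; this yields the first union $\bigcup_n\{\sigma^n(1),\sigma^n(12)\}$.

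Next I would propagate these periodic words through the conjugacy relations of Corollary \ref{crucial-translations}. The key structural input is that $F(\psi(v))\circ t=t\circ F(v)$ and $F(\xi(v))\circ u=u\circ F(v)$ for every word $v$ in the language, together with the equivalences: $v$ codes a periodic point if and only if $\psi(v)$ does, and $v$ is a periodic word if and only if $\xi(v)$ is. Applying $\psi$ repeatedly to the seed periodic words $2$, $2223$, and to the entire first family $\sigma^n(1),\sigma^n(12)$, produces the second union; here I must separately verify that $\psi^m(2)$ and $\psi^m(2223)$ are genuinely periodic words of $L'$, which follows from the fact that $F(2)$ is a rotation by $-\pi/5$ fixing a decagon (so $2^\omega$ is periodic) and $F(2223)$ is conjugate to $F(2)$ by the translation $t$ of Lemma \ref{translation-calcul}. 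Applying $\xi$ once to the first family and then pushing the result through all powers of $\psi$ yields the third union $\bigcup_{n,m}\{\psi^m\circ\xi\circ\sigma^n(1),\psi^m\circ\xi\circ\sigma^n(12)\}$; the single application of $\xi$ reflects the fact that the rotation $u$ conjugating $F(32222)$ to $F(1)$ accounts for exactly one extra ring of decagons/pentagons, after which the $\psi$-conjugation takes over.

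The harder direction is completeness: I must show that no other periodic words occur, i.e. that the three unions exhaust all periodic orbits of $\hat{T}$ on $U_2\cup U_3$. For this I would argue by an induction-on-scale / renormalization argument. Lemma \ref{calcul-pentagone} reduces the dynamics on $[U_2\setminus(\hat{T}U_1\cap U_2)]\cup U_3$ to a rotation structure (Lemma \ref{lemindupent}), and the conjugacies of Corollary \ref{crucial-translations} show that the full dynamics is self-similar under $\psi$ and $\xi$: any periodic cell not already accounted for at the coarsest scale must, after applying $t^{-1}$ or $u^{-1}$, be the image under $\psi$ or $\xi$ of a strictly smaller periodic cell. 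Since the cells have areas bounded below within any fixed region and the substitutions strictly increase word length, a descent argument forces every periodic word to be obtained from the finitely many seeds $1,12,2,2223$ (and their $\xi$-images) by iterating $\psi$ and $\sigma$. I expect the main obstacle to be precisely this exhaustiveness step: one must be sure that the self-similar tiling by decagons and pentagons leaves no "gaps," that is, that $V_{per}$ on the complementary invariant set is entirely covered by the decagonal and pentagonal cells produced by the substitutions, and that the aperiodic residual set contributes no further periodic words. Establishing that the substitutions $\psi,\xi,\sigma$ together generate the complete self-similar structure — rather than just a subfamily of it — is the crux, and it rests on matching the geometric renormalization of Lemma \ref{lemindupent} with the combinatorial action of the substitutions on the level of the whole partition, not merely on individual seed words.
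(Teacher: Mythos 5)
Your first half --- generating the three families from the seeds $1$, $12$, $2$, $2223$ via Tabachnikov's theorem on $(Z,G)$ and the conjugacy relations $F(\psi(v))\circ t=t\circ F(v)$ and $F(\xi(v))\circ u=u\circ F(v)$ --- is exactly the paper's argument. The gap is in the completeness direction, and it is genuine: the descent you propose does not work as stated. You argue that any unaccounted-for periodic cell becomes, after applying $t^{-1}$ or $u^{-1}$, the image under $\psi$ or $\xi$ of a \emph{strictly smaller} periodic cell, and you invoke a lower bound on cell areas to terminate the descent. But $t$ is a translation and $u$ a rotation, so the cells coded by $\psi^{m}(w)$ for varying $m$ are all congruent; only $\sigma$, realized by Tabachnikov's contraction $D$ inside $Z$, actually shrinks cells. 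The descent in the $\psi$- and $\xi$-directions is therefore a descent on the \emph{position} of the cell (which element of the $t$-orbit of a fundamental region it sits in), not on its size, and the statement that still needs proof is precisely that every periodic cell lies in the $t$-orbit of a cell whose coding is already in your list.

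The paper closes this with an explicit geometric covering argument that your sketch does not supply: it shows that the orbit of $Z+jt$ under $\hat{T}$ (an invariant set of eight big and three small triangles carrying the dynamics $\psi^{j}\circ\sigma$), the invariant ring of cells of $(32222)^{\omega}$ produced by $\xi$, and the invariant regular decagon inside $U_2$ glue together into a region that tiles a fundamental strip for the translation $t$ (a parallelogram of width $|t|$ together with all its $t$-translates); and that outside this strip $\hat{T}$ acts as a rotation, so every orbit enters the strip after finitely many iterations. It is this covering of a fundamental domain by the three rings, plus the recurrence of every orbit to that domain, that rules out extra periodic words. You correctly flag exhaustiveness as the crux, but identifying the crux is not the same as resolving it: without the gluing/fundamental-domain step (or an equivalent verification that the self-similar tiling leaves no gaps in $U_2\cup U_3$), the hard direction of the theorem remains unproved.
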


\begin{remark}
In other terms we can split the language in different sets of periodic words with following periods:
\begin{itemize}
\item The words given by iteration of $\sigma$ on $1$ or $12$. 
\item The words obtained by the iterations of $2223$ or $2$ under $\psi$. 
\item The iterates of $\psi$ on the periodic words of the first class. They corresponds to coding of orbits of the following points: Take one point in $Z$, translate it by some power of $t$, see Lemma \ref{translation-calcul}.  

\item The words obtained by composition of $\xi$ and a power of $\psi$ on the first words. 

\end{itemize}
\end{remark}

We will use Corollary \ref{crucial-translations}. We will construct three invariant regions which will glue together and form a fundamental domain for the action of the translation $t$. The three sets are given by
\begin{itemize} 
\item The properties of the substitution $\psi$ imply the existence of a translation $t$ which is parallel to a side of the cone. Let $j$ be an integer, then the set $Z+jt$ is not invariant by $\hat{T}$. Now consider its orbit: $$\bigcup_{i\in \mathbb{N}}\hat{T}^i(Z+jt).$$
First by Lemma \ref{lemindupent} we know that its orbit is inside $U_2\cup U_3$. $Z$ is made of one big triangle and one small triangle. We claim that the five first iterations form an invariant set made of eight big triangles and three small ones. 
The symbolic dynamics inside this set is given by the composition of $\psi^j$ and 
$\sigma$. It is the first invariant ring.

\item Now we look at the second substitution $\xi$. Corollary \ref{crucial-translations} shows that there exists an invariant ring corresponding to the cells of the orbit under the shift of $(32222)^\omega$. The symbolic dynamics inside this set is given by the composition of $\xi$ and one iterate of $\sigma$. 

\item Moreover by Lemma \ref{lemindupent} there is an invariant polygon inside $U_2$, which is a regular decagon.
\end{itemize}
These three invariant rings glue together by trigonometric arguments.

Now we have a compact set inside $V$ with a symbolic description. These words are thus obtained as the $\mathbb{Z}^2$ sequences:
$$\psi^j\circ\sigma^i(1), \psi^j\circ\xi\circ\sigma^i(1),\psi^j(2223).$$ 

Now consider the parallelogram in $V$ with sides parallel to the axis of $V$ and of lengths $|t|$ and $\Phi$. Then consider the images of it by all the power of $t$. It forms a 
strip. 
Let $m$ be a point in $V$, then either it is in the strip or there exists an integer $n_0$ such that $\hat{T}^{n_0}m$ is inside this strip. Indeed outside the strip $\hat{T}$ acts as a rotation. Thus the dynamics of every point can be described with the preceding description.

\subsection{Examples}
Here we give some different examples of words coding some periodic orbits.

\begin{example}
We see on Figure \ref{sect-final} from the left to the right
\begin{itemize}
\item The first small decagon has coding $1^\omega$.
\item The second small decagon $(22223)^\omega=\xi(1)^\omega$.
\item The third small decagon is $(2223223)^\omega=\psi(1)^\omega$.
\item The first big decagon is $2^\omega$.
\item The second big decagon is $(223)^\omega=\psi(2)^\omega$.
\item The last: $22323^\omega=\psi(\xi(1))^\omega$ 
\item The first small pentagon has coding $(12)^\omega$.
\item The second small pentagon has coding $(222223)^\omega=\xi(12)^\omega$.
\item The small before the second big decagon $2223223223$.
\item The first big pentagon has coding $2223$.
\item The second big pentagon is $22322323$.
\end{itemize}
\end{example}

\begin{figure}
\begin{center}
\includegraphics[width=6cm]{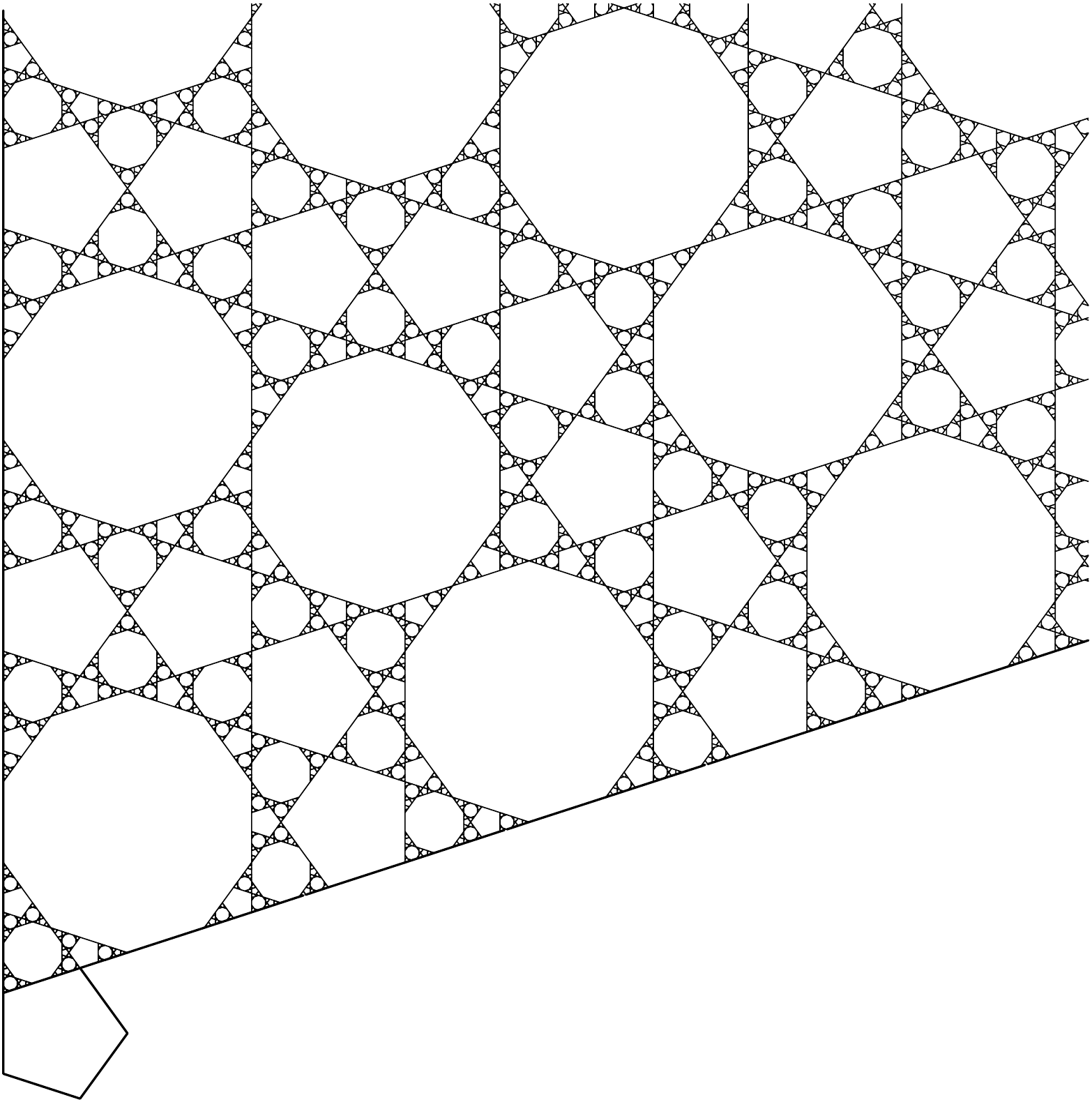}
\end{center}
\caption{Sector}\label{sect-final}
\end{figure}

\section{Bispecial words}\label{seccalccomppent}
In this Section we will describe the bispecial words of the language of the outer billiard map outside the regular pentagon. 

\begin{definition}\label{deffinalmorph}
We introduce different maps and words to simplify the statement of the result.

\begin{multicols}{3}

\begin{itemize}
\item $\Phi\begin{cases}1\mapsto 1\\ 2\mapsto 2\\ 3\mapsto 23 \end{cases}$
\item $\tilde{\psi}\begin{cases}1\mapsto 23232\\ 2\mapsto 32\\ 3\mapsto 3 \end{cases}$\item $\tilde{\xi}\begin{cases}1\mapsto 3222\\ 2\mapsto 2 \end{cases}$
\item $\tilde{\psi}_{|\{1,2\}^*}=\tilde{\beta}.$
\item $\tilde{\psi}_{|\{2,3\}^*}=\tilde{\chi}.$
\\
\item $\hat{\tilde{\chi}}(w)=\tilde{\chi}(w)3$ for all word $w$.
\item $\hat{\tilde{\xi}}(w)=222\tilde{\xi}(w)$ for all word $w$.
\item $\hat{\sigma}(w)=11\sigma(w)11$ for all word $w$.
\item $\hat{\Phi}(w)=\Phi(w)2$ for all word $w\in \{2,3\}^*$.
\\
\item $\hat{\tilde{\beta}}(w)=23232\tilde{\beta}(w)$ for all word $w$. 
\item $x_n=\hat{\sigma}^n(1)$ for all integer $n$.
\item $y_n=\hat{\sigma}^n(1111)$ for all integer $n$.
\item $z_n=\hat{\sigma}^n(12121)$ for all integer $n$.
\item $t_n=\hat{\sigma}^n(1^7)$ for all integer $n$.
\end{itemize}
\end{multicols}

\end{definition}
Remark that we have $\psi=\Phi\circ\tilde{\psi}\circ\Phi^{-1}, \xi=\Phi\circ\tilde{\xi}\circ\Phi^{-1}.$

The aim of this part is to prove 

\begin{proposition}\label{classfinbisp}
The bispecial words of the language $L'$ of the outer billiard outside the regular pentagon form $24$ families, according to preceding definition.
\begin{itemize}
\item The empty word $\varepsilon$, with $i(\varepsilon)=2$.

\item The word $2$, with $i(2)=0$.

\item The weak bispecial words are with $k,n \in\mathbb{N}$: 
$$
\hat{\Phi}(\hat{\tilde{\chi}}^k(2222)),
\hat{\Phi}(\hat{\tilde{\chi}}^k(22322)), 
\hat{\Phi}(\hat{\tilde{\chi}}^k(232232)),
\hat{\Phi}(\hat{\tilde{\chi}}^k(2323232)),$$
$$
\hat{\Phi}(\hat{\tilde{\chi}}^k\circ\hat{\tilde{\xi}}(z_n)),
\hat{\Phi}(\hat{\tilde{\chi}}^k\circ\hat{\tilde{\xi}}(t_n)),
\hat{\Phi}(\hat{\tilde{\chi}}^k\circ \hat{\tilde{\beta}}(z_n)), 
\hat{\Phi}(\hat{\tilde{\chi}}^k\circ \hat{\tilde{\beta}}((t_n)),$$
$$z_n,t_n$$

\item The strong bispecial words with $k,n \in\mathbb{N}$: 
$$
\hat{\Phi}(\hat{\tilde{\chi}}^k(2)),
\hat{\Phi}(\hat{\tilde{\chi}}^k(22)),
\hat{\Phi}(\hat{\tilde{\chi}}^k(222)),
\hat{\Phi}(\hat{\tilde{\chi}}^k(232)),
\hat{\Phi}(\hat{\tilde{\chi}}^k(23232)),
\hat{\Phi}(\hat{\tilde{\chi}}^k(3)),$$
$$
\hat{\Phi}(\hat{\tilde{\chi}}^k\circ\hat{\tilde{\xi}}(x_n)),
\hat{\Phi}(\hat{\tilde{\chi}}^k\circ\hat{\tilde{\xi}}(y_n)),
\hat{\Phi}(\hat{\tilde{\chi}}^k\circ\hat{\tilde{\beta}}(x_n)),
\hat{\Phi}(\hat{\tilde{\chi}}^k\circ\hat{\tilde{\beta}}(y_n)),$$

$$x_n,y_n
$$

\end{itemize}
\end{proposition}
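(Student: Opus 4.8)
The plan is to transport the entire computation to the conjugated alphabet through $\Phi$, where the substitutions lose their inverse letters and become the genuine substitutions $\tilde\psi,\tilde\xi$, and then to run the same peeling induction that produced the bispecial words for the square (Corollary \ref{bisp-car}) and the hexagon (Lemma \ref{bisp-hexa}). Using the identities $\psi=\Phi\circ\tilde\psi\circ\Phi^{-1}$ and $\xi=\Phi\circ\tilde\xi\circ\Phi^{-1}$ noted after Definition \ref{deffinalmorph}, I would first show that $w$ is bispecial in $L'$ if and only if $w=\hat\Phi(\tilde w)$ for a bispecial word $\tilde w$ of the conjugated language $\tilde L=\Phi^{-1}(L')$, and that this correspondence preserves the multiplicities $m_l,m_r,m_b$, hence $i$. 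Here the trailing letter in $\hat\Phi(w)=\Phi(w)2$ is exactly what is needed to close up the right extension after desubstitution. This reduces the statement to classifying the bispecial words of $\tilde L$ and computing their $i(v)$, the outer $\hat\Phi$ in every family of the Proposition being this final change of alphabet.

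Next I would establish one peeling lemma for each substitution involved. Theorem \ref{langages} organises the periods of $\tilde L$ into three concentric rings: the innermost $\sigma$-ring with periods $\sigma^n(1),\sigma^n(12)$, their $\psi$-iterates together with those of $2,2223$, and the $\psi\xi$-iterates. Accordingly $\tilde\psi$ splits as $\tilde\chi$ on $\{2,3\}^*$ and $\tilde\beta$ on $\{1,2\}^*$, with $\tilde\beta$ and $\tilde\xi$ both carrying a $\{1,2\}$-word into a $\{2,3\}$-word. For each of $\hat{\tilde\chi},\hat{\tilde\beta},\hat{\tilde\xi},\hat\sigma$ I would prove the pattern already seen for the square: a sufficiently long bispecial word $\tilde w$ can be written $\tilde w=s\,\gamma(v)\,p$ for the fixed affixes $s,p$ built into the hatted version of $\gamma$, where $v$ is a strictly shorter bispecial word with the \emph{same} lateral extensions as $\tilde w$. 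Iterating the correct $\gamma$ strips off one ring at a time: the outer $\hat{\tilde\chi}^k$ removes the $k$ copies of the $\psi$-layer, after which one lands either in the pure $\sigma$-ring, in its $\hat{\tilde\xi}$-image, or in its $\hat{\tilde\beta}$-image, and a further $\hat\sigma^n$ reaches one of the short seeds $x_n,y_n,z_n,t_n$. This produces precisely the nested shapes $\hat\Phi\!\bigl(\hat{\tilde\chi}^k(\cdot)\bigr)$, $\hat\Phi\!\bigl(\hat{\tilde\chi}^k\circ\hat{\tilde\xi}(\cdot)\bigr)$ and $\hat\Phi\!\bigl(\hat{\tilde\chi}^k\circ\hat{\tilde\beta}(\cdot)\bigr)$ appearing in the statement.

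With the tree of families in place, the proof finishes with a finite base computation. The roots are the short words $\varepsilon,2$ together with the arguments of $\hat{\tilde\chi}^0$, namely $2,22,222,232,23232,3$ (strong) and $2222,22322,232232,2323232$ (weak), and the seeds $x_0,y_0,z_0,t_0$. For each I would read off the admissible left letters, right letters and admissible pairs directly in $\tilde L$ using Theorem \ref{langages}, form $i(v)=m_b-m_l-m_r+1$, and check it equals the announced value: $2$ for $\varepsilon$, $0$ for $2$, positive for the strong list and negative for the weak list. Since each peeling lemma transfers the lateral extensions unchanged, $i$ is constant along each family, so the base values propagate to all $k,n$. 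Lemma \ref{julien} then converts this list into the complexity asymptotics of Theorem \ref{calccomp}.

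The main obstacle is twofold. First comes completeness: one must show that \emph{every} bispecial word is reached by some sequence of peelings, i.e.\ that the stripped affixes $s,p$ are forced, so that no bispecial word escapes the $24$ families. This is the combinatorial core, since three substitutions interact and one must rule out mixed desubstitutions; the cleanest route is to use Lemma \ref{lemindupent}, by which $\hat T$ acts as a pure rotation outside the central decagon, so that any long enough factor must cross a $\psi$- or $\xi$-seam and hence admits a unique desubstitution. Second, unlike the square and hexagon, the language is not the set of factors of a single substitutive fixed point but a \emph{union} of periodic orbits (Theorem \ref{langages}); I must therefore test bispeciality against the whole union, which is exactly what forces the exceptional values $i(\varepsilon)=2$ and $i(2)=0$ and what separates the weak seeds $z_n,t_n$ from the strong seeds $x_n,y_n$. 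Once these two points are settled, the arithmetic of the $i(v)$'s is routine.
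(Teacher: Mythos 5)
Your skeleton matches the paper's: split the language according to Theorem \ref{langages} into the sublanguage $L_0$ (the $\sigma$-ring) and its $\tilde\xi$-, $\tilde\beta$- and $\tilde\chi$-images, prove a synchronization lemma for $\Phi$, peel each bispecial word by the appropriate hatted substitution down to a short seed, and compute the indices on the seeds. Two points, however, need repair. First, the reduction through $\Phi$ cannot be the clean equivalence you state: the families $x_n,y_n,z_n,t_n$ appear in the Proposition \emph{without} the outer $\hat\Phi$, and $\hat\Phi(x_n)=x_n2\neq x_n$. The paper's corollary following Lemma \ref{lem-step-one} is a genuine case analysis: if the letter $1$ occurs in $w$ then $w$ is bispecial in $L_0$ with the same index and no trailing $2$; if $3$ or $22$ occurs then $w=\hat\Phi(v)$ with $v$ bispecial in $L_\Phi$; and $\varepsilon$ and $2$ are treated separately. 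Your blanket ``iff'' would both append a spurious $2$ to the $L_0$ families and lose the two exceptional words.

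Second, the completeness step --- ruling out mixed desubstitutions --- is not obtained geometrically in the paper but combinatorially: Lemma \ref{lemtriv} pins each sublanguage inside an explicit code ($L_1\subset\{3222,32222\}^*$, $L_2\subset\{23,223\}^*$, and so on), which is what forces the affixes $s,p$ in each peeling; and, crucially, the paper then \emph{enumerates} all words lying in two or more of the sublanguages (Corollary \ref{cor-int-lang}), showing that this set is finite and consists exactly of your short seeds $2,22,222,232,23232,3$ and $2222,22322,232232,2323232$. That enumeration is what certifies that your base list is complete and that every longer word's index can be computed inside a single sublanguage, where the peeling preserves the lateral extensions. Your alternative --- invoking Lemma \ref{lemindupent} to argue that $\hat T$ is a rotation off the central decagon so a long factor must ``cross a seam'' --- is only a heuristic: it does not by itself yield uniqueness of the symbolic factorization, and it says nothing about which short words acquire extra extensions from lying in several rings at once, which is precisely what produces the values $i(\varepsilon)=2$ and $i(2)=0$ and the finite exceptional families. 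With the case analysis for $\Phi$ and the intersection enumeration supplied, the remainder of your plan is the paper's argument, the seeds of $L_0$ being handled by Corollary \ref{cor-L0}.
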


\subsection{Notations}
We use Figure \ref{diagram} and define five languages $L_0,L_1, L_2, L_4,L_3$.
\begin{definition}
We denote $L_0,L_1, L_2, L_4,L_3$ the languages made respectiveley by factors of the following words:
\begin{itemize}
\item $L_0=\displaystyle\bigcup_{n\geq 0}\sigma^n(1)^\omega\cup\sigma^n(12)^\omega.$
\item $L_1=\tilde{\xi}(L_0).$
\item $L_2=\tilde{\beta}(L_0).$ 
\item $L_3=\displaystyle\bigcup_{m\geq 1} \tilde{\chi}^m(L_3)\cup \tilde{\chi}^m(L_2)\cup\tilde{\chi}^m(L_1).$
\item $L_4=2^\omega\cup (223)^\omega.$
\end{itemize}
\end{definition}

\begin{figure}
$$ \xymatrix{ (12)^\omega\ar[d]&1^\omega \ar[ld]\\
 L_0\ar@(ul,dl)[]_\sigma\ar[d]_{\tilde{\xi}}\ar[rd]_{\tilde{\beta}}\\ 
 L_1\ar[d]_{\tilde{\chi}}&L_2\ar[ld]_{\tilde{\chi}}\\
 L_4\ar@(ur,dr)[]_{\tilde{\chi}}\\
L_3\ar[u]_{\tilde{\chi}}
}$$
\caption{Construction of $L_\Phi$}\label{diagram}
\end{figure}
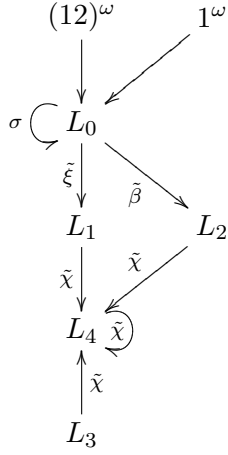

\subsection{Step one}
\subsubsection{Simplification of the problem}
First we use conjugaison by $223$ on $\psi$. It does not change the images of $1,2$, and the image of $3$ is $2^{-1}$. Thus we can assume that $\psi$ is defined with $\psi(3)=2^{-1}$.
Now we use conjugation by $2$ to change the maps $\psi,\xi$.
$$\psi_2=2^{-1}.\psi .2, \xi_2=2^{-1}.\xi. 2$$ 
$$\psi_2\begin{cases}1\mapsto 2232232\\ 2\mapsto 232\\ 3\mapsto 2^{-1} \end{cases}$$
$$\xi_2\begin{cases}1\mapsto 23222\\ 2\mapsto 2\\ \end{cases}$$

\begin{lemma}
The conjugaison does not change the language. In other terms the language is equal to the set of factors of the following words:
$$\bigcup_{n\in\mathbb{N}}\sigma^n(1)^\omega\cup\sigma^n(12)^\omega,$$
$$\bigcup_{n,m\in\mathbb{N}} \psi_2^n(2)^\omega\cup\psi_2^n(2223)^\omega\cup [\psi_2^m\circ\sigma^n(1)]^\omega\cup[\psi_2^m\circ\sigma^n(12)]^\omega,$$
$$\bigcup_{n,m\in\mathbb{N}} [\psi_2^m\circ\xi_2\circ\sigma^n(1)]^\omega\cup [\psi_2^m\circ\xi_2\circ\sigma^n(12)]^\omega.$$
\end{lemma}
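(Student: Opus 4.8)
The plan is to show that replacing the substitutions $\psi,\xi$ by their conjugates $\psi_2,\xi_2$ leaves the language $L'$ unchanged, where $L'$ is generated (as the set of factors of the periodic words $z^\omega$, $z\in Z$) according to Theorem \ref{langages}. The key observation is that conjugating a substitution by a fixed word affects each periodic word $z^\omega$ only by a bounded ``shift'' at the junctions between consecutive copies of $z$, and such a bounded modification cannot change the set of all factors. I would make this precise in two stages: first handle the conjugation of $\psi$ by the word $223$ (which is forced by the earlier convention $\psi(3)=2^{-1}$), then handle the conjugation of both $\psi$ and $\xi$ by the letter $2$.

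First I would record the general lemma that underlies everything: if $\tau$ is a substitution and $c$ a word, and $\tau_c := c^{-1}\tau c$ (interpreted in the free group, as the paper does with $\psi(3)=2^{-1}$), then for any word $v$ one has $\tau_c(v) = c^{-1}\,\tau(v)\,c$, so that the periodic word $\tau_c(v)^\omega$ and $\tau(v)^\omega$ have the same set of factors. Indeed $\tau_c(v)^\omega = c^{-1}\tau(v)(cc^{-1})\tau(v)(cc^{-1})\cdots = c^{-1}(\tau(v)^\omega)$, i.e. the two biinfinite (or, after passing to the language, periodic) words differ by deleting a prefix $c$ and a suffix, which does not affect $L(\tau(v)^\omega)=L(\tau_c(v)^\omega)$. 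Here I must take care that the cancellations $cc^{-1}$ genuinely occur as reductions in the free group and that the resulting reduced word is a nonnegative word over $\{1,2,3\}$ so that it really codes an orbit; this is where the specific forms of $\psi_2$ and $\xi_2$ listed in the statement must be verified by direct substitution, checking $\psi_2=2^{-1}\psi\,2$ and $\xi_2=2^{-1}\xi\,2$ letter by letter.

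Next I would apply this to every family of generators appearing in Theorem \ref{langages}. For the $\sigma$-orbit words $\sigma^n(1)^\omega,\sigma^n(12)^\omega$ there is nothing to do, since $\sigma$ is untouched. For the words $\psi^m(2)^\omega$, $\psi^m(2223)^\omega$, $\psi^m\circ\sigma^n(1)^\omega$, $\psi^m\circ\sigma^n(12)^\omega$, and $\psi^m\circ\xi\circ\sigma^n(\cdot)^\omega$, I would argue by induction on $m$ (and on the presence of $\xi$) that conjugating the outer $\psi$'s and the single $\xi$ simultaneously by the appropriate fixed word yields exactly $\psi_2^m(\cdot)$, $\psi_2^m\circ\xi_2\circ\cdots$, because conjugation is compatible with composition: $(c^{-1}\psi c)^m = c^{-1}\psi^m c$ and $(c^{-1}\psi c)^m(c^{-1}\xi c) = c^{-1}\psi^m\xi\, c$. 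Thus each periodic word in the new list is a conjugate of the corresponding periodic word in the old list, and the general lemma gives equality of factor sets family by family, hence equality of $L'$.

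The main obstacle, and the step deserving the most care, is the bookkeeping of the free-group conjugation together with the requirement that the final coding words be honest positive words. Because $\psi(3)=2^{-1}$, intermediate expressions contain negative letters, and one must check that after conjugation and reduction the exponents cancel correctly so that the periodic words $\psi_2^m(\cdot)^\omega$ are well-defined positive biinfinite words with the same factors as before; I would verify this by confirming that the conjugating word $c$ (namely $223$ for the first step, then $2$) is consistent across the whole tower of compositions, so that all the telescoping cancellations $c\,c^{-1}$ actually occur. Once the positivity and the telescoping are checked on the generators $1,2,3$, the conclusion for arbitrary iterates follows formally, and the displayed union of factor sets is therefore unchanged.
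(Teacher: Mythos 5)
Your proposal is correct and takes essentially the same route as the paper: the paper's own proof observes that $\psi_2(w)=2^{-1}\psi(w)2$ telescopes to $\psi^n(w)=2\,\psi_2^{n}(w)\,2^{-1}$, so each periodic word is merely a shift of the corresponding one and the factor sets coincide. Your extra bookkeeping about free-group cancellation and positivity of the reduced words only makes explicit what the paper leaves implicit.
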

\begin{proof} For any word $w$ we have the relation 
$\psi_2(w)=2^{-1}\psi(w)2$. 
Thus we have the relation $\psi^n(w)=2\psi_2^{n}(w)2^{-1}.$ It means that we have shifted the sequence, thus the languages are the same.
$$\psi_2(u)=S\psi(u).$$
\end{proof}

Thus we will keep the same notations $\psi,\xi$ for the new maps.
\begin{lemma}
The language $L'$ of the outer billiard map outside a regular pentagon is the union:
$$L'=Fact(\Phi(L_0\cup L_1\cup L_2\cup L_4\cup L_3)).$$
\end{lemma}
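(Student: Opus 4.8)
The plan is to take as input the explicit list of periodic words produced by Theorem \ref{langages} — rewritten with the conjugated substitutions $\psi,\xi$ via the preceding lemma — and to transport that list through the morphism $\Phi$ using the two conjugacy relations $\psi=\Phi\circ\tilde{\psi}\circ\Phi^{-1}$ and $\xi=\Phi\circ\tilde{\xi}\circ\Phi^{-1}$ noted after Definition \ref{deffinalmorph}. Since the factors of a union of periodic words form the union of the corresponding factor sets, and since $Fact$ is compatible with application of $\Phi$, it suffices to match, family by family, the three groups of seed words of Theorem \ref{langages} with the five languages $L_0,L_1,L_2,L_3,L_4$.

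First I would pull $\Phi$ to the front of every periodic word. The seeds $\sigma^n(1)$ and $\sigma^n(12)$ lie in $\{1,2\}^*$, on which both $\Phi$ and $\Phi^{-1}$ act as the identity; hence the first family of Theorem \ref{langages} is exactly $\Phi(L_0)$, which also absorbs the iterates of $\sigma$ alone. For the second and third families I substitute $\psi^m=\Phi\circ\tilde{\psi}^m\circ\Phi^{-1}$ and $\xi=\Phi\circ\tilde{\xi}\circ\Phi^{-1}$. Because $\sigma^n(1),\sigma^n(12)\in\{1,2\}^*$ and because $2223=\Phi(223)$ with $223\in\{2,3\}^*$, every interior $\Phi^{-1}$ either cancels an adjacent $\Phi$ or acts trivially, so each periodic word takes the shape $\Phi\bigl(\tilde{\psi}^m(w)\bigr)$ with $w$ a seed lying in $\{1,2\}^*$ or in $\{2,3\}^*$.

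The combinatorial heart of the argument is the decomposition of $\tilde{\psi}$ along the two sub-alphabets: $\tilde{\psi}$ restricted to $\{1,2\}^*$ equals $\tilde{\beta}$ and carries such words into $\{2,3\}^*$, whereas $\tilde{\psi}$ restricted to $\{2,3\}^*$ equals $\tilde{\chi}$ and preserves that alphabet. Thus a power $\tilde{\psi}^m$ applied to a $\{1,2\}$-seed $\sigma^n(w)$ factors as one application of $\tilde{\beta}$ followed by $\tilde{\chi}^{m-1}$, while applied to a $\{2,3\}$-seed — namely $\tilde{\xi}(\sigma^n(w))$, which generates $L_1=\tilde{\xi}(L_0)$, or the words $2$ and $223$, which generate $L_4$ — it is simply $\tilde{\chi}^m$. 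Reading off the inner words then identifies the seeds $\tilde{\beta}(\sigma^n(w))$ with $L_2=\tilde{\beta}(L_0)$, the seeds $\tilde{\xi}(\sigma^n(w))$ with $L_1$, the seeds $2,223$ with $L_4$, and every strictly positive $\tilde{\chi}$-iterate of $L_1,L_2,L_4$ with $L_3$. Applying $\Phi$ and taking factors yields $Fact(\Phi(L_0\cup L_1\cup L_2\cup L_4\cup L_3))$; the reverse inclusion follows because each word of this set is manifestly produced by one of the three families, so both inclusions hold.

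The main obstacle is the bookkeeping at small values of $m$, where one must check that the boundary cases land in the right language: $m=0$ must return $L_0$ (respectively $L_1$ for the $\xi$-family and $L_4$ for the seeds $2,223$) rather than $L_3$, and the single leading application of $\tilde{\beta}$ must be cleanly peeled off from the ensuing block of $\tilde{\chi}$'s. A further point to settle is that the recursive definition of $L_3$ indeed coincides with the $\tilde{\chi}$-closure $\bigcup_{m\geq1}\tilde{\chi}^m(L_1\cup L_2\cup L_4)$ of the three intermediate languages, so that no $\tilde{\chi}$-iterate is dropped or double counted. One also has to justify that $\Phi^{-1}$ is unambiguous wherever it is invoked: this holds because $\Phi$ is injective on $\{1,2,3\}^*$, each letter $3$ of an image being forced to pair with the $2$ immediately preceding it, which legitimizes applying the conjugacy relations letter by letter.
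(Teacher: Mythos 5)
Your proposal is correct and follows essentially the same route as the paper's (much terser) proof: conjugate $\psi,\xi$ through $\Phi$ via $\psi=\Phi\circ\tilde{\psi}\circ\Phi^{-1}$ and $\xi=\Phi\circ\tilde{\xi}\circ\Phi^{-1}$, use $\Phi^{-1}(L_0)=L_0$, and match the families of periodic words from Theorem \ref{langages} with $L_0,\dots,L_4$. The paper leaves the family-by-family bookkeeping (the splitting of $\tilde{\psi}$ into $\tilde{\beta}$ on $\{1,2\}^*$ and $\tilde{\chi}$ on $\{2,3\}^*$, and the identification of the $\tilde{\chi}$-closure) implicit in its diagram; you simply make that explicit.
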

\begin{proof}
By using the conjugation by $\Phi$ and  Theorem \ref{langages} we obtain a description of our language. The map $\Phi^{-1}$ is given by 
$\begin{cases}1\mapsto 1\\ 2\mapsto 2\\ 3\mapsto 2^{-1}3 \end{cases}$. Thus we deduce that $\Phi^{-1}(L_0)=L_0$, and deduce the result.
\end{proof}

\begin{definition}
By preceding Lemma $L'$ is the union of $L_0$ and the image by $\Phi$ of a set. We denote  $L_\Phi=L_1\cup L_2\cup L_4\cup L_3$ so that 
$L'=\Phi(L_\Phi)\cup L_0.$
\end{definition}
The preceding operations can be summarized in Figure \ref{diagram}.

\subsubsection{Study of the map $\Phi$}
In this part we explain how to manage the map $\Phi$ and restrict the study to the language $L_\Phi$.
In order to prove this result we use a synchronization lemma
\begin{lemma}\label{lem-step-one}
If $w$ is a factor of the language $\Phi(L_\Phi)$, then there exists a unique triple $(s,v,p)$ such that $w=s\Phi(v)p$ with $s\in\{\varepsilon,3\}, v\in L_\Phi, p\in\{\varepsilon,2\}$, with the additional condition $v\in\mathcal{A}^*2\Rightarrow p=2.$
\end{lemma}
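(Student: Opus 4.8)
The plan is to treat this as a recognizability statement for the morphism $\Phi$, whose only nontrivial feature is that $\Phi(3)=23$ while $\Phi(1)=1$ and $\Phi(2)=2$. The first thing I would record is that the three images $\{1,2,23\}$ form a code: no codeword is a suffix of another, so this is a suffix code and in particular $\Phi$ is injective as a morphism on $\{1,2,3\}^*$. The second, structural, observation is that in any image $\Phi(V)$ every letter $3$ occurs as the second letter of a block $23$ (the image of a letter $3$ of $V$), hence is always immediately preceded by a $2$; consequently $\Phi(V)$ admits a unique decomposition into the blocks $1$, $2$, $23$, obtained by pairing each $3$ with the preceding $2$ and leaving every other letter as a singleton.

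For existence I would take an occurrence of $w$ inside some $\Phi(V)$ with $V\in L_\Phi$ (such a $V$ exists because $w$ is a factor of $\Phi(L_\Phi)$) and read off the triple from this block decomposition. The left endpoint of the occurrence lies either at a block boundary or strictly inside a block; since the only block of length $>1$ is $23$, the latter happens exactly when $w$ begins with the $3$ of such a block, forcing $s=3$, and otherwise $s=\varepsilon$. Symmetrically the right endpoint forces $p=2$ (when $w$ ends with the $2$ of a $23$-block whose $3$ is excluded) or $p=\varepsilon$. Deleting these partial blocks leaves a word spanning complete blocks, i.e. $\Phi(v)$ with $v$ a factor of $V$; since $L_\Phi$ is factorial, $v\in L_\Phi$. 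To enforce the side condition I would note that if this parse produces $v$ ending in $2$ together with $p=\varepsilon$, then writing $v=v'2$ and using $\Phi(v)=\Phi(v')2$ lets me reinterpret $w=s\,\Phi(v')\,2$, moving the trailing $2$ into $p$; the new $v'$ is still a factor of $V$, hence in $L_\Phi$, and now satisfies the required implication.

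For uniqueness, suppose $w=s_1\Phi(v_1)p_1=s_2\Phi(v_2)p_2$ with both triples admissible. Because no image $\Phi(v)$ begins with $3$ and $p\in\{\varepsilon,2\}$, the word $w$ begins with $3$ if and only if $s=3$; hence $s_1=s_2$, and after cancelling it remains to show $\Phi(v_1)p_1=\Phi(v_2)p_2$ forces $(v_1,p_1)=(v_2,p_2)$. If $p_1=p_2$ then $\Phi(v_1)=\Phi(v_2)$ and injectivity of $\Phi$ gives $v_1=v_2$. If instead $p_1\neq p_2$, say $p_1=\varepsilon$ and $p_2=2$, then $\Phi(v_1)=\Phi(v_2)2$ ends in $2$; since among $\Phi(1)=1,\Phi(2)=2,\Phi(3)=23$ only $\Phi(2)$ ends in $2$, the last letter of $v_1$ is $2$, so the side condition forces $p_1=2$, contradicting $p_1=\varepsilon$. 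This rules out $p_1\neq p_2$ and finishes uniqueness.

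The coding facts above are routine; the delicate point I would write out most carefully is that the hypothesis $v\in\mathcal{A}^*2\Rightarrow p=2$ is exactly the tie-break needed for a trailing $2$. A word ending in $2$ can legitimately be read either as ending a block $\Phi(2)$ or as the truncation of a block $23$, and the two readings yield the same $w$ through $\Phi(v'2)=\Phi(v')2$; without the condition the triple would not be unique. The heart of the argument is then to check that this is the \emph{only} source of ambiguity, in particular that the left endpoint never creates an analogous clash because $3$ cannot begin a block, so that the decomposition comes out precisely as stated.
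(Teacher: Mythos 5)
Your proof is correct and rests on the same synchronization idea as the paper's: every $3$ in $\Phi(L_\Phi)$ is preceded by a $2$, so $w$ parses uniquely against the block decomposition into $1$, $2$, $23$ up to the boundary letters $s$ and $p$. You in fact spell out the uniqueness argument and the role of the side condition $v\in\mathcal{A}^*2\Rightarrow p=2$ more completely than the paper, which only sketches the left-endpoint case analysis and asserts that uniqueness follows.
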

\begin{proof}
First we are only interested in the case where $w=s\Phi(v)$.
If $w$ begins by $1$, then it is clear that $w$ can be written in the form $\Phi(1v)$. If $w$ begins by $2$ then three possibilities appear for the beginning: 
$21$ thus we write $w=\Phi(21v)$; or $22$ and  $w$ can be written as $\Phi(2v)$; and last possibility $23$, then $w=\Phi(3v)$.
If $w$ begins by $3$, we do the same thing and remark that the only problem is if $w$ begins with $32$. In this case we have $s=3$. The first part of the lemma is proven. Now we consider words of the form $w=\Phi(v)p$.The proof is similar:  the uniqueness is a consequence of the proof.
\end{proof}

\begin{corollary}
A word $w$ is bispecial in the language $L'$ if and only if either 
\begin{itemize}
\item $1$ occurs in $w$ and $w$ is bispecial in $L_0$, with same index.
\item $3$ or $22$ occurs un $w$, and $w=\hat{\Phi}(v)$ with $v$ bispecial in $L_\Phi$
\item $w=\varepsilon$,
\item $w=2$.

\end{itemize}
\end{corollary}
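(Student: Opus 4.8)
The plan is to characterize bispecial words of $L'$ by exploiting the synchronization Lemma \ref{lem-step-one}, which writes any factor uniquely as $w=s\Phi(v)p$ with $s\in\{\varepsilon,3\}$, $p\in\{\varepsilon,2\}$ and $v\in L_\Phi$. Since $L'=\Phi(L_\Phi)\cup L_0$, and $L_0$ is exactly the set of factors containing the letter $1$ (the sublanguages $L_1,L_2,L_3,L_4$ live on $\{2,3\}$ and $\Phi$ fixes $1$), the argument naturally splits into two regimes: words in which $1$ occurs, and words lying entirely in $\Phi(L_\Phi)\subset\{2,3\}^*$. The four short cases $\varepsilon$ and $2$ will be checked by hand, since $\Phi$ acts trivially on them and the synchronization decomposition degenerates.

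First I would treat the case where $1$ occurs in $w$. By the structure of $L'$, a factor containing $1$ comes from $L_0$, so I would show that left/right extensions of $w$ in $L'$ are precisely its extensions in $L_0$, whence $w$ is bispecial in $L'$ with the same multiplicities $m_l,m_r,m_b$ (hence the same index $i(w)$) if and only if it is bispecial in $L_0$. The key point to verify is that the letters adjacent to a $1$-containing factor are never ambiguous across the $L_0$/$\Phi(L_\Phi)$ boundary — that is, extending $w$ cannot leave $L_0$. This follows because any occurrence of $1$ forces, via the admissible factors of the periodic words $\sigma^n(1)^\omega,\sigma^n(12)^\omega$, a local context that is itself in $L_0$.

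Next, for words with no $1$ but containing $3$ or $22$, I would use the synchronization lemma to write $w=\hat{\Phi}(v)=\Phi(v)2$ and argue a bijective correspondence between bispecial words of $L'$ of this shape and bispecial words $v$ of $L_\Phi$. Concretely, I would show that left extensions $a$ of $w$ correspond exactly to left extensions of $v$ (using $s\in\{\varepsilon,3\}$ to absorb the prefix ambiguity) and right extensions likewise (using $p\in\{\varepsilon,2\}$), so that $w$ is right/left/bi-special in $L'$ exactly when $v$ is in $L_\Phi$. The uniqueness clause in Lemma \ref{lem-step-one}, together with the side condition $v\in\mathcal{A}^*2\Rightarrow p=2$, is what guarantees the decomposition is well-defined and that no extension of $w$ escapes the form $\hat{\Phi}(\,\cdot\,)$; the requirement that $3$ or $22$ actually occur is precisely what rules out the degenerate short words $\varepsilon,2$ and makes the correspondence a genuine bijection.

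The main obstacle I expect is the second case: verifying that the extension multiplicities are transported faithfully through $\hat{\Phi}$, since $\Phi$ is not length-preserving and a single letter on the $L'$ side can correspond to different boundary behaviors. I would handle this by a careful case analysis on the first and last letters of $\Phi(v)$ — checking that the synchronizing prefix $s$ and suffix $p$ are determined by, and only by, the extending letters — so that the maps $a\mapsto (\text{left extension of }v)$ and $b\mapsto(\text{right extension of }v)$ are bijections preserving the bilateral multiplicity $m_b$, and hence $i(w)=i(v)$. Once this bijection on bispecials (with indices) is established, the explicit $24$ families of Proposition \ref{classfinbisp} will follow in the subsequent sections by iterating $\tilde\chi,\tilde\xi,\tilde\beta$ and $\sigma$ on the base bispecial words of $L_0,L_1,L_2,L_3,L_4$, exactly as the substitution-invariance corollaries were used for the square, hexagon and triangle.
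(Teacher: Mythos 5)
Your plan follows the paper's own route: both rest on the synchronization Lemma \ref{lem-step-one} to write $w=s\Phi(v)p$, observe that $s=\varepsilon$ for a bispecial word and that extensions transport through $\Phi$ because the image of each letter ends with that letter, and then perform a case analysis on $p$ and the boundary letters (with the $1$-containing words falling back to $L_0$ and $\varepsilon$, $2$ checked by hand). The one caution is that your claimed clean bijection with $i(w)=i(v)$ is slightly stronger than what that case analysis actually yields — in the paper's bookkeeping the bispecial word of $L_\Phi$ attached to $w$ is sometimes $v2$ rather than $v$, and when $w$ admits three right extensions both $v$ and $v2$ turn out to be bispecial — but this is precisely the boundary analysis you announce you will carry out, so the approach is the same.
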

\begin{proof}
We apply preceding Lemma to $w$. Now if $w$ is a bispecial word of $\Phi(L_\Phi)$, then $s=\varepsilon$, indeed the properties of $\Phi$ imply $13,33$ does not belong to $\Phi(L_\Phi)$.
Since the image of each letter by $\Phi$ ends with the same letter we remark that  $xw\in \Phi(L_\Phi)\Rightarrow xv\in L_\Phi$. There are two cases
\begin{itemize}
\item $p=\varepsilon$: then the extensions of $w$ must belong to $\Phi(L_\Phi)$. It implies $v1\in L_\Phi$ and either $v2$ or $v3$ belong to $L_\Phi$ (or both). 
\item If $p=2$, then $w1$ or $w2$ or $w3$ belong to $\Phi(L_\Phi)$. It implies $v21\in L_\Phi$ resp. $v22\in L_\Phi$ or $v23\in L_\Phi$ resp. $v3\in L_\Phi$.
\end{itemize} 
Then we can summarize this study in the four cases
\begin{itemize}
\item $w1,w2\in \Phi(L_\Phi)$, then $v21$ and $v22\quad\text{or}\quad v23\in L_\Phi$. So $v2$ is bispecial in $L_\Phi$.
\item $w1,w3\in \Phi(L_\Phi)$, then $v2, v3\in L_\Phi$, so $v$ is bispecial in $L_\Phi$.
\item $w2,w3\in \Phi(L_\Phi)$, then $v2, v3\in L_\Phi$, so $v$ is bispecial in $L_\Phi$.
\item $w1,w2,w3\in \Phi(L_\Phi)$, then both $v$ and $v2$ are bispecial in $L_\Phi$.
\end{itemize}
This finishes the proof.
\end{proof}

\begin{remark}
This lemma allows us to forget the map $\Phi$ until Section \ref{secfin} and to study the bispecial words of the language $L_\Phi$.
\end{remark}

\subsection{Abstract of the method}
The method in order to list the bispecial words is the following. We begin by the bispecial words which are not in the intersection of two of the languages $L_1, L_2, L_4, L_3$. For the language $L_4$ we prove that these words are images of bispecial words of $L_2\cup L_1$. Then we prove that bispecial words in $L_1\cup L_2$ are images of bispecial words in $L_0$, finally we list the bispecial words of $L_0$. Then it remains to treat the words which are in the intersection of two languages. 

\subsection{Different languages}
We will need the following result.
\begin{lemma}\label{lemtriv}
We have 
$$L_0\subset\{1,12\}^*, L_1\subset\{3222,32222\}^*, L_2\subset\{23,223\}^*,$$ 
$$ L_4\subset\{3,32\}^*, L_3\subset\{2,223\}^*$$
\end{lemma}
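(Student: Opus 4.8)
The plan is to prove each of the five inclusions in Lemma~\ref{lemtriv} by exhibiting, for each language $L_i$, a finite set of ``blocks'' over which all the relevant periodic words factor, and then checking that the generating substitutions preserve the property of being a concatenation of these blocks. The point is that each $L_i$ is defined as the set of factors of an explicit family of periodic words built from the substitutions $\sigma, \tilde\xi, \tilde\beta, \tilde\chi$, so it suffices to understand the image alphabet of each substitution in terms of the claimed block alphabet.

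First I would treat $L_0 \subset \{1,12\}^*$. Recall $L_0 = \bigcup_{n\ge 0}\operatorname{Fact}(\sigma^n(1)^\omega)\cup\operatorname{Fact}(\sigma^n(12)^\omega)$. The base words $1$ and $12$ already lie in $\{1,12\}^*$, and I would check that $\sigma$ maps the block alphabet $\{1,12\}$ into $\{1,12\}^*$: indeed $\sigma(1)=1121211$ parses as $1\cdot 12\cdot 12\cdot 1\cdot 1$ and $\sigma(12)=\sigma(1)\sigma(2)=1121211\cdot 111$ which again parses over $\{1,12\}$. Since $\sigma$ sends concatenations of blocks to concatenations of blocks, an easy induction on $n$ gives $\sigma^n(1),\sigma^n(12)\in\{1,12\}^*$, and passing to periodic words and then to factors preserves the inclusion (one must note that a factor of a word in $\{1,12\}^\omega$ is again in $\{1,12\}^*$, which holds because the block $1$ has length one so the factorization is unambiguous at the boundary). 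The remaining four inclusions follow the same template: for $L_1=\tilde\xi(L_0)$ I would compute $\tilde\xi(1)=3222$ and $\tilde\xi(12)=\tilde\xi(1)\tilde\xi(2)=3222\cdot 2=32222$, so images of $L_0$ land in $\{3222,32222\}^*$; for $L_2=\tilde\beta(L_0)$ I would compute $\tilde\beta=\tilde\psi|_{\{1,2\}^*}$, giving $\tilde\beta(1)=23232$ parsed suitably over $\{23,223\}$ and $\tilde\beta(12)$ over the same, so $L_2\subset\{23,223\}^*$; for $L_4=\{2^\omega,(223)^\omega\}$ the inclusion $L_4\subset\{3,32\}^*$ is a direct parse of $2$ and $223$; and for $L_3=\bigcup_{m\ge 1}\tilde\chi^m(L_3\cup L_2\cup L_1)$ I would check $\tilde\chi=\tilde\psi|_{\{2,3\}^*}$ sends $\{2,223\}$ into $\{2,223\}^*$ via $\tilde\chi(2)=32$ and $\tilde\chi(3)=3$, so the self-referential definition is consistent with the block alphabet $\{2,223\}$.

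The main obstacle is not any single computation but getting the block parsings and the ambient alphabets to match the exact claim: one must be careful that $\tilde\beta,\tilde\chi$ are the advertised restrictions of $\tilde\psi$ and that after the conjugation by $2$ (which replaced the original $\psi,\xi$) the substitutions act on $\{1,2,3\}$ as stated in Definition~\ref{deffinalmorph}, not with the earlier negative-letter conventions. I would therefore first fix, once and for all, the post-conjugation values $\tilde\psi:1\mapsto 23232,\ 2\mapsto 32,\ 3\mapsto 3$ and $\tilde\xi:1\mapsto 3222,\ 2\mapsto 2$, and verify the closure of each claimed block alphabet under the relevant restriction before running the induction. The subtle point for $L_3$ is that its definition is recursive, so rather than an induction on word length I would argue that $\{2,223\}^*$ is a fixed set containing $L_1$ and $L_2$ (by the previous parts, noting $\{3222,32222\},\{23,223\}\subset\{2,223\}^*$ after accounting for the surrounding letters) and stable under $\tilde\chi$, whence the least fixed point $L_3$ defining the union is contained in it.

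Once these five closure facts are in place, each inclusion is immediate: any element of $L_i$ is a factor of a word obtained by applying the relevant substitutions to a base word, every such image lies in the claimed block monoid, and the block monoids are closed under taking factors because in each case at least one generating block has length allowing unambiguous boundary parsing. I would present the five verifications as short explicit block decompositions followed by the one-line inductive remark, keeping the computations to the level of displaying $\sigma(1),\tilde\xi(1),\tilde\beta(1),\tilde\chi(2)$ and their parses, since everything else is routine propagation through concatenation.
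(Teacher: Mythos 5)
Your overall strategy --- fix a block alphabet for each $L_i$, check that the generating substitution carries blocks into the block monoid, and propagate by induction through the periodic words --- is exactly the (very terse) argument the paper gives, and it works cleanly for $L_0$ (since $\sigma(1)=1\cdot12\cdot12\cdot1\cdot1$ and $\sigma(12)=\sigma(1)\cdot1\cdot1\cdot1$) and for $L_1$ (since $\tilde\xi(1)=3222$ and $\tilde\xi(12)=32222$ are literally the blocks). But two of your five verifications fail as written. The most serious is the pairing of block alphabets with $L_3$ and $L_4$. You claim the inclusion of $F(2^\omega\cup(223)^\omega)$ in $\{3,32\}^*$ "is a direct parse of $2$ and $223$": it is not, since every nonempty word of $\{3,32\}^*$ begins with $3$ while $2^\omega$ contains no $3$ at all. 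Likewise you claim $\{2,223\}^*$ is stable under $\tilde\chi$ "via $\tilde\chi(2)=32$", but $32\notin\{2,223\}^*$ because both blocks begin with $2$. The true statements are the opposite pairing: the recursively defined $\tilde\chi$-language is the one contained in $\{3,32\}^*$ (as $\tilde\chi(2)=32$, $\tilde\chi(3)=3$, and $\{3,32\}^*$ is $\tilde\chi$-stable since $\tilde\chi(32)=3\cdot 32$), while $F(2^\omega\cup(223)^\omega)\subset\{2,223\}^*$. The paper's definition of $L_3,L_4$ interchanges these two labels relative to Figure \ref{diagram} and to the statement of Lemma \ref{lemtriv}; you inherited that typo, but rather than detecting the inconsistency you asserted parses that cannot exist, so as written these two cases are not proved.

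The second problem is the $L_2$ step. You say $\tilde\beta(1)=23232$ is "parsed suitably over $\{23,223\}$"; it is not: $23232=(23)^2\cdot 2$ leaves a dangling $2$, so the image of a block of $L_0$ is \emph{not} in the block monoid and the clean "blocks map to concatenations of blocks" induction does not apply to $\tilde\beta$. The inclusion still holds for the periodic words, but only via a boundary-merging argument: every $\tilde\beta$-image of a block of $\{1,12\}$ has the form $(23)^k2$, and $(23)^k2\cdot(23)^l2=(23)^k\cdot 223\cdot(23)^{l-1}\cdot 2$, so the dangling $2$ is always absorbed by the next image, and only the global phase is shifted. (This is the same boundary issue you acknowledged for $L_0$, where it is harmless because the block $1$ has length one; here it is not harmless, and it is essentially why the paper later works with the shifted map $\hat{\tilde\beta}(w)=23232\,\tilde\beta(w)$.) With the $L_3/L_4$ pairing corrected and this merging step supplied, your plan does yield the lemma; without them, the cases $L_2$, $L_3$, $L_4$ are gaps.
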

\begin{proof}
The proof just consists in the remark that $22$ does not appear in $L_0$. Thus in $L_2$, the word $32$ appears if $u$ contains $1$ and $322$ appears when $u$ contains $21$.
For $L_1$, the word $3222$ appears in $\xi(1)$, and the word $32222$ appears from the word $12$. From the image by $\tilde{\chi}$ of $32,3222,32222$ we deduce the result.
\end{proof}
\subsubsection{Language $L_4$}
\begin{proposition}
If $w$ is a non empty bispecial word of $L_4$, then we have $w=s\tilde{\chi}(v)p$ with $s=\varepsilon, p=3$, where $v\in L_\Phi$ is a bispecial word of $L_\Phi$ with the same extansions as $w$. 
\end{proposition}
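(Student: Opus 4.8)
The plan is to establish a synchronization property for the substitution $\tilde{\chi}$ analogous to the one proved for $\Phi$ in Lemma \ref{lem-step-one}, and then to transfer bispeciality back and forth through this decomposition. By Lemma \ref{lemtriv} we know that $L_4\subset\{3,32\}^*$, and the relevant substitution here is $\tilde{\chi}=\tilde{\psi}_{|\{2,3\}^*}$, which sends $2\mapsto 32$ and $3\mapsto 3$. The first step is to observe that both $\tilde{\chi}(2)=32$ and $\tilde{\chi}(3)=3$ end with the letter that determines where the next block begins, so any word of $L_4$ can be uniquely parsed as $s\tilde{\chi}(v)p$ with a short prefix $s$ and suffix $p$. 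I would argue, exactly as in the proof of Lemma \ref{lem-step-one}, that reading a factor $w$ of $L_4$ from left to right forces a unique cutting into $\tilde{\chi}$-blocks: since every image begins with $3$, a factor starting with $3$ either continues as $32=\tilde{\chi}(2)$ or stands alone as $3=\tilde{\chi}(3)$, and this choice is dictated by the following letter, so $s=\varepsilon$. The trailing letter then forces $p=3$: because the images are $32$ and $3$, a bispecial word (which must be extendable on the right by more than one letter) has to end just after a $3$ has been completed, i.e.\ the final $\tilde{\chi}$-block is cut so that the dangling symbol is exactly one extra $3$.

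Next I would transfer the bispeciality. Writing $w=\tilde{\chi}(v)3$ with $v\in L_\Phi$, I would show that right-extensions of $w$ in $L_4$ are in bijection with right-extensions of $v$: appending a letter to $w$ corresponds, after the synchronizing suffix $3$, to choosing whether the next block of $v$ is $2$ (giving $\tilde\chi(2)=32$, i.e.\ $w$ continues with $2$) or $3$ (giving $\tilde\chi(3)=3$, i.e.\ $w$ continues with $3$), and these two cases are exactly the two possible right-successors $v2$ and $v3$ of $v$. A symmetric argument, using that each $\tilde\chi$-image begins with the same letter $3$, gives the correspondence between left-extensions of $w$ and left-extensions of $v$. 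Hence $w$ is right/left special in $L_4$ if and only if $v$ is right/left special in $L_\Phi$, and moreover the multiplicities $m_l, m_r, m_b$ agree, so the extensions of $w$ and $v$ match in the strong sense required by the statement.

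The main obstacle I expect is not the letter-counting but the boundary bookkeeping: one must check that the synchronization is genuinely unique and that no extra bispecial words are created or destroyed at the seams where two $\tilde\chi$-blocks meet. In particular I would need to verify that the short prefix is always $s=\varepsilon$ (ruling out a dangling leading fragment, which relies on $33$ not being creatable inside $\tilde\chi(L_\Phi)$ in a way that would force a nontrivial prefix) and that the only admissible suffix making $w$ bispecial is $p=3$ (the alternative $p=\varepsilon$ would leave $w$ ending in the middle of a $32$ block, where it cannot be right-special). I would handle this by the same exhaustive case analysis on the first and last two letters of $w$ used in Lemma \ref{lem-step-one}, invoking Lemma \ref{lemtriv} to restrict the possible local patterns. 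Once the decomposition $w=\tilde{\chi}(v)3$ and the matching of extensions are both in hand, the proposition follows immediately, since it asserts precisely that $v$ is bispecial in $L_\Phi$ with the same extensions as $w$.
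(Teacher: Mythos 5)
Your proposal is correct and follows essentially the same route as the paper: use Lemma \ref{lemtriv} to see $L_4\subset\{3,32\}^*$, deduce that a bispecial word must begin and end with $3$, parse it uniquely into the blocks $\tilde{\chi}(2)=32$ and $\tilde{\chi}(3)=3$ with the last $3$ left dangling as $p$, and transfer the extensions. (One tiny slip: the bijection on left-extensions rests on the images $32$ and $3$ ending in \emph{distinct} letters, not on their sharing the first letter $3$; the rest is fine and, if anything, more detailed than the paper's own proof.)
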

\begin{proof}
By the definition of $L_4$ we have $w\in F(\chi(L_1\cup\dots L_3))$. By Lemma \ref{lemtriv} a bispecial word must begin and end with the letter $3$. Now since $L_4$ is built with the words $3$ and $32$ we can remark that these words are images of $3$ and $2$ by $\tilde{\chi}$. The last letter of $w$ can not be the image by $\tilde{\chi}$ of $3$ since it must be prolonged by $2$. 
\end{proof}

\begin{corollary}\label{cor-L3}
If $w$ is a bispecial word of $L_4$ there exists an integer $k$ such that 
$w=\hat{\tilde{\chi}}^k(v),$ with $v\in L_1\cup L_2\cup L_3$. Moreover $v$ is a bispecial word in $L_\Phi$ with the same multiplicity. 
\end{corollary}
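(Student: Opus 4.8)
The proposition just established tells us that any nonempty bispecial word $w$ of $L_4$ can be written as $w=\tilde{\chi}(v)3=\hat{\tilde{\chi}}(v)$, where $v$ is a bispecial word of $L_\Phi$ with the same extensions (same multiplicity). The corollary asks for the fully "reduced" form, where we have pulled out $\tilde{\chi}$ as many times as possible until the remaining core word $v$ lives in $L_1\cup L_2\cup L_3$ rather than in $L_4$ itself. So the natural approach is an induction on the length of $w$, peeling off one factor $\hat{\tilde{\chi}}$ at each stage.

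First I would set up the induction cleanly. Given a bispecial word $w$ of $L_4$, apply the preceding proposition to write $w=\hat{\tilde{\chi}}(v_1)=\tilde{\chi}(v_1)3$ with $v_1\in L_\Phi$ bispecial with the same multiplicity. Now there is a case split on where $v_1$ lives: by the definition $L_3=\bigcup_{m\geq 1}\tilde{\chi}^m(L_1\cup L_2\cup L_3)$, the language $L_4$ receives its words as $\tilde{\chi}$-images of $L_1\cup L_2\cup L_3$, and $L_3$ itself is the further $\tilde{\chi}$-orbit. Concretely, either $v_1$ already lies in $L_1\cup L_2\cup L_3$ (in which case we are done with $k=1$, since $\hat{\tilde{\chi}}(v_1)=w$), or $v_1$ lies in $L_4$ and is strictly shorter than $w$, so we may reapply the proposition to $v_1$. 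Since $\tilde{\chi}$ is non-erasing and $|\tilde{\chi}(v)|>|v|$ for any nonempty $v$, each peel strictly decreases length, so the process terminates after finitely many steps, say $k$, yielding $w=\hat{\tilde{\chi}}^k(v)$ with $v\in L_1\cup L_2\cup L_3$.

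The "same multiplicity" clause then follows by transitivity: the preceding proposition guarantees that at each peeling step the core word has exactly the same left, right and bilateral extension counts as the word above it, hence $i(v)=i(\hat{\tilde{\chi}}^k(v))=i(w)$, and in particular $v$ is bispecial in $L_\Phi$ iff $w$ is. I would state this as a short remark that $\hat{\tilde{\chi}}$ preserves $i$, so being weak, strong or neutral is invariant under the operation, which is exactly what is needed later for the complexity count in Proposition \ref{classfinbisp}.

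The main obstacle, such as it is, is making the termination-and-membership case split airtight: one must be sure that when the proposition returns a core $v_1$, that $v_1$ cannot ambiguously straddle the boundary between $L_4$ and $L_1\cup L_2\cup L_3$, and that the decomposition is genuinely unique so that $k$ is well defined. This is really a consequence of the injectivity of $\tilde{\chi}$ together with the synchronization (the unique-triple property) already proved for the $L_4$ step, so I do not expect a genuine difficulty — the content is entirely in the preceding proposition, and this corollary is its iterated form.
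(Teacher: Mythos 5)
Your proposal is correct and follows essentially the same route as the paper: the paper's proof also iterates the preceding proposition, peeling off one $\hat{\tilde{\chi}}$ at a time and invoking the strict decrease of lengths to guarantee termination with a core word in $L_1\cup L_2\cup L_3$. The extra care you take with the case split, uniqueness, and the transitivity of the multiplicity is a fleshed-out version of the paper's terser argument, not a different method.
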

\begin{proof}
By preceding result $w=\hat{\tilde{\chi}}(v)$ and $v$ is a bispecial word. Thus if $v$ is not empty, we deduce $w=\hat{\tilde{\psi}}^2(v').$ We do the same thing until $v'$ belongs 
to $L_1\cup L_2\cup L_3$. This must happen since the lengths decrease.
\end{proof}

\subsubsection{Language $L_3$}
\begin{lemma}\label{lem-L4}
The bispecial words in this language are:
$$\{\varepsilon, 2, 22\}.$$
\end{lemma}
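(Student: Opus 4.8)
The plan is to argue directly from the description of the language as the set of factors of the two periodic words $2^{\omega}$ and $(223)^{\omega}$ (see the defining list and Lemma \ref{lemtriv}, which tells us it is built from the blocks $2$ and $223$). Since a bispecial word must be both left special and right special, I would compute, for each candidate factor $w$, the set of letters that can precede $w$ and the set that can follow it, and check whether both sets have cardinality at least two; that is, whether $m_{l}(w)\geq 2$ and $m_{r}(w)\geq 2$. The whole statement then reduces to a short case analysis, organised by two structural reductions that eliminate all but finitely many candidates.

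First I would record two facts about the two periodic words. The letter $3$ never occurs in $2^{\omega}$ and occurs exactly once per period of $(223)^{\omega}$, always immediately preceded and immediately followed by $2$; consequently every factor containing a $3$ lies only in $(223)^{\omega}$, and because $3$ pins down the position modulo $3$, any such factor occurs at a single residue and hence has a unique left and a unique right extension. Thus no factor containing a $3$ can be special, let alone bispecial. Second, a block $2^{n}$ occurs in $(223)^{\omega}$ if and only if $n\leq 2$, whereas $2^{n}$ occurs in $2^{\omega}$ for every $n$; so for $n\geq 3$ the word $2^{n}$ appears only in $2^{\omega}$, where it is flanked by $2$ on both sides, and is again not special. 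These two reductions leave only the short blocks $\varepsilon$, $2$ and $22$ to examine.

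It then remains to verify that these three words are indeed bispecial. For $\varepsilon$ both letters $2,3$ occur as first and as last letters, so $m_{l}(\varepsilon)=m_{r}(\varepsilon)=2$. For $2$, it is preceded by $2$ (inside $2^{\omega}$, or inside the block $22$) and by $3$ (after the $3$ of a block $223$), and symmetrically it is followed by $2$ and by $3$, so $m_{l}(2)=m_{r}(2)=2$. For $22$, inside $2^{\omega}$ it is flanked by $2$ on both sides, while inside $(223)^{\omega}$ it is flanked by $3$ on both sides, so again $m_{l}(22)=m_{r}(22)=2$. Combining this with the two reductions shows that the bispecial words are exactly $\varepsilon,2,22$.

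The argument is entirely elementary; the only point that needs genuine care is the claim that no factor containing a $3$ is special, since a careless reader might expect $3$-bordered words to gain extra extensions from the union of the two orbits. The clean way to dispose of it is the periodicity remark above: $(223)^{\omega}$ is purely periodic of primitive period $3$, its complexity stabilises at $3$ from length $2$ on, so it has no special factor of length $\geq 2$, and the only extra special factors produced by taking the union with $2^{\omega}$ are the blocks $2^{n}$ common to both words, namely $n\leq 2$. Once this is in place, checking the remaining finitely many short words is immediate.
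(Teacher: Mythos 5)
Your proof is correct, and since the paper leaves this verification to the reader there is no competing argument to compare it with; your direct check is exactly what is intended. The two reductions (any factor containing a $3$ occurs only in $(223)^{\omega}$ at a fixed phase mod $3$, hence is not special; $2^{n}$ for $n\geq 3$ occurs only in $2^{\omega}$, hence is not special) together with the explicit extension counts for $\varepsilon$, $2$, $22$ settle the statement completely.
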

\begin{proof}
Proof left to the reader.
\end{proof}

\subsubsection{Language $L_1$}
\begin{lemma}\label{lem-L1}
A bispecial word $w$ of $L_1$ fulfills one of the following facts.
\begin{itemize}
\item $w\in\{\varepsilon, 2, 22, 222, 2222,22322\}$.
\item $w=\hat{\tilde{\xi}}(v)$ where $v$ is bispecial in $L_0$, $v\neq \varepsilon$.  
\end{itemize}
\end{lemma}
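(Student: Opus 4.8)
The strategy is to analyze the structure of the language $L_1 = \tilde\xi(L_0)$ directly, using Lemma \ref{lemtriv} which tells us that $L_1 \subset \{3222, 32222\}^*$. The key observation is that $\tilde\xi$ is an injective substitution sending $1 \mapsto 3222$ and $2 \mapsto 2$ (after the conjugations introduced above), so we first prove a synchronization lemma: every long enough factor $w$ of $L_1$ admits a unique decomposition $w = s\,\tilde\xi(v)\,p$ with $v \in L_0$ and prefix/suffix $s,p$ in a short controlled set. The block $3222$ always begins with the distinctive letter $3$, while $L_0 \subset \{1,12\}^*$ contains no $22$; this means the letter $3$ acts as a synchronizing marker in $L_1$, so a bispecial word (which must be extendable on both sides and hence cannot have an ambiguous boundary) is forced to begin and end at a block boundary up to the fixed correction words.

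\textbf{Main steps.}
First I would establish that, apart from the short exceptional words $\varepsilon, 2, 22, 222, 2222, 22322$, any bispecial word of $L_1$ must contain at least one $3$ and must therefore start and finish inside the structure imposed by $\tilde\xi$. Second, using the synchronization just described, I would show that for such a $w$ there is a unique $v$ with $w = \hat{\tilde\xi}(v) = 222\,\tilde\xi(v)$ in the notation of Definition \ref{deffinalmorph}, where the prefix $222$ is precisely the correction needed so that $w$ begins and ends at the right place relative to the $3$-markers. Third, and this is the crux, I would verify that the left and right extensions of $w$ in $L_1$ correspond bijectively to the extensions of $v$ in $L_0$: because $\tilde\xi$ is injective and $3$ is synchronizing, reading one more block to the left or right of $w$ is equivalent to reading one more letter to the left or right of $v$, so $v$ is bispecial in $L_0$ with the same multiplicities $m_l, m_r, m_b$ (hence the same $i(v)$). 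The constraint $v \neq \varepsilon$ appears because the image of the empty word gives only one of the short exceptional cases already listed separately.

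\textbf{Main obstacle.}
The hard part will be the bookkeeping in the synchronization argument: I must check carefully that the two blocks $3222$ and $32222$, which have different lengths, cannot overlap in an ambiguous way, and that the exceptional short words are exactly those that fail to reach a full block boundary on both sides. This amounts to a finite case analysis on the possible prefixes and suffixes of a factor of $\{3222,32222\}^*$, completely analogous to the synchronization Lemma \ref{lem-step-one} already proved for $\Phi$; I would model the proof on that argument. The verification that extensions are preserved is then routine once the decomposition is unique, since $\tilde\xi$ maps $L_0$ into $L_1$ and the marker letter $3$ guarantees no extension of $v$ is lost or created in passing to $w$.
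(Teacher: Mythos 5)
Your proposal is correct and follows essentially the same route as the paper: the paper likewise uses the block structure $L_1\subset\{3222,32222\}^*$ from Lemma \ref{lemtriv} (so that $3$ is isolated and runs of $2$'s have length $3$ or $4$), deduces that a bispecial word containing $222$ must begin with $2223$ and synchronize onto $222\,\tilde{\xi}(v)=\hat{\tilde{\xi}}(v)$, and disposes of the remaining short words by a finite check. Your extra care about uniqueness of the decomposition and the bijection between extensions of $w$ and of $v$ is exactly the (tersely stated) content of the paper's argument, so no gap.
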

\begin{proof}
First it is clear that a bispecial word in $L_1$ must begin and end by $2$. If $w$ is not a factor of $232232$, then assume $w$ does not contains $222$ as factor. By Lemma \ref{lemtriv}, $L_1\subset\{2,322\}^*$ we deduce that $w$ is a factor of $(223)^\omega$. Thus we have $w=(223)^k22$, and we deduce $k=1$ since $w\notin F(232232)$.
It remains one case corresponding to $222\in F(w)$. Then by definition of $L_1$ we have $w\in \tilde{\xi}(L_0)$. By Lemma \ref{lemtriv} the number of consecutive $2$ is equal to $3$ or $4$, moreover the letter $3$ is isolated. Then if $w$ is a bispecial word we deduce that $w$ begins with $2$. Now $2w$ must be a word of the language. We deduce that $w$ begins with three letters $2$. Now we remark that $3222$ is equal to $\hat{\xi}(1)$. This implies that the suffixe of $w$ which prolong $222$ is the image of one word by $\hat{\xi}$.
To finish the proof it remains to list the bispecial words factors of $232232$.
\end{proof}

\subsubsection{Language $L_2$}
\begin{lemma}\label{lem-L2}
A bispecial word $w$ of $L_2$ fulfills one of the following facts.
\begin{itemize}
\item $w \in F(23232).$
\item $w=\hat{\tilde{\beta}}(v)$ where $v$ is bispecial in $L_0$, $v\neq \varepsilon$.
\end{itemize}
\end{lemma}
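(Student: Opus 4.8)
The plan is to follow exactly the scheme used in the proof of Lemma \ref{lem-L1}, replacing the substitution $\tilde{\xi}$ by $\tilde{\beta}$ and the exceptional short word $232232$ by $23232=\tilde{\beta}(1)$. Recall that $\tilde{\beta}(1)=23232$ and $\tilde{\beta}(2)=32$, and that $L_2=\tilde{\beta}(L_0)$ with $L_0\subset\{1,12\}^*$.

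First I would show that a bispecial word $w$ of $L_2$ must begin and end with the letter $2$. By Lemma \ref{lemtriv} we have $L_2\subset\{23,223\}^*$, so every occurrence of $3$ is immediately followed by the first letter of the next block, which is a $2$; hence a word ending in $3$ admits a unique right extension and cannot be right special. Dually, every $3$ is preceded by a $2$, so a word beginning with $3$ has a unique left extension and cannot be left special. Thus any bispecial word starts and ends with $2$, and I then split according to whether $w$ is a factor of $23232$.

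The key step is a synchronization (desubstitution) argument for $\tilde{\beta}$ in the case where $w$ is \emph{not} a factor of $23232$. Since $23232$ has only isolated $2$'s, it contains no factor $22$, so in this case $22\in F(w)$. Now $22$ does not occur inside $\tilde{\beta}(1)$ or $\tilde{\beta}(2)$, and the letter $2$ is isolated in $L_0$ (as $L_0\subset\{1,12\}^*$ and $2$ is always bordered by $1$'s); consequently a factor $22$ can only appear at the junction preceding an image $\tilde{\beta}(1)$. This makes the decomposition of $w$ into $\tilde{\beta}$-images recognizable: starting from an occurrence of $22$ and reading both ways, I recover a unique word $v$ such that $w=\tilde{\beta}(1v)=\hat{\tilde{\beta}}(v)$, where the prefix $23232$ completes the initial block forced by the fact that $w$ begins with $2$.

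Finally I would transfer bispeciality from $w$ to $v$. Because each letter's image under $\tilde{\beta}$ begins and ends with prescribed letters, the left and right extensions of $w$ in $L_2$ are in bijection with those of $v$ in $L_0$, so $v$ is bispecial in $L_0$ with the same multiplicity, and $v\neq\varepsilon$ since $w$ is long enough to contain $22$. The remaining bispecial words, namely those that are factors of $23232$, are enumerated directly. I expect the main obstacle to be the recognizability step: one must verify that the $\tilde{\beta}$-decomposition is genuinely unique and that no bispecial word containing $22$ escapes the form $\hat{\tilde{\beta}}(v)$. This is precisely where the isolation of $2$ in $L_0$ is essential, playing the role that the isolation of $3$ played in the treatment of $L_1$.
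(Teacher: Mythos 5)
Your overall strategy---show a bispecial word begins and ends with $2$, split off short exceptional words, and desubstitute through $\tilde{\beta}$ using $22$ as a synchronization marker, then transfer bispeciality back to $L_0$---is the same as the paper's, and the synchronization step (every $22$ marks the start of a $\tilde{\beta}(1)$-block) is exactly the right recognizability mechanism. However, one inference fails as stated: from ``$23232$ contains no factor $22$'' you conclude that any $w$ not in $F(23232)$ must contain $22$. That implication is false in $L_2$: the word $2323232=\tilde{\beta}(12)=\hat{\tilde{\beta}}(2)$ belongs to $L_2$, is not a factor of $23232$, and contains no $22$, so it lands in your second case while your $22$-anchored desubstitution has nothing to grab onto. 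The correct statement is that the $22$-free factors of $L_2$ are precisely the factors of $2323232$ (in $L_0$ every $2$ is followed by $1$, so $\tilde{\beta}(2)=32$ is always followed by $\tilde{\beta}(1)$, whose initial $2$ creates a $22$); hence you must separately dispose of the factors of $2323232$ that are not factors of $23232$, namely $232323$, $323232$ and $2323232$. The first two are already excluded by your begin/end-with-$2$ step, and $2323232$ is not left special, since every occurrence of it is preceded by $2$ (whether it arises as $\tilde{\beta}(12)$ or as the suffix $232\cdot 32\cdot 2\ldots$ inside $\tilde{\beta}(121)$). With that one check added your argument closes; the paper's own proof avoids the issue by anchoring its case split on an occurrence of $23232$ rather than of $22$, so that $2323232$ is written as $\hat{\tilde{\beta}}(2)$ and eliminated because $2$ is not bispecial in $L_0$.

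A smaller point: when you justify the block alignment of $w$, the prefix $23232$ is forced not merely by ``$w$ begins with $2$'' but by left-specialness ($2w\in L_2$ creates a $22$ at the left edge, which forces the continuation $3232$, exactly as in the paper's phrase that ``$22$ can only be extended by $3232$''); similarly right-specialness is what forces $w$ to end at a block boundary. Likewise, the bijection between extensions of $w$ and of $v$ is not letter-for-letter on the left (both $\tilde{\beta}(1)$ and $\tilde{\beta}(2)$ end in $2$): it comes from the two possible alignments of the prefix $23232$, either as a full block $\tilde{\beta}(1)$ or as $232\cdot\tilde{\beta}(2)$, which is precisely what the hatted map $\hat{\tilde{\beta}}$ is designed to capture. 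These are the points where your sketch should be made explicit, but they are standard and do not change the route.
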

\begin{proof}
First it is clear that a bispecial word in $L_2$ must begin and end by $2$.  By Lemma \ref{lemtriv}, we obtain $L_2\subset\{23232,2323232\}^*$. We split the proof in two cases: $23232\in F(w)$ or $23232\notin F(w)$. For the first case since $w$ is a bispecial word we have $2w\in L_2$. Then Lemma \ref{lemtriv} shows that $22$ can only be extended by $3232$, thus $w$ begins with $23232$, and $w=\hat{\tilde{\beta}}(v)$.
 \end{proof}

\subsubsection{Language $L_0$}
\begin{lemma}\label{propu}
The language $L_0$ fulfills the three following properties:
\begin{itemize}
\item $11211\notin L_0$. 
\item $22\notin L_0$.
\item Three consecutive occurences of $2$ are of the forms:
$21^l212$ or $2121^l2$ with $l\in\{1,4;7\}$.
\end{itemize}

The followings words  are bispecial words of $L_0$:
\begin{itemize}
\item $1^i$ for $i=0,2,3,5,6$ and it is an ordinary bispecial word: $i(1^i)=0$. Thus they do not modify the complexity.
\item $1^i$ for $i=1,4$ and it is a strong bispecial word: $i(1)=i(1^4)=1$.
\item $12121,1^7$ are weak bispecial words: $i(12121)=i(1^7)=-1$.
\end{itemize}

\end{lemma}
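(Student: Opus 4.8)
The plan is to establish the combinatorial structure of $L_0$ by exploiting the fact, stated in Theorem \ref{langages}, that $L_0$ is the set of factors of the periodic words $\sigma^n(1)^\omega$ and $\sigma^n(12)^\omega$, where $\sigma$ is the substitution $1\mapsto 1121211,\ 2\mapsto 111$. First I would prove the three structural properties. For the claim $22\notin L_0$ and $11211\notin L_0$, I would argue by induction on $n$: the images $\sigma(1)=1121211$ and $\sigma(2)=111$ contain no $22$ and no $11211$, and since every letter $2$ in an image $\sigma(w)$ is separated from the next by at least the surrounding $1$'s coming from the blocks $1121211$ and $111$, neither forbidden factor can be created at the junctions between consecutive images $\sigma(a)\sigma(b)$. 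The key observation is that $\sigma(a)$ always begins and ends with $1$ (indeed with $11$), so concatenation never produces $22$, and a short case analysis on the possible two-letter factors $ab\in L_0$ controls the junction words and rules out $11211$. The statement on three consecutive occurrences of $2$ then follows by tracking that each $2$ in $\sigma^n(1)$ or $\sigma^n(12)$ arises as the central letter of a block $\sigma(1)=11\underline{2}1\underline{2}11$, so the gaps $1^l$ between consecutive $2$'s take only the values $l\in\{1,4,7\}$, these being exactly the lengths of maximal $1$-runs produced by the substitution.

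Next I would identify the bispecial words of $L_0$ and compute their indices $i(v)$. Since $L_0\subset\{1,12\}^*$ by Lemma \ref{lemtriv}, every word decomposes into blocks, and the bispecial words are determined by which $1$-runs $1^i$ and which alternating blocks $12121$ can be extended on both sides by different letters. I would use the self-similarity: applying $\sigma$ to a bispecial word of $L_0$ (suitably desubstituted as in the arguments for $L_1,L_2$) yields another bispecial word, so that the list of bispecials is generated from a finite set of short seeds. For each candidate $1^i$ with $i\in\{0,2,3,5,6\}$ I would exhibit its left and right extensions and check $m_b-m_r-m_l+1=0$; for $i\in\{1,4\}$ I would show two left and two right extensions combine to give $m_b=4$, hence $i(v)=4-2-2+1=1$; and for the two weak words $12121$ and $1^7$ I would show only the two ``diagonal'' extensions survive, giving $m_b=2$ and $i(v)=2-2-2+1=-1$.

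The main obstacle I anticipate is the extension bookkeeping: correctly determining, for each candidate word, precisely which of the four possible two-sided extensions $avb$ actually lie in $L_0$. This requires using the forbidden-factor constraints ($22\notin L_0$, $11211\notin L_0$, and the admissible gap lengths $l\in\{1,4,7\}$) in a uniform way, and the values $i\in\{0,2,3,5,6\}$ versus $i\in\{1,4\}$ versus the critical lengths $5,7$ are exactly the places where the run-length structure of $\sigma$ switches behavior. I would organize this by relating the extensions of $v$ to those of its $\sigma$-preimage, so that the finitely many seed computations propagate automatically to the whole family; the bulk of the work is verifying the base cases by hand and confirming that desubstitution preserves the multiplicities $m_l, m_r, m_b$, which is the content already used implicitly in Lemmas \ref{lem-L1} and \ref{lem-L2}.
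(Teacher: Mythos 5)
The paper gives no argument for this lemma (``the proof is left to the reader''), so there is nothing to compare against; your proposal is the natural direct verification and it is essentially correct. The induction on $n$ using the fact that $\sigma(1)=1121211$ and $\sigma(2)=111$ both begin and end with $11$, together with tracking each letter $2$ back to the centre of a block $\sigma(1)$, does establish the three structural properties, and the index computations $i=4-2-2+1=1$, $3-2-2+1=0$, $2-2-2+1=-1$ are the right ones. One small point to watch: for the word $12121$ the three stated structural properties are not quite enough, because $21212=21^1 21^1 2$ is an \emph{allowed} pattern (it occurs in $(12)^\omega$); to see that $1121212$ and $2121211$ are excluded you need the slightly finer fact that two consecutive gaps of length $1$ between $2$'s occur only in $(12)^\omega$, where $11$ never appears (for $n\geq 1$ the second $2$ of a $\sigma(1)$-block is always followed by $11$, forcing the next gap to be $4$ or $7$). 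Your block-origin argument does yield this, but it should be stated explicitly; likewise the positive memberships (e.g.\ $211112\in L_0$ for the strength of $1^4$, $1^9\in L_0$ via $1^\omega$ for $1^7$) need to be exhibited, not just the exclusions.
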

The proof is left to the reader.

\begin{lemma}\label{lem-L0}
We have different cases for a non-ordinary bispecial word $w$ of $L_0$.
\begin{itemize}
\item $w=1^n, n\in\{1,4,7\}$.
\item $w=12121$.
\item $w=\hat{\sigma}(v)$ where $v$ is a non-ordinary bispecial word of $L_0$ and $i(w)=i(v)$.
\end{itemize}
\end{lemma}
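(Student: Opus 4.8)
The plan is to treat $L_0$ with the same desubstitution scheme that handled the square and the hexagon (Corollaries \ref{bisp-car} and \ref{bisp-hexa}): I would first show that $\sigma$ is recognizable on $L_0$, so that a rich enough factor has a unique preimage; then show that the padded substitution $\hat{\sigma}(w)=11\sigma(w)11$ sends bispecial words to bispecial words without changing the index $i$; then prove the converse, that every bispecial word which is not a factor of a degenerate periodic word lies in the image of $\hat{\sigma}$; and finally read the remaining, non-desubstitutable bispecial words off Lemma \ref{propu}. The first three steps produce the recursive alternative $w=\hat{\sigma}(v)$, and the last step produces the base cases $1,1^4,1^7,12121$.

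First I would establish recognizability from the combinatorial properties of Lemma \ref{propu}. Since $22\notin L_0$, $\sigma(1)=1121211$ and $\sigma(2)=111$, every maximal run of $1$'s has length $1$, $4$ or $7$, and each value pins down the local block structure: a run of length $1$ (the factor $212$) sits at the centre of a block $\sigma(1)$, a run of length $4$ marks a junction $\sigma(1)\sigma(1)$, and a run of length $7$ marks a junction $\sigma(1)\sigma(2)\sigma(1)$. Reading off these markers reconstructs all block boundaries, hence a unique preimage word $v$, for every factor that is not a factor of $1^\omega=\sigma^0(1)^\omega$ or of $(12)^\omega=\sigma^0(12)^\omega$; on these two degenerate words the markers are either absent (no $2$ at all) or overlap inconsistently (all gaps equal to $1$), which is exactly what will force the base cases.

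Next I would check that $\hat{\sigma}$ preserves the index. The key observation is that $\sigma(1)$ and $\sigma(2)$ both begin with $11$ and end with $11$, and differ only in their third letter ($112$ against $111$) and their third-from-last letter ($211$ against $111$). Hence the letter immediately to the left of $11\sigma(v)11$ is the distinguishing third-from-last letter of the block $\sigma(a)$ preceding it, so it determines and is determined by the left-extension letter $a$ of $v$, and symmetrically on the right. This yields a bijection (exchanging $1$ and $2$) between the one-sided extensions of $v$ and of $\hat{\sigma}(v)$ and between their biextensions, so that $m_l,m_r,m_b$, and therefore $i$, are unchanged; in particular $\hat{\sigma}$ carries a non-ordinary bispecial word to a non-ordinary bispecial word. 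For the converse, if $w$ is a non-ordinary bispecial word that is not a factor of $1^\omega$ or $(12)^\omega$, recognizability writes a core of $w$ as $\sigma(v)$, and left/right-speciality forces the surrounding synchronizing data to be exactly the padding $11$ on each side — just as the bispecial words of the square and hexagon were forced to begin (and end) with a synchronizing block — so that $w=\hat{\sigma}(v)$ with $i(v)=i(w)$; since $|v|<|w|$, this reduction terminates.

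It then remains to treat the words that do not desubstitute, namely the bispecial factors of $1^\omega$ and of $(12)^\omega$; Lemma \ref{propu} lists them and their indices, and the non-ordinary ones are precisely $1$, $1^4$, $1^7$ and $12121$, completing the three alternatives. I expect the recognizability and alignment to be the main obstacle: one must prove both that the block decomposition is unique and that speciality pins the outer padding down to $11$ (neither more nor less), and one must handle carefully the interface with $1^\omega$ and $(12)^\omega$. These two degenerate words are exactly why $\hat{\sigma}(\varepsilon)=1^4$ and $\hat{\sigma}(2)=1^7$ are not instances of the index-preserving case — $\varepsilon$ is only ordinary and $2$ is not even bispecial — which is why $1^4$ and $1^7$ must be recorded as separate base cases rather than as images under $\hat{\sigma}$.
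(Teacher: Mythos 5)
Your proposal is correct and follows essentially the same route as the paper: a desubstitution argument with respect to $\sigma$, driven by the run-length facts of Lemma \ref{propu} (runs of $1$'s of length $1$, $4$, $7$ locating the blocks $\sigma(1)$, the junctions $\sigma(1)\sigma(1)$ and $\sigma(1)\sigma(2)\sigma(1)$), with the factors of the degenerate periodic words $1^\omega$ and $(12)^\omega$ supplying the base cases $1,1^4,1^7,12121$. You merely make explicit the ``easy argument of synchronization'' and the index-preserving bijection of extensions that the paper leaves implicit, and your observation that $1^4=\hat{\sigma}(\varepsilon)$ and $1^7=\hat{\sigma}(2)$ must nonetheless be listed separately (since $\varepsilon$ is ordinary and $2$ is not bispecial) is exactly right.
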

\begin{proof}
First we consider the words without $2$, then the word is a power of $1$, and preceding Lemma shows the different possibilities. Now if the word contains only one letter $2$, then the word has the form $w=1^m21^n$, now the fact that $11211\notin L_0$ (see preceding Lemma) shows that the only  possibility is $121$ which is ordinary. 
It remains the case where $w$ contains at least two letters $2$. Then either $w$ contains $212$ or the word is a factor of $\sigma(L_0)$. We have different subcases for a bispecial word $w=1^m2\dots 21^n$ of $L_0$ factor of $u$ due to preceding Lemma. 
\begin{itemize}
\item $m=1$, then the word $1w=112\dots 21^n$ must belong to the language. This implies that $w=1212\dots 21^n$, but the fact that $2w$ exists implies now that $w=12121$.
\item $m\in\{4,7\}$, then $m=7$ is cleary impossible. It remains one case which can be written by symmetry $w=11112\dots 21111$. An easy argument of synchronization finishes the proof.
\end{itemize}
\end{proof}

The preceding lemma implies that the bispecial words of $L_0$ are of the form
\begin{corollary}\label{cor-L0}
For the long bispecial words there are four famillies of words.
\begin{itemize}
\item $x_n=\hat{\sigma}^n(1), n\in\mathbb{N}$, $i(\hat{\sigma}^n(1))=1$. 
\item $y_n=\hat{\sigma}^n(1^4), n\in\mathbb{N}$, $i(\hat{\sigma}^n(1^4))=1$. 
\item $z_n=\hat{\sigma}^n(12121),  n\in\mathbb{N}$, $i(\hat{\sigma}^n(12121))=-1$.
\item $t_n=\hat{\sigma}^n(1^7),  n\in\mathbb{N}$, $i(\hat{\sigma}^n(1^7))=-1$.
\end{itemize}

The two first famillies are made of strong bispecial words, the two last are weak bispecial words.
\end{corollary}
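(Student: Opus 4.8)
The plan is to read Lemma~\ref{lem-L0} as a recursive (tree-like) description of the non-ordinary bispecial words of $L_0$ and simply to unfold it. That lemma asserts that every non-ordinary bispecial word $w$ is either one of the four \emph{seeds} $1$, $1^4$, $1^7$, $12121$, or of the form $w=\hat{\sigma}(v)$ with $v$ again a non-ordinary bispecial word and $i(w)=i(v)$. First I would observe that the passage $w=\hat{\sigma}(v)\mapsto v$ strictly decreases the length: since $\hat{\sigma}(v)=11\sigma(v)11$ with $|\sigma(1)|=7$ and $|\sigma(2)|=3$, one has $|\hat{\sigma}(v)|>|v|$ whenever $v\neq\varepsilon$. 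Hence the third alternative of Lemma~\ref{lem-L0} can be invoked only finitely many times before a seed is reached.

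Consequently every non-ordinary bispecial word of $L_0$ equals $\hat{\sigma}^n(s)$ for some integer $n\geq 0$ and some seed $s\in\{1,1^4,1^7,12121\}$. Grouping the words according to the seed from which they arise yields exactly the four announced families $x_n=\hat{\sigma}^n(1)$, $y_n=\hat{\sigma}^n(1^4)$, $z_n=\hat{\sigma}^n(12121)$ and $t_n=\hat{\sigma}^n(1^7)$. Because the length strictly increases with $n$, within each family the words are pairwise distinct, so each family is genuinely infinite and the enumeration is faithful.

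It remains to propagate the index $i$ along each family. The equality $i(\hat{\sigma}(v))=i(v)$ supplied by Lemma~\ref{lem-L0} shows that $i$ is constant on each family and equal to its value on the seed. Those seed values are recorded in Lemma~\ref{propu}, namely $i(1)=i(1^4)=1$ and $i(12121)=i(1^7)=-1$. Therefore $i(x_n)=i(y_n)=1$ for all $n$, so the first two families consist of strong bispecial words, while $i(z_n)=i(t_n)=-1$ for all $n$, so the last two families consist of weak bispecial words, as claimed.

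The argument is essentially bookkeeping once Lemma~\ref{lem-L0} is available; the only point genuinely requiring attention is the termination of the recursion, which rests on the strict length increase under $\hat{\sigma}$. The substantive content — that the seeds are precisely $1$, $1^4$, $1^7$, $12121$ and that $\hat{\sigma}$ preserves $i$ — has already been established, so no further combinatorial input on the pentagon dynamics enters here.
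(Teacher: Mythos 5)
Your proposal is correct and follows essentially the same route as the paper, which derives the corollary directly by iterating Lemma~\ref{lem-L0} until one of the seeds $1,1^4,1^7,12121$ is reached and then reading off the index values from Lemma~\ref{propu}. Your added remarks on termination (strict length increase under $\hat{\sigma}$) and on the constancy of $i$ along each family merely make explicit what the paper leaves implicit.
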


\subsection{Intersection of languages}
We interest in the words which belong to different languages.

In the following Lemma we denote by $L_{ijk}$ the language intersection of the languages $L_i, L_j$ and $L_k$ for $i,j\in \{1\dots 4\}$.
\begin{lemma}
The words which belong to at least two languages are:
\begin{itemize}
\item $22\in L_{124},23\in L_{1234}, 32\in L_{1234}.$
\item $222\in L_{14}, 223\in L_{124}, 232\in L_{1234}, 322\in L_{124}, 323\in L_{23}$
\item $2222\in L_{14}, 2232\in L_{124}, 2322\in L_{124}, 2323\in L_{23}, 3223\in L_{24}, 3232\in L_{23}.$
\item $22322\in L_{14}, 23223\in L_{24}, 23232\in L_{23}, 32232\in L_{24}, 32323\in L_{23}.$
\item $232232\in L_{24}, 232323\in L_{23}, 323232\in L_{23}.$
\item $2323232\in L_{23}, 3232323\in L_{23}.$
\end{itemize}
\end{lemma}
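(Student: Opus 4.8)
The key structural feature to exploit is that none of $L_1,L_2,L_3,L_4$ contains the letter $1$, so the statement is really a classification of the common factors of four sublanguages of $\{2,3\}^*$. I would start by recording, for each language, the constraints on its factors. Using $L_0\subset\{1,12\}^*$ (Lemma \ref{lemtriv}, \ref{propu}) together with the definitions of the substitutions, these are: $L_4$ is the set of factors of the periodic words $2^\omega$ and $(223)^\omega$; $L_1=\tilde{\xi}(L_0)$ is generated by the blocks $\tilde{\xi}(1)=3222$ and $\tilde{\xi}(12)=32222$; $L_2=\tilde{\beta}(L_0)$ by the blocks $\tilde{\beta}(1)=23232$ and $\tilde{\beta}(12)=2323232$; and $L_3$ is the accumulation of the $\tilde{\chi}$-images, where $\tilde{\chi}(2)=32$ and $\tilde{\chi}(3)=3$. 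From these one reads off, for each language, the admissible lengths of a maximal run of the letter $2$ and whether the patterns $22$ and $33$ may occur.

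The analysis is then organised around a simple dichotomy. Since $\tilde{\chi}$ prefixes every $2$ with a $3$, the pattern $22$ never occurs in $L_3$; conversely the blocks $3222,32222$ of $L_1$, the blocks $23232,2323232$ of $L_2$ (here one uses $22\notin L_0$, Lemma \ref{propu}), and the words $2^\omega,(223)^\omega$ never contain $33$, so $33$ occurs only in $L_3$. Hence any word belonging to two of the languages contains no $33$ (otherwise it would lie in $L_3$ alone), and if one of its two indices is $3$ it contains no $22$ either. This splits the problem into the words containing a $22$, whose indices must lie in $\{1,2,4\}$, and the alternating words, which are the only ones that can meet $L_3$.

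For a common factor containing a $22$ I would compare the admissible run-lengths of $2$: a maximal interior run has length $3$ or $4$ in $L_1$, length $1$ or $2$ in $L_2$, length exactly $2$ inside any factor of $(223)^\omega$, and is unrestricted only inside $2^\omega$. Because these sets are essentially disjoint, two of the languages cannot share a word containing a long run of $2$ bounded on both sides by $3$'s; a short computation then bounds the length and produces the finitely many words, together with their index sets $L_{12},L_{14},L_{24},L_{124}$, membership being checked in $L_4$ directly against $2^\omega,(223)^\omega$ and in $L_1,L_2$ by exhibiting a preimage in $L_0$. The triple and quadruple intersections require nothing extra: they are simply the shortest words, such as $23,32,232$, that survive all constraints at once.

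The remaining case $L_2\cap L_3$ is the genuinely structural one, and I expect it to be the main obstacle. Here the run-length comparison is useless, since both languages allow runs of length $1$; instead the dichotomy forces a common factor to contain neither $22$ nor $33$, hence to be an alternating factor of $(23)^\omega$. Finiteness then follows from the explicit shape of $L_2=\tilde{\beta}(L_0)$: with $L_0\subset\{1,12\}^*$ the only alternating images are $\tilde{\beta}(1)=23232$ and $\tilde{\beta}(12)=2323232$, and since each image ends in $2$ and the next begins with $2$, every junction between consecutive blocks produces a $22$. A maximal alternating factor of $L_2$ therefore lies inside a single block and has length at most $7$; intersecting with $L_3$, which contains all the short alternating words, yields exactly the alternating words of length at most $7$, the longest being of the form $2323232$. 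Collecting all the words found, sorted by length and labelled by their index sets, then gives the classification of the statement.
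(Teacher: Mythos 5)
Your proposal is correct in its essentials but takes a genuinely different route from the paper. The paper's own proof is a mechanical enumeration: it lists the common words length by length, at each step extending the surviving words of length $i$ by one letter on each side and testing the candidates of length $i+1$ against the block decompositions of Lemma \ref{lemtriv}, terminating because nothing of length $8$ survives. You replace this by a structural argument: the observation that $33$ occurs in exactly one of the four languages (the $\tilde{\chi}$-image language) while $22$ is forbidden there splits the problem into words containing $22$, handled by comparing admissible runs of $2$, and alternating words, where finiteness comes from the fact that the only long alternating factor of $\tilde{\beta}(L_0)$ is $\tilde{\beta}(12)=2323232$. Your version explains why the list stops where it does and why no word containing $33$ appears, which the paper's enumeration does not. (Incidentally, your labelling of $L_3$ and $L_4$ follows the definition list and the subscripts in the statement; Lemma \ref{lemtriv} and the subsection headings use the opposite convention, so state explicitly which one you mean.)

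Two steps need tightening. First, the run-length comparison does not separate $L_2$ from the factors of $(223)^\omega$: both admit interior runs of $2$ of length exactly $2$, so the pair indexed $24$ is not excluded by run lengths alone. What bounds it is the context of $22$: in $L_2$ every $22$ sits at a block junction, so $223$ is always followed by $23$ and $322$ is always preceded by $32$, whereas in $(223)^\omega$ the factor $223$ is followed by $22$ and $322$ is preceded by $22$; the two languages therefore agree on only two letters on either side of a $22$, and the longest common factor containing $22$ is $232232$ --- exactly the largest word the lemma places in $L_{24}$. This comparison is the real content of your ``short computation'' and should be written out. Second, your conclusion that the intersection with the $\tilde{\chi}$-image language consists of \emph{all} alternating words of length at most $7$ is in tension with your own block analysis: since every occurrence of $\tilde{\beta}(12)$ in $\tilde{\beta}(L_0)$ is preceded and followed by the letter $2$, the alternating factors of $L_2$ are precisely the factors of $2323232$, and $3232323$ is not among them. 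The paper's statement lists $3232323$ all the same, so your imprecision merely reproduces the paper's; a careful execution of your own method would flag this entry for rechecking rather than carry it over.
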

\begin{proof}
The proof consists in the following method. We consider the words by family of different lengths. When we have listed all the words of a given length $i$, we consider the words of length $i+1$ which contain one of the preceding words as prefix or sufix. Then we use Lemma \ref{lemtriv} to verify if this word is in two languages.  This allows us to obtain the first list, after this it remains to look at the bispecial words.
\end{proof}

\begin{corollary}\label{lem-int}
We have
$$L_1\cap L_0=\{\varepsilon,2\}.$$
$$L_1\cap L_4=F(232).$$
$$L_1\cap L_3=F(22322)\cup \{222,2222\}$$
$$L_4\cap L_3=F(232).$$
$$L_2\cap L_3=F(232232).$$
$$L_2\cap L_4=F(2323232)$$
\end{corollary}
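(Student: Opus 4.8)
The statement is a bookkeeping consequence of the preceding Lemma, which already lists every word lying in at least two of $L_1,L_2,L_3,L_4$ together with the precise set of those languages containing it. The plan is therefore, for each of the five intersections among $L_1,\dots,L_4$, to collect from that list exactly the words whose attached index set contains the two indices in question, and then to check that the finite set so obtained is closed under taking factors and coincides with the factor set $F(w)$ of the single word $w$ named in the statement. For instance, $L_4\cap L_3$ collects the words carrying both relevant labels, namely the factors of $232$, while $L_2\cap L_3$ collects the longer family of factors of $232232$; in each case one reads off the generating word and verifies factor-closure by direct inspection of the tabulated words.

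The intersection $L_1\cap L_0$ must be treated separately, since $L_0$ does not appear in the preceding Lemma. Here I would argue by alphabet: Lemma \ref{lemtriv} gives $L_1\subset\{3222,32222\}^*$, so no word of $L_1$ contains the letter $1$, whereas every word of $L_0\subset\{1,12\}^*$ that is longer than one letter and avoids $1$ would have to contain $22$, which Lemma \ref{propu} excludes from $L_0$. Consequently the only words common to $L_1$ and $L_0$ are the empty word and the single letter $2$, giving $L_1\cap L_0=\{\varepsilon,2\}$.

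The substantive content is already carried by the preceding Lemma, so the real work is to be sure its enumeration is complete: one must know that no word longer than those listed lies in two of the languages. This rests on the block descriptions of Lemma \ref{lemtriv}, which bound the runs of $2$'s and keep the letter $3$ isolated, and hence force any word shared by two languages to synchronise with the distinct block factorisations of both; incompatible block structures then cap the length of a shared word, and the finitely many survivors are exactly those already tabulated. Granting this completeness, matching each collected family to a factor set $F(w)$ and checking closure under factors is routine, so the main obstacle is simply the careful, exhaustive verification that the block-structure constraints of Lemma \ref{lemtriv} leave no shared word unlisted.
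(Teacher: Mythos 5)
Your proposal is correct and is exactly the derivation the paper intends (its own proof is ``left to the reader''): the five intersections among $L_1,\dots,L_4$ are read off from the preceding Lemma's enumeration, and $L_1\cap L_0$ is handled separately via the block descriptions of Lemma \ref{lemtriv} together with the absence of $22$ in $L_0$ (Lemma \ref{propu}), just as you do. One practical warning: the subscripts $3$ and $4$ in the preceding Lemma are interchanged relative to the convention used in Lemma \ref{lemtriv} and in this corollary (for instance it lists $222\in L_{14}$ even though $222$ cannot occur in $L_4\subset\{3,32\}^*$), so the literal ``collect the words whose index set contains both indices'' step must be carried out after swapping those two labels; performed verbatim it would give $222\in L_1\cap L_4$, contradicting $L_1\cap L_4=F(232)$.
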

\begin{proof}
The proof is left to the reader.
\end{proof}

\begin{corollary}\label{cor-int-lang}
The bispecial words of $L_\Phi$ belonging to at least two of the languages $L_1, L_2, L_3, L_4$  are
\begin{itemize}
\item The ten strong bispecial words,
$$\varepsilon, 2, 3, 22, 33, 222, 232, 323, 23232, 32323.$$
\item The four weak bispecial words :
 $$ 2222, 22322, 232232, 2323232.$$
\end{itemize}
\end{corollary}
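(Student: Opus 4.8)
The plan is to treat this as a finite verification built on the explicit enumeration supplied by the preceding Lemma, which already lists every word lying in at least two of $L_1,L_2,L_3,L_4$; the longest such word has length $7$, so only finitely many candidates need to be examined. For each candidate word $w$ on that list I would decide whether $w$ is bispecial in $L_\Phi=L_1\cup L_2\cup L_4\cup L_3$ and, if so, compute its index $i(w)=m_b(w)-m_r(w)-m_l(w)+1$ in order to sort it into the strong, weak, or neutral class. Thus the corollary is really a pruning-and-classification step applied to a known finite set.

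The core computation is the determination of the one- and two-sided extensions of $w$ inside the union $L_\Phi$. The key point is that the extension set of a word in a union of languages is the union of its extension sets in each member language that contains it. I would therefore combine Corollary \ref{lem-int}, which records exactly which intersections each word belongs to, with the block descriptions $L_1\subset\{3222,32222\}^*$, $L_2\subset\{23,223\}^*$, $L_4\subset\{3,32\}^*$ and $L_3\subset\{2,223\}^*$ coming from Lemma \ref{lemtriv}. For a fixed $w$ and a fixed language $L_i$ containing it, the admissible left and right letters are read off from the block structure, since $w$ must decompose into the allowed blocks and its extensions are governed by which blocks may legally precede or follow; taking the union over all $L_i\ni w$ then yields $m_l(w)$, $m_r(w)$ and $m_b(w)$ in $L_\Phi$.

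Once the extensions are in hand the classification is mechanical: a word with $m_l(w)=1$ or $m_r(w)=1$ is discarded as non-bispecial, and the surviving words are separated by the sign of $i(w)$. I expect the ten short words $\varepsilon,2,3,22,33,222,232,323,23232,32323$ to fall into the strong family listed in the statement and the four words $2222,22322,232232,2323232$ to have negative index and hence to be weak, matching the claim.

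The main obstacle is bookkeeping rather than conceptual difficulty. One must be scrupulous in forming the union of extension sets, because a word lying in several languages may still extend, within a single $L_i$, into a region covered by only that one language, and such extensions must nonetheless be counted toward $m_l$, $m_r$ and $m_b$. Care is likewise needed to avoid double-counting a biextension $(a,b)$ realized in more than one language, and above all to confirm exhaustiveness: every remaining word on the intersection list must be checked to have $m_l=1$ or $m_r=1$, so that it is genuinely not bispecial and the fourteen listed words are indeed all of them.
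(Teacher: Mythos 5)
Your plan is essentially the proof the paper intends: the paper gives no written proof of this corollary at all, and the only sensible argument is exactly the finite verification you describe --- restrict attention to the finite list supplied by the preceding Lemma, compute the left, right and bi-extensions of each candidate inside $L_\Phi$ as the union of its extensions in each $L_i$ that contains it (using the block descriptions of Lemma \ref{lemtriv} and the intersection data of Corollary \ref{lem-int}), and classify the survivors by the sign of $i(w)$. Your remarks that an extension realized in only one of the containing languages must still be counted, and that a biextension realized in several languages must not be double-counted, are precisely the points where care is needed.

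One concrete caveat: four of the fourteen words in the corollary's conclusion, namely $\varepsilon$, $2$, $3$ and $33$, do not appear on the preceding Lemma's list (which begins at length $2$ and does not contain $33$), so a literal run of your procedure over that list cannot output them. You must supplement the candidate set with the words of length at most $1$, which trivially lie in several of the $L_i$, and decide separately whether $33$ belongs to two of the languages. As written, your exhaustiveness argument inherits whatever incompleteness the Lemma's enumeration has, and that enumeration is visibly not in bijection with the list you are trying to establish; this should be acknowledged and repaired rather than passed over.
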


\subsection{Proof of Proposition \ref{classfinbisp}}
First Lemma \ref{lem-step-one} implies that a bispecial word $w\in L'$ can be written as $w=\hat{\Phi}(v)$ where $v\in L_\Phi$ is a bispecial word. Now we are interested in a bispecial word $v$ in $L_\Phi$. Several cases appear
\begin{itemize}
\item If $v\in L_0$, then Corollary \ref{cor-L0} shows that $v$ is inside four famillies of words. 
\item If $v\in L_1$ then Lemma \ref{lem-L1} implies that $v=\hat{\tilde{\xi}}(v')$ with $v'\in L_0$ or $v$ is element of a finite familly. Thus the preceding point completes the list of bispecial words of $L_1$.
\item If $v\in L_2$, then Lemma \ref{lem-L2} implies that except for a finite list of words, we can write $v=\hat{\tilde{\beta}}(v')$ with $v'\in L_0$.
\item If $v\in L_3$ then Lemma \ref{lem-L4} gives the complete list of bispecial words.
\item If $v\in L_4$, then by Corollary \ref{cor-L3} we know that $v=\hat{\tilde{\psi}}^k(v')$ where $v'\in L_1\cup L_2\cup L_3$. 
\item Corollary \ref{cor-int-lang} and the preceding points allow us to finish the proof.
\end{itemize}


\section{Proof of Theorem \ref{calccomp}}\label{secfin}
\subsection{Method}
We will use Proposition \ref{classfinbisp} to compute the different lengths of these bispecial words. 
We must compute the length of words obtained as iterations of one map on one word. We explain the method for $\hat{\tilde{\psi}}^n(v)$. We will use abelianization of the morphism $\psi$ and deduce the matrix of $\hat{\psi}$. Then we must compute the $n$ power of this matrix $\hat{A}$ and compute the norm of $\hat{A}^nw$ where $w$ is the column vector, abelianization of the word $w$.

\begin{definition}\label{defmatricessubst}
\begin{itemize}
\item The matrices of the following abelianization morphisms are denoted by 
$\begin{array}{|c|c|c|c|c|c|c|c|c|}
\hline
 \tilde{\chi}&\hat{\tilde{\chi}}&\tilde{\xi}&\hat{\tilde{\xi}}&\tilde{\beta}&\hat{\tilde{\beta}}&
 |\Phi|& |\hat{\Phi}|\\
\hline
A&\hat{A}&C&\hat{C}&B&\hat{B}&L&\hat{L}\\
\hline
\end{array}
$. 
\item The abelianization of the words $x_n, y_n, z_n, t_n$ are denoted by $X_n, Y_n, Z_n, T_n$.
\end{itemize}
\end{definition}

\begin{remark}
The matrices related to $\tilde{\chi},\hat{\tilde{\chi}},\tilde{\xi},\hat{\tilde{\xi}},\tilde{\beta},\hat{\tilde{\beta}},|\Phi|, |\hat{\Phi}|$ are the following:
 $$A=\begin{pmatrix}1&0\\1&1\end{pmatrix}, \hat{A}=\begin{pmatrix}1&0&0\\1&1&1\\ 0&0&1\end{pmatrix}, B=\begin{pmatrix}3&1\\2&1\end{pmatrix},$$ 
$$\hat{B}=\begin{pmatrix}3&1&3\\2&1&2\\0&0&1\end{pmatrix},   
C=\begin{pmatrix}3&1\\ 1&0\end{pmatrix},$$
$$\hat{C}=\begin{pmatrix}3&1&3\\ 1&0&0\\0&0&1\end{pmatrix},$$ 
$$L=\begin{pmatrix}1&2&0\end{pmatrix}, \hat{L}=\begin{pmatrix}1&2&1\end{pmatrix}.$$
The map $\hat{\psi}$ is always applied to a word defined on the alphabet $\{2,3\}$, thus we can consider $L$ as a $3$ dimensional vector.
\end{remark}

\subsection{Computation}
According to Proposition \ref{classfinbisp} we will compute 
the following vectors, for every integer $k$:
\begin{itemize}
\item For the weak bispecial words:

$$\hat{L}\hat{A}^k\begin{pmatrix}4\\0\\1\end{pmatrix}, \hat{L}\hat{A}^k\begin{pmatrix}4\\1\\1\end{pmatrix},
\hat{L}\hat{A}^k\begin{pmatrix}4\\2\\1\end{pmatrix},
\hat{L}\hat{A}^k\begin{pmatrix}4\\3\\1\end{pmatrix},$$
$$
\hat{L}\hat{A}^k\hat{C}(Z_n),
\hat{L}\hat{A}^k\hat{C}(T_n),
\hat{L}\hat{A}^k(BZ_n),
\hat{L}\hat{A}^k(BT_n),$$
$$Z_n,T_n.$$

\item For the strong bispecial words:

$$\hat{L}\hat{A}^k\begin{pmatrix}1\\0\\1\end{pmatrix},
\hat{L}\hat{A}^k\begin{pmatrix}2\\0\\1\end{pmatrix},
\hat{L}\hat{A}^k\begin{pmatrix}3\\0\\1\end{pmatrix},
\hat{L}\hat{A}^k\begin{pmatrix}2\\1\\1\end{pmatrix},
\hat{L}\hat{A}^k\begin{pmatrix}3\\2\\1\end{pmatrix},
\hat{L}\hat{A}^k\begin{pmatrix}0\\1\\1\end{pmatrix},$$
$$
\hat{L}\hat{A}^k\hat{C}(X_n),
\hat{L}\hat{A}^k\hat{C}(Y_n),  
\hat{L}\hat{A}^k(BX_n),
\hat{L}\hat{A}^k(BY_n),$$
$$X_n,Y_n.$$
\end{itemize}

\subsection{Length of bispecial words of $L_0$}
\begin{lemma} 
The vectors $X_n, Y_n, Z_n, T_n$, see Definitions \ref{deffinalmorph} and \ref{defmatricessubst}, are of the form
\begin{itemize}
\item $X_n=\frac{1}{35}\begin{pmatrix}54.6^n-5(-1)^n-14\\ 18.6^n+10.(-1)^n-28\\ 35\end{pmatrix}.$
\item $Y_n=\frac{1}{35}\begin{pmatrix}144.6^n+10(-1)^n-14\\ 48.6^n-20.(-1)^n-28\\ 35\end{pmatrix}.$
\item $Z_n=\frac{1}{35}\begin{pmatrix}144.6^n-25(-1)^n-14\\ 48.6^n+50.(-1)^n-28\\ 35\end{pmatrix}.$
\item $T_n=\frac{1}{35}\begin{pmatrix}234.6^n+25(-1)^n-14\\ 78.6^n-50.(-1)^n-28\\ 35\end{pmatrix}.$
\end{itemize}
\end{lemma}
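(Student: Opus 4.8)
The key structural observation is that all four families are produced by iterating a \emph{single} substitution, $\hat\sigma(w)=11\,\sigma(w)\,11$, so on the level of abelianizations everything is controlled by one matrix. Since abelianization is a monoid homomorphism to $\mathbb{N}^2$ (counting occurrences of $1$ and $2$; there are no $3$'s in any of these words), applying $\hat\sigma$ corresponds to applying a fixed linear-affine map to the count vector. The affine part comes from the four fixed letters $1$ prepended and appended by the hat, so $\hat\sigma$ is not linear; the natural remedy is to pass to homogeneous coordinates, representing a word with $n_1$ ones and $n_2$ twos by the column $(n_1,n_2,1)^t$. Reading off $\sigma(1)=1121211$ and $\sigma(2)=111$ gives the linear part $\left(\begin{smallmatrix}5&3\\2&0\end{smallmatrix}\right)$ on $(n_1,n_2)$, while the two pairs of $1$'s add the constant $4$ to the $1$-count; hence in homogeneous coordinates $\hat\sigma$ acts by
$$M=\begin{pmatrix}5&3&4\\2&0&0\\0&0&1\end{pmatrix}.$$

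Because composition of substitutions becomes multiplication of these matrices, and $x_n=\hat\sigma^n(1)$, $y_n=\hat\sigma^n(1^4)$, $z_n=\hat\sigma^n(12121)$, $t_n=\hat\sigma^n(1^7)$, the plan is simply to write $X_n=M^nX_0$, and likewise for $Y_n,Z_n,T_n$, with the base vectors obtained by counting letters directly:
$$X_0=\begin{pmatrix}1\\0\\1\end{pmatrix},\quad Y_0=\begin{pmatrix}4\\0\\1\end{pmatrix},\quad Z_0=\begin{pmatrix}3\\2\\1\end{pmatrix},\quad T_0=\begin{pmatrix}7\\0\\1\end{pmatrix}.$$
The entire lemma thus reduces to computing the powers of $M$.

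The matrix $M$ is block triangular: its top-left block has characteristic polynomial $\lambda^2-5\lambda-6=(\lambda-6)(\lambda+1)$, and the homogeneous coordinate contributes the eigenvalue $1$. So $M$ has the three distinct eigenvalues $6,-1,1$, which is precisely why every coordinate of $X_n,Y_n,Z_n,T_n$ has the shape $a\,6^n+b(-1)^n+c$. I would extract the coefficients by solving the three equations coming from $n=0,1,2$; subtracting the $n=0$ relation from the $n=2$ relation isolates $a=(v_2-v_0)/35$, which explains the common denominator $35=6^2-1=(6-1)\bigl(6-(-1)\bigr)$, and the remaining two equations then give $b$ and $c$. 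In practice the cleanest and least error-prone route is to avoid diagonalization altogether and instead \emph{verify} the stated closed forms by induction on $n$: the base case $n=0$ matches $X_0,Y_0,Z_0,T_0$ above, and the inductive step amounts to checking, for each family, that left-multiplying the claimed vector by $M$ reproduces the same expression with $n$ replaced by $n+1$, using $6\cdot 6^n=6^{n+1}$ and $-(-1)^n=(-1)^{n+1}$.

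There is no genuine conceptual difficulty here: once $M$ and its eigenvalues $6,-1,1$ are in hand the answer is forced, and the remaining work is arithmetic bookkeeping — computing the four base vectors correctly and carrying out one $3\times 3$ matrix-vector product per family in the inductive step. The one subtlety worth flagging is the affine correction produced by the fixed letters of $\hat\sigma$; treating $\hat\sigma$ naively as linear would lose the constant terms $-14,-28$, whereas encoding it through the homogeneous third coordinate makes the eigenvalue $1$ and those constants appear automatically. I expect the induction to be entirely routine, so the main (modest) hazard is simply a sign or coefficient slip in the hand computation, which the independent base-case and step checks guard against.
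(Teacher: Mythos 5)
Your proposal is correct and follows essentially the same route as the paper: both set up the homogeneous abelianization matrix $M=\left(\begin{smallmatrix}5&3&4\\2&0&0\\0&0&1\end{smallmatrix}\right)$, use the same base vectors $(1,0,1)^t,(4,0,1)^t,(3,2,1)^t,(7,0,1)^t$, and exploit the eigenvalues $6,-1,1$ to obtain the closed forms (the paper diagonalizes and computes $M^n$ directly, while you also offer the equivalent inductive verification). No gap.
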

\begin{proof}
With Lemma \ref{lem-L0} and Corollary \ref{cor-L0} we deduce that the abelianizations of the bispecial words of $L_0$ are of the form
$l_i=M^nv_i, i=1\dots 4$ where 
$M=\begin{pmatrix}5&3&4\\ 2&0&0\\ 0&0&1\end{pmatrix}$ and $v_i$ are the vectors:
$\begin{pmatrix}1\\0\\1\end{pmatrix}, \begin{pmatrix}4\\0\\1\end{pmatrix}, \begin{pmatrix}3\\2\\1\end{pmatrix}, \begin{pmatrix}7\\0\\1\end{pmatrix}.$
Now the matrix $M$ can be reduced to a diagonalized matrix with coefficients $-1,1,6$.
A simple calculus gives the expression of $M^n$. 
\end{proof}

\subsection{Iterations and lengths}
\begin{lemma}\label{lem-calcul-iter}
For all integer $k\geq 1$ we have

$$\hat{L}\hat{A}^k\begin{pmatrix}a\\b\\1\end{pmatrix}=k(2a+2)+2b+a+1,$$
$$\hat{L}\hat{A}^k\hat{B}\begin{pmatrix}a\\b\\1\end{pmatrix}=k(6a+2b+8)+7a+3b+8,$$
$$\hat{L}\hat{A}^k\hat{C}\begin{pmatrix}a\\b\\1\end{pmatrix}=k(6a+2b+8)+5a+b+4.$$
\end{lemma}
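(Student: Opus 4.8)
The plan is to reduce all three identities to a single explicit computation of the power $\hat{A}^k$, since $\hat{A}$ is the only matrix being raised to a power while $\hat{B}$, $\hat{C}$ and the row vector $\hat{L}$ are fixed. First I would compute $\hat{A}^k$ by induction on $k$. Since $\hat{A}=\begin{pmatrix}1&0&0\\1&1&1\\0&0&1\end{pmatrix}$ differs from the identity only in its middle row, a direct multiplication suggests the guess $\hat{A}^k=\begin{pmatrix}1&0&0\\k&1&k\\0&0&1\end{pmatrix}$, which is verified by checking $\hat{A}^{k+1}=\hat{A}^k\hat{A}$: the first and third rows are stable, and the middle row passes from $(k,1,k)$ to $(k+1,1,k+1)$.

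With $\hat{A}^k$ in hand I would establish the first identity, which serves as the master formula. Left-multiplying by $\hat{L}=(1,2,1)$ gives $\hat{L}\hat{A}^k=(2k+1,\,2,\,2k+1)$, and pairing this row with the column $(a,b,1)^{t}$ yields $(2k+1)a+2b+(2k+1)$, which rearranges to $k(2a+2)+a+2b+1$, exactly the asserted value.

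The remaining two identities then follow with no further matrix power, simply by substituting the constant vectors $\hat{B}(a,b,1)^{t}$ and $\hat{C}(a,b,1)^{t}$ into the master formula. A one-line computation gives $\hat{B}(a,b,1)^{t}=(3a+b+3,\,2a+b+2,\,1)^{t}$ and $\hat{C}(a,b,1)^{t}=(3a+b+3,\,a,\,1)^{t}$, both of which still have last coordinate equal to $1$. Feeding the corresponding pair $(a',b')$ into the expression $k(2a'+2)+a'+2b'+1$ and simplifying produces $k(6a+2b+8)+7a+3b+8$ and $k(6a+2b+8)+5a+b+4$ respectively.

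There is no genuine obstacle here: the only delicate point is the bookkeeping of the additive constants, and the cleanest way to avoid error is exactly this two-step organization, namely to prove the first formula once and then treat $\hat{B}$ and $\hat{C}$ as constant affine substitutions into it, rather than recomputing $\hat{L}\hat{A}^k\hat{B}$ and $\hat{L}\hat{A}^k\hat{C}$ from scratch. The hypothesis $k\geq 1$ is needed only to anchor the induction; in fact the three formulas remain valid at $k=0$ as well, since $\hat{A}^0$ is the identity.
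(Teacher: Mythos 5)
Your proof is correct and follows essentially the same route as the paper: both arguments reduce to the explicit formula $\hat{A}^k=\begin{pmatrix}1&0&0\\k&1&k\\0&0&1\end{pmatrix}$ (the paper gets it from the minimal polynomial $(X-1)^2$, you by induction, which is equivalent) and then finish by direct matrix arithmetic with $\hat{B}$, $\hat{C}$ and $\hat{L}$. Your organization via a single master formula into which $\hat{B}(a,b,1)^{t}$ and $\hat{C}(a,b,1)^{t}$ are substituted is a slightly cleaner bookkeeping of the same computation.
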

\begin{proof}
We remark that the minimal polynomial of $\hat{A}$ is $(X-1)^2$. We deduce the formula $\hat{A}^k=k\hat{A}-(k-1)Id$. Then we have for $k\geq 1$,
 
$$\hat{A}^k=\begin{pmatrix}1&0&0\\k&1&k\\0&0&1\end{pmatrix}.$$ We deduce that the word $\hat{\tilde{\chi}}^k(v)$ has the following abelianization
$\begin{pmatrix}a\\(a+1)k+b \\ 1\end{pmatrix}$ where the word $v$ has coordinates $v=\begin{pmatrix}a\\ b\\1\end{pmatrix}$.

In a similar way we obtain the matrix of $\hat{\tilde{\chi}}^k\hat{\tilde{\beta}}$, first we compute the matrix $\hat{B}$ of $\beta$, then we deduce
$$\hat{A}^k\hat{B}=
\begin{pmatrix}3&1&3\\ 3k+2&1+k&4k+2\\ 0&0&1\end{pmatrix}.$$
We deduce that the word $\hat{\tilde{\chi}}^k\hat{\beta}(v)$ has the following abelianization
$$\begin{pmatrix}3a+b+3\\(3a+b+4)k+2a+b+2 \\ 1\end{pmatrix},$$ where the word $v$ has coordinates $v=\begin{pmatrix}a\\ b\\1\end{pmatrix}$.

Finally we obtain
$$\hat{A}^kC=\begin{pmatrix}3&1&3\\ 3k+1&k&4k\\0&0&1\end{pmatrix}.$$
We deduce that the word $\hat{\tilde{\chi}}^k\hat{\xi}(v)$ has the following abelianization
$\begin{pmatrix}3a+b+3\\(3a+b+4)k+a \\ 1\end{pmatrix}$ where the word $v$ has coordinates $v=\begin{pmatrix}a\\ b\\1\end{pmatrix}$.
\end{proof}

\subsection{Formula}
Now we can use the formula for the computation of the lenghts of the bispecial words of $L_0$. In the following we denote 
$X_i=PD^nP^{-1}v_i$.
Finally we obtain

\begin{proposition}
The bispecial words of the language are of the following form, 
with $n,k\in\mathbb{N}\cup\{0\}$:
\begin{itemize}
\item The length of the weak bispecial words is of the form
$$\begin{cases}
10k+5\\
10k+7\\
10k+9\\
10k+11\\
\frac{48.6^n(20k+24)+14(10k+7)-25(-1)^n(2k+1)}{35}\\
\frac{78.6^n(20k+16)+14(10k+3)+25(-1)^n(2k+1)}{35}\\
\frac{48.6^n(20k+16)+14(10k+3)-25(-1)^n(2k+3)}{35}\\
\frac{78.6^n(20k+24)+14(10k+7)+25(-1)^n(2k+3)}{35}\\
\frac{192.6^n+25(-1)^n-42}{35}\\
\frac{312.6^n-25(-1)^n-42}{35}
\end{cases}
$$
\item The lengths of the strong bispecial words is of the form
$$\begin{cases}
4k+2\\
6k+3\\
8k+4\\
6k+5\\
8k+8\\
2k+3\\
\frac{18.6^n(20k+24)+14(10k+7)-5(-1)^n(2k+1)}{35}\\
\frac{48.6^n(20k+24)+14(10k+7)+10(-1)^n(2k+1)}{35}\\
\frac{18.6^n(20k+16)+14(10k+3)-5(-1)^n(2k+3)}{35}\\
\frac{48.6^n(20k+16)+14(10k+3)+10(-1)^n(2k+3)}{35}\\
\frac{4.18.6^n-42+5(-1)^n}{35}\\
\frac{4.8.6^n-42+10(-1)^n}{35}
\end{cases}$$
\end{itemize}
\end{proposition}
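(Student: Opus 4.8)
The plan is to read each bispecial word off Proposition \ref{classfinbisp}, write it as a composition of the "hat" substitutions applied to an explicit base word, and then compute its length by passing to abelianizations, at which point everything reduces to the linear algebra already packaged in Lemma \ref{lem-calcul-iter}. The only analytic input I need is that the length functional is linear on abelianizations: for a word $v$ over $\{2,3\}$, writing $\mathrm{ab}(v)=(n_2,n_3,1)^T$ for the abelianization in homogeneous coordinates, one has $|\hat{\Phi}(v)|=\hat{L}\cdot\mathrm{ab}(v)$ with $\hat{L}=(1,2,1)$, which is exactly the record that $\Phi$ sends $2,3$ to words of lengths $1,2$ and that $\hat{\Phi}$ appends one letter. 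Since abelianization turns a composition of substitutions into the product of their abelianization matrices, a family such as $\hat{\Phi}(\hat{\tilde{\chi}}^k\circ\hat{\tilde{\beta}}(z_n))$ has length $\hat{L}\,\hat{A}^{k}\hat{B}\,Z_n$, and likewise $\hat{L}\,\hat{A}^{k}\hat{C}\,Z_n$ for the families passing through $\hat{\tilde{\xi}}$, or $\hat{L}\,\hat{A}^{k}$ applied to an elementary column vector for the families with a short base word. This dictionary pairs every one of the $24$ families of Proposition \ref{classfinbisp} with one of the three expressions evaluated in Lemma \ref{lem-calcul-iter}.

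I would then organize the computation in three blocks. For the families $\hat{\Phi}(\hat{\tilde{\chi}}^k(w))$ with $w$ among $2222,22322,232232,2323232$ and the strong analogues $2,22,222,232,23232,3$, I apply the first identity $\hat{L}\hat{A}^k(a,b,1)^T=k(2a+2)+2b+a+1$ with $(a,b)=(n_2,n_3)$ of the base word; this produces the linear lengths $10k+5,10k+7,10k+9,10k+11$ and $4k+2,6k+3,8k+4,6k+5,8k+8,2k+3$. For the families built through $\hat{\tilde{\beta}}$ and $\hat{\tilde{\xi}}$, I substitute the explicit expressions for $X_n,Y_n,Z_n,T_n$ — already diagonalized, so that the eigenvalues $6$ and $-1$ of the matrix $M$ furnish the $6^n$ and $(-1)^n$ terms — as $(a,b)=(n_1,n_2)$ into the second and third identities $\hat{L}\hat{A}^k\hat{B}(a,b,1)^T=k(6a+2b+8)+7a+3b+8$ and $\hat{L}\hat{A}^k\hat{C}(a,b,1)^T=k(6a+2b+8)+5a+b+4$, and collect everything over the common denominator $35$ to reach the stated normal forms.

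Finally, for the two strong families $x_n,y_n$ and the two weak families $z_n,t_n$, which by the corollary following Lemma \ref{lem-step-one} are bispecial in $L'$ directly (the letter $1$ occurs, so no copy of $\hat{\Phi}$ is involved), the length is simply the word length $n_1+n_2$, i.e. the sum of the first two coordinates of $X_n,Y_n,Z_n,T_n$; this yields the pure-exponential forms such as $\frac{192\cdot 6^n+25(-1)^n-42}{35}$ and $\frac{312\cdot 6^n-25(-1)^n-42}{35}$.

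The computation is entirely mechanical once the dictionary "family $\leftrightarrow$ matrix product" is fixed, so I expect the main obstacle to be bookkeeping rather than mathematics: one must pair each family with the correct base vector and the correct one of the three formulas of Lemma \ref{lem-calcul-iter}, keep the homogeneous coordinate consistent when passing from the alphabet $\{1,2\}$ on which $x_n,\dots,t_n$ live to $\{2,3\}$ through $\hat{C}$ and $\hat{B}$, and simplify each expression to the indicated form over $35$. No new geometric or dynamical fact is needed beyond Proposition \ref{classfinbisp}, the diagonalization producing $X_n,Y_n,Z_n,T_n$, and the three evaluations of Lemma \ref{lem-calcul-iter}.
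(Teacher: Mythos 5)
Your proposal is correct and follows essentially the same route as the paper: abelianize each of the $24$ families of Proposition \ref{classfinbisp} into a product $\hat{L}\hat{A}^k$, $\hat{L}\hat{A}^k\hat{B}$ or $\hat{L}\hat{A}^k\hat{C}$ applied to the base vector (or, for $x_n,y_n,z_n,t_n$, just sum the first two coordinates), then substitute the diagonalized expressions for $X_n,Y_n,Z_n,T_n$ and evaluate via Lemma \ref{lem-calcul-iter}. The dictionary you describe between families, base vectors and the three formulas is exactly the paper's computation, so nothing further is needed.
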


The next subsections split the proof in two parts.

\subsubsection{Weak bispecial words}
We use Lemma \ref{lem-calcul-iter} and obtain the formulas.
\begin{itemize}
\item $\hat{L}\hat{A}^k\begin{pmatrix}4\\0\\1\end{pmatrix}=10k+5, 
\hat{L}\hat{A}^k\begin{pmatrix}4\\1\\1\end{pmatrix}= 10k+7.$
\item $\hat{L}\hat{A}^k\begin{pmatrix}4\\2\\1\end{pmatrix}=10k+9, 
\hat{L}\hat{A}^k\begin{pmatrix}4\\3\\1\end{pmatrix}= 10k+11.$

\item $\hat{L}\hat{A}^k\hat{B}Z_n=\frac{1}{35}[48.6^n(20k+24)-25(-1)^k(2k+1)+14(10k+7)].$

\item $\hat{L}\hat{A}^k\hat{B}T_n=\frac{1}{35}[6.13.6^n(20k+24)+25(-1)^n(2k+1)+14(10k+7)].$

\item $\hat{L}\hat{A}^k\hat{C}Z_n=\frac{1}{35}[48.6^n(20k+16)-25(-1)^n(2k+3)+14(10k+3)].$

\item $\hat{L}\hat{A}^k\hat{C}T_n=\frac{1}{35}[6.13.6^n(20k+16)+25(-1)^n(2k+3)+14(10k+3)].$
\end{itemize}
The two last formula correspond to $Z_n, T_n$.

\subsubsection{Strong bispecial words}
By the same method we obtain
\begin{itemize}
\item $\hat{L}\hat{A}^k\begin{pmatrix}1\\0\\1\end{pmatrix}=4k+2,
\hat{L}\hat{A}^k\begin{pmatrix}2\\0\\1\end{pmatrix}=6k+3.$
\item $\hat{L}\hat{A}^k\begin{pmatrix}3\\0\\1\end{pmatrix}=8k+4,
\hat{L}\hat{A}^k\begin{pmatrix}2\\1\\1\end{pmatrix}=6k+5.$
\item $\hat{L}\hat{A}^k\begin{pmatrix}3\\2\\1\end{pmatrix}=8k+8,
\hat{L}\hat{A}^k\begin{pmatrix}0\\1\\1\end{pmatrix}=2k+3.$

\item $\hat{L}\hat{A}^k\hat{B}X_n\frac{1}{35}[18.6^n(20k+24)-5(-1)^n(2k+1)+14(10k+7)].$

\item $\hat{L}\hat{A}^k\hat{B}Y_n=\frac{1}{35}[48.6^n(20k+24)+10(-1)^n(2k+1)+14(10k+7)].$

\item $\hat{L}\hat{A}^kCX_n=\frac{1}{35}[18.6^n(20k+16)-5(-1)^n(2k+3)+14(10k+3)].$

\item $\hat{L}\hat{A}^kCY_n=\frac{1}{35}[48.6^n(20k+16)+10(-1)^n(3+2k)+14(10k+3)].$
\end{itemize}
The two last formula correspond to $X_n, Y_n$.

\begin{remark}
In fact the formulas of the length could be reduced by assuming that $n$ is an integer greatest than $-1$. In this case the formulas $8k+4,8k+8, 6k+3, 6k+5$ can be erased, since they appear as particular cases of other formulas. But there is no algebraic sense to $x_{-1}$, thus we do not use it.
\end{remark}
\subsection{Number of bispecial words}
\begin{lemma}
Denote $g(n,k)=6^n(ak+b)+ck+d+(-1)^n(ek+f)$ for some fixed rationals numbers $a,b,c,d,e,f$.
Then we have
$$S_N=\sum_{(n,k)\in\mathbb{N}^2, f(n,k)\leq N }1=\sum_{n\geq 0}\frac{1}{a6^n+c+(-1)^ne}N+o(N).$$
\end{lemma}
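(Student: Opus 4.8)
The plan is to count, for each fixed $n$, the contribution coming from the variable $k$, and then to sum these contributions over $n$ while controlling the error terms. First I would freeze $n$ and observe that $g$ is affine in $k$: writing $A_n = a6^n + c + (-1)^n e$ and $B_n = b6^n + d + (-1)^n f$, we have $g(n,k) = A_n k + B_n$. In our situation the coefficient $A_n$ is positive for every $n$, since the quantities $g(n,k)$ are lengths of words, hence positive and strictly increasing in $k$. Consequently the number of integers $k \ge 0$ with $g(n,k) \le N$ equals $\lfloor (N - B_n)/A_n \rfloor + 1$ when $N \ge B_n$ and $0$ otherwise; in particular this count is $(N - B_n)/A_n + O(1)$, with an $O(1)$ uniform in $n$.

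Next I would bound the range of $n$ that actually contributes. Since $B_n = b6^n + d + (-1)^n f$ grows geometrically, the condition $B_n \le N$ forces $n \le \log_6(N/b) + O(1)$, so only $O(\log N)$ values of $n$ give a nonzero count. Summing the per-$n$ counts, the accumulated $O(1)$ rounding errors therefore contribute only $O(\log N) = o(N)$, and the problem reduces to estimating the main term $\sum_{n : B_n \le N} (N - B_n)/A_n$.

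Then I would split this main term as $N \sum_{n : B_n \le N} 1/A_n - \sum_{n : B_n \le N} B_n/A_n$ and treat the two pieces separately. For the first piece, the full series $\sum_{n \ge 0} 1/A_n$ converges because $A_n \sim a6^n$; moreover the discarded tail $\sum_{n : B_n > N} 1/A_n$ is bounded by a geometric series beyond $n \approx \log_6(N/b)$, hence is $O(1/N)$, so that $N \sum_{n : B_n \le N} 1/A_n = N \sum_{n \ge 0} 1/A_n + O(1)$. For the second piece, the ratio $B_n/A_n$ tends to the constant $b/a$ and is bounded, whence $\sum_{n : B_n \le N} B_n/A_n = O(\log N) = o(N)$. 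Combining these estimates yields $S_N = N \sum_{n \ge 0} 1/A_n + o(N)$, which is exactly the claimed formula.

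The step I expect to require the most care is the simultaneous control of the geometric tail and of the number of contributing indices: one must verify that the relevant range of $n$ is genuinely $O(\log N)$ and that $N \sum_{n : B_n > N} 1/A_n$ stays bounded, since these are the facts that guarantee the rounding errors and the constant-term contributions really are $o(N)$ rather than merging into the leading order. Everything else reduces to routine geometric estimates.
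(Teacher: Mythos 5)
Your proposal is correct and follows essentially the same route as the paper: fix $n$, count the admissible $k$ as $(N-v_n)/u_n+O(1)$, observe that only $O(\log N)$ values of $n$ contribute so the rounding errors are $o(N)$, and replace the truncated sum $\sum_{n\le m}1/u_n$ by the full convergent series at the cost of another $o(N)$. Your treatment is in fact slightly sharper in places (quantifying the tail as $O(1/N)$ so that $N$ times the tail is $O(1)$, and isolating the bounded ratio $B_n/A_n$ explicitly), but the decomposition and all the key estimates coincide with the paper's argument.
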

\begin{proof}
We denote $u_n=a6^n+c+(-1)^ne, v_n=b6^n+d+f(-1)^n$ and we remark that $f(n,k)=ku_n+v_n$.  Then we define the following number $m$: 
$$m=max\{n\in\mathbb{N}, v_n \leq N\}.$$
Now we remark that $f(n,k)\leq N$ implies $k\leq \frac{N-v_n}{u_n}$, then we have
$$S_N=\sum_{n\leq m}(\lfloor  \frac{N-v_n}{u_n}\rfloor+1).$$
$$S_N=\sum_{n\leq m}\lfloor  \frac{N-v_n}{u_n}\rfloor+m.$$
We can remark that the inequality $v_n\leq N$ gives 
$b6^n\leq N-d-f$, thus we deduce 
$m\leq \frac{\ln{N}}{b\ln{6}}$. Finally we can write

$$S_N=\sum_{n\leq m}\frac{N-v_n}{u_n}+O(\ln{N}).$$

Remark that
Now we have $f(n,k)=k(a6^n+c+(-1)^ne)+b6^n+d+f(-1)^n$. 
$$\frac{N-v_n}{u_n}=\frac{N-b6^n-d-f(-1)^n}{a6^n+c+(-1)^ne}=\frac{N}{a6^n+c+(-1)^ne}+0(1).$$
We deduce finally
$$S_N=\sum_{n\leq m}\frac{N}{a6^n+c+(-1)^ne}+O(\ln{N}).$$

Now we remark that $\displaystyle\lim_{N\rightarrow+\infty}m=+\infty$, moreover the numerical serie of general term $\frac{1}{a6^n+c+(-1)^ne}$ is convergent. Thus we deduce that $$\sum_{n\leq m}\frac{1}{a6^n+c+(-1)^ne}=\sum_{n}\frac{1}{a6^n+c+(-1)^ne}+o(1).$$
We deduce

$$S_N=\sum_{n\geq 0}\frac{1}{a6^n+c+(-1)^ne}N+o(N).$$
\end{proof}

\begin{lemma}\label{cal-comp-fin}
There exists $\beta>0$ such that 
$$\displaystyle\sum_{i=0}^Nb(i)\sim \beta N.$$
Moreover we can give a formula for $\beta$:
\begin{align*}
\beta=&\frac{14}{15}+\\
&\displaystyle\sum_{n\geq 0}(\frac{7}{48.6^n.2+14+2(-1)^n}+\frac{7}{18.6^n.2+14-(-1)^n})\\
&-\displaystyle\sum_{n\geq 0}(\frac{7}{78.6^n.2+14+5(-1)^n}+\frac{7}{48.6^n.2+14-5(-1)^n}).
\end{align*}

\end{lemma}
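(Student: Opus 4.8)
The plan is to convert the asymptotics of $\sum_{i=0}^N b(i)$ into an index-weighted count of bispecial words and then feed the length formulas of the previous proposition into the counting lemma just established. First I would telescope using $b(i)=\sum_{v\in BL_i}i(v)$:
$$\sum_{i=0}^N b(i)=\sum_{\substack{v\ \text{bispecial}\\ |v|\le N}} i(v).$$
By Proposition \ref{classfinbisp} every bispecial word is neutral (index $0$, hence irrelevant), strong, or weak, apart from the two exceptional words $\varepsilon$ and $2$ with $i(\varepsilon)=2$, $i(2)=0$, which together contribute only $O(1)$. Since the index is preserved by the substitutions that generate the families and equals $\pm1$ on the base words of $L_0$ (Corollary \ref{cor-L0}), every strong word carries weight $+1$ and every weak word weight $-1$. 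Thus
$$\sum_{i=0}^N b(i)=\#\{\text{strong},\ |v|\le N\}-\#\{\text{weak},\ |v|\le N\}+O(1),$$
and it remains to count, family by family, the members of length at most $N$.

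I would next separate the $24$ families by how many parameters they carry, reading lengths off the previous proposition. The families indexed by a single integer $k$ have length affine in $k$; one of slope $s$ supplies $\sim N/s$ words of length $\le N$. The strong single-parameter slopes are $4,6,8,6,8,2$ and the weak ones are $10,10,10,10$, so these families contribute to $\beta$ the rational amount
$$\Big(\tfrac14+\tfrac16+\tfrac18+\tfrac16+\tfrac18+\tfrac12\Big)-\tfrac{4}{10}=\tfrac43-\tfrac25=\tfrac{14}{15}.$$

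For the families indexed by two integers $(n,k)$, whose lengths have the shape $g(n,k)=6^n(ak+b)+ck+d+(-1)^n(ek+f)$ handled by the counting lemma, I would apply that lemma family by family, each contributing $\big(\sum_{n\ge0}\frac1{a6^n+c+(-1)^n e}\big)N+o(N)$, the denominator being exactly the coefficient of $k$ in the length. The crucial simplification is Lemma \ref{lem-calcul-iter}: the $k$-linear coefficient of $\hat L\hat A^k\hat B(a,b,1)^T$ and of $\hat L\hat A^k\hat C(a,b,1)^T$ is the same value $6a+2b+8$, so the $\hat{\tilde\beta}$- and $\hat{\tilde\xi}$-versions of a common base word share one reciprocal and merge into a single clean term. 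Running this over the strong pairs built from $x_n$ and $y_n$ gives the two positive series of $\beta$, and over the weak pairs built from $z_n$ and $t_n$ the two negative series. The residual families $x_n,y_n,z_n,t_n$ depend on $n$ alone with lengths of order $6^n$, so only $O(\log N)$ of them reach length $N$ and they disappear into $o(N)$.

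The main obstacle is bookkeeping rather than any hard estimate: one must pair each of the $24$ families with the correct length formula, read off the triple $(a,c,e)$ with the right parity signs, and confirm the pairwise merging of the $\hat{\tilde\beta}$- and $\hat{\tilde\xi}$-families. One must also count without overlap; this is legitimate because Corollary \ref{cor-int-lang} isolates the finitely many words lying in more than one of $L_1,L_2,L_3,L_4$, so the families are disjoint up to an $O(1)$ correction already absorbed above. Assembling the rational part $\tfrac{14}{15}$, the positive two-parameter series and the negative two-parameter series yields $\sum_{i=0}^N b(i)\sim\beta N$ with the announced $\beta$.
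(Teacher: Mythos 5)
Your proposal is correct and follows essentially the same route as the paper: telescoping $\sum b(i)$ into a signed count of strong minus weak bispecial words, extracting $\tfrac{14}{15}=\tfrac43-\tfrac25$ from the single-parameter families, and applying the counting lemma to the two-parameter families with the $\hat{\tilde\beta}$/$\hat{\tilde\xi}$ pairs merging because they share the $k$-coefficient $6a+2b+8$. Your additional remarks on disjointness of the families and on the $O(\log N)$ contribution of the $n$-only families $x_n,y_n,z_n,t_n$ are correct refinements of details the paper leaves implicit.
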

\begin{proof}
First we remark that $i(v)$ for a bispecial word $v$ can be equal to $1$ or $-1$. Thus we have
$$\displaystyle\sum_{i=0}^Nb(i)=\displaystyle\sum_{i\leq N}[\sum_{v\in\mathcal{BS}(i)}1-\displaystyle\sum_{v\in\mathcal{BW}(i)}1].$$
 We will first consider the subset of strong bispecial words. The other part is similar.
The preceding proposition shows that the length of the bispecial words is always of the form $f(n,k)$ with $a,b,c,d,e$ rational numbers. Then we will consider only one familly and denote by $S_N$ the associated sum. W need to count the number of bispecial words of length less than $N$. To do this we count the number of integers $(k,n)$ such that $(ak+b)6^n+c_k+d\leq N$. The preceding Lemma gives the result.

We have six sequences of values for $a,b$. We sum them, and we deduce that the number of strong bispecial words of length less than $N$ is:
$$\displaystyle\sum_{i\leq N}\sum_{v\in\mathcal{BS}(i)}1=\beta_1 N+o(N),$$
where $\beta_1$ is the sum of the different constants depending on the six values of $a,b,c,d,e$.

For the weak bispecial words, the lengths have the same form, thus the calculus is similar. We obtain another constant $\beta_2$. It suffices to remark that we have $\beta_1>\beta_2$, and we define $\beta=\beta_1-\beta_2$.
$$\displaystyle\sum_{i\leq N}\sum_{v\in\mathcal{BW}(i)}1=\beta_2 N+o(N).$$
We find

\begin{align*}
\beta_1=&\frac{1}{4}+\frac{1}{6}+\frac{1}{8}+\frac{1}{6}+\frac{1}{8}+\frac{1}{2}+\\
&2[\displaystyle\sum_{n\geq 0}\frac{35}{48.6^n.20+140+20(-1)^n}+\displaystyle\sum_{n\geq 0}\frac{35}{18.6^n.20+140-10(-1)^n}].
\end{align*}

$$\beta_1=\frac{4}{3}+\displaystyle\sum_{n\geq 0}\frac{7}{48.6^n.2+14+2(-1)^n}+\displaystyle\sum_{n\geq 0}\frac{7}{18.6^n.2+14-(-1)^n}.$$

$$\beta_2=\frac{4}{10}+2\displaystyle\sum_{n\geq 0}[\frac{35}{78.6^n.20+140+50(-1)^n}+\frac{35}{48.6^n.20+140-50(-1)^n}].$$
$$\beta_2=\frac{2}{5}+\displaystyle\sum_{n\geq 0}[\frac{7}{78.6^n.2+14+5(-1)^n}+\frac{7}{48.6^n.2+14-5(-1)^n}].$$
\end{proof}

\begin{corollary}
We deduce that $p(n)\sim \frac{\beta n^2}{2}.$
\end{corollary}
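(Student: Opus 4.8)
The plan is to integrate the estimate of Lemma \ref{cal-comp-fin} twice, using the two discrete summation identities that connect $p$, $s$ and $b$. Recall from the definition of $s$ that $s(n)=p(n+1)-p(n)$, and from Lemma \ref{julien} that $b(n)=s(n+1)-s(n)$. The whole computation is therefore a matter of telescoping these relations in the right order.

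First I would telescope the relation $b(i)=s(i+1)-s(i)$. Summing over $0\le i\le N-1$ gives $s(N)=s(0)+\sum_{i=0}^{N-1}b(i)$. By Lemma \ref{cal-comp-fin} the partial sums of $b$ satisfy $\sum_{i=0}^{N-1}b(i)\sim\beta N$, so $s(N)\sim\beta N$; equivalently $s(N)=\beta N+o(N)$. Next I would telescope the relation $s(i)=p(i+1)-p(i)$. Summing over $0\le i\le n-1$ gives $p(n)=p(0)+\sum_{i=0}^{n-1}s(i)$. Substituting $s(i)=\beta i+o(i)$ yields $p(n)=\beta\sum_{i=0}^{n-1}i+o(n^2)=\frac{\beta n^2}{2}+o(n^2)$, which is the claimed equivalence.

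The only point requiring care — and the step I expect to be the main (minor) obstacle — is the passage from the asymptotic $s(i)\sim\beta i$ to the asymptotic of its partial sums, since $\sim$ is not in general preserved under summation without a sign or monotonicity hypothesis. Here it is legitimate because the estimate is the stronger statement $s(i)=\beta i+o(i)$: writing $s(i)-\beta i=\varepsilon(i)\,i$ with $\varepsilon(i)\to 0$, for any fixed $\delta>0$ one bounds $\bigl|\sum_{i\le n-1}\varepsilon(i)\,i\bigr|$ by splitting the range at an $i_0$ beyond which $|\varepsilon(i)|<\delta$; the tail contributes at most $\delta\cdot\frac{n^2}{2}$ while the head is $O(1)$ in $n$, so the whole sum is $o(n^2)$. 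This is exactly the Stolz--Ces\`aro theorem applied to $p(n)/n^2$, and it closes the argument.

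I should also note that no difficulty arises from the fact that $b(i)$ takes negative values for weak bispecials: the cancellation between strong and weak contributions has already been absorbed into the single constant $\beta=\beta_1-\beta_2$ of Lemma \ref{cal-comp-fin}, whose statement gives directly $\sum_{i\le N}b(i)\sim\beta N$. Hence the two telescoping steps above apply verbatim, and since $\beta>0$ the leading term $\frac{\beta n^2}{2}$ is genuinely of order $n^2$, consistent with the polynomial bounds of Gutkin and Tabachnikov recalled in Section \ref{secback}.
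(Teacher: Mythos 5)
Your proposal is correct and follows exactly the route the paper intends: the paper's own proof is the single sentence that the corollary is a direct consequence of Lemma \ref{julien} and Lemma \ref{cal-comp-fin}, and your two telescoping steps (from $b$ to $s$, then from $s$ to $p$, with the Ces\`aro-type justification for summing the asymptotic) are precisely the details being left implicit. Nothing further is needed.
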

\begin{proof}
The proof is a direct consequence of Lemma \ref{julien} and Lemma \ref{cal-comp-fin}.
\end{proof}


\section{Regular decagon}\label{secdeca}
In this short section we explain how to deal with the case of the regular decagon. In fact this case can be deduced easily from the case of the regular pentagon. In Figure \ref{decaT} the lengths are not correct, but the angles have correct values. We have drawn the partition and its image by $\hat{T}$.

\subsection{Induction}
\begin{lemma}
The map $\hat{T}_{deca}$ is defined on five sets. The definitions of these sets are the following, see Figure  \ref{decaT} :
\begin{itemize}
\item The first set $V_1$ is a triangle, and $\hat{T}_{deca}$ is a rotation of angle $4\pi/5$ on this set.
\item The second set $V_2$ is a quadrilateral, and $\hat{T}_{deca}$ is a rotation of angle $3\pi/5$ on this set.
\item The third set is $V_3$ a quadrilateral, and $\hat{T}_{deca}$ is a rotation of angle $2\pi/5$ on this set.
\item The fourth set $V_4$ is an infinite polygon with four edges, and $\hat{T}_{deca}$ is a rotation of angle $\pi/5$ on this set.
\item The last one $V_5$ is a cone, and $\hat{T}_{deca}$ is a translation on this set.
\end{itemize}
\end{lemma}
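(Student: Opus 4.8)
The plan is to mirror the analysis already carried out for the square, hexagon and pentagon, specialising the general computation of Lemma \ref{isom-newcoding} to the case $k=10$. First I would invoke that lemma directly: with $k=10$ one has $j=\lfloor (k+1)/2\rfloor = 5$, so $\hat{T}_{deca}$ is automatically a piecewise isometry on exactly five pieces, and the return index $n_{Tx}$ ranges over $\{1,2,3,4,5\}$. This fixes the number of cells and lets me label them $V_1,\dots,V_5$ according to the constant value $m$ of $n_{Tx}$ on each.

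Next I would read off the five angles purely algebraically. On the sector $V_0$ the outer billiard map $T$ is the central symmetry $s_{A^+}$ about the fixed tangency vertex (Definition \ref{point}), hence a rotation by $\pi$. By Definition \ref{Tcone}, on the cell where $n_{Tx}=m$ we have $\hat{T}_{deca}=R^m T$, where $R$ is the rotation by $-2\pi/10=-\pi/5$. Therefore on $V_m$ the map is a rotation by $\pi-m\pi/5=(5-m)\pi/5$. Evaluating for $m=1,\dots,5$ gives the angles $4\pi/5,\,3\pi/5,\,2\pi/5,\,\pi/5$ and $0$; the last value is an orientation-preserving isometry with trivial rotation part, hence a translation, matching the claim for $V_5$.

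The remaining work, and the only genuinely geometric step, is to identify the shapes of the cells. I would proceed exactly as in the proofs of Lemma \ref{isom-newcoding} and Lemma \ref{calcul-pentagone}: place the decagon with vertices $e^{i\pi n/5}$, $n=0,\dots,9$, take the image cone $V=TV_0$ (the central image of $V_0$ about the vertex of affix $1$), and record how the two boundary half-lines of $V$ cross the partition cones $V_0,\dots,V_9$. The preimage under $T$, an affine central symmetry, of each intersection is one cell of $V_0$, whose boundary consists of segments of the lines $d_i$ together with their $T$-images. A direct inspection of these intersections, supported by Figure \ref{decaT}, yields that the innermost cell is bounded by three such segments (the triangle $V_1$), the next two cells by four segments (the quadrilaterals $V_2,V_3$), then an unbounded cell cut off by four edges (the infinite polygon $V_4$), and finally the outermost unbounded cell bounded only by the two extreme half-lines (the cone $V_5$).

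The main obstacle is precisely this last bookkeeping: while the count and the angles follow mechanically, distinguishing triangle from quadrilateral from infinite polygon from cone requires tracking the exact incidence pattern of the image cone's edges with the partition, and is most safely verified against the figure rather than by a closed-form computation. As the opening remark of the section indicates, once this piecewise structure is in hand the symbolic dynamics and complexity for the decagon can be transported from the pentagon, since the induced first-return dynamics reproduces the piecewise isometry $(Z,G)$ of Section \ref{sectab}.
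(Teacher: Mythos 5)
Your proposal is correct and matches the paper's (implicit) argument: the paper gives no proof of this lemma beyond the reference to Figure \ref{decaT}, and your reconstruction — piece count from Lemma \ref{isom-newcoding} with $k=10$, angles $(5-m)\pi/5$ from $\hat{T}=R^mT$ with $T$ a central symmetry and $R$ a rotation by $-\pi/5$, and the cell shapes read off from the incidence of the image cone with the partition — is exactly the intended computation, consistent with the analogous pentagon formulas in Lemma \ref{lem-rot-calcul}. Your honest flagging of the shape bookkeeping as the only step requiring the figure is fair, since that is also all the paper offers.
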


\begin{figure}
\begin{center}  
\includegraphics[width= 6cm]{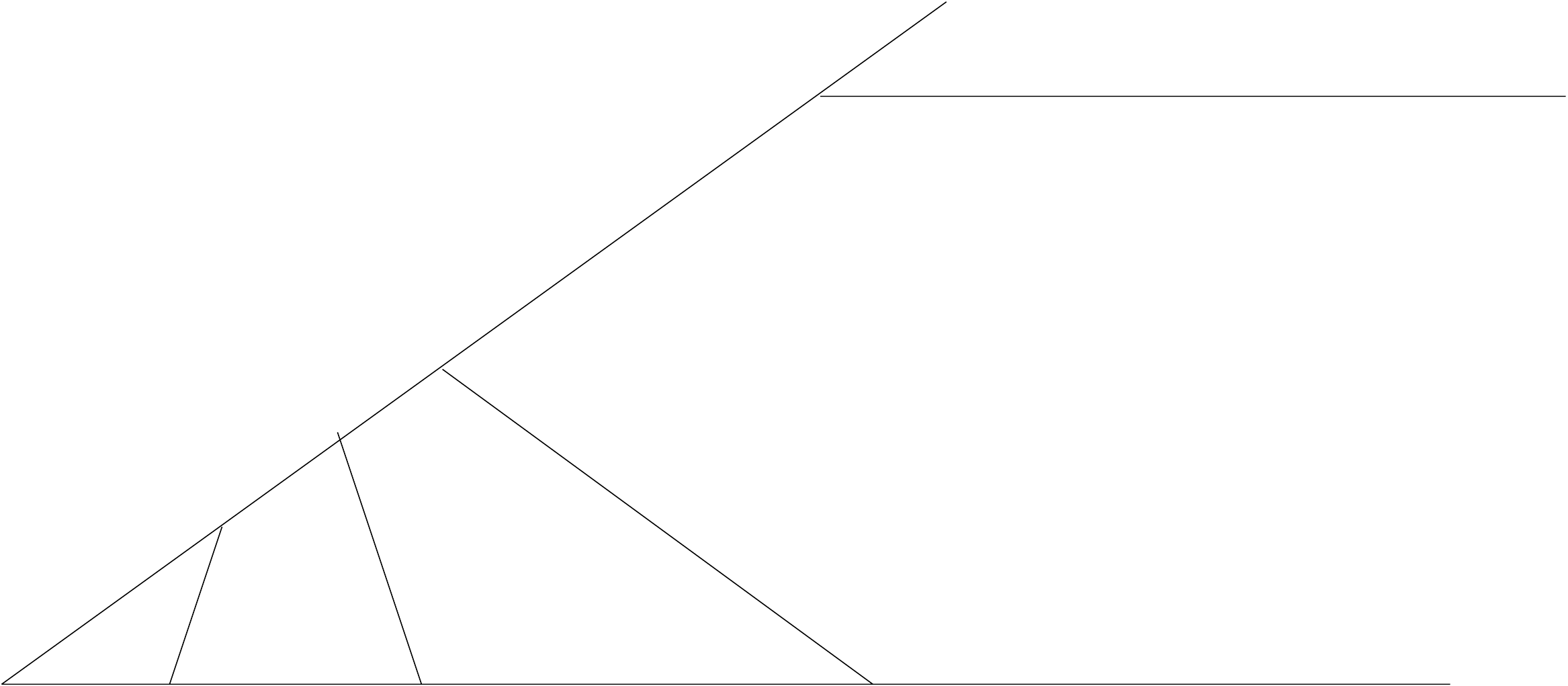}
\includegraphics[width= 6cm]{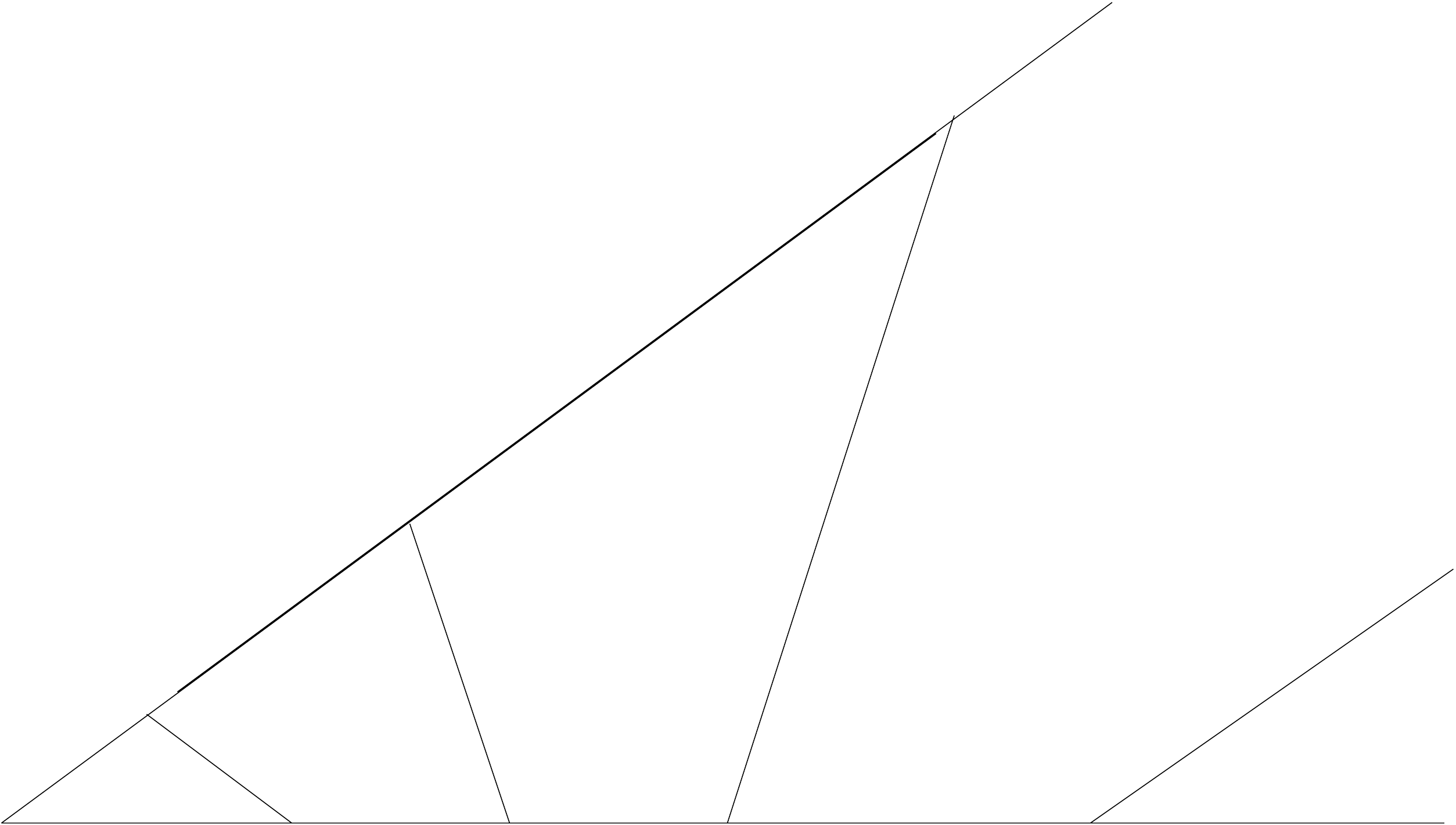}

\caption{Definition of $\hat{T}$ for the decagon} \label{decaT}
\end{center}
\end{figure}

\begin{lemma}\label{ret-deca}
Consider the maps $\hat{T}_{penta}, \hat{T}_{deca}$ related to the outer billiard map outside the regular pentagon respectively the regular decagon. Then consider the induced map on $U_3$, and denote it by $\hat{T}_{penta,3}$. Then there exists a translation $s$ such that
$$\hat{T}_{deca}=s^{-1}\circ \hat{T}_{penta,3}\circ s.$$  
\end{lemma}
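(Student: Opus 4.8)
The plan is to treat the decagon exactly as the square, the hexagon and the triangle were treated in Section \ref{secinduc}: write down the first return map $\hat{T}_{penta,3}$ of $\hat{T}_{penta}$ on $U_3$, check that it is a piecewise isometry on five pieces carrying precisely the rotation angles $4\pi/5,\,3\pi/5,\,2\pi/5,\,\pi/5$ and a translation, and then read off the conjugating translation $s$. The whole argument stays inside the invariant set $[U_2\setminus(\hat{T}U_1\cap U_2)]\cup U_3$ isolated in Lemma \ref{calcul-pentagone}, so the only dynamics used is the one of Lemma \ref{lemindupent}: $\hat{T}_{penta}$ is the rotation of angle $-\pi/5$ on $U_3$ and the rotation of angle $+\pi/5$ on $U_2\setminus Z$.

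First I would compute the image $\hat{T}_{penta}(U_3)$, which by Lemma \ref{calcul-pentagone} is the cone of vertex $G$, and intersect it with the partition $\{U_2\setminus Z,\,U_3\}$; pulling this decomposition back by the rotation $\hat{T}_{|U_3}$ cuts $U_3$ into subcones on which the return time $k$ to $U_3$ is constant. The crucial bookkeeping is on the angles: the orbit of a point of $U_3$ first leaves by the rotation of angle $-\pi/5$ and then stays in $U_2\setminus Z$, where each step adds $+\pi/5$, until it falls back into $U_3$; hence a return after $k$ steps composes into an isometry of angle $-\pi/5+(k-1)\pi/5=(k-2)\pi/5$. The admissible return times should be $k=2,3,4,5,6$, giving the angles $0,\,\pi/5,\,2\pi/5,\,3\pi/5,\,4\pi/5$, that is five pieces with a genuine translation on the piece $k=2$. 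This is term by term the description of $\hat{T}_{deca}$ given in the preceding Lemma.

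Next I would evaluate these composite isometries in the complex coordinates fixed before Lemma \ref{lem-rot-calcul}, exactly as in Lemmas \ref{lem-rot-calcul} and \ref{translation-calcul}, in order to locate the four centres of rotation and the translation vector. Comparing them with the centres prescribed for $V_1,\dots,V_4$ and with the translation of $V_5$ should show that all five data differ from the corresponding data of $\hat{T}_{deca}$ by one and the same vector; this common vector is the translation $s$, and one concludes $\hat{T}_{deca}=s^{-1}\circ\hat{T}_{penta,3}\circ s$ by checking the equality on each of the five pieces, as was done for the square and the hexagon.

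The main obstacle is the geometric bookkeeping of the middle step: verifying that, between its exit from $U_3$ and its return, the orbit really remains in $U_2\setminus Z$ (so that the angle count $(k-2)\pi/5$ is valid), and reading off from Figure \ref{decaT} the five subcones together with their return times $k=2,\dots,6$. Once this combinatorial model is correctly extracted, the identification of the centres and of the translation $s$ is the same routine computation as in Lemma \ref{translation-calcul}.
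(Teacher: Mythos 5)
Your plan is correct and is essentially the argument the paper intends: the paper states this lemma without proof, but your return-time/angle count $-\pi/5+(k-1)\pi/5=(k-2)\pi/5$ for $k=2,\dots,6$ reproduces exactly the five return words $32,322,\dots,322222$ of the corollary that follows the lemma, and matches term by term the five pieces and angles ($0,\pi/5,2\pi/5,3\pi/5,4\pi/5$) of $\hat{T}_{deca}$. The only part left unexecuted is the explicit identification of the four rotation centres and of the common translation $s$, which is the same routine complex-coordinate computation as in Lemmas \ref{lem-rot-calcul} and \ref{translation-calcul} and which the paper likewise omits.
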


The results follow from Lemma \ref{ret-deca}.
\begin{corollary}
There exists a bijective map $\theta$ between the coding of the decagon and the pentagon which is 
$$
\begin{array}{cccccc}
\theta&:&\mathcal{L}'_{deca}&\rightarrow&\mathcal{L}'_{penta}\cap \rho(U_3)\\
&&\begin{cases}1\\ 2\\ 3\\4\\5\end{cases}& \rightarrow& \begin{cases}322222\\32222\\3222\\ 322\\ 32\end{cases}
\end{array}
$$
\end{corollary}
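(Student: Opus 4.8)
The plan is to read the statement off from Lemma \ref{ret-deca} by means of the induction machinery of Lemma \ref{leminducsub}, exactly as was done to pass from the hexagon to the triangle in Corollary \ref{hexa-triang}. The only geometric input needed beyond Lemma \ref{ret-deca} is the list of return words of the first return map of $\hat{T}_{penta}$ on the cell $U_3$, and everything else is formal.

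First I would compute the first return map of $\hat{T}_{penta}$ on $U_3$ and determine its return words. By Lemma \ref{lemindupent} the dynamics on $U_2\cup U_3$ acts by rotations, a point of $U_3$ carries the letter $3$, and the intermediate iterates it meets before coming back to $U_3$ all lie in $U_2$ and carry the letter $2$. Hence every return word has the form $32^m$, and a geometric inspection of Figure \ref{decaT} shows that precisely the five return times $2,3,4,5,6$ occur, giving the five return words
$$32,\quad 322,\quad 3222,\quad 32222,\quad 322222.$$
These five words correspond to the partition of $U_3$ into the cells on which the return time is constant, that is, to the induced partition associated with $\hat{T}_{penta,3}$.

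Next I would transport this to the decagon. Lemma \ref{ret-deca} provides a translation $s$ with $\hat{T}_{deca}=s^{-1}\circ\hat{T}_{penta,3}\circ s$; since $s$ is an orientation preserving affine map it carries the five cells of the return-time partition of $U_3$ onto the five cells $V_1,\dots,V_5$ of the decagon, so the coding of $\hat{T}_{deca}$ coincides cell by cell with the coding of $\hat{T}_{penta,3}$. By the third item of Lemma \ref{leminducsub} the two languages are then equal, $\mathcal{L}'_{deca}=L_{U_3}$. Ordering the cells by decreasing return time so that they match $V_1,\dots,V_5$ identifies the induced substitution $\alpha_{U_3}$ with the map $\theta$ of the statement (letter $i$ being sent to the return word of length $7-i$).

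Finally I would invoke Lemma \ref{leminducsub} once more: the substitution $\alpha_{U_3}=\theta$ satisfies $\theta(L_{U_3})\subset\mathcal{L}'_{penta}$, and its image is exactly the set of codings of orbits issued from $U_3$, namely $\mathcal{L}'_{penta}\cap\rho(U_3)$. Because the five return words $32^m$ are distinct and form a suffix code, $\theta$ is injective on words, and together with $\mathcal{L}'_{deca}=L_{U_3}$ this yields the asserted bijection. The main obstacle is the very first step: verifying geometrically that the return-time partition of $U_3$ has exactly these five cells with return times $2,\dots,6$, since this is where the specific rotation angles of the decagon's pieces and the placement of $U_3$ inside the pentagon's dynamics must be checked against Figure \ref{decaT}; the remaining steps are a routine application of the induction lemma.
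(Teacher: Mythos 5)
Your proposal is correct and follows essentially the same route as the paper, which derives the corollary from Lemma \ref{ret-deca} together with the induction machinery of Lemma \ref{leminducsub}, exactly as in the hexagon/triangle case of Corollary \ref{hexa-triang}: the five return words $32^m$, $1\leq m\leq 5$, of the first return map on $U_3$ are matched with the five cells of $\hat{T}_{deca}$ via the conjugating translation $s$. Your additional observations (that intermediate iterates stay in $U_2$ by the invariance from Lemma \ref{lemindupent}, and that the return words form a code ensuring injectivity) are exactly the details the paper leaves implicit.
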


\subsection{Results}
We use the same method as for the pentagon, and we deduce the language of the outer billiard map outside the regular decagon, and the complexity function.
\begin{theorem}
There exists a constant $C>0$ such that
$$p(n)\sim Cn^2.$$
\end{theorem}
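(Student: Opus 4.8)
The plan is to reduce the decagon to the already-solved pentagon case through the substitution $\theta$ furnished by the preceding corollary, and then to rerun the bispecial-word count that produced Theorem \ref{calccomp} for the pentagon. By Lemma \ref{ret-deca} the first return map of $\hat{T}_{penta}$ on $U_3$ is conjugate to $\hat{T}_{deca}$, so by Lemma \ref{leminducsub} the map $\theta$ identifies $\mathcal{L}'_{deca}$ with the sublanguage of $\mathcal{L}'_{penta}$ coding orbits that meet $U_3$. Since $\theta$ sends the five letters to words of lengths $6,5,4,3,2$, it is a morphism with bounded images, and the whole point is that it transports the bispecial structure of the pentagon back to the decagon while rescaling lengths by a fixed linear form. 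I would work with $p_{L'}$ throughout; the statement for $p_L$ then follows from Lemma \ref{changealphabet}, which only multiplies by a constant and shifts the argument, preserving the quadratic order.

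First I would prove a synchronization lemma for $\theta$, entirely parallel to Lemma \ref{lem-step-one}: every factor $w$ of $\theta(\mathcal{L}'_{deca})$ admits a unique decomposition $w=s\,\theta(v)\,p$ with bounded prefix $s$ and suffix $p$, so that, up to a fixed modification $\hat{\theta}$ absorbing the synchronizing letters, a long bispecial word of the $U_3$-sublanguage of $\mathcal{L}'_{penta}$ is exactly $\hat{\theta}(v)$ for a bispecial word $v$ of $\mathcal{L}'_{deca}$ carrying the same extension index $i(v)$. Combined with the classification of Proposition \ref{classfinbisp}, this shows that, apart from finitely many short exceptions, the bispecial words of $\mathcal{L}'_{deca}$ fall into finitely many families indexed by the same pairs $(n,k)\in\mathbb{N}^2$ as in the pentagon, each carrying $i(v)=\pm 1$.

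Next I would compute the lengths. For each family the decagon length $|v|$ is obtained from the abelianization of $v$ by the linear form dual to $\theta$, exactly as $|\hat{\theta}(v)|$ was obtained in the length Proposition preceding Lemma \ref{cal-comp-fin}. Because the pentagon lengths already had the shape $6^n(ak+b)+ck+d+(-1)^n(ek+f)$ and $\theta$ merely replaces one bounded linear form by another, the decagon lengths have the very same shape with different rational constants; in particular $2\,|v|\le|\hat{\theta}(v)|\le 6\,|v|$, so no family degenerates. Feeding these length formulas into the counting lemma used for Lemma \ref{cal-comp-fin} gives $\sum_{i\le N} b_{deca}(i)\sim\beta' N$ for a constant $\beta'>0$, the positivity of $\beta'$ coming from the strong families dominating the weak ones exactly as $\beta_1>\beta_2$ did in the pentagon computation. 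Finally Lemma \ref{julien} converts this into $s(n)\sim\beta' n$ and hence $p_{L'}(n)\sim\frac{\beta'}{2}n^2$, so the theorem holds with $C=\beta'/2$.

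The main obstacle is the synchronization lemma together with the bookkeeping of the restriction to $U_3$: one must check that $\theta$ is recognizable enough to match bispecial words bijectively up to the finite exceptional list, and that passing to the $U_3$-sublanguage deletes only families that do not alter the order of growth. Establishing that the surviving families still yield a strictly positive $\beta'$—equivalently that the decagon has genuinely quadratic, not merely linear, complexity—is the delicate point, but it follows once the strong and weak bispecial families are shown to transport through $\hat{\theta}$ with their indices $i(v)$ intact.
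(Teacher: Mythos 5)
Your proposal is correct and follows essentially the same route as the paper: the paper's own ``proof'' consists of the conjugacy of Lemma \ref{ret-deca}, the substitution $\theta$ sending $1,\dots,5$ to $322222,\dots,32$, and the single sentence ``we use the same method as for the pentagon,'' which is exactly the program you carry out (synchronization for $\theta$, transport of the bispecial families of Proposition \ref{classfinbisp} with their indices, length formulas of the shape $6^n(ak+b)+ck+d+(-1)^n(ek+f)$, the counting lemma, and Lemma \ref{julien}). Your write-up is in fact more explicit than the paper about the two delicate points it silently skips, namely recognizability of $\theta$ and the positivity of the resulting constant.
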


\bibliographystyle{alpha}
\bibliography{biblidual}
\end{document}